\newcommand*\patchAmsMathEnvironmentForLineno[1]{%
  \expandafter\let\csname old#1\expandafter\endcsname\csname #1\endcsname 
  \expandafter\let\csname oldend#1\expandafter\endcsname\csname end#1\endcsname 
  \renewenvironment{#1}%
     {\linenomath\csname old#1\endcsname}%
     {\csname oldend#1\endcsname\endlinenomath}}%
\newcommand*\patchBothAmsMathEnvironmentsForLineno[1]{%
  \patchAmsMathEnvironmentForLineno{#1}%
  \patchAmsMathEnvironmentForLineno{#1*}}%
\definecolor{darkred}{rgb}{0.5,0,0}
\definecolor{darkgreen}{rgb}{0,0.5,0}
\definecolor{darkblue}{rgb}{0,0,0.5}
\newtheorem{theorem}{Theorem}[section]
\newtheorem{notation}{Notation}[section]
\newtheorem{corollary}[theorem]{Corollary}
\newtheorem{proposition}[theorem]{Proposition}
\newtheorem{lemma}[theorem]{Lemma}
\newtheorem{lem}[theorem]{}
\theoremstyle{definition}
\newtheorem{definition}[theorem]{Definition}
\theoremstyle{remark}
\newtheorem{remark}[theorem]{Remark}
\newtheorem{example}[theorem]{Example}
\newcommand{\blem}{\begin{lem} \rm}
\newcommand{\elem}{\end{lem}}
\newcommand\M{\mathcal{M}}
\renewcommand\S{\mathcal{S}}
\newcommand{\T}{\mathcal{T}}
\newcommand{\U}{\mathcal{U}}
\newcommand{\F}{\mathcal{F}}
\newcommand{\bR}{\mathbb{R}}
\renewcommand{\H}{\mathbb{H}}
\newcommand{\RR}{\mathcal{R}}
\newcommand{\bC}{\mathbb{C}}
\newcommand{\CC}{C\kern-1.3ex|}
\newcommand{\bZ}{\mathbb{Z}}
\newcommand{\bQ}{\mathbb{Q}}
\newcommand{\ddt}{\frac{d}{dt}}
\newcommand{\dds}{\frac{d}{ds}}
\renewcommand{\P}{\mathbb{P}}
\newcommand{\PP}{\mathcal{P}}
\newcommand{\on}{\operatorname}
\newcommand{\st}{\on{st}}
\newcommand{\supp}{\on{supp}}
\newcommand{\univ}{\on{univ}}
\newcommand{\ainfty}{{$A_\infty$\ }}
\newcommand{\Def}{\on{Def}}
\newcommand{\dist}{\on{dist}}
\newcommand{\dual}{\vee}
\newcommand{\JJ}{\mathcal{J}}
\renewcommand{\top}{{\on{top}}}
\newcommand{\Edge}{\on{Edge}}
\newcommand{\Lag}{\on{Lag}}
\newcommand{\loc}{{\on{loc}}}
\newcommand{\Ver}{\on{Vert}}
\newcommand\B{\mathcal{B}}
\newcommand\cU{\mathcal{U}}
\newcommand\cM{\mathcal{M}}
\newcommand{\aut}{ \on{aut} }
\newcommand{\Hol}{ \on{Hol} }
\newcommand{\Hom}{ \on{Hom}}
\newcommand{\Ind}{ \on{Ind}}
\renewcommand{\ker}{ \on{ker}}
\newcommand{\coker}{ \on{coker}}
\newcommand{\im}{ \on{im}}
\newcommand{\Vol}{  \on{Vol}}
\newcommand{\diag}{  \on{diag}}
\newcommand\dirac{/\kern-1.2ex\partial} 
\newcommand\qu{/\kern-.7ex/} 
\newcommand\lqu{\backslash \kern-.7ex \backslash} 
\newcommand\dr{r_+ \kern-.7ex - \kern-.7ex r_-}
\newcommand{\labell}\label
\renewcommand{\d}{{\on{d}}}
\newcommand{\ol}{\overline}
\newcommand{\olp}{\ol{\partial}}
\newcommand\eps{\epsilon}
\newcommand{\f}{\frac}
\newcommand{\lan}{\langle}
\newcommand{\ran}{\rangle}
\newcommand{\hh}{{\f{1}{2}}}
\newcommand{\ti}{\tilde}
\newcommand\cE{\mathcal{E}}
\newcommand\cF{\mathcal{F}}
\newcommand\cI{\mathcal{I}}
\newcommand\mE{\mathcal{E}}
\newcommand\curv{\on{curv}}
\newcommand\Map{\on{Map}}
\newcommand\Vect{\on{Vect}}
\newcommand\ul{\underline}
\renewcommand\Im{\on{Im}}
\newcommand\Ker{\on{Ker}}
\newcommand\grad{\on{grad}}
\newcommand\reg{{\on{reg}}}
\newcommand\bdefn{\begin{definition}}
\newcommand\edefn{\end{definition}}
\newcommand\bea{\begin{eqnarray*}}
\newcommand\eea{\end{eqnarray*}}
\newcommand\bcv{\left[ \begin{array}{r} }
\newcommand\ecv{\end{array} \right] }
\newcommand\bma{\left[ \begin{array}{l} }
\newcommand\ema{\end{array} \right]}
\newcommand\ben{\begin{enumerate}}
\newcommand\een{\end{enumerate}}
\newcommand\beq{\begin{equation}}
\newcommand\eeq{\end{equation}}
\newcommand\bex{\begin{example}}
\newcommand\bsj{\left\{ \begin{array}{rrr} }
\newcommand\esj{\end{array} \right\}}
\newcommand\Cone{\on{Cone}}
\newcommand\DD{\mathbb{D}}
\newcommand\eex{\end{example}}
\newcommand\crit{{\on{crit}}}
\newcommand\val{{\on{val}}}  
\newcommand\sx{*\kern-.5ex_X}
\newcommand\white{{\includegraphics[width=.05in]{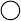}}}
\newcommand\black{{\includegraphics[width=.05in]{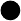}}}
\newcommand\whitet{\includegraphics[width=.07in]{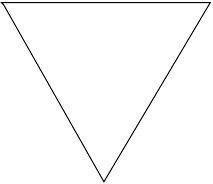}}
\newcommand\greyt{\includegraphics[width=.07in]{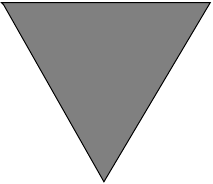}}
\newcommand{\zz}{v_+}
\newcommand{\bzz}{\ol{v}_+}
\newcommand{\mz}{v_-}
\newcommand{\bz}{\ol{v}_-}
\def\mathunderaccent#1{\let\theaccent#1\mathpalette\putaccentunder}
\def\putaccentunder#1#2{\oalign{$#1#2$\crcr\hidewidth \vbox
to.2ex{\hbox{$#1\theaccent{}$}\vss}\hidewidth}}
\begin{document}
\title[Immersed Floer theory and Maslov flows]{Invariance of immersed
  Floer cohomology \\ under Maslov flows}

\author{Joseph Palmer and Chris Woodward \\  with an erratum jointly with
  Hadi Azizi}

\begin{abstract} 
  We show that immersed Lagrangian Floer cohomology in compact
  rational symplectic manifolds is invariant under Maslov flow; this
  includes coupled mean curvature/K\"ahler-Ricci flow in the sense of
  Smoczyk \cite{smoczyk}.  In particular, we show invariance when a
  pair of self-intersection points is born or dies at a self-tangency,
  using results of Ekholm-Etnyre-Sullivan \cite{ees}.  Using this we
  prove a lower bound on the time for which the immersed Floer theory
  is invariant under the flow, if it exists.  This proves part of a
  conjecture of Joyce \cite{joyce:conjectures}.   An erratum written
  jointly with Hadi Azizi treates a missing case in the proof of a
  Lemma treating triple
  intersections, which are unavoidable in two dimensions.
\end{abstract}

\address{J. Palmer.   Mathematics - Altgeld Hall, University of Illinois at Urbana Champaign, 1409 W.~Green Street, Urbana, IL 61801, U.S.A.}
\email{ jpalmer5@illinois.edu }

\address{C. Woodward.  Mathematics-Hill Center,
Rutgers University, 110 Frelinghuysen Road, Piscataway, NJ 08854-8019,
U.S.A.}  \email{woodwardc@gmail.com}

\address{H. Azizi.  Mathematics - Bahen Centre, University of Toronto,
  40 St. George St., Toronto, ON, M5S 2E4, Canada}
\email{hadi.azizi@mail.utoronto.ca}

\maketitle

\tableofcontents

 \section{Introduction} 

 Lagrangian Floer theory is a cohomology theory whose differential
 counts pseudoholomorphic disks in a symplectic manifold with
 Lagrangian boundary conditions. For many purposes one wants to
 consider Floer theory of not only embedded but also {\em immersed}
 Lagrangians as introduced by Akaho-Joyce \cite{akaho}.  In Calabi-Yau
 manifolds, one expects special Lagrangians to play a special role as
 in the Thomas-Yau conjecture \cite{thomasyau}, namely, they should
 split-generate the Fukaya category.  In order to obtain special
 Lagrangians, one may hope to start with an arbitrary Lagrangian and
 minimize the volume by evolving under the mean curvature flow.
 Unfortunately, there is no reason to expect mean curvature flow of
 submanifolds of codimension greater than one to preserve
 embeddedness.  In addition, although short-time solutions to the mean
 curvature flow exist \cite{baker}, in general surgery is required to
 continue the flow beyond singular times, as in Colding-Minicozzi
 \cite{colding} and for the Lagrangian torus case Chen-Ma
 \cite{chenma}.

 From the point of view of mirror symmetry, mean curvature flow for
 Lagrangian branes is expected to be mirror to a (deformed version of)
 Yang-Mills flow for vector bundles or coherent sheaves.  Since
 Yang-Mills flow for vector bundles is achieved by a complex gauge
 transformation, one expects the isomorphism class of a Lagrangian
 brane to be invariant under the mean curvature flow.  In this paper,
 we prove several results in this direction.  We consider any flow
 whose deformation class at any time is equal to the Maslov class, and
 call such flows {\em Maslov flows}; a mean curvature flow, if it
 exists, is a special case. \label{masflow-comment} We prove
 invariance of Floer cohomology until the first singularity occurs or
 until the weakly bounding cochain hits a wall of zero valuation; in
 particular we show invariance of the Floer cohomology in the case
 that the (coupled, forward or reverse) mean curvature flow exists and
 passes through a self-tangency.  We have in mind an application to
 the existence of Floer non-trivial Lagrangians (and generators for
 the Fukaya category) in K\"ahler surfaces that we will discuss
 elsewhere.  In particular this proves ``step zero'' in the
 conjectures of Joyce \cite{joyceslides}, \cite{joyce:conjectures} for
 the existence of Lagrangian mean curvature flow with surgery for
 weakly unobstructed Lagrangians.
\begin{figure} 
\begin{center}
\begingroup%
  \makeatletter%
  \providecommand\color[2][]{%
    \errmessage{(Inkscape) Color is used for the text in Inkscape, but the package 'color.sty' is not loaded}%
    \renewcommand\color[2][]{}%
  }%
  \providecommand\transparent[1]{%
    \errmessage{(Inkscape) Transparency is used (non-zero) for the text in Inkscape, but the package 'transparent.sty' is not loaded}%
    \renewcommand\transparent[1]{}%
  }%
  \providecommand\rotatebox[2]{#2}%
  \ifx\svgwidth\undefined%
    \setlength{\unitlength}{213.21216733bp}%
    \ifx\svgscale\undefined%
      \relax%
    \else%
      \setlength{\unitlength}{\unitlength * \real{\svgscale}}%
    \fi%
  \else%
    \setlength{\unitlength}{\svgwidth}%
  \fi%
  \global\let\svgwidth\undefined%
  \global\let\svgscale\undefined%
  \makeatother%
  \begin{picture}(1,0.52436037)%
    \put(0,0){\includegraphics[width=\unitlength,page=1]{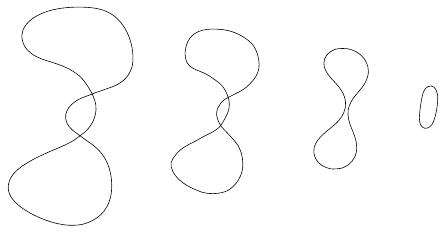}}%
  \end{picture}%
\endgroup%
\end{center} 
\caption{An immersed curve flowing under the mean curvature flow.} 
\label{movie}
\end{figure} 
To explain the terminology, recall from Akaho-Joyce \cite{akaho} that
associated to a self-transverse immersion $\phi: L \to X$ of a compact
Lagrangian in a compact symplectic manifold $X$ is a {\em Fukaya
  \ainfty algebra} $CF(\phi)$ additively generated over a Novikov ring
by chains on the Lagrangian plus two copies of each self-intersection;
for each $d\geq 0$ the composition maps
\[\mu_d: CF(\phi)^{\otimes d} \to CF(\phi)\] 
count holomorphic disks with boundary in the Lagrangian. Via the 
homotopy units construction in \cite[(3.3.5.2)]{fooo}, \cite[Section 
2.2]{flips} one may furnish $CF(\phi)$ with a strict unit
$1_\phi \in CF(\phi)$. A {\em bounding cochain} (resp. {\em weakly
  bounding cochain}) is a solution $b \in CF(\phi)$ to the Maurer-Cartan equation
(resp. weak Maurer-Cartan equation)
\[ \mu_0(1) + \mu_1(b)  + \mu_2(b,b) + \ldots = 0 \ (\text{resp.} =
W(b) 1_\phi ).\]
The space of solutions to the weak Maurer-Cartan equation is denoted
$MC(\phi)$ and is equipped with a {\em potential function} 
\[ MC(\phi) \to \Lambda, \quad b \mapsto W(b) .\]
%
For any $b \in MC(\phi)$, the
operator
\[ \mu_1^b:CF(\phi) \to CF(\phi), \quad c \mapsto \sum_{k_- \ge 0,k_+
  \ge 0}
\mu_{k_- + k_+ +1}(\underbrace{b,\ldots, b}_{k_-}, c,\underbrace{ b,\ldots, b}_{k_+})
\]
squares to zero. 
Non-vanishing of the Floer cohomology for some weakly bounding cochain
obstructs the Hamiltonian displaceability of the Lagrangian.  That is,
if there exists a Hamiltonian diffeomorphism $\psi: X \to X$ such that
$ \psi(\phi(L)) \cap \phi(L)$ is empty then the Floer cohomology
$HF(\phi,b)$ vanishes for any $b \in MC(\phi)$, or $MC(\phi)$ is empty.

We study the behavior of immersed Floer cohomology under flows that in
a cohomological sense are equivalent to forward or reverse mean
curvature flow coupled with K\"ahler-Ricci flow on the symplectic
manifold.  Consider a one-parameter family $\phi_t: L \to X$ of
Lagrangian immersions for a family of symplectic forms $\omega_t$ such
that the deformation class $ \Def(\phi_t) = [\dot{\phi}_t,
  -\dot{\omega}_t]$ as defined in Equation~\eqref{cartaneq} satisfies
\begin{equation} \label{defclass}
 \Def(\phi_t) = \pm c_1(\phi_t) \in H^2(\phi_t), 
\end{equation}
where $c_1(\phi_t)$ is the Maslov (relative Chern) class $c_1(\phi_t)$ in the relative de
Rham cohomology $H^2(\phi_t)$ defined in \eqref{maslovind} below.  We
call such a flow a {\em Maslov flow} (without surgery) as in
Lotay-Pacini \cite{lotay:coupled} (up to a sign).  In particular, this
implies that
\[ [ \dot{\omega_t} ] =  \pm c_1(X) \]
with $c_1(X) \in H^2(X)$ the first Chern class, as in a K\"ahler-Ricci
flow.  It is an observation of Smoczyk \cite{smoczyk} that the
combined mean curvature and K\"ahler-Ricci flow preserves the
Lagrangian condition.  A result of Lotay-Pacini \cite[Section
7]{lotay:coupled} (using a technique of Hamilton) says that such
coupled mean-curvature flows $\phi_t: L \to X, \omega_t$ exist for
short time.  All symplectic manifolds $X$ will be assumed to be
compact and Lagrangians $L$ will be assumed to be compact, oriented,
and equipped with relative spin structures and local systems.  In
order to apply Cieliebak-Mohnke perturbations \cite{cm:trans} we
furthermore assume that $X$ is simply-connected and the relative
symplectic class $[0,\omega] \in H^2(\phi,\bQ)$ is rational (which will
hold for rational times in the flow). \label{rattimes}

At times when the number of self-intersection points changes, we find
a correction to the weakly bounding cochain so that the Floer
cohomology and potential is preserved.  Suppose that under a flow
$\phi_t$ two new self-intersection points
$x_{1,t}, x_{2,t} \in \phi(L_t)$ are born at $t = 0$, leading to four
ordered self-intersection points
$v_{t,\pm}, \ol{v}_{t,\pm} \in L \times_{\phi_t} L$.\footnote{We
  use a bar over an ordered intersection point if the orientation
  induced by the splitting into tangent spaces of the branches of the
  Lagrangian disagrees with the symplectic orientation, and no bar
  otherwise.}  In our conventions, $\ol{v}_{t,\pm}$ is connected by a
Floer trajectory to $v_{t,\mp}$ as in
Figure \ref{small} by Theorem \ref{smallstrip}.
\label{mod2}

\begin{figure} 
\begin{center}
\begingroup%
  \makeatletter%
  \providecommand\color[2][]{%
    \errmessage{(Inkscape) Color is used for the text in Inkscape, but the package 'color.sty' is not loaded}%
    \renewcommand\color[2][]{}%
  }%
  \providecommand\transparent[1]{%
    \errmessage{(Inkscape) Transparency is used (non-zero) for the text in Inkscape, but the package 'transparent.sty' is not loaded}%
    \renewcommand\transparent[1]{}%
  }%
  \providecommand\rotatebox[2]{#2}%
  \ifx\svgwidth\undefined%
    \setlength{\unitlength}{64.31480227bp}%
    \ifx\svgscale\undefined%
      \relax%
    \else%
      \setlength{\unitlength}{\unitlength * \real{\svgscale}}%
    \fi%
  \else%
    \setlength{\unitlength}{\svgwidth}%
  \fi%
  \global\let\svgwidth\undefined%
  \global\let\svgscale\undefined%
  \makeatother%
  \begin{picture}(1,3.24090934)%
    \put(0,0){\includegraphics[width=\unitlength,page=1]{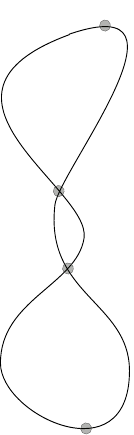}}%
    \put(0.68588224,3.16002918){\color[rgb]{0,0,0}\makebox(0,0)[lb]{\small \smash{$x$}}}%
    \put(0.37286981,1.92878027){\color[rgb]{0,0,0}\makebox(0,0)[lb]{\small \smash{$v_{t,+}$}}}%
    \put(0.39588543,1.05279238){\color[rgb]{0,0,0}\makebox(0,0)[lb]{\small \smash{$v_{t,-}$}}}%
    \put(0.53397911,0.10380579){\color[rgb]{0,0,0}\makebox(0,0)[lb]{\small \smash{$\ol{x}$}}}%
    \put(0.05544301,1.53534461){\color[rgb]{0,0,0}\makebox(0,0)[lb]{\small \smash{}}}%
    \put(0.02558908,1.48843143){\color[rgb]{0,0,0}\makebox(0,0)[lb]{\small \smash{$A_0$}}}%
    \put(0.30706891,0.66531598){\color[rgb]{0,0,0}\makebox(0,0)[lb]{\small \smash{$A_1$}}}%
    \put(0.41369007,1.49269626){\color[rgb]{0,0,0}\makebox(0,0)[lb]{\small \smash{$A_2$}}}%
    \put(0.3966307,2.58023192){\color[rgb]{0,0,0}\makebox(0,0)[lb]{\small \smash{$A_3$}}}%
    \put(0,0){\includegraphics[width=\unitlength,page=2]{imex3.pdf}}%
  \end{picture}%
\endgroup%

\end{center}
\caption{Small strips created by intersection points} 
\label{small}
\end{figure} 

\begin{theorem}
 {\rm (Invariance for self-tangencies)} 
\label{tanmain}
  Let $(\phi_t)_{t \in [-\eps,\eps]}$ be a (forward or reverse) Maslov
  flow \textup{(}as in Definition~\ref{def:maslovflow}\textup{)} of Lagrangian immersions developing a
  tangency $D\phi_t(T_{l_-} L) \cap D\phi_t(T_{l_+} L) \neq \{ 0 \}$
  at $t = 0$ with two additional self-intersections $x_{1,t}, x_{2,t}$
  for $t > 0$. For any family
\[ b_{t,-} \in MC(\phi_t), \quad  t \in [-\eps,0] \]  
of Maurer-Cartan solutions for $CF(\phi_t), t < 0$ with
$\val_q(b_0) > 0$, there exists a family 
\[ b_{t,+} \in MC(\phi_t), \quad t \in [0,\eps] \]
of Maurer-Cartan solutions for $CF(\phi_t)$ for $t \in (0,\eps)$ such
that
  \[ \lim_{t \to 0^+}   W(\phi_t,b_{t,+}) = \lim_{t \to 0^-}
  W(\phi_{t},b_{t,-})\]
  and there is an isomorphism of Floer cohomologies for
  $-\eps < t_- < 0 < t_+ < \eps$
  \[ HF(\phi_{t_-},b_{t_-,-}) \to HF(\phi_{t_+},b_{t_+,+}).\]
\end{theorem}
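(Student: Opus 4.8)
The plan is to reduce the statement to a local model near the self-tangency and to transport the weakly bounding cochain across the surgery using the relationship between the Fukaya $A_\infty$ algebras $CF(\phi_t)$ for $t<0$ and $t>0$. First I would set up the comparison of the underlying modules: for $t<0$ the module $CF(\phi_t)$ is generated by chains on $L$ together with two copies of each of the self-intersection points not involved in the tangency, while for $t>0$ there are two extra pairs of generators $v_{t,1},\ol v_{t,1},v_{t,2},\ol v_{t,2}$ near the new intersection points $x_{1,t},x_{2,t}$. The key geometric input is the analysis of Ekholm-Etnyre-Sullivan \cite{ees} (and the self-tangency/birth move as in the Akaho-Joyce setup \cite{akaho}): the holomorphic disks with a corner at one of the new short generators $v_{t,k}$ are, for small $t$, in bijection with holomorphic disks in the $t<0$ picture passing through a neighborhood of the tangency point, with a controlled energy $\hbar(t)\to 0$ carried by the short Reeb-type chord. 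Concretely, one obtains an $A_\infty$ homomorphism (or a pair of maps realizing a homotopy equivalence onto a deformation retract) relating $CF(\phi_{t_-})$ and $CF(\phi_{t_+})$ up to terms weighted by positive powers of the Novikov variable $q^{\hbar(t)}$.

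Next I would handle the Maurer-Cartan element. Write $b_{t_+,+} = \iota(b_{t_-,-}) + \beta_t$, where $\iota$ is the natural inclusion of the $t<0$ generators into $CF(\phi_{t_+})$ and $\beta_t$ is a correction term supported on the four new generators. The weak Maurer-Cartan equation for $b_{t_+,+}$, projected onto the new generators, becomes a fixed-point equation for $\beta_t$ whose leading term is linear (governed by $\mu_1$ restricted to the new subcomplex, which is acyclic over the Novikov field after inverting $q$ because the two new intersection points cancel in homology). Since $\val_q(b_0)>0$, the nonlinear corrections are strictly $q$-filtered, so a standard contraction-mapping / Maurer-Cartan-transfer argument along the $A_\infty$ homotopy equivalence produces a unique small solution $\beta_t$, hence $b_{t_+,+}\in MC(\phi_{t_+})$. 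I would then invoke the homotopy invariance of the potential and of Floer cohomology under $A_\infty$ homotopy equivalence together with gauge equivalence of Maurer-Cartan elements (as recalled in the excerpt, $HF(\phi,b)$ depends only on $[b]\in\ol{MC}(\phi)$): this simultaneously yields the isomorphism $HF(\phi_{t_-},b_{t_-,-})\xrightarrow{\ \sim\ } HF(\phi_{t_+},b_{t_+,+})$ and the equality $W(\phi_{t_+},b_{t_+,+})=W(\phi_{t_-},b_{t_-,-})$ up to an error of size $q^{\hbar(t)}$, which vanishes in the limit $t\to 0$.

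For the forward-versus-reverse and birth-versus-death dichotomy, I would note the argument is symmetric in $t\mapsto -t$: a death of intersection points is a birth for the reversed flow, and the Maslov/coupled-flow hypothesis enters only to guarantee that the flow exists for short time on both sides (Lotay-Pacini \cite{lotay:coupled}) and that the symplectic forms $\omega_t$ vary so that the relevant relative classes, hence the Cieliebak-Mohnke perturbation data \cite{cm:trans}, can be chosen continuously through $t=0$; rationality and simple-connectedness of $X$ are used exactly to have these transversality perturbations available uniformly in $t$. The main obstacle I anticipate is the uniform gluing/compactness statement underlying the bijection of disks: one must show that as $t\to 0^+$ no holomorphic disk with a corner at a new short generator escapes the local model or acquires uncontrolled energy, i.e. that the Fukaya $A_\infty$ structure maps of $\phi_{t_+}$ converge, in the appropriate $q$-adic sense, to those of $\phi_{t_-}$ together with the extra $\mu_2$-type term $\ol v_{t,k}\mapsto v_{t,k+1}$; establishing this continuity of the structure maps through the self-tangency, using the compactness package adapted to the degenerating neck, is the technical heart of the proof.
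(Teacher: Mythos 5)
Your intuitions about where the correction must live (on the new self-intersection generators), the role of the small strip connecting $\bzz$ to $\mz$ and $\bz$ to $\zz$, and the need for $\val_q(b_0)>0$ to control a $q$-adically filtered solution are all in line with the paper. But the framework you propose, namely an $A_\infty$ homotopy equivalence (or deformation retract) between $CF(\phi_{t_-})$ and $CF(\phi_{t_+})$ through which one transfers Maurer--Cartan elements and then invokes homotopy invariance of $W$ and $HF$, is not what the paper does, and it conceals a genuine gap. The paper explicitly records that the limiting composition maps $\mu_d(\phi_0)$ for the self-tangent immersion, once the tangency is allowed as an input or output, \emph{fail} the $A_\infty$ associativity axiom, because gluing at the tangency without moving the Lagrangian fails (see the Remark following the definition of $\mu_d(\phi_0)$ in the ``Invariance for birth-death singularities'' section). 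There is thus no honest curved $A_\infty$ algebra at $t=0$ to interpolate through, and no $A_\infty$ homomorphism is constructed between the two sides. Moreover, the non-admissible generators $\ol v_{t,\pm}$ label moduli spaces that do not converge as $t\to 0$, so the naive correspondence of disks does not assemble into an $A_\infty$ morphism without significant additional work; and your appeal to ``acyclicity of the new subcomplex after inverting $q$'' followed by the homological perturbation lemma is exactly the kind of argument that is delicate for curved algebras over $\Lambda_{\geq 0}$, and the paper never claims or needs it.

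What the paper actually does is more bare-hands. Using Theorem~\ref{smallstrip} and Corollary~\ref{gcor}, which give an orientation-preserving bijection between rigid disks for $\phi_{-\eps}$ and collections of disks for $\phi_{+\eps}$ joined along the newly born self-intersection points by small strips of area $A(\eps)\to 0$, it writes down the correction as an explicit closed formula,
\[
\psi(b_{-\eps}) \;=\; b_{-\eps} \;+\; q^{-A(\eps)}\,\iota\bigl(\nu_{0,+}^{\,b_{-\eps}}(1)\bigr),
\]
where $\nu_{0,+}^{b}=\Pi\circ\mu_0^{b}$ projects the curved zeroth map onto the span of the tangency generators, and $\iota$ sends $v_{0,\pm}$ to (a sign times) $\ol v_{t,\mp}$; note that the correction is supported only on $\ol v_{t,\pm}$, not on all four new generators. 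It then verifies $b_\eps\in MC(\phi_\eps)$ and the matching of potentials directly by re-expanding $\mu_0^{b_\eps}(\phi_\eps;1)$ via the curve bijection, with no fixed-point iteration in the forward direction (the iterative argument appears only for the converse, reconstructing $b_{-\eps}$ from a given $b_\eps$). The isomorphism on Floer cohomology then follows not from homotopy invariance of $A_\infty$ algebras but from the elementary observation that $D\psi$ is a chain map for the $\mu_1^{b}$-differentials together with a filtration/spectral-sequence comparison: $D\psi=\mathrm{id}+\iota\circ\nu_{0,+}^{b_{-\eps}}$ is the identity to leading order in $q$, hence an isomorphism on the associated graded, hence an isomorphism. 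If you want to salvage your route, you would need to actually construct a filtered curved $A_\infty$ homomorphism $CF(\phi_{t_-})\to CF(\phi_{t_+})$ from the Ekholm--Etnyre--Sullivan curve correspondence, bypassing the ill-behaved $t=0$ algebra; that is a nontrivial extra step that the paper's direct verification renders unnecessary.
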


A very similar argument for the invariance of Legendrian contact
homology appears in Ekholm-Etnyre-Sullivan \cite{ees}, and our proof
uses the same basic idea although its realization in the language of
Lagrangian Floer theory is somewhat different.  In particular
Ekholm-Etnyre-Sullivan \cite[Lemma 4.6]{ees} describes the behavior of
holomorphic disks with boundary in a Lagrangian immersion under the
development of a self-tangency.  As in that paper, we work in a local
model in which the self-tangency is ``of standard form'', justified in
Lemma \ref{admissible}.  We consider families
\[ b_{t,\pm} \in CF(\phi_t), t \in [\pm \eps,0] \]  
which on either side of the tangency are connected by a correction
formula
\begin{equation} \label{informal} b_{t,+} = E_{t_-}^{t_+} b_{t,- } +
  \sum_{\pm} q^{-A(t)} \# \M(\phi_{t_+},b_{t,-};v_{t,\pm})
  \ol{v}_{t,\mp} \end{equation}
where $E_{t_-}^{t_+}$ is an operator acting by multiplication by some
power of $q$ in each graded piece (see Definition \ref{ongen}) \label{punc} $A(t)$
is the area of the small strip connecting $v_{t,\mp}$ with $v_{t,\pm}$
and, as usual, the count is a signed count weighted by the areas of
the disks.  In the case that the immersion stays self-transverse, we
identify the Maurer-Cartan solution spaces and the Floer cohomology
for small times.

\begin{theorem} 
  {\rm (Invariance for self-transverse families)} \label{transmain}
  Let $(X,\omega_0)$ be a compact, rational, simply-connected
  symplectic manifold as above and
  \[\omega_t \in \Omega^2(X), \quad \phi_t:L \to X, \quad  t \in [0,T]\] 
  a Maslov flow of symplectic forms and self-transverse Lagrangian
  immersions.  Given 
  $b_0 \in {MC}(\phi_0)$ with $\val_q(b_0) > (\dim(L) - 1)T$ there
  exists $b_T \in {MC}(\phi_T)$ such that
 \begin{equation} \label{WHF} W(\phi_T,b_T) = q^{2T}W(\phi_0,b_0),  \quad HF(\phi_T,b_T) \cong
  HF(\phi_0,b_0) \end{equation}
where $\cong$ denotes group isomorphism.  Similarly, given
$b_T \in {MC}(\phi_T)$ with $ \val_q(b_T)> 2T$ there exists
$b_0 \in {MC}(\phi_0)$ such that \eqref{WHF} holds.
\end{theorem}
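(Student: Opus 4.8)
The plan is to realize the one-parameter family $t\mapsto CF(\phi_t)$, $t\in[0,T]$, as a (suitably renormalized) pseudo-isotopy of filtered $A_\infty$ algebras, and then to transport weakly bounding cochains, the potential, and Floer cohomology across it. First I would fix a model for the family: since $\phi_t$ is self-transverse for every $t\in[0,T]$, the ordered self-intersection points form a (necessarily trivial, since $[0,T]$ is contractible) finite covering of $[0,T]$, and after trivializing it and choosing a smooth family of Cieliebak--Mohnke perturbation data \cite{cm:trans} — available precisely because $X$ is simply connected and $[0,\omega_t]$ is rational, exactly as in the construction of a single $CF(\phi)$ — one obtains a \emph{fixed} graded Novikov module $CF$ carrying a smooth family of filtered $A_\infty$ structures $\mu_{d,t}$, the term coming from a disk class $\beta$ being weighted by $q^{A_t(\beta)}$ with $A_t(\beta)=\int_\beta\omega_t$.

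The crucial point, and the reason the Maslov hypothesis enters, is area control. The defining identity $\Def(\phi_t)=\pm c_1(\phi_t)$ of a Maslov flow gives $\tfrac{d}{dt}A_t(\beta)=\pm\mu(\beta)$ for every relative class $\beta$, with $\mu(\beta)$ the Maslov index. A disk contributing a rigid term to $\mu_{d,t}$ has its Maslov index pinned by the cohomological degrees of the inputs and the output; consequently the entire $t$-dependence of each $q$-weight is a power of $q^t$ determined by those degrees, and it can be absorbed by conjugating $\mu_{d,t}$ with a Novikov-linear renormalization automorphism $R_t\in\Aut(CF)$ that scales each graded summand by a suitable power of $q^t$, at the sole cost of an overall factor $q^{\pm(2-d)t}$ on $\mu_{d,t}$. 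The renormalized family $\widetilde\mu_{d,t}:=R_t\circ\mu_{d,t}\circ(R_t^{-1},\dots,R_t^{-1})$ then has, up to those overall (convergent) factors, only the $t$-dependence coming from the disk counts $\#\M(\phi_t,\beta)$, and it stays gapped because the undeformed areas $A_0(\beta)$ are bounded away from $0$. Without the Maslov condition, disk areas could degenerate to $0$ along the flow and no renormalization would keep the $A_\infty$ structure convergent; this is exactly the role of the hypothesis.

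Next I would build the equivalence and transport the structures. For $0\le s\le T$ a monotone interpolation of the perturbation data produces parametrized moduli spaces of disks with immersed boundary whose signed, area-weighted counts assemble into a filtered $A_\infty$ morphism $F_s\colon CF(\phi_0)\to\widetilde{CF}(\phi_s)$, and the standard concatenation-of-homotopies argument shows each $F_s$ is a filtered $A_\infty$ homotopy equivalence; equivalently one recognizes $(\widetilde\mu_{d,t})_{t\in[0,T]}$ as a pseudo-isotopy in the same sense used to construct $CF(\phi)$ and quotes the corresponding invariance statement \cite{fooo}. A filtered $A_\infty$ homotopy equivalence induces a bijection on gauge-equivalence classes of weakly bounding cochains intertwining the potential functions together with isomorphisms of the associated Floer cohomologies, so $F_T$ provides the correspondence $[b_0]\leftrightarrow[b_T]$ with $b_T$ the $R_T$-unrenormalization of $F_T(b_0)$. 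Undoing $R_T$ changes the $q$-valuation of a weakly bounding cochain by an amount controlled by its cohomological degree; the worst such shift over the range of degrees relevant to weakly bounding cochains is $(\dim L-1)T$, while the $d=0$ slot (where the potential lives, carried by Maslov-$2$ disks) is shifted by exactly $q^{2T}$. Hence $\val_q(b_0)>(\dim L-1)T$ guarantees $\val_q(b_T)>0$, i.e.\ $b_T\in MC(\phi_T)$, and $W(\phi_T,b_T)=q^{2T}W(\phi_0,b_0)$, $HF(\phi_T,b_T)\cong HF(\phi_0,b_0)$, which is \eqref{WHF}. The converse follows symmetrically by transporting $[b_T]$ back along a homotopy inverse of $F_T$ composed with $R_T$; in that direction the estimate governing convergence of the transported cochain gives the bound $\val_q(b_T)>2T$, after which \eqref{WHF} holds as before.

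The genuinely non-formal step is the construction of the continuation morphisms $F_s$ (equivalently the pseudo-isotopy structure maps and their homotopy operators): one needs transversality, compactness, and gluing for the $1$-parameter families of holomorphic disks with boundary on the moving immersed Lagrangian, so that these maps are well defined, gapped, and homotopy equivalences. This is where the Cieliebak--Mohnke package does the real work and where the simply-connected and rational hypotheses are used. By contrast, the area identity of the second step, the bookkeeping of the renormalization $R_t$, and the abstract transport of $\ol{MC}$, the potential, and $HF$ along a filtered $A_\infty$ homotopy equivalence are formal consequences of machinery already in place for $CF(\phi)$, $MC(\phi)$, and $HF(\phi,b)$.
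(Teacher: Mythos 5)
Your high-level strategy (transport of Maurer--Cartan data along a family of filtered structures, with a valuation-loss bookkeeping that explains the hypotheses $\val_q(b_0)>(\dim L-1)T$ and $\val_q(b_T)>2T$) is parallel in spirit to the paper, but the central renormalization step contains a genuine gap. For an immersed boundary condition the $t$-derivative of the area of a rigid polygon is \emph{not} pinned by the cohomological degrees of its inputs and output: the identity $\tfrac{d}{dt}A_t(\beta)=\pm\mu(\beta)$ is valid only for classes whose boundary lifts to $L$ without corners. For polygons with corners at self-intersection points the correct statement is Lemma \ref{change}, $\tfrac{d}{dt}\int_S u_t^*\omega_t=\sum_{k>0}\theta_t(x_k)-\theta_t(x_0)-d+2$, where $\theta_t(x)=i(x)-a_t(x)$ involves the real-valued K\"ahler angles of the two branches at the corner. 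These angles are not determined by the $\Z_N$-degree $|x|$ (two self-intersection points of equal degree generically have different angle sums), and they vary with $t$ along the flow. Hence an automorphism $R_t$ rescaling each \emph{graded summand} by a degree-determined power of $q^t$ does not remove the $t$-dependence of the weights: residual factors of the form $q^{(\text{angle difference})\,t}$ survive, the renormalized family is not gapped in the way you claim, and your subsequent appeal to pseudo-isotopy/continuation invariance would then apply verbatim to an arbitrary exact isotopy --- which is false, as the paper's own examples show (Figure \ref{joycemovie}, Example \ref{weinsteinex}). The correct correction must be generator-wise and use the time-integral of the angles: this is exactly the Euler flow of \eqref{ongen}, $x\mapsto q^{\int_{t_1}^{t_2}(1-\theta_t(x))\,dt}\,x$, which in the paper intertwines the structure maps \emph{exactly} (Theorem \ref{related}), after the moduli spaces for nearby times have been identified by an ambient diffeomorphism matching the images (Lemma \ref{bijlema}) rather than by continuation maps; your numerical bounds $(\dim L-1)T$ and $2T$ come out right only because they reflect the range $\theta\in(0,\dim L)$ of the real grading (together with the degree $-1$ unit generator), not the cohomological degree.

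A second omission: your setup (trivialized covering of self-intersection points over $[0,T]$, smooth family of perturbation data, continuation morphisms) tacitly assumes there are no triple self-intersections along the flow, which self-transversality does not exclude. The paper treats this separately --- a Sard--Smale argument removes triple points generically when $\dim X\ge 4$, and in dimension two, where they cannot be avoided, an explicit analysis of the small holomorphic triangles is required --- and any version of your argument needs the same step, since the identification of the moduli problems across a triple-intersection time is precisely where the naive family argument breaks down.
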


In the monotone case, invariance of Floer cohomology in this setting
is a result of Alston-Bao \cite{alston}.  The reason for the asymmetry
in the two cases in the Theorem (that is, the difference between the
bounds $(\dim(L) - 1)T$ versus $2T$) lies in the fact that the
necessary modification changes the weakly bounding cochain in degree
$d$ by a multiple of $\pm (d-1)T$; that is, there is a symmetry of the
factors around degree $1$ weakly bounding cochains which roughly
correspond to local systems and so require no modification under the
flow.

Combining Theorems \ref{tanmain} and \ref{transmain} above, consider a
situation in which a Maslov flow of Lagrangian immersions
$\phi_t: L \to X$ exists on some time interval $[0,T]$.  After
perturbation (see Lemma \ref{admissible} below) we may assume that
$\phi_t$ is self-transverse except for a finite collection of times
$t_1,\ldots, t_k \in [0,T]$ at which self-tangencies occur. Let
\[ A_i = \val_q( b_{t_i,+} - b_{t_i,-}) \] 
be the $q$-valuation of the correction term $b_{t_i,+} - b_{t_i,-}$ in
\eqref{informal} at the self-tangency $t_i$, for $i = 1,\ldots, k$ and
define $A_0 = \val_q(b_0)$.

\begin{theorem} \label{main}
 Let $(X,\omega_0)$ be a compact,
  rational, simply-connected symplectic manifold as above and
  \[\omega_t \in \Omega^2(X), \quad \phi_t:L \to X, \quad  t \in [0,T]\] 
  a Maslov flow of symplectic forms and Lagrangian immersions that are 
  self-transverse for $t \in \{ 0, T \}$. 
 Given 
  $[b_0] \in \ol{MC}(\phi_0)$ with 
  \[ \min_{i=0}^k \left( A_i - (\dim(L) - 1)( T - t_i) \right) > 0 \]
resp. $[b_T] \in \ol{MC}(\phi_T)$ with 
\[ \min\left( \val_q(b_T)- 2T, \min_{i=1}^k A_i - 2(T - t_i)\right) >
0 \]
there exists $[b_T] \in \ol{MC}(\phi_T)$ resp. 
$[b_0] \in \ol{MC}(\phi_0)$ such that 
\[ W(\phi_T,b_T) = q^{2T}W(\phi_0,b_0) \quad HF(\phi_T,b_T) \cong 
HF(\phi_0,b_0)\]
where $\cong$ denotes group isomorphism. 
\end{theorem}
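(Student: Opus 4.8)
The plan is to concatenate the two invariance statements already in hand: Theorem \ref{transmain} on the maximal sub-intervals where the flow stays self-transverse, and Theorem \ref{tanmain} across each self-tangency. First I would invoke Lemma \ref{admissible} (perturbing rel $t\in\{0,T\}$) so that $\phi_t$ is self-transverse except at the isolated tangency times $0 < T_1 < \cdots < T_k < T$; set $T_0 = 0$ and $T_{k+1} = T$. On each complementary interval $(T_i,T_{i+1})$ the flow is a self-transverse Maslov flow, so a time-translated form of Theorem \ref{transmain} applies there, while Theorem \ref{tanmain} applies at each $T_i$. Since the proof of Theorem \ref{transmain} yields an honest continuous family of Maurer-Cartan solutions (not merely a map on gauge-equivalence classes), its output on a sub-interval furnishes the family $b_{t,\pm}$ needed as input for Theorem \ref{tanmain} at the adjoining tangency, and vice versa.

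\textbf{Forward direction.} Given $b_0 \in MC(\phi_0)$ with $\val_q(b_0) = A_0$, I would build, inductively for $i = 0,1,\ldots,k$, a solution $b_{T_i,+} \in MC(\phi_{T_i})$, starting from $b_{T_0,+} := b_0$: from $b_{T_{i-1},+}$, Theorem \ref{transmain} on $(T_{i-1},T_i)$ produces a family $b_{t,-}$ with $t \to T_i^-$ limit $b_{T_i,-}$, and then Theorem \ref{tanmain} at $T_i$ produces $b_{T_i,+}$ and the family $b_{t,+}$ just beyond $T_i$. The only real content is the estimate
\[ \val_q\!\left(b_{T_i,+}\right) \;>\; (\dim(L)-1)\,(T - T_i), \qquad i = 0,1,\ldots,k, \]
proved by induction on $i$. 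The case $i = 0$ is the hypothesis $A_0 - (\dim(L)-1)T > 0$. For the step, the inductive bound at $i-1$ gives $\val_q(b_{T_{i-1},+}) > (\dim(L)-1)(T-T_{i-1}) \geq (\dim(L)-1)(T_i-T_{i-1})$, so Theorem \ref{transmain} applies on $(T_{i-1},T_i)$; the construction in its proof lowers the $q$-valuation by at most $(\dim(L)-1)$ times the elapsed time, so $\val_q(b_{T_i,-}) > (\dim(L)-1)(T-T_i) \geq 0$, and in particular $\val_q(b_{T_i,-}) > 0$, which lets Theorem \ref{tanmain} apply at $T_i$. Finally, from the correction formula \eqref{informal} together with the fact that the operator $E$ of Definition \ref{ongen} acts by multiplication by powers of $q$ and hence does not lower $q$-valuation,
\[ \val_q\!\left(b_{T_i,+}\right) \;\geq\; \min\!\left(\val_q(b_{T_i,-}),\,A_i\right) \;>\; (\dim(L)-1)(T-T_i), \]
using $A_i - (\dim(L)-1)(T-T_i) > 0$. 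So every invocation of Theorems \ref{transmain} and \ref{tanmain} above is legitimate, and one final use of Theorem \ref{transmain} on $(T_k,T)$ delivers $b_T \in MC(\phi_T)$.

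\textbf{Assembling the output and the reverse direction.} Theorem \ref{transmain} on $(T_i,T_{i+1})$ gives $W(\phi_{T_{i+1}},b_{T_{i+1},-}) = q^{2(T_{i+1}-T_i)}W(\phi_{T_i},b_{T_i,+})$ (reading $b_{T_{k+1},-} = b_T$) and an isomorphism $HF(\phi_{T_i},b_{T_i,+}) \cong HF(\phi_{T_{i+1}},b_{T_{i+1},-})$; Theorem \ref{tanmain} at $T_i$ gives $\lim_{t\to T_i^+}W(\phi_t,b_{t,+}) = \lim_{t\to T_i^-}W(\phi_t,b_{t,-})$, i.e.\ $W(\phi_{T_i},b_{T_i,+}) = W(\phi_{T_i},b_{T_i,-})$ once one notes $W$ is continuous along each self-transverse sub-interval, and an isomorphism $HF(\phi_{T_i},b_{T_i,-}) \cong HF(\phi_{T_i},b_{T_i,+})$. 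Multiplying the potential identities over all steps, the exponent telescopes to $\sum_{i=0}^{k}2(T_{i+1}-T_i) = 2T$, giving $W(\phi_T,b_T) = q^{2T}W(\phi_0,b_0)$, and composing the isomorphisms gives $HF(\phi_T,b_T) \cong HF(\phi_0,b_0)$. The reverse direction is symmetric: one runs the construction backward from $b_T$, processing the tangencies $T_k,\ldots,T_1$ in turn and using the second halves of Theorems \ref{transmain} and \ref{tanmain}; the per-unit-time valuation cost of the self-transverse flow is now $2$ in place of $\dim(L)-1$, and the induction keeping the running cochain's valuation positive at every tangency uses exactly the two pieces $\val_q(b_T) > 2T$ and $A_i > 2(T - T_i)$, $i=1,\ldots,k$, of the stated hypothesis.

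\textbf{Main obstacle.} The only non-formal point is the valuation bookkeeping in the inductive step: verifying that the $q$-valuation consumed by each self-transverse sub-flow (which I need to read off from the proof of Theorem \ref{transmain}) together with the valuation of each self-tangency correction (controlled by $A_i$ and by the $q$-degree behaviour of $E$ in Definition \ref{ongen}) never exhausts the initial budget, so that the running Maurer-Cartan solution stays strictly positive in valuation at each tangency time --- the hypothesis of Theorem \ref{tanmain} --- and survives the remaining flow. The rest is concatenation. One should also confirm that the perturbation of Lemma \ref{admissible} can be performed rel $\{0,T\}$, leaving the Floer data there untouched, and that it stays a Maslov flow.
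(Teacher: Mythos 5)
Your argument is essentially identical to the paper's: both break $[0,T]$ at the tangency times, alternate Theorem \ref{transmain} on the self-transverse sub-intervals with Theorem \ref{tanmain} across each $T_i$, and track the $q$-valuation inductively to verify the hypotheses of each invocation; the paper phrases this as an induction on the number of tangencies while you unroll it step by step, but the bookkeeping is the same, and the telescoping of the $q^{2(T_{i+1}-T_i)}$ factors and composition of $HF$-isomorphisms are exactly the paper's conclusion. One minor correction: your parenthetical ``the operator $E$ of Definition \ref{ongen} acts by multiplication by powers of $q$ and hence does not lower $q$-valuation'' is not true in general --- for $|x|>1$ the forward Euler flow multiplies by $q^{(t_2-t_1)(1-|x|)}$ with negative exponent --- but you don't need it, since $A_i$ is defined as $\val_q(b_{T_i,+}-b_{T_i,-})$ and the bound $\val_q(b_{T_i,+})\ge\min(\val_q(b_{T_i,-}),A_i)$ is immediate from the ultrametric triangle inequality.
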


  \begin{example} \label{initun}
    The following example shows that Floer theory can be initially
    unobstructed but become obstructed under mean curvature flow.  Let
    the symplectic manifold $X$ be the two-sphere $S^2$, thought of as
    the one-point compactification of a plane $\bR^2$, equipped with a
    metric that is flat on a large open subset $U \subset \bR^2$
    containing the image $\phi(L)$ of the immersion
    $\phi: L \cong S^1 \to X$.
    A ``movie'' showing a family of circles $\phi_t: L \to X$ under
    the mean curvature flow $\dot{\phi_t} = - H_{\phi_t}$ is shown in
    Figure \ref{joycemovie}.\footnote{Produced using the
      curve-shortening software by A. Carapetis \cite{shorten} and
      Inkscape.}  Indeed, initially the area of the middle bigon is
    smaller than the areas of the teardrops; we will see when we
    analyze this example in more detail later (Example
    \ref{teardrops}) that there exists a weakly bounding cochain
    $b_t \in MC(\phi_t)$.  On the other hand, once the area of the
    bigon becomes larger than the combined area of the teardrops then no
    choice of weakly bounding cochain $b_t$ can cancel the teardrop
    contributions to
    $\mu_0(1) \in CF(\phi_t)$.
    By rescaling this example one sees that similarly, Floer
    cohomology $HF(\phi_t,b_t)$ can not be invariant under arbitrary
    exact isotopy, that is, deformations $\phi_t$ generated by exact
    one-forms $\alpha_t \in \Omega^1(L)$.  In higher dimension, we
    show in \cite{pw:surger} that one can continue the flow by
    performing a surgery at one of the intersection points, at least
    in high dimensions.  This ends the example.

\begin{figure} 
\begin{center}
\includegraphics{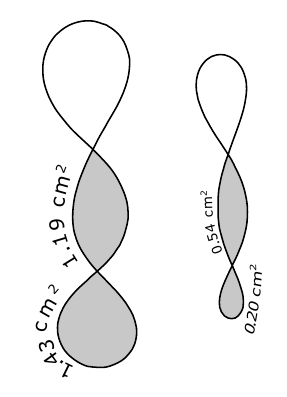}
\end{center}
\caption{A  flow in which the Floer cohomology becomes
  obstructed, with areas indicated}
\label{joycemovie}
\end{figure}

\end{example} 

Mean curvature flow for curves is also known as the {\em
  curve-shortening flow}. In this case, for embedded curves in the
plane $\phi_0: L \to X = \bR^2$ Grayson \cite{grayson} proved that the
curve $\phi_t(L)$ eventually becomes convex and then collapses to a
point in finite time.  For immersed curves $\phi_0: L \to X$
Angenent-Vel\'azquez \cite{angenent:cusp} studied the development of
cusps. Angenent \cite{angenent} studied further the curve-shortening
flow on arbitrary surfaces $X$.  Even for curves in the Euclidean
plane, Theorem \ref{main} includes a statement that is not obvious to
the authors:

\begin{corollary} \label{cusp} Suppose $\phi: S^1 \to \bR^2$ is an
  immersion such that the Maurer-Cartan moduli space $MC(\phi)$ is
  empty.  Then the curve-shortening flow of $\phi$ encounters a
  singularity, that is, does not flow to a round point in the sense of 
Grayson \cite{grayson}.
\end{corollary}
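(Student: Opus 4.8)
The plan is to argue by contraposition: assuming that the curve-shortening flow of $\phi$ does \emph{not} encounter a singularity and flows to a round point in the sense of Grayson, we produce an element of $MC(\phi)$, contradicting the hypothesis. Under this assumption the flow $\phi_t$ exists as a smooth family of immersions with curvature bounded on compact subintervals of its maximal interval $[0,T_{\max})$, and since being round forces being embedded, there is a time $T < T_{\max}$ at which $\phi_T$ is a small embedded convex circle; the curves $\phi_t(L)$, $t\in[0,T]$, sweep out a compact set $K \subset \R^2$.

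First I would realize this flow inside a compact symplectic manifold. Compactify $\R^2 \hra S^2 = X$ and choose a K\"ahler structure on $X$ whose symplectic form $\omega_0$ has rational class, which restricts to the standard flat structure on a neighbourhood $U \supset K$, and whose total area is large enough that the K\"ahler--Ricci flow $\dot\omega_t = -\rho_t$ exists on $[0,T]$. Because the Ricci form $\rho_t$ vanishes identically on $U$ for all $t$ --- a short openness/continuity bootstrap shows the metric stays flat on $U$ since $\dot g_t|_U = -2\,\mathrm{Ric}(g_t)|_U = 0$ --- the form $\omega_t$ is constant on $U$, so the $g_t$--mean curvature flow of $L$ agrees there with Euclidean curve-shortening and stays in $K \subset U$. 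Hence $(\omega_t,\phi_t)_{t\in[0,T]}$ is a coupled mean curvature/K\"ahler--Ricci flow, i.e. a (reverse) Maslov flow in the sense of the theorems above, with self-transverse endpoints: $\phi_T$ is embedded, and $\phi_0 = \phi$ may be taken self-transverse (the general immersed case follows by a small perturbation, since only nonemptiness of $MC(\phi)$ is needed). After a further perturbation (Lemma \ref{admissible}) $\phi_t$ is self-transverse away from finitely many self-tangency times $0 < T_1 < \cdots < T_k < T$.

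Next I would show the endpoint is weakly unobstructed with controllable valuation. An embedded circle on $S^2$ is Hamiltonian isotopic to a latitude circle, i.e. a toric fibre of $\C P^1$, which is weakly unobstructed: for every $c \in \Lambda^*$ the multiple of the point class $b = c\,[\pt]^{\dual}$ solves the weak Maurer--Cartan equation, so choosing $\val_q(c)$ as large as we like gives $[b_T] \in \ol{MC}(\phi_T)$ with $\val_q(b_T)$ arbitrarily large. Then, tracking $q$-valuations backwards through the correction formula \eqref{informal} across each tangency $T_i$, one obtains a bound $A_i \ge \val_q(b_T) - C$ with $C$ depending only on the areas of the vanishing strips and of the relevant disks of the flow; so taking $\val_q(b_T) > C + 2T$, the hypothesis $\min\bigl(\val_q(b_T) - 2T,\ \min_i A_i - 2(T-T_i)\bigr) > 0$ of Theorem \ref{main} holds. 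Theorem \ref{main} (reverse direction) then produces $[b_0] \in \ol{MC}(\phi_0) = \ol{MC}(\phi)$, so $MC(\phi) \neq \emptyset$, the desired contradiction.

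I expect the main obstacle to be the valuation bookkeeping in the last step: the $A_i$ are not free parameters but are determined by $b_T$ and the flow through \eqref{informal}, and one must verify that starting the backward flow with large $q$-valuation keeps \emph{all} intermediate corrections of large valuation, uniformly over the finitely many tangencies (the factor $q^{-A(t)}$ in \eqref{informal} should be absorbed by the areas of the disks being counted). A secondary technical point is checking that the flat-region coupled flow literally satisfies $\Def(\phi_t) = \pm c_1(\phi_t)$ even though $c_1(X) \neq 0$ globally on $S^2$ --- this is exactly where it matters that $L$ remains in the region with $\rho_t \equiv 0$ --- and arranging the rationality hypothesis compatibly with the time-dependent family $\omega_t$.
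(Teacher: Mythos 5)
Your broad strategy — compactify $\R^2 \hookrightarrow S^2$ with a metric flat on a neighbourhood of the flow, realize the curve-shortening flow as a (reverse) Maslov flow, and use Theorem \ref{main} to transport a Maurer--Cartan solution from the late embedded circle $\phi_T$ back to $\phi=\phi_0$ — is exactly the paper's intended argument, and the set-up (flat compactification, connection with curvature supported away from $L$, rationality perturbations) is handled essentially correctly.

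The gap is in the valuation bookkeeping: you are applying the \emph{wrong direction} of Theorem \ref{main}, and then trying (unsuccessfully) to meet a hypothesis that cannot hold. You invoke the ``reverse direction,'' which requires $\val_q(b_T)>2T$, and you claim $\val_q(b_T)$ can be made arbitrarily large because a toric fibre carries a family of bounding cochains. But in the strict-unit framework of the paper, the weak Maurer--Cartan solution for an embedded circle on $S^2$ separating areas $A_\pm$ is \emph{forced} to contain the summand $c_1\,x^{\greyt}$ with $c_1 = q^{A_+}+q^{A_-}+ c_2(q^{A_+}-q^{A_-}) +\cdots$; since $c_2\in\Lambda_{>0}$ has no constant term, the leading term $q^{\min(A_+,A_-)}$ cannot be cancelled, so $\val_q(b_T)\le\min(A_+,A_-)$, which shrinks to $0$ as the circle collapses and is in no way a free parameter. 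Your auxiliary claim $A_i\ge \val_q(b_T)-C$ is also false: the correction term $q^{-A(\eps)}\iota(\nu_{0,+}^{b}(1))$ contains the contribution $q^{-A(\eps)}\Pi\mu_0(1)$, whose valuation is controlled by the minimal teardrop energy, not by $\val_q(b)$. The fix is to reverse the time parameter. Mean curvature flow decreases areas, so (Example 2.5(a) of the paper) it is a \emph{reverse} Maslov flow; the reversed family $\psi_s:=\phi_{T-s}$, $s\in[0,T]$, is a \emph{forward} Maslov flow from the embedded $\psi_0=\phi_T$ to the immersed $\psi_T=\phi$. For $\dim L=1$ the forward-direction hypothesis of Theorem \ref{main}, $\min_{i}A_i-(\dim L -1)(T-T_i)>0$, degenerates to $\min_i A_i>0$, which is automatic (each $A_i$ is the valuation of a positive-energy correction, bounded below via energy quantization and the smallness of the strip areas $A(T_i)$ near the tangency). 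The theorem then yields $[b]\in\ol{MC}(\psi_T)=\ol{MC}(\phi)$ with no constraint on $\val_q$ at the embedded end, giving the desired contradiction.
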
 

Non-emptiness of the Maurer-Cartan space does not imply convergence to
a point, by Example \eqref{initun}. \label{addex}  Joyce's
conjecture \cite{joyce:conjectures} also makes sense for the
curve-shortening flow for immersed curves in the two-sphere $S^2$, and
is open even in that case.  Work of Angenent and others
\cite{angenent} describes the formation of singularities but note that
Joyce's suggested surgery should be performed {\em before} the
singularities arise, see Remark \ref{comms} and the sequel to this
paper \cite{pw:surger}.

The intended application of these results is to the Lagrangian minimal
model program as explained in Charest-Woodward \cite{flips}.  The
conjecture in \cite{flips} describes an orthogonal decomposition of
the split-closed derived Fukaya category $D^\pi \F(X)$ of a K\"ahler
manifold $X$ corresponding to singularities in the \label{thethe}
K\"ahler-Ricci flow-with-surgery; that is, a family of K\"ahler
manifolds $(X_t,\omega_t)$ satisfying $\dot{\omega_t} = -R_{\omega_t}$
except at finitely many times where flips, divisorial contractions, or
a fibration occur.  Floer non-trivial Lagrangians associated to the
singularities in the flow were described in \cite{flips}; the results
of this paper imply the invariance of the Floer cohomology as these
Lagrangians are ``flowed backwards'' under the K\"ahler-Ricci flow.
We address in \cite{pw:surger} the invariance of Floer cohomology
under simplest kind of surgery.  We close the paper with
Remark~\ref{comms}, which lists some related open questions.

\vskip .1in \noindent {\em This work was partially supported by NSF grant DMS 1711070. 
  Any opinions, findings, and conclusions or recommendations expressed 
  in this material are those of the author(s) and do not necessarily reflect the views of the National Science Foundation.  }

\section{Maslov flows of Lagrangian immersions}

Recall basic definitions regarding Lagrangian immersions following,
for example, Weinstein \cite{weinstein}.  Let $(X,\omega)$ be a
symplectic manifold.  A {\em Lagrangian immersion} is a smooth map
$\phi: L \to X$ from a smooth manifold $L$ to $X$ satisfying
\[\phi:L \to X, \quad \dim(L) = \dim(X)/2, \quad \phi^* \omega = 0 , \quad 
\ker(D_x \phi) =\{ 0 \}, \forall x \in L .\]
The set of ordered {\em self-intersection points} is
$$ \cI^{\on{si}}(\phi) = L \times_\phi L - \Delta_L $$ 
where $\Delta_L: L \to L \times L$ is the diagonal embedding.  A
Lagrangian immersion $\phi: L \to X$ from a compact manifold $L$ is a {\em Lagrangian embedding}
if $\phi$ is injective, or equivalently if $\cI^{\on{si}}(\phi)$ is empty.
Lagrangian immersions are closed under the {\em disjoint union}
construction: If $\phi_0:L_0 \to X$ and $\phi_1:L_1 \to X$ are
Lagrangian immersions then so is the disjoint union
\[\phi_0 \sqcup \phi_1:L_0 \sqcup L_1 \to X, \quad 
x \mapsto \begin{cases} \phi_0(x) & x \in L_0 \\ 
\phi_1(x) &  x \in L_1 \end{cases} .\]  
Locally any Lagrangian immersion is the disjoint union of Lagrangian
embeddings, so that the self-intersections of the union
$\phi_0 \sqcup \phi_1$ include the intersections between different
components $\phi_0(L_0), \phi_1(L_1)$.  Similarly, the product of
Lagrangian immersions $\phi_0: L_0 \to X_0 $ and $\phi_1:L_1 \to X$ is
a Lagrangian immersion to $X_0 \times X_1$.

A natural equivalence on the set of Lagrangian immersions is given by
{\em Lagrangian isotopy}.  Two Lagrangian immersions
$\phi_0,\phi_1:L \to X$ are {\em isotopic} if there exists a family
\[ \phi_t:L \to X, t \in [0,1] \]
of Lagrangian immersions connecting them, smooth in the parameter $t$.
The derivative of an isotopy of Lagrangian immersions
$\phi_t: L \to X$ at any time $t \in [0,1]$ may be identified with a
closed one-form on the domain as follows: The quotient
$\phi_t^* TX/TL$ is the normal bundle of the immersion.  Consider the
normal vector field
\[v_t: L \to \phi_t^* TX/TL, \quad v_t(l) = \ddt
\phi_t(l) \ \text{mod} \  D\phi_t(T_l L) .\]
The pairing $\iota(v_t) \phi_t^* \omega$ is well-defined since
$\phi_t$ is Lagrangian.  The derivative
of the pull-back of the symplectic form satisfies
\begin{equation} \label{cartan} \ddt (\phi_t^* \omega) = \d
  \dot{\phi_t} + \iota(v_t) \phi_t^* \d \omega \end{equation}
where 
\[ \dot{\phi_t} :=  
\omega(   D\phi_t ( \cdot), v_t) 
 \in \Omega^1(L) .\]
 This follows locally from the Cartan homotopy formula and the fact
 that in the embedded case
 $ \omega( D\phi_t ( \cdot), v_t) = \phi_t^* \iota(\ti{v}_t) \omega $
 where $\ti{v}_t$ is an extension of $v_t$ to a neighborhood of
 $\phi_t(L)$.  Since $\phi_t^* \omega$ and $\d \omega$ are both zero,
 $\dot{\phi_t}$ is closed for all $t \in [0,1]$. Its cohomology class
 is the {\em deformation class} of the isotopy
\[ \Def(\phi_t) =  [ \dot{\phi_t}] \in H^1(L) .\]
 If $\psi_t:X \to X, t \in [0,1]$ is a smooth family of
 symplectomorphisms then $\psi_t \circ \phi_0: L \to X$ provides an
 isotopy from $\phi_0$ to $\psi_1 \circ \phi_0$.  If
 $\psi_t: X \to X, \ t \in [0,T]$ is a family of Hamiltonian
 symplectomorphisms then we say that the immersions
 $\phi_t = \psi_t \circ \phi_0$ are {\em Hamiltonian isotopic}
 and we write $\phi_0 \sim \phi_1$.  In
 this case, the deformation class is trivial.  The Gromov-Lees
 h-principle \cite{lees}, \cite{gromov} classifies isotopy classes of
 Lagrangian immersions by the homotopy classes of their tangent maps
 $D\phi: TL \to TX$:
\[ \phi_0 \sim \phi_1 \iff D\phi_0 \simeq D \phi_1 .\]  
So in particular, immersions of a circle $S^1$ into the plane $\bR^2$
are classified up to isotopy by the winding numbers of their Gauss
map, while there are two isotopy classes of immersions of a circle
$S^1$ into the two-sphere $S^2$.

To study further Lagrangian isotopies of immersions we recall the
definition of the relative de Rham complex as in Bott-Tu \cite[Section
6]{bt}.  For $k \ge 0$ denote the space of relative forms
\[ \Omega^k(\phi) := \Omega^{k-1}(L) \oplus \Omega^k(X) .\]
Equip $\Omega^k(\phi)$ with the relative de Rham differential the
operator of order $1$ given by
\[ \d :\Omega(\phi) \to \Omega(\phi), \quad (\beta_L, \beta_X) \mapsto
(-\d \beta_L + \phi^* \beta_X, \d \beta_X) .\]
The relative de Rham cohomology is 
\[H(\phi) = \bigoplus_{k=0}^{m(X,L)} H^k(\phi), \quad
H^k(\phi) =  \frac{\ker(\d) \cap \Omega^k(\phi)}{\im(\d) \cap
\Omega^k(\phi)} \]
$m(X,L) = \max(\dim(X),\dim(L))$.  The relative cohomology fits into a
long exact sequence
\[ \ldots \to H^k(\phi) \to H^k(X) \to H^k(L) \to H^{k+1}(\phi) \to \ldots  .\]
Relative cocycles integrate naturally over any relative cycle.  Such a
cycle is a pair of maps from a manifold and its boundary:
\[u: S \to X, \quad \partial u: \partial S \to L, \quad \phi
\circ \partial u(z) = u(z), \forall z \in \partial S .\]
Integration of a cocycle $(\beta_L,\beta_X) \in \Omega^k(\phi)$ where
$k = \dim(S)$ is given by
\begin{equation} \label{relpair}
 \int_u (\beta_L,\beta_X) = \int_S u^*(\beta_X) - \int_{\partial S} (\partial u)^*(\beta_L)
 .\end{equation} 
By Stokes' theorem, the integral \eqref{relpair} depends only on the
cohomology class $[\beta_L,\beta_X] \in H^k(\phi)$.

In particular the relative symplectic form defines a map on relative
second homology of the immersion. Let $\phi: L \to X $ be a Lagrangian
immersion.  The pair $ (0,\omega) \in \Omega^2(\phi)$ is a relative de
Rham cocycle since 
$$ \phi^* \omega = 0, \quad \d \omega = 0 .$$
Denote the unit disk in the complex plane
\[ S = \{ z \in \bC \ | \ |z| \leq 1 \} .\]
A {\em disk with boundary $\phi$} is a pair consisting of a map
$u: S \to X$ and a lift of the restriction to the boundary
$u|_{\partial S}$ to a map to $L$:
\[u: S \to X, \quad  \partial u: \partial S \to L,  \quad  \phi
\circ \partial u = u |_{\partial S} .\]
%
%

The homomorphisms associated to the symplectic and Maslov classes are
denoted as follows.  Given a disk $u: (S,\partial S) \to (X,L)$ the
symplectic area $A(u)$ is the pairing of $[\omega]$ with the class of
$[S,\partial S]$ and induces a map
\[ A: H_2(\phi) \to \R, \quad [u] \mapsto \int_D u^* \omega .\]
The {\em Maslov (or relative Chern) index} measures the winding of the
Lagrangian boundary condition.  
To explain the definition
\eqref{maslovind} let $\dim(X) = 2n$ for some positive integer $n$.
Let $U(n)$ resp. $SO(n)$ denote the space of unitary resp. special
orthogonal $n \times n$ matrices and let
\[ \Lag(\bC^n) \cong U(n)/SO(n) \]
denote the Grassmannian of oriented Lagrangian subspaces of $\bC^n$.
Since the disk $S$ is contractible, there exists a trivialization of
symplectic vector bundles $u^* TX \cong S \times \bC^n$, unique up to
isomorphism.  Let
\[(\partial u)^* TL: \partial S \to \Lag(\bC^n)\]
denote the family of Lagrangian, or equivalently, totally real
subspaces on the boundary of the disk $\partial S$. The determinant
$\det: U(n) \to U(1)$ factors through $U(n)/SO(n)$ and induces a loop
whose winding number is by definition half the Maslov index:
\begin{equation} \label{maslovind} I: H_2(\phi) \to \bZ, \quad [u] \mapsto
  2 [ \det( (\partial u)^* TL)] \in \pi_1(S^1) \cong \bZ .\end{equation}
In \cite[Appendix C]{ms:jh} Robbin shows that the Maslov index for
bundles is the unique invariant that is additive under direct sum,
additive under sewing boundaries and suitably normalized for line
bundles over the disk.  It follows that the Maslov index map
$[u] \mapsto I([u])$ is the relative Chern class of $TX$ (as pointed
out by T. Perutz) and defines a cohomology class denoted
$c_1(\phi) \in H^2(\phi)$.  Alternatively, the explicit differential
form representing the Maslov class \cite{pacini} \label{expl} implies that the
Maslov class is a relative cohomology class.

In the case of isotopies in which the symplectic form varies, the 
deformation of the Lagrangian and symplectic form combine to a 
relative cocycle.  Let 
\[ \omega_t \in \Omega^2(X), t \in [0,1], \quad \text{resp.} \ \phi_t:
L \to X \] 
be a family of symplectic forms resp.  a family of Lagrangian
immersions for $\omega_t$.  The derivative of the pullback symplectic
form satisfies
\[ 0 = \ddt (\phi_t^* \omega_t) = \d \dot{\phi_t} + \phi_t^*
\dot{\omega_t} \]
by an argument similar to that of \eqref{cartan}.  The deformation
class of the isotopy is an element in the relative de Rham cohomology
\begin{equation} \label{cartaneq} \Def(\phi_t) = \left[ \dot{\phi_t},
    -\dot{\omega_t} \right] \in H^2(\phi_t) .\end{equation}
If the class \eqref{cartaneq} vanishes, then the isotopy is an {\em
  exact isotopy}.  The notions of immersed Lagrangian isotopy
resp. exact Lagrangian isotopy $\phi_0 \simeq \phi_T$ are easily shown
to be equivalence relations; concatenation of isotopies can be taken
to be smooth by deforming so that the isotopies are constant near the
starting and ending times $t = 0,T$.

Deformations of Lagrangian isotopies by the Maslov class arise from a
choice of connection one-form on the anti-canonical bundle.  Recall
that the tangent bundle $TX$ to the symplectic manifold $X$ has a
complex structure well-defined up to isotopy.  The top exterior power
of the dual $T^\dual X$ of $TX$
\[K^{-1} = \Lambda^{\top}_\C(T^\dual X) \]
is the {\em anticanonical bundle}.  By definition the first Chern
class of $K^{-1}$ is the first Chern class of $X$.  Assume that $L$ is
oriented by a section of the orientation bundle
\[ o_L :L \to \Lambda^{\top}_\R(T^\dual L ).\]
Since $TL$ is a totally real sub-bundle of $\phi^* TX $ one has an
isomorphism 
\[ \phi^* TX \cong TL \otimes_\bR |C .\]
This isomorphism of bundles induces an isomorphism of top exterior
powers
\begin{equation} \label{topext}
 \ \Lambda^{\top}_\R(T^\dual L) \otimes_\bR \C
\to \Lambda^{\top}_\C( \phi^* T^\dual X) .\end{equation}
In this way the orientation $o_L$ on $L$ induces a trivialization
\[ \phi^* K^{-1} \to L \times \bC \]  
of the pull-back of the anticanonical bundle $K^{-1}$.  Let $\alpha_X$
be a connection one-form on $K^{-1}$.  By definition, 
\[ \alpha_X \in \Omega(K^{-1}_1)^{S^1},  \quad \alpha_X( \partial/\partial
\theta) = 1 \] 
is an $S^1$-invariant one-form on the unit circle bundle
$K^{-1}_1 \subset K^{-1}$ with the property that \label{that} the contraction with the
rotational vector field $\partial/\partial \theta \in \Vect(K^{-1})$
is $1$.  The curvature is defined by
\begin{equation} \label{curva}
\curv(\alpha_X) \in \Omega^2(X), \quad  \pi^* \curv(\alpha_X) = \d
\alpha_X\end{equation} 
where $\pi:K^{-1}_1 \to X$ is the projection.

On the other hand, the given trivialization of $\phi^* K^{-1}$ defines
a flat connection \label{flatcon} we denote $\alpha_L$.  The difference between the two
connection one-forms $\phi^* \alpha_X$ and $\alpha_L$ is a one-form on
the base that we write $\phi^* \alpha_X - \alpha_L \in \Omega^1(L)$.
The derivative of the difference one-form is the difference between
the curvatures, which since $\alpha_L$ has trivial curvature is the
pull-back of the curvature on $X$:
\[ \d( \phi^* \alpha_X - \alpha_L) = \phi^* (\curv(\alpha_X)) .\]
We include the following result of Cieliebak-Goldstein \cite{cg:note},
see also Lotay-Pacini \cite[Section 7]{lotay:coupled} for convenience.

\begin{lemma}
 For any Lagrangian immersion $\phi: L \to X$, the pair 
\begin{equation} \label{thepair}
( \phi^* \alpha_X - \alpha_L, \curv(K^{-1},\alpha_X)) \in 
  \Omega^2(\phi)\end{equation}
is a relative cocycle representing the Maslov class.
\end{lemma}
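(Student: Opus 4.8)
The plan is to verify the two assertions separately: that the pair \eqref{thepair} is closed for the relative de Rham differential on $\Omega(\phi)$, and that the resulting class in $H^2(\phi)$ equals the Maslov class $\mu(\phi)$.

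\emph{Closedness.} Abbreviate $\gamma := \phi^*\alpha_X - \alpha_L \in \Omega^1(L)$ and $F := \curv(K^{-1},\alpha_X) \in \Omega^2(X)$. By the definition of the relative differential, $\d(\gamma,F) = (-\d\gamma + \phi^* F,\ \d F)$. The first slot vanishes by the curvature identity $\d(\phi^*\alpha_X - \alpha_L) = \phi^*\curv(K^{-1})$ recorded just before the statement; for the second, apply $\d$ to \eqref{curva}: since $\pi^*\d F = \d\d\alpha_X = 0$ and $\pi$ is a submersion, $\d F = 0$. So $(\gamma,F)$ is a relative $2$-cocycle, and this step is purely formal.

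\emph{Identification of the class.} Recall from the discussion around \eqref{maslovind} that $\mu(\phi)$ is by definition the relative first Chern class of $TX$ with the framing over $L$ supplied by $o_L$; since $c_1(K^{-1}) = c_1\bigl(\Lambda^{\top}_\C T^\dual X\bigr) = c_1(X)$, and since $o_L$ induces, through the totally real isomorphism $\phi^*TX \cong TL\otimes_\R\C$, precisely the trivialization of $\phi^*K^{-1}$ against which the determinant loop in \eqref{maslovind} is measured, this is the same as the relative first Chern class of $K^{-1}$ with the trivialization recorded by $\alpha_L$. The pair \eqref{thepair} is exactly the relative Chern--Weil representative of that class, built from the connection $\alpha_X$ on $K^{-1}$ and the flat connection $\alpha_L$ coming from the framing; so the two classes agree. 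To make this concrete I would compute the period of \eqref{thepair} over an arbitrary disk $u:(S,\partial S)\to(X,L)$ and match it with $I([u])$: over the contractible $S$ take the unit section $\sigma$ of $u^*(K^{-1}_1)$ induced by a symplectic trivialization $u^*TX\cong S\times\C^n$, write the connection $\alpha_X$ in this frame as a real $1$-form $a$ with $\d a = u^*\curv(K^{-1})$, and over $\partial S$ let $\tau$ be the framing section, so $\tau = e^{ig}\sigma$ for a real function $g$ along $\partial S$ with $\tfrac{1}{2\pi}\int_{\partial S}\d g = I([u])$ by definition. Since $\alpha_L$ is flat in the frame $\tau$, the change-of-frame formula gives $(\partial u)^*\gamma = a|_{\partial S} + \d g$, and \eqref{relpair} then yields
\[ \int_u(\gamma,F) \;=\; \int_S\d a \;-\; \int_{\partial S}\bigl(a+\d g\bigr) \;=\; -\int_{\partial S}\d g, \]
which equals $I([u])$ up to sign and the universal constant relating $\curv(K^{-1})$ to a representative of $c_1$. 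Since $\mu(\phi)$ and the class of \eqref{thepair} are both relative first Chern classes of the same framed line bundle $K^{-1}$, and any two relative cocycles representing the same relative Chern class are cohomologous, they coincide. (This reproduces the argument of Cieliebak--Goldstein \cite{cg:note}.)

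\emph{Main obstacle.} The first step is automatic; all the content is in the second, and the genuine work there is bookkeeping: checking that the $o_L$-trivialization of $\phi^*K^{-1}$ is literally the one against which $\det\bigl((\partial u)^*TL\bigr)$ in \eqref{maslovind} is read off, and then pinning down the normalization and orientation conventions ($2\pi$ factors, orientation of $\partial S$, sign of the relative differential) so that the period comes out to $+I([u])$ rather than its negative --- a sign ambiguity of exactly the kind already flagged in the introduction. Alternatively one could bypass the period computation by invoking Robbin's characterization of the Maslov index \cite[Appendix C]{ms:jh} (additivity under direct sum, additivity under sewing, normalization on line bundles over the disk) and checking those three properties for \eqref{thepair} directly.
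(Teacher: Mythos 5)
Your proof is correct and follows essentially the same route as the paper's: closedness is an immediate consequence of the curvature identity $\d(\phi^*\alpha_X - \alpha_L) = \phi^*\curv(K^{-1})$, and the identification with the Maslov class is obtained by pairing with a relative $2$-cycle $u:(S,\partial S)\to(X,L)$ and applying Stokes' theorem, comparing the connection induced by a trivialization of $u^*K^{-1}$ over $S$ with the flat connection $\alpha_L$ induced by the framing over $L$. The paper phrases the period computation by adding and subtracting $\phi^*\alpha_X$ in the integrand $\int_{\partial S}(\partial u)^*\alpha_L - \iota^*\alpha_S$ rather than writing out the frame change $\tau = e^{ig}\sigma$ explicitly, but this is the same calculation, and the paper is likewise informal about the sign/normalization you flag.
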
 

\begin{proof} The cocycle property is a consequence of the definition
  of the curvature \eqref{curva} and flatness of $\alpha_L$.  We show
  that the pair \eqref{thepair} represents the Maslov class.  Let
  $u: (S, \partial S) \to (X,L)$ be a relative cycle.  A
  trivialization $u^* K^{-1} \cong S \times \C$ induces a connection
  one-form denoted $\alpha_S$ on $u^* K^{-1}$.  The Maslov index
  $I(u)$ is the integral of the difference of this connection with the
  connection one-form $(\partial u)^*\alpha_L$ induced by the
  trivialization $(\partial u)^* K^{-1} \cong \partial S \times \C$:
  Let $\iota: \partial S \to S$ denote the inclusion.  The difference
  between any two connection one-forms
  $(\partial u)^*\alpha_L,\iota^* \alpha_S$ on $(\partial u)^* K^{-1}$
  is the pull-back of a one-form on the base $\partial S \cong S^1$,
  which we write (abusing notation) as $\alpha_L - \iota^* \alpha_S $.
  The Maslov index is 
\begin{eqnarray*} 
  \int_{\partial S} (\partial u)^*\alpha_L -  \iota^* \alpha_S &=&
                                                                   \int_{\partial
                                                                   S}
                                                                   (\partial
                                                                   u)^*\alpha_L
                                                                   -
                                                                   (\partial
                                                                   u)^*
                                                                   \phi^*
                                                                   \alpha_X
                                                                   +
                                                                   \int_{\partial
                                                                   S}
                                                                   (\partial
                                                                   u)^*
                                                                   \phi^*
                                                                   \alpha_X
                                                                   -
                                                                   \iota^*
                                                                   \alpha_S
  \\  &=&  -  \int_{\partial S} ((\partial u)^*   \phi^* \alpha_X -
          (\partial u)^*\alpha_L ) +  \int_S \curv(K^{-1},\alpha_X) \end{eqnarray*} 
        using Stokes' theorem and the fact that the curvature of
        $\alpha_S$ vanishes.  This gives a special case of the pairing
        in \eqref{relpair}. \end{proof}

                                               Maslov flows are
                                               isotopies of
                                               Lagrangians given by
                                               the above relative
                                               cocycle.  Let
                                               $\omega_t$ be \label{isbe} a family
                                               of symplectic forms and
                                               $\alpha_t$
                                               be \label{letbe} a family of
                                               connections on $K^{-1}$
                                               satisfying
\[ \dot{\omega_t} = \curv(K^{-1}, \alpha_t) .\] 
Consider a family of immersions $\phi_t: L \to X$ satisfying
\begin{equation} \label{maslovflow} \dot{\phi_t} := \phi_t^* \omega_t \left(v_t,\cdot \right) =
  \phi^* \alpha_t  - \alpha_L \end{equation}
where 
\[ v_t: L \to \phi_t^* TX/TL \]
is the vector field determined by $\ddt \phi_t$.  The family $\phi_t$
is Lagrangian if $\phi_0$ is.  In order to give ourselves a bit more
freedom, we allow ourselves a finite number of Hamiltonian isotopies
in addition to the flow:

\begin{definition} \label{def:maslovflow} A {\em Maslov flow} for a family of symplectic
  forms $\omega_t, t \in (0,T)$ is a family of Lagrangian immersions
\[ \phi_{t,i}: L \to X, t \in [t_i,t_{i+1}] \] 
satisfying \eqref{maslovflow}, except for a finite number of times
\[ t_1,\ldots,t_k \in (0,T) \] 
for which $\phi_{t_i,i}$ and $\phi_{t_i,i+1}$ are Hamiltonian
isotopic: 
\[ \phi_{t_i,i+1} = \phi_{t_i,i} \circ \psi_{H} \] 
where $H: [0,1] \times X$ is a time-dependent smooth function and
$\psi_H: X \to X$ its Hamiltonian flow.
  \end{definition}

\begin{example} 
\begin{enumerate} 
\item {\rm (Mean-curvature flow)} Suppose that the symplectic manifold
  $(X,\omega)$ is equipped with a K\"ahler structure $J$, that is, an
  integrable almost complex structure $J: TX \to TX$ so that
  $\omega( \cdot, J \cdot)$ is a Riemannian metric.  The {\em mean
    curvature one-form} is the unique one-form
\[ H_\phi \in \Omega^1(L) \] 
satisfying the following: Given a closed one-form
$\beta \in \Omega^1(L)$, let $\phi_t: L \to X, t \in (-\eps,\eps)$ be
a variation of $\phi:L \to X$ corresponding to the one-form $\beta$
and denote by $\Vol(L,\phi_t) > 0 $ its volume with respect to the
metric induced by pull-back of the K\"ahler metric on $X$.  Then
  \[ \dds \Vol(L,\phi_t) = \int_L \lan \beta, H_\phi \ran \d \Vol_L \]
  where $\lan \cdot, \cdot \ran$ \label{doubledot} is the pairing between one-forms induced by
  the metric.  In particular an immersion $\phi: L \to X$ represents a
  critical point of the volume if and only if the mean curvature one-form
  vanishes.  The mean curvature one form $H_\phi$ is not closed but
  rather its derivative is the Ricci curvature
  $R_\omega \in \Omega^2(X)$:
\[ \d H_\phi = \phi^* R_\omega \]
by an observation of Smoczyk \cite[Section 1.7]{smoczyk} using the
{\em traced Codazzi equation}.  

Several other papers comment on the resulting coupled flow.  By
Cieliebak-Goldstein \cite{cg:note}, Lotay-Pacini \cite[Proposition
4.3]{lotay:coupled}, with respect to the trivialization of the
anticanonical bundle over the Lagrangian, the mean curvature one-form
is related to the difference of connections by 
\[ H_\phi = ( -1/\pi ) (\phi^* \alpha_X - \alpha_L) \in
\Omega^1(L). \]
Hence any coupled mean-curvature/K\"ahler-Ricci flow is a reverse
Maslov flow up to a rescaling of the time parameter by $1/\pi$.  The
same paper \cite[Section 7]{lotay:coupled} proves short-time existence
of the coupled flow.
\item \label{unions} {\rm (Unions)} Let $\phi_{t,0}: L_0 \to X$ and
  $\phi_{t,1}: L_1 \to X$ be Maslov flows. Then the disjoint union
\[  \phi_{t,0} \sqcup \phi_{t,1}: L_0 \sqcup L_1 \to X \]  
is also a Maslov flow.  Indeed, the mean curvature on the union
restricts to $H_{\phi_{t,k}}$ on $L_k, k \in \{ 0, 1 \}$.
\item \label{products} {\rm (Products)} Let $\phi_{t,0}: L_0 \to X_0$
  and $\phi_{t,1}: L_1 \to X_1$ be Maslov flows.  Then the product
\[ \phi_{t,0} \times \phi_{t,1}: L_0 \times L_1 \to X_0 \times X_1 \] 
is also a Maslov flow.  Indeed, the mean curvature on the product
$H_{\phi_{t,0} \times \phi_{t,1}} \in \Omega^1(L_0 \times L_1)$ is the
sum of pull-backs of $H_{\phi_{t,k}}$ on $L_k, k \in \{ 0, 1 \}$.
\end{enumerate} 
\end{example} 

The following are elementary examples of Maslov flow:

\begin{example} \label{elem}
\begin{enumerate}
\item {\rm (Linear case)} Let $X = \bR^{2n}$ and $L = \bR^n$ with
  immersion $l \to (l,0)$.  Then the constant isotopy
\[  \phi_s: L \to X, \ q \mapsto (q,0) \]  
is a Maslov flow.  Indeed the standard trivialization
$K^{-1} = X\times \C$ agrees with the trivialization
$\phi_s^* K^{-1} \cong L \times \C$, and so the connection one-form
$\alpha_s \in \Omega^1(K^{-1}_1)$ (and the mean curvature) vanishes.
\item {\rm (Circle case)}  \label{circlecase}
Consider the embedding 
  of the circle of radius $1$ 
\[ \phi: L =  \{ q^2 + p^2 = 1 \} \to \bR^2 =X \] 
Let $\theta: L \to S^1$ denote the angular coordinate.  The
trivialization of $\phi^* K^{-1}$ induced by the Lagrangian is that
induced by the section $e^{i \theta} \partial_x $ and so is related to
the standard trivialization by multiplication by $e^{ i \theta}$.
Thus the connection one-form of the connection with respect to the
trivialization of $\phi^* K^{-1}$ is
  \[\phi^* \alpha_X - \alpha_L = \d \theta \in \Omega^1(L) .\]
  Writing the standard symplectic form $\omega = \d q \wedge \d p $ on
  $X$ as $\omega = r \d r \wedge \d \theta$, the corresponding vector
  field is the outward normal $v_t = \partial_r/ r \in \Vect(L)$
  (corresponding to the fact that the mean curvature $H_\phi$ of a
  circle $\phi : S^1 \to \bR^2$ is inversely proportional to its
  radius).  Thus a Maslov flow $\phi_t: L \to X$ is given by outward
  flow at speed $1 / r$ with $r = 1$ at $t = 0$:
  \[ \phi_t: L \to X, \quad (q,p) \mapsto (2t +1)^{1/2} (q,p) .\]
  Thus in particular the area enclosed by the immersion is
  $\pi (1 + 2t)$ and increases linearly with $t$.
\item \label{lm} {\rm (Local model for self-tangencies)} Let $L$
  denote the union of $S^1 \times i\bR^{n-1}$ and
  $\{1 \} \times \bR^{n-1}$.  By the disjoint union and product axioms
  \eqref{unions} and \eqref{products} above, a Maslov flow is given by
\[ \phi_t(z_1,\ldots,z_n) = \begin{cases} 
  (  (1 + 2t)^{1/2} z_1,z_2,\ldots, z_n) & z_1 \in S^1 \\
  (z_1,z_2,\ldots, z_n) & z_1 \in  1 + i \bR \end{cases} .\]
 \end{enumerate}
\end{example} 

In order to obtain the invariance of Floer cohomology we will assume
that our flow is in some sense generic.  

\begin{definition} 
  By a {\em generic} Maslov flow we mean a flow $\phi_t$ defined by a
  generic choice of connection one forms $\alpha_t$ with respect to
  the $C^\infty$ topology.  For $H \in C^l([0,T] \times X)$ a change
  in the connection one-form $\alpha_t$ on $K^{-1}$ by the pull-back
  of $\d H_t$ changes the Maslov flow equation to
\begin{equation} \label{equivto}
 \dot{\phi_t} = \alpha_t - \phi_t^* \alpha_L + \phi_t^* \d H_t .\end{equation}
Thus a generic change in the connection one-form is essentially the
same as a generic Hamiltonian perturbation.  \end{definition}

The following is standard from properties of generic homotopies of
functions in Cerf \cite{ce:st}, see also Sullivan
\cite[3.12]{sullivan}.

\begin{theorem} \label{sardsmale} Let $\phi_t: L \to X, t \in [0,1] $
  be a Maslov flow such that $\phi_0$ has only transverse
  self-intersections.  There exists an open dense set of Hamiltonian
  perturbations $C^\infty([0,T] \times X)^{\reg}$ with the property
  that for $H \in C^\infty([0,T] \times X)^{\reg}$, the $H$-perturbed
  flows $ \phi_t: L \to X, t \in [0,1]$ have the property that the
  self-intersections
  $(\phi_t \times \phi_t)^{-1}(\Delta_L) -\Delta_L $ of $\phi_t$ are
  transverse except for all but finitely many values of $t$ for which
  there is a single self-intersection
  $(x_-,x_+) \in L, \phi_t(x_-) = \phi_t(x_+)$ that has a
  non-degenerate quadratic tangency.
\end{theorem}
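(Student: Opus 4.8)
The statement is a parametric transversality result in the spirit of Cerf theory \cite{ce:st}, \cite{sullivan}, and the plan is to build a universal moduli space of self-intersections over the Banach manifold of Hamiltonian perturbations, show it is a manifold by checking that such perturbations act on the relevant jets of $\phi_t$ with enough freedom, and then combine the Sard--Smale theorem with the standard stratification of the space of pairs of Lagrangian subspaces.

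Concretely, I would fix an integer $l \gg 0$, set $\cH = C^l([0,T] \times X)$, and for $H \in \cH$ let $\phi_t^H: L \to X$ denote the flow obtained by solving the perturbed equation \eqref{equivto}; by uniqueness of solutions to ordinary differential equations $\phi_t^0 = \phi_t$ and $H \mapsto \phi^H$ is smooth. The first step is to study the evaluation map
\[ \Psi: \big( (L \times L) \ssm \Delta_L \big) \times (0,T) \times \cH \to X \times X, \quad (x_-,x_+,t,H) \mapsto (\phi^H_t(x_-), \phi^H_t(x_+)), \]
whose fibre over the diagonal $\Delta_X \subset X \times X$, at a fixed $H$, is the self-intersection locus of $(\phi^H_t)_t$. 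I would show $\Psi$ is transverse to $\Delta_X$: at a configuration with $\phi^H_t(x_-) \ne \phi^H_t(x_+)$, choosing $H$ supported near $\phi_t(x_-)$ resp.\ near $\phi_t(x_+)$ at time $t$, equation \eqref{equivto} shows that the derivative of $\Psi$ in the $\cH$-direction already surjects onto $T(X\times X)$, so $\Psi$ is a submersion at such points. Hence $\cM^{\univ} := \Psi^{-1}(\Delta_X)$ is a Banach manifold and the projection $\cM^{\univ} \to \cH$ is Fredholm of index $1$.

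Next I would record tangent data. Trivializing $\phi_t^* TX$ locally, a self-intersection $(x_-,x_+,t)$ carries the pair $\Lambda_\pm = D\phi^H_t(T_{x_\pm} L) \in \Lag(\C^n)$; stratify $\Lag(\C^n) \times \Lag(\C^n)$ by $\Sigma_j = \{ (\Lambda_-,\Lambda_+) : \dim_\R(\Lambda_- \cap \Lambda_+) = j \}$, where each $\Sigma_j$ is a submanifold of codimension $j(j+1)/2$, so $\codim \Sigma_1 = 1$ and $\codim \Sigma_j \ge 3$ for $j \ge 2$. Enlarging $\Psi$ to also record $(\Lambda_-,\Lambda_+)$ together with its $2$-jet in $t$, the same freedom in $H$ — now also using that $\phi^H_t$ may be post-composed with an arbitrary Hamiltonian diffeomorphism near $\phi_t(x_\pm)$, cf.\ \eqref{equivto} — shows that the enlarged universal map is transverse to $\Sigma_1$ and to the discriminant locus of the $2$-jet. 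By Sard--Smale there is a residual, hence dense, set of $H \in \cH$ for which the self-intersection locus $\cM_H$ is a $1$-manifold whose Gauss map to $\Lag(\C^n)\times\Lag(\C^n)$ is transverse to every stratum $\Sigma_j$ and whose associated $2$-jet is non-degenerate. Since $1 < \codim \Sigma_j$ for $j \ge 2$, transversality forces $\cM_H$ to miss $\Sigma_j$ for $j\ge2$ and to meet $\Sigma_1$ in isolated points; compactness of $L \times [0,T]$ makes this set finite, yielding the times $T_1,\dots,T_k$, and after a further generic perturbation distinct crossings occur at distinct times, so exactly one self-intersection is non-transverse at each $T_i$.

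Finally, a transverse crossing of $\Sigma_1$ at $T_i$ means $\dim_\R(\Lambda_-\cap\Lambda_+)=1$ there and that this intersection is born or dies to first order in $t$; together with the $2$-jet non-degeneracy this is exactly the local picture of Example \ref{lm}, i.e.\ a non-degenerate quadratic tangency, so $C^\infty([0,T]\times X)^{\reg}$ at finite level $C^l$ can be taken to be the set of such $H$, which is $C^l$-open since each of the finitely many defining conditions is open. The passage to $C^\infty([0,T]\times X)^{\reg}$ is then the standard Taubes argument of intersecting the dense open sets over all $l$ (or using density of $C^\infty$ in $C^l$). The hard part will be the submersivity claims above — showing that Hamiltonian perturbations, fed through the flow ODE \eqref{maslovflow}, \eqref{equivto}, reach all of the $1$- and $2$-jet of $\phi^H_t$ at the self-intersection points simultaneously; this is precisely where one uses that $H$ is a free function of both space and time and that post-composition with Hamiltonian isotopies is unconstrained.
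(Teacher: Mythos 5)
Your proposal is correct and follows essentially the same Sard--Smale strategy as the paper's proof, but the bookkeeping is organized differently in a way worth noting. The paper introduces three separate universal moduli spaces --- $\M^{\univ,\on{si}}$ for self-intersections, $\M^{\univ,t}$ for tangencies, and $\M^{\univ,\on{sst}}$ for simultaneous self-tangencies --- and applies Sard--Smale to each projection in turn: the first rules out non-transverse self-intersections except at finitely many times, the second forces the tangency to be one-dimensional and quadratic, and the third rules out two tangencies at the same time. You instead package all of this into a single evaluation map $\Psi$ whose target you stratify via the Gauss data $(\Lambda_-,\Lambda_+) \in \Lag(\C^n)^2$ by intersection dimension, and then appeal to the fact that $\codim \Sigma_j \ge 3$ for $j\ge 2$ to rule out deeper degeneracies; the quadratic non-degeneracy is encoded as transversality of the $2$-jet. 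This stratification viewpoint is slightly cleaner and makes the dimension counts more visible, at the cost of having to spell out the jet-prolongation of $\Psi$. Both routes rest on the same key submersivity fact, which you correctly flag as the crux: Hamiltonian perturbations of the Maslov flow via \eqref{equivto} act with full freedom on the $0$-, $1$- and $2$-jet of $\phi_t^H$ at the intersection points. The paper establishes this via a scaling argument (a time-concentrated bump $\rho(\eps t)\d H$ limiting to post-composition by the Hamiltonian flow of $H$), which you should adopt or reproduce, since a bald assertion of submersivity is exactly where the argument could otherwise go soft. You also add the Taubes intersection argument for passing from $C^l$ to $C^\infty$, which the paper omits but is standard.
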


\begin{proof} The proof is a Sard-Smale argument applied to a
  universal moduli space of intersection points.  Let
  $C^l([0,T] \times X)$ denote the space of time-dependent functions of
  class $C^l$.  Let
 \[ \Lag(L,X)_l := \Set{ \phi \in \Map([0,T] \times L,
    X)_l  | \begin{array}{l}  \ker(D \phi) = \{0 \}, \phi_0 = \phi(0)
              \\ \phi^* \omega =
    0 \ \forall t \in [0,T] \end{array} }  \]
denote the space of time-dependent Lagrangian immersions from $L$ to
$X$ of class $C^l$.  Consider the {\em universal space of
  self-intersections}
\begin{equation} \label{univsi}
\M^{\univ,si}
 := \left\{ \begin{array}{ll}  (x_1,x_2,t_0,H,\phi) \in  L \times L \times [0,1] \times C^l([0,T] \times X)
             \times \Lag(L, X)_l  \\ 
             \dot{\phi_t} = \alpha_t + \d \phi_t^* H_t, \   \quad \phi_{t_0}(x_1)
             = \phi_{t_0}(x_2) \end{array} 
         \right\}. \end{equation} 
       The universal moduli space \eqref{univsi} is a smooth Banach
       manifold by an application of the implicit function theorem, as
       follows.  First, variations in $H_t$ near $t =0 , x = x_1$ span
       the space of normal vector fields to $\phi_0$ near $x_1$.
       Indeed, let $H \in C^\infty(X)$ and $\varkappa \in C^\infty([0,T])$
       supported and equal to $1$ near $0$ a cutoff function.  The
       Maslov flow $\phi_t': L \to X$ defined by the connection
       \[ \alpha + \pi^* (1/\eps) \d H( \eps t) \in
       \Omega^1(K^{-1}_1) \]
       converges as $\eps \to 0$ to the composition of the restriction
       to $L$ of the Hamiltonian flow $\psi_t: X \to X$ of $H$ with
       the original Maslov flow $\phi_t:L \to X$.  It follows that the
       linearization of the condition
       $\phi_{t_0}(x_1) = \phi_{t_0}(x_2)$ is surjective.

       From the universal moduli space we obtain a comeager set of
       regular values for the projection.  By the Sard-Smale theorem
       for $l \ge 1$ the projection 
\[ \pi : \M^{\univ,si} \to  C^l([0,T] \times X) \] 
has a comeager set of regular values $C^l([0,T] \times X)^{\on{reg}}$.
For such regular $H \in C^l([0,T] \times X)^{\on{reg}}$, the
projection of $\pi^{-1}(H)$ onto the fiber $[0,1]$ has open and
comeager set of regular values $[0,1]^{\reg}$.  Since $L$ is compact,
the set of irregular values $[0,1] \backslash [0,1]^{\reg}$ is
discrete, hence finite.  Thus for regular $H$ the set of times $t$ for
which the intersection $L \times_{\phi_t} L$ is not transverse is
finite.  Since the space of smooth elements of this comeager set is
also open and dense, the claim follows.

         Next we show that for generic Hamiltonian perturbations the
         non-transverse intersections are non-degenerate.  Consider
         the {\em universal space of tangencies}
         \[
         \M^{\univ,t} :=
 \left\{ \begin{array}{ll} (v_1,v_2,H,\phi) \in TL \times
             TL \times [0,1] \times C^l([0,T] \times X) \times \Lag(L,
             X)_l |
                      \\
                      \dot{\phi_t} = \alpha_t + \d \phi_t^* H_t, \ D
                      \phi_{t_0}( v_1 ) = D\phi_{t_0}(v_2) \neq 0
 \end{array} \right\} \]
is cut out transversally, since the linearization of $ D \phi_t$ is
the second derivative of $\phi_t$ and a Hamiltonian variation at
$x_1,x_2$ at time $0$ produces an arbitrary variation this derivative
at time $t_0$.  Let $C^l([0,T] \times X)^{\on{\reg},'}$ denote the set
of regular values for the projection $\pi$ onto $ C^l([0,T] \times X)$
For $H \in C^l([0,T] \times X)^{\on{\reg},'}$ the intersection
$D_{t} \phi( T_{x_1} L ) \cap D_t \phi(T_{x_2} L)$ is trivial except
for finitely many times $t = t_1,\ldots, t_k$ at which the
intersection is one-dimensional.  So there is a point of tangency
$(x_{i,-},x_{i,+}) \in L \times_{\phi_{t_i}} L$ and for these
intersections the tangency is quadratic.

         A final argument using a {\em universal space of
           simultaneous self-tangencies}
\[ \M^{\univ,sst} :=  \left\{ \begin{array}{l} (v_1,v_2,v_3,v_4,t,H,\phi) \in  
TL^4  \times [0,1] \times C^l([0,T] \times X)
             \times \Lag( L, X)_l | \\
             \dot{\phi_t} = \alpha_t + \d \phi_t^* H_t,  
             D\phi_t(v_1) = D\phi_t(v_2), \ D\phi_t(v_3) =
             D\phi_t(v_4) \\ \{ \bR v_1,\bR v_2, \bR v_3, \bR v_4 \}  \ 
             \text{distinct, non-zero}
                           \end{array} \right\} \]
            guarantees that at most one tangency occurs at each
            non-transverse time. 
          \end{proof}

\begin{definition} 
\begin{enumerate} 
\item {\rm (Self-tangency moves)} A Maslov flow $\phi_t: L \to X$
  undergoes {\em a self-tangency move} at $p \in X$ and time $t = 0$
  if the following holds: There exist Darboux coordinates
  $(x_1,y_1,\ldots,x_n,y_n)$ on a contractible open neighborhood $U$
  of $p$ such that the intersection $U \cap \phi_s(L)$ is the union of
  Lagrangians $L_1$ and $L_2(t)$ where
\begin{equation} \label{localmodel} L_1 = L_1' \times L_1'', \quad
  L_2(t) = L_2'(t) \times L_2'' \end{equation}
for some Lagrangians 
\[ L_1', L_2'(t) \subset \C, \quad L_1'', L_2'' \subset \bC^{n-1} \]
where 
\[ L_1' \subset \{ y_1 = 0 \}, \quad L_2'(t) \subset \{ x_1^2 + (y_1 -
1)^2 = (1 + 2t)^2 \} \] 
are subarcs of the straight line and the circle in the $z_1$ plane,
respectively, and the factors $L_1''$ and $L_2''$ are linear
Lagrangians in $\bC^{n-1} = \{ z_1 = 0 \}$ intersecting transversally
at $0$.  The mean curvature to a circle $L_2(t)'$ is the outward
normal while the mean curvatures for the linear spaces are trivial.
Hence the isotopy $\phi_t$ is a Maslov flow as in Example \ref{elem}.
\item {\rm (Admissible flows)} Let $\phi_t: L \to X$ be a Maslov flow
  of Lagrangian immersions with self-tangency instants
  $t_1,\ldots, t_k \in (0,T)$ at points $x_1,\ldots, x_k \in X$. Say
  that $\phi_t:L \to X, s \in [0,T]$ is an {\em admissible} family of
  immersions if there exist small disjoint intervals
  $(t_j - \delta, t_j + \delta) \subset [0,T]$ such that all
  self-tangency instants $t_j$ are standard in the sense that $\phi_t$
  near $t_j$ is given by the local model \eqref{localmodel}, and in
  addition any self-tangency occurs at a rational time
  $t \in (0,T) \cap \bQ$.
\end{enumerate}
\end{definition} 

The rationality of the time will be used in Section
\ref{bdeath} \label{bdeathlab}
to
define the Floer theory in a neighborhood of the singular time.

\begin{lemma} \label{admissible} {\rm (Similar to Lemma 3.6 of
    \cite{ees})} Let $\phi_t:L \to X, t \in [t_0,t_1]$ be a Maslov
  flow of Lagrangian immersions connecting $\phi_{t_0}$ to
  $\phi_{t_1}$.  Then for any $\eps$ there is an admissible Maslov
  flow $\phi_t'$ of Lagrangian immersions $\eps$-close to $\phi_t$
  connecting $\phi_{t_0}$ to $\phi_{t_1}$.
\end{lemma}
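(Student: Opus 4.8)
The plan is to combine the genericity statement of Theorem~\ref{sardsmale} with a local normal form near each self-tangency, paralleling \cite[Lemma~3.6]{ees}. First I would decompose the given Maslov flow into the finitely many subintervals on which \eqref{maslovflow} holds together with the interpolating Hamiltonian isotopies, and apply Theorem~\ref{sardsmale} on each subinterval whose left-hand endpoint immersion is self-transverse (which, for the applications of interest, includes $\phi_{T_0}$ and $\phi_{T_1}$). This produces, after a $C^\infty$-small and interior-supported Hamiltonian perturbation --- which leaves $\phi_{T_0}$ fixed and moves $\phi_{T_1}$ only slightly --- a flow whose self-intersections $(\phi_t\times\phi_t)^{-1}(\Delta_L) - \Delta_L$ are transverse for all but finitely many instants $t_1 < \cdots < t_k$, at each of which a single self-intersection pair develops a non-degenerate quadratic tangency, with no two tangencies simultaneous. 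I would then insert short Hamiltonian isotopies at the subinterval junctions to absorb the small mismatch between consecutive perturbed pieces, and one more short Hamiltonian isotopy at the right end to restore $\phi_{T_1}$; this is harmless since the definition of Maslov flow already permits finitely many Hamiltonian isotopies.

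The core step is to put the flow into the standard local model \eqref{localmodel} on a short interval around each $t_j$. Near the tangency point $p$, for $t$ near $t_j$, the image $\phi_t(L)$ is the union of two embedded Lagrangian sheets; in a Weinstein chart in which both are graphs of differentials of generating functions $F_1^t, F_2^t$ near the origin, self-intersections near $p$ correspond to critical points of $G^t = F_1^t - F_2^t$, and non-degeneracy of the quadratic tangency says exactly that $G^{t_j}$ has at the origin a corank-one critical point with non-vanishing cubic term along the kernel direction, transversally unfolded by the family $(G^t)_t$. By the versal deformation of the fold ($A_2$) singularity from Cerf theory \cite{ce:st}, a $t$-dependent local change of base coordinates, composed with a rescaling normalizing the second-order data and a reparametrization of $t$, brings $G^t$ to the form $a(t) x_1 - \tfrac{1}{6} x_1^3 + \tfrac{1}{2}\langle Q x', x'\rangle$ with $a(t_j) = 0$, $a'(t_j) \neq 0$, and $Q$ a fixed non-degenerate symmetric matrix; comparison with the elementary computation for the circular arc $\{x_1^2 + (y_1-1)^2 = (1 + 2(t-c_j))^2\}$ meeting $\{y_1 = 0\}$ shows this is, up to higher-order terms, the generating-function description of \eqref{localmodel} with $L_1$ the zero section and $L_2(t)$ a circular arc times a linear Lagrangian. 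Realizing the base change and rescaling as a compactly supported symplectomorphism of the chart (via Moser) extended to a compactly supported Hamiltonian isotopy of $X$, and using the flexibility \eqref{equivto} in the choice of connection one-form together with two Hamiltonian isotopies at the endpoints of $(t_j - \delta, t_j + \delta)$, I would modify the flow there to be exactly the local model --- itself a Maslov flow by Example~\ref{elem}(\ref{lm}). Since the tangency of that model occurs at $t = c_j$, which may be taken to be any rational in $(t_j - \delta/2, t_j + \delta/2)$ (the quantitative mismatch at the gluing times being absorbed into the inserted isotopies), every self-tangency can be made to occur at a rational time, and the total modification can be kept $\eps$-small.

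The main obstacle is this second step, and in particular two points within it. First, the coordinate changes supplied by singularity theory need not be $C^0$-small, so the required $\eps$-closeness of $\phi_t'$ to $\phi_t$ must be checked after fixing the (a priori large) Weinstein chart and time reparametrization, using that the residual discrepancy between the actual flow and the model is of higher order in $x$ and in $t - t_j$ and hence as small as desired on a sufficiently small neighborhood of $p$ and time interval. Second, one must verify that the modified family is still a genuine Maslov flow: near $p$ it is the model of Example~\ref{elem}(\ref{lm}), away from $p$ it moves only by $O(\delta)$ on the short interval, and in between one must interpolate through a family of connection one-forms on $K^{-1}$ inducing the prescribed flow --- this is precisely where the freedom in $\alpha_t$ encoded in \eqref{equivto} is used, and where a little care is needed to ensure the interpolating one-forms exist with the correct curvature while the extra inserted Hamiltonian isotopies remain small.
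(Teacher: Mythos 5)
The first step of your proposal (using Theorem~\ref{sardsmale} to reduce to non-degenerate quadratic tangencies, one at a time, then rationalizing the times by Hamiltonian isotopies) matches the paper's proof. The second step --- putting the flow into the local model near each tangency --- has a genuine gap, which you yourself half-identify but do not resolve.

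You bring the generating-function difference $G^t$ to the $A_2$ normal form $a(t)x_1 - \tfrac16 x_1^3 + \tfrac12\langle Qx',x'\rangle$ by a $t$-dependent base coordinate change and then "realize the base change and rescaling as a compactly supported symplectomorphism of the chart (via Moser)." This is where the argument breaks. Once the nearby pair of self-intersection points exists, the symplectic area of the bigon joining them (equivalently, the difference of critical values of $G^t$) is a symplectic invariant of the configuration, preserved by any symplectomorphism that takes $L_1(t)\cup L_2(t)$ to $L_1^{\st}\cup L_2^{\st}(t)$. Moser's argument produces such a symplectomorphism only when the relative symplectic classes match --- and your circular-arc model with rigid radius $1+2(t-c_j)$ fixes the bigon area to a particular profile that the actual flow will not reproduce in general. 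The paper addresses exactly this by leaving the radius $r(t)$ of $L_2^{\st}(t)$ as a free parameter and tuning $r(t)$ so that the area $A_t$ is constant along the Moser isotopy; without a free parameter of this kind (or a careful verification that the time reparametrization you propose is compatible with the Maslov flow equation \eqref{maslovflow}, which it is not in any obvious way), the Moser step fails and no symplectomorphism exists.

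A second, smaller gap is the gluing of the local model to the original flow. You gesture at "interpolating through a family of connection one-forms," but the paper's argument is more specific and really needed: one writes $L_1(t)$, $L_2(t)$, and $L_2^{\st}(t)$ as graphs of $dH_1(t)$, $dH_2(t)$, $dH_{12}(t)$ over a reference Lagrangian, observes that $H_1,H_2$ vanish to third order at the tangency while $H_{12}$ vanishes only to second order, and therefore a cutoff $\rho$ vanishing near the tangency can be used to replace $H_i$ by $\rho H_i$ \emph{without changing the self-intersection locus}. This order-of-vanishing observation is what makes the patching work; without it, the cutoff could create or destroy self-intersection points. One also needs the terminal matching at $t_j+\delta$, where the paper uses that the perturbed Lagrangians are disjoint and equal outside a small ball, so the discrepancy is Hamiltonian. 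Both of these need to be spelled out to close your argument.
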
  

\begin{proof} 
First we perturb to make the flow generic.  Locally the
  family of immersions $\phi_t: L \to X$ is given by a family of
  closed one-forms $\alpha_t \in \Omega^1(L)$ as in \eqref{cartaneq}.
  For each self-tangency instant $t_j$, there exist points
  $x_j^-, x_j^+ \in L$ with $\phi(x_j^+) = \phi(x_j^-) = x_j$ such
  that
\[   
D \phi_{t_j}(x_j^-) (T_{x_j^-} L) \cap D \phi_{t_j}(x_j^+) (T_{x_j^+}
L) \neq \{ 0 \} .\]
Using the Sard-Smale theorem one checks that for a dense open subset
of Hamiltonian perturbations $H_t$ as in \eqref{equivto}, the equation
cuts out the solutions $(x_j^-, x_j^+)$ transversally.  Equivalently,
\[ \dim I_j = 1, \quad I_j := D \phi_{t_j}(x_j^-) (T_{x_j^-} L) \cap D
\phi_{t_j}(x_j^+) (T_{x_j^+} L) \]
and the tangency of the components of $\phi_{t_j}^{-1}(I_j)$ is
quadratic.  

Having achieved a quadratic tangency, we apply a Hamiltonian isotopy
to make the Lagrangians of standard form just before the isotopy.  We
suppose that the self-tangency represents the death of two
self-intersection points; the birth case is easier.  By
changing \label{bychanging} the cubic and higher order terms in the
generating functions for the Lagrangians locally we obtain an isotopy
$L_{12}(t)$ between $L_1(t_j - \delta) \cup L_2(t_j - \delta)$ and
$L_1^{\st} \cup L_2^{\st}(t)$ where $L_2^{\st}(t)$ is a circle of
radius $r(t)$ depending on $t$.  As in Moser's proof of the Darboux
lemma, there exists a diffeomorphism $\psi$ between neighborhoods
$U_0,U_1$ of $0$ in $\bC^n$ taking $L_1(t) \cup L_2(t)$ to
$L_1^{\st} \cup L_2^{\st}(t)$.  Let $\omega_0 \in \Omega^2(U_0)$ be
the standard symplectic form, and
$\omega_1 = \psi^* \omega_0 \in \Omega^2(U_1)$ the pull-back.  Let
\[ \omega_t = (1-t) \omega_0 + t \omega_1 \in \Omega^2(U_1). \] 
In order to correct $\psi$ to a symplectomorphism note
that \label{notethe} $\ddt \omega_t$ is exact if $\ddt A_t = 0$ where
$A_t = \int_{S} u^* \omega_t$ is the area of the disk $u: S \to \bC^n$
whose boundary from $1$ to $-1$ resp.  $-1$ to $1$ lies in $L_1^{\st}$
resp. $L_2^{\st}$ connecting the two self-intersection points.  By
choosing the parameter $r(t)$ and the isotopy suitably, we may ensure
that this area $A_t$ is constant in the isotopy $L_{12}(t)$; then
Moser's argument produces the desired symplectomorphism equal to the
identity on $L_1^{\st} \cup L_2^{\st}(t)$.

The flows of the original Lagrangian immersion and the flow in the
standard model may be combined using a cutoff function.  In the local
model, the flow of $L_1(t)$ is given by the graph of the differential
$\d H_1(t)$ of a time-dependent Hamiltonian
$H_1(t): L_1^{\st} \to \R$, while $L_2(t)$ is the graph of $\d H_2(t)$
for some function $H_2 : L_2^{\st}(t) \to \R$.  Finally, by choosing a
generic complement to $T_0 L_1^{\st}$, we may assume that
$L_2^{\st}(t)$ is the graph of $\d H_{12}(t)$ for some
$H_{12} (t) : L_1^{\st} \to \R$, By choosing $U$ sufficiently small,
we may assume that $\d H_1(t), \d H_2(t)$ are small in comparison with
$\d H_{12}(t)$, since $\d H_{12}(t)$ has at worst quadratic vanishing
at $(q,p) = 0$ and $t = 0$, and $\d H_2(t), \d H_1(t)$ vanish to third
order.  Let $\varkappa$ be a cutoff function vanishing in a neighborhood of
$0$.  The flow $\phi'_t: L \to X$ obtained by replacing
$L_1(t), L_2(t)$ by the graphs of $\d( \varkappa H_1(t)), \d( \varkappa H_2(t))$
have the same self-intersections, and are equal to the standard flow
in a neighborhood of $0$.  By \eqref{equivto}, $\phi'_t$ is also a
Maslov flow, and is of standard form near the tangency.  

At the end time of the combined flow, the two nearby self-intersection
points have disappeared, and so the resulting Lagrangians are disjoint
and equal outside of a small ball. In particular, the flow produces an
exact deformation from $L_1'(t_j + \delta) \cup L_2'(t_j + \delta)$ to
$L_1(t_j + \delta) \cup L_2(t_j + \delta)$ that is the identity
outside of a small ball, and a standard argument implies that this
isotopy is achievable by a Hamiltonian flow.  Thus, changing the flow
by a Hamiltonian isotopy at $t_j + \delta$ produces a piece-wise
smooth flow that matches up with the original flow for
$t \ge t_j + \delta$.

Finally, the time of the self-tangency can be perturbed an arbitrary
small amount using a Hamiltonian isotopy, so that the times $t_j$ of
the self-tangencies
$ D\phi_{t_j}( T_{x_1} L) \cap D\phi_{t_j}(T_{x_2} L) \neq \{ 0 \}$
become rational.
\end{proof}

\section{Holomorphic disks with self-transverse boundary condition}
\label{tbc}

The Morse model of Floer theory counts treed pseudoholomorphic disks
with Lagrangian boundary condition.  First we recall basic terminology
regarding stable disks.  A {\em disk} will mean a
$2$-manifold-with-boundary $S_{\white}$ equipped with a complex
structure $j_{S_{\white}}: TS_{\white} \to TS_{\white}$ so that the
surface $S_{\white}$ is biholomorphic to the closed unit disk
$ \{ z \in \bC \ | \ |z| \leq 1 \}$.  A {\em sphere} will mean a
complex one-manifold $S_{\black}$ biholomorphic to the complex
projective line
$\P^1 = \{ [\zeta_0:\zeta_1] \ | \ \ \zeta_0,\zeta_1 \in \bC \}$.  A
{\em nodal disk} $S$ is a union
\[ S = \left( \bigcup_{i=1}^{n_{\white}} S_{\white,i} \right) \cup 
\left(\bigcup_{i=1}^{n_{\black}} S_{\black,i} \right) / \sim \]
of a finite number of disks $S_{\white,i}, i = 1,\ldots, n_{\white}$
and spheres $S_{\black,i}, i =1,\ldots, n_{\black}$ identified at
pairs of distinct points called {\em nodes} $w_1,\ldots, w_m$.  Each
node 
\[ w_k = (w_k^-,w_k^+) \in S_{i_-(k)} \times S_{i_+(k)} \]  
is a pair of distinct points $w_k^\pm \in S_{i_\pm(k)}$ where
$S_{i_\pm(k)}$ are the (disk or sphere) components of $S$ adjacent to
the node $w_k$; the resulting topological space $S$ is required to be
simply-connected, see Figure \ref{disk} (produced using Inkscape and
\cite{wiki}.)  The complex structures on the disks and spheres induce
a complex structure on the tangent bundle $TS$ (which is a vector
bundle except at the nodal points) denoted $j:TS \to TS$.
\begin{figure} 
\begin{center} \includegraphics[height=1.5in]{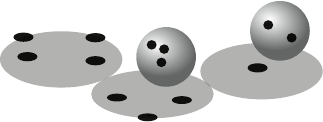} 
\end{center} 
\label{disk} 
\caption{A stable disk with boundary and interior markings} 
\end{figure} 
A {\em boundary resp. interior marking} of a nodal disk $S$ is an
ordered collection of non-nodal points
$\ul{z} = (z_0,\ldots, z_d) \in \partial S^{d+1}$ resp.
$\ul{z}' = (z_1',\ldots,z_c') \in \on{int}(S)^c$ on the boundary
resp. interior, whose ordering is compatible with the orientation on
the boundary $\partial S$ in the case of boundary markings.  The {\em
  combinatorial type} 
is the graph
\[ \Gamma(S) = (\on{Vert}(\Gamma(S)), \on{Edge}(\Gamma(S): (h \times
t): \on{Edge}(\Gamma(S)) \to \on{Vert}(\Gamma(S)) \cup \{ \infty
\} \]  
obtained by setting $\on{Vert}(\Gamma(S))$ to be the set of disk and
sphere components in $S$ and $\on{Edge}(\Gamma(S))$ the set of nodes
$w_1,\ldots,w_m$ (each connected to the vertices corresponding to the
disks or spheres they connect) and markings
$z_{i,\black}, z_{j,\white}$ (each connecting the corresponding vertex
to infinity); the graph $\Gamma(S)$ is required to be a tree, that is,
connected with no cycles among the combinatorially finite edges.  The
set of edges
 is naturally equipped with a
partition into subsets
\[ \on{Edge}(\Gamma(S)) = \on{Edge}_{\black}(\Gamma(S)) \cup
\on{Edge}_{\white}(\Gamma(S)) \] 
corresponding to interior resp. boundary markings and nodes,
respectively.  The set of boundary edges
$h^{-1}(v) \cup t^{-1}(v) \cap \Edge_{\white}(\Gamma(S))$ meeting some
vertex $v \in \Ver(\Gamma(S))$ is naturally equipped with a cyclic
ordering giving $\Gamma(S)$ the partial structure of a ribbon graph.
The set of boundary and interior {\em semi-infinite leaves}
\begin{eqnarray*}
\Edge_{\white,\rightarrow}(\Gamma(S)) &:=& \Edge_{\white}(\Gamma(S))
\cap \Edge_{\rightarrow}(\Gamma(S)) ,\\
\Edge_{\black,\rightarrow}(\Gamma(S)) &:=& \Edge_{\black}(\Gamma(S)) \cap
\Edge_{\rightarrow}(\Gamma(S)) , \\ 
\Edge_{\rightarrow}(\Gamma(S)) &:=&
h^{-1}(\infty) \cup
t^{-1}(\infty) \end{eqnarray*}
is each equipped with an ordering; these orderings will be omitted
from the notation to save space.  We call the leaves corresponding to
interior resp. boundary markings {\em bulk} resp. {\em boundary
  leaves}, for lack of better terminology.  A marked disk $(S,\ul{z},\ul{z}')$
is {\em stable} if it admits no automorphisms preserving the markings.
The moduli space of stable disks with fixed number of boundary
markings and no interior markings admits a natural structure of a cell
complex which identifies the moduli space with Stasheff's
associahedron.

Treed disks are defined by replacing nodes with broken segments as in
the pearly trajectories of Cornea-Lalonde \cite{cl:clusters},
Biran-Cornea \cite{bc:ql}, and also Seidel \cite{seidel:genustwo}.  By
a {\em segment} we will mean an oriented connected Riemannian manifold
$T$, necessarily isometric with a closed sub-interval of the real line
$[t_-,t_+] \cap \R, \ t_\pm \in \{ - \infty \} \cup \bR \cup \{ \infty
\}$.
A {\em broken segment} $T$ is obtained from a finite collection of
segments $T_1,T_2$ with an infinite positive resp. negative end by
$T = T_1 \cup \{ \infty \} \cup T_2$ where
$\{ \infty \} = \ol{T_1} \cap \ol{T_2} $ is the {\em point of
  breaking}, with the topology that makes $T$ also a closed interval.
The metrics on the open subsets $T_1,T_2$ are given as part of the
data; that is, any broken segment is equipped with a metric on the
complement of the finite set of breaking points.  A {\em treed disk}
is a space $C$ obtained from a nodal disk $S$ by replacing each
boundary node or boundary marking corresponding to an edge
$e \in \Gamma(S)$ with a broken segment $T_e$; we also allow the case
$C \cong \R$ with a single edge infinite in both directions in which
case the underlying ``disk'' is empty.  \footnote{More generally one
  could also replace interior nodes and markings with segments, but
  since in this case we will rule out sphere bubbling there is no need
  for the most general construction.}  Each broken line is obtained
from a sequence of possibly infinite closed real intervals by gluing
together endpoints, and is equipped with a {\em length}
 and {\em number of breakings}
 \[ \ell(e) \in [0,\infty], \quad b(e) \in \bZ_{\ge 0} \]
 where the length $\ell(e)$ is equal to infinity by assumption if the
 segment $T_e$ replaces a marking $z_e \in S$.  For the
 combinatorially finite edges, $\ell(e) = \infty$ if and only if the
 number of breakings $b(e)>0$.  Denote by $T \subset C$ the union of
 (finite or infinite length) segments so produced.  Then
 $C = S \cup T$ where the one-dimensional part $T$ is joined to the
 two-dimensional part $S$ at a finite set of points on the boundary of
 $S$, which we call the {\em nodes} of the treed disk (as they
 correspond to the nodes in the underlying nodal disk.)  The
 semi-infinite edges in the one-dimensional part $T$ are oriented by
 requiring that the first boundary marking is outgoing while the
 remaining semi-infinite edges are incoming; the outgoing
 semi-infinite edge is referred to as the {\em root} while the other
 semi-infinite edges are {\em leaves}.  On each segment
 $T_e \subset T$ corresponding to an edge $\on{Edge}(\Gamma(S))$ we
 assume the existence of a coordinate $s$ that identifies $T_e$ with
 the interval $[0,\ell(e)]$; this coordinate will be used in the
 construction of gradient trees below.  The combinatorial type
 $\Gamma(C)= (\on{Vert}(C), \on{Edge}(C))$ of a broken tree $C$ is
 defined similarly to that for broken disks but now the set of edges
 $\on{Edge}(C)$ is equipped with a partition
 $\on{Edge}(C) = \on{Edge}_0(C) \cup \on{Edge}_{(0,\infty)}(C) \cup
 \on{Edge}_\infty(C) $
 corresponding to whether the length is zero, finite and non-zero, or
 infinite, \label{orinfinite} and equipped with a labelling by the
 number of breakings.  A treed disk is stable if the underlying disk
 is stable, each combinatorial finite edge has at most one breaking,
 and each semi-infinite edge is unbroken.  An example of a treed disk
 with one broken edge (indicated by a small hash through the edge) is
 shown in Figure \ref{treeddisk}.

\begin{figure} 
\begin{center} \includegraphics[height=2.5in]{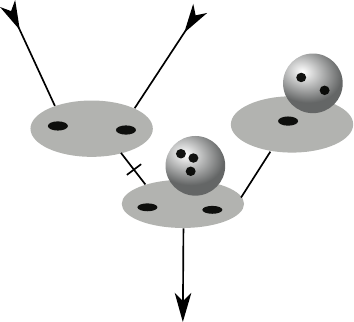} 
\end{center} 
\label{treeddisk} 
\caption{A treed disk with $d = 2$ incoming edges} 
\end{figure} 

The stability condition gives a compact, Hausdorff moduli space with a
universal curve.  For a given combinatorial type $\Gamma$ denote by
${\M}_\Gamma$ the moduli space of treed disks of combinatorial type
$\Gamma$ and $\ol{\M}_d = \cup_\Gamma {\M}_\Gamma $ the union over
combinatorial types $\Gamma$ with $d$ incoming semi-infinite
edges. The moduli space $\ol{\M}_d$ is compact with a universal curve
$\ol{\U}_d$ given as the space of isomorphism classes of pairs $[C,z]$
where $C$ is a holomorphic treed disk and $z \in C$ is a point in the
one or two dimensional locus.  Depending on which is the case one has
a splitting
\begin{equation} \label{splitting} \ol{\U}_d = \ol{\S}_d \cup
  \ol{\T}_d \end{equation}
of the universal treed disk into one-dimensional and two-dimensional
parts $\ol{\T}_d$ resp.  $\ol{\S}_d$. That is,
$\ol{\U}_d = \ol{\T}_d \cup \ol{\S}_d $ where the fibers of
$\ol{\T}_d \to \ol{\M}_d$ resp.  $\ol{\S}_d \to \ol{\M}_d$ are one
resp. two-dimensional.  We denote by $\ol{\S}_\Gamma, \ol{\T}_\Gamma$
the parts of the universal treed disk living over $\ol{\M}_\Gamma$.

Holomorphic treed disks for immersed Lagrangians are defined as in the
embedded case, but requiring a double cover of the tree parts to
obtain the boundary lift.  Given a treed disk $C$ we define a
one-manifold $\partial C$ by gluing together the boundary of each disk
$S_v, v \in \Ver(\Gamma)$ minus the points
$T_e \cap S, e \in \Edge(\Gamma)$ where edges attach with {\em two
  copies} of each edge $T_e$ as in Figure \ref{boundary}.

\begin{figure} 
\begin{center} \includegraphics[height=2.5in]{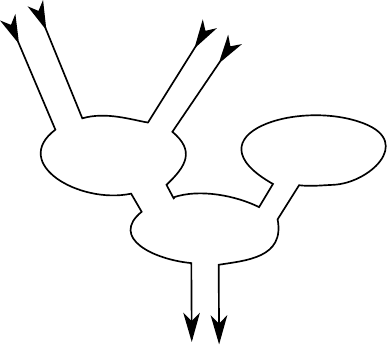} 
\end{center} 
\caption{Boundary of a treed disk with $d = 2$ incoming edges} 
\label{boundary} 
\end{figure} 

To define holomorphic treed disks, we make a choice of almost complex
structure and Morse function.  
\begin{enumerate} 
\item Let $J: TX \to TX$ be an almost complex structure taming the
  symplectic form $\omega \in \Omega^2(X)$; such a choice is unique up
  to isotopy.  Later, for the purposes of constructing Donaldson
  hypersurfaces we will assume that $J$ is {\em compatible} with
  $\omega$ in the sense that $\omega( \cdot, J \cdot)$ is a Riemannian
  metric.
\item Let $m: L \to \R$ be a Morse function.  Given a metric on $L$ we
  obtain a gradient vector field and time $t$ flow 
  \[ \grad(m) \in \Vect(L), \quad \psi_t \in\on{Diff}(L) .\]
  For any critical point $x \in \crit(m)$ we have {\em stable and
    unstable manifolds}
\[ W_x^\pm = \Set{ y \in L \ | \ \lim_{t \to \pm \infty} \psi_t(y) = x } .\]
We assume that the Morse function and metric are {\em
  Morse-Smale} in the sense that the unstable and stable manifolds
intersect transversally:
\[ W_x^+ \pitchfork W_y^- , \quad \forall x,y \in \crit(m) .\]
Thus in particular the {\em moduli space of Morse trajectories} 
\[ \M(x,y) \cong (W_x^+ \cap W_y^- ) / \bR \]
is smooth for any $x,y \in \crit(m)$; these are special cases of
moduli spaces of holomorphic treed disks, namely the case that the
number of disks in the configuration is zero.  Choose coordinates $s$
on each segment of $T$ so that each segment has the given length.  We
extend $m$ to the space of self-intersection points $L \times_\phi L$
by extension by zero, so that any gradient trajectory
$u: T^2_e \to L \times_\phi L$ in $L \times_\phi L$ is constant.
\end{enumerate} 

\begin{definition} A {\em holomorphic treed disk} with boundary in
  $\phi: L \to X$ consists of a treed disk $C = S \cup T$ and a pair
  of continuous maps
  \[u: C \to X, \quad \partial u: \partial C \to L \]
  satisfying the conditions that the map is a pseudoholomorphic map on
  the surface part, gradient trajectory on the tree parts, and the map
  $\partial u$ is a lift to $L$ of the restriction of $u$ to the
  boundary of $S$:
  \begin{eqnarray} \label{conds}
    J \d_H  (u  |_{S}) &=& \d_H  (u |_{S} ) j  \\
    - \grad(m) (\partial u |_T)    &=& \dds ( \partial u |_T)\\
    u | \partial C &=& \phi \circ \partial u \end{eqnarray}
  where $\d_H u = \d u - H(u)$.
\end{definition} 

Note that the one-dimensional part $T$ of the domain $C$ is equipped
with a decomposition $T = T_2 \cup T_1$, so that $u | T_2$ takes
values in the finite set of self-intersections
\[L \times_\phi L - \Delta_L,  \quad \Delta_L = \{ (l,l) \ | l \in L
\}\]
and $u | T_1$ is a map to $L$. We call $T_1$ resp. $T_2$ the {\em
  unbranched} resp. {\em branched} one-dimensional locus.  

The combinatorial data of a treed holomorphic disk is packaged into a
labelled graph called the {\em combinatorial type}: for a
pseudoholomorphic treed disks $u: C \to X$ is the type is the
combinatorial type $\Gamma$ of the underlying treed disk $C$ together
with the labelling of vertices $v \in \on{Vert}(\Gamma)$ corresponding
to sphere and disk components $S_v, v \in \Ver(\Gamma)$ by their
(relative) homology classes 
\[ d(v) \in H_2(X) \cup H_2(\phi) \] 
and the labelling 
\[ t(e) \in \{ 1, 2\} \]
 of edges by their branch type (whether
they map to $\Delta_L$ or to $L \times_\phi L$), and the labelling of the
semi-infinite edges by the limits $x_0,\ldots, x_d \in L$.

A compactified moduli space for any type is obtained after imposing a
stability condition.  A holomorphic tree disk $u: C = S \cup T \to X$
is {\em stable} if it has no automorphisms, or equivalently
\begin{enumerate} 
\item each disk component $S_{v,\white} \subset S$ on which the map
  $u$ is constant (that is, a ghost disk bubble) has at least one
  interior node $S_{v,\white} \cap T_\black \neq \emptyset$ or has at
  least three boundary nodes
  $\# ( S_{v,\white} \cap T_\white ) \ge 3$, where $T_\white$
  resp. $T_\black$ denotes the union of segments $e$ corresponding to
  $e \in \Edge_{\white}$ resp. $e \in \Edge_{\black}$;

\item each sphere component $S_{v,\black} \subset S$ on which the map
  $u$ is constant (that is, a ghost sphere bubble) has at least three
  nodes $\# ( S_{v,\black} \cap T_\black ) \ge 3$;
\item each broken segment $T_{e,i} \subset T_e$ on which the map $u$
  is constant has at most one infinite end, that is, one of the ends
  of $T_{e,i}$ is an attaching point to a sphere or disk
  $S_v \subset S$.
\end{enumerate}
Note that the case $C \cong \R$ equipped with a non-constant 
Morse trajectory $u: C \to L$ is allowed under this stability
condition. 
The {\em energy} of a treed disk is the sum of the energies of the
surface components,
\[E_H(u) = \int_S (1/2) | \d_H u|_S^2 \d \Vol_S \]
and for holomorphic treed disks with $H = 0$ is equal to the
symplectic area
\[ A(u) = \int_S (u|_S)^* \omega .\]
For any combinatorial type $\Gamma$ denote by ${\M}_\Gamma(\phi)$ the
moduli space of finite energy stable treed holomorphic disks of type
$\Gamma$.  Denote by 
\[ \ol{\M}_d(\phi) = \bigcup_\Gamma \M_\Gamma(\phi) \]
the union over combinatorial types with $d$ incoming edges.  

\begin{proposition} \label{prop:cutout} In a neighborhood of any
  holomorphic treed disk $u: C \to X$ of type $\Gamma$ with stable
  domain the moduli space $\M_\Gamma(\phi)$ is cut out by a Fredholm
  map of Banach spaces.
\end{proposition}

\begin{proof} The proof is standard combination of various Banach
  spaces of maps and Sobolev multiplication theorems.  Let
  $C = S \cup T $ be the domain of $u$ and denote by $S^\circ$ the
  surface obtained by removing the nodes $q_k$ that map to
  self-intersection points $\phi(L \times_\phi L - L)$.  Fix a metric
  on $S^\circ$ with strip-like ends near those self-intersection
  points.  For $kp > 2$ let $\Map_{k,p}(S^\circ,X)$
  resp. $\Map_{k,p}(T,L)$ denote the space of continuous maps with
  finite $W^{k,p}$-norm with respect to suitably chosen covariant
  derivatives on the domains and target.  The base of the required
  bundle is the space
\begin{multline} \label{bundlebase} \B_\Gamma = \left\{ 
    \begin{array}{l} (C,u_S, \partial u_S, u_{T_1},u_{T_2})
      \in \left( \begin{array}{l} \M_\Gamma
      \times \Map_{k,p} (S^\circ, X) 
   \times \Map_{k-1/p,p} (\partial S^\circ, L) 
\\ \times \Map_{k,p}(T_1,L) \times
      \Map_{k,p}(T_2, L \times_\phi L) \end{array} \right) |   \\
      u | \partial S^\circ = \phi \circ\partial u, 
 \phi \circ u_{T_1} |_{S^\circ \cap
      T_1} = u_S |_{\partial S^\circ \cap T_1} 
\end{array}  \right\} .\end{multline}
Note that by the Sobolev trace theorem the boundary map
$\Map_{k,p}(S^\circ,X)$ takes values in
$\Map_{k- 1/p,p}(\partial S^\circ, X)$, so the boundary condition
$ u | \partial S^\circ = \phi \circ\partial u $ is well-defined.
Local charts for $\B_\Gamma$ can be constructed using geodesic
exponentiation $\exp: TX \to X$ for some metric for which $L$ is
totally geodesic; such a metric exists as long as the
self-intersections of $L$ are self-transverse.  Given such a metric we
have compatible maps
\begin{equation} \label{exp} \Omega(S^\circ, u^* TX)_{k,p} \to
  \Map_{k,p}(S^\circ, X) \quad \Omega(\partial S^\circ, u^* TL)_{k-1/p,p}
  \to \Map_{k-1/p,p}(\partial S^\circ, L) \end{equation}
providing the local charts for $\B_\Gamma$.  The fiber of the bundle
$\cE_\Gamma$ over some map $u$ is the vector space
\begin{equation} \label{bundlefiber} 
\cE_{\Gamma,u} :=  \Omega^{0,1}(S, u_S^* TX)_{k-1,p} 
\oplus 
\Omega^1(T_1, u_{T_1}^* TL)_{k-1,p} \oplus 
\Omega^1(T_2, u_{T_2}^* TL^2)_{k-1,p}
 .\end{equation} 
Local charts are provided by almost complex parallel transport 
\begin{equation} \label{trans} \Pi_u^\xi:   \Omega^{0,1}(S, \exp_u(\xi)_S^* TX)_{k-1,p} \to
\Omega^{0,1}(S, u_S^* TX)_{k-1,p} \end{equation} 
along $\exp_u(s\xi)$ for $s \in [0,1]$.  However, in general
$\cE_\Gamma \to \B_\Gamma$ is only a $C^0$-Banach manifold, because
the transition maps between the local trivializations involve
reparametrizing the domains $S$ in $C = S \cup T$ and these
reparametrization actions are not smooth on Sobolev spaces
$W^{k,p}(S)$.

However, in any local trivialization of the universal curve one can
obtain Banach bundles with arbitrarily high regularity.  Let
$\cU^i_\Gamma \to \cM_\Gamma^i \times C$ be a collection of local
trivializations of the universal curve.  Let $\B_\Gamma^i$ denote the
inverse image of $\cM_\Gamma^i$ in $\B_\Gamma$ and $\cE_\Gamma^i$ its
preimage in of $\cE_\Gamma $.  For integers $k,p$ determining the
Sobolev class as above the Fredholm map cutting out the moduli space
over $\cM_\Gamma^i$ is
\begin{equation} \label{eqn:cutout} 
 \cF_\Gamma^i: \B_\Gamma^i \to \cE_\Gamma^i,   u \mapsto \left(\olp_{u|S}, 
  \dds u_{T_1}  + \grad(m(u_{T_1})), \dds u_{T_2} \right) .\end{equation}
In other words, $u$ is $J$-holomorphic on the surface parts
$S \subset C$, a gradient trajectory of $-m$ on the unbranched
segments $T_e \subset T^1$ and constant on the branched segments
$T_e \subset T^2$.  Indeed, in local charts and after restricting to a
sheet of the immersion $\iota: L \to X$, the condition
$ u | \partial S^\circ = \phi \circ\partial u $ simply imposes
Lagrangian boundary conditions.  After imposing a finite energy
condition each strip-like end limiting to such a node has a
well-defined limit $u(q_{j,\pm}) \in L \times_\phi L$.  The zero set
of $\cF_\Gamma^i$ need not satisfy the matching conditions
$u(q_{j,-}) = u(q_{j,+})$ on the nodes of $S$ mapping to
self-intersection points.  The matching condition may be imposed a
posteriori, that is, $\M_\Gamma(\phi)$ is the subset of
$\cF_\Gamma^{-1}(0)$ such that for each node
$u(q_{j,-}) = u(q_{j,+})$.  Since the set of self-intersection points
$L \times_\phi L - \Delta_L$ is finite, this identifies
$\M_\Gamma(\phi)$ as a connected component of the zero set
$\cF_\Gamma^{-1}(0)$.  Elliptic regularity for pseudoholomorphic
curves in \cite[Theorem B.4.1]{ms:jh} (note that the boundary
condition is embedded locally) imply that the solution space consists
of {\em smooth} maps.
\end{proof}

The linearization of the map \eqref{eqn:cutout} cutting out the moduli
space is a combination of the standard linearization of the
Cauchy-Riemann operator with additional terms arising from the
gradient operator and variation of conformal structure.  With $k,p$
integers determining the Sobolev class as above let
\begin{equation} \label{du}
\begin{split}
 D_u: \Omega^0(S, u_S^* TX, (\partial
  u_S)^* TL )_{k,p} &\to     \Omega^{0,1}(S, u_S^* TX)_{k-1,p} \\ 
               \xi  &\mapsto \nabla_H^{0,1} \xi - \hh (\nabla_\xi J) J \partial_H u_S
  \end{split}
 \end{equation}
 denote the linearization of the Cauchy-Riemann operator, c.f.
 McDuff-Salamon \cite[p. 258]{ms:jh}; here
 $\partial_H u_S = (1/2) ( J \d_H u_S + \d_H u_S j )$ and
 $\nabla_H^{0,1} \xi$ is the $0,1$-projection of
 $\nabla_H \xi = \nabla \xi - \nabla_\xi H$, where
 $H \in \Omega^1(S,\Vect_h(X))$ is the Hamiltonian vector field.
 Denote by $\ti{D}_u$ the operator given by combining the
 linearization of the Cauchy-Riemann operator with the linearized
 gradient operator and the variation of conformal structure and
 metrics on the domain:
 \begin{multline} \label{linop} \ti{D}_u: T_{[C]} \M_\Gamma \oplus
   \Omega^0(S^\circ, u_S^* TX)_{k,p} \oplus \Omega^0(T_1, u_1^* TL)_{k,p}
   \oplus \Omega^0(T_2, u_2^* TL^2)_{k,p} \\ \to
   \Omega^{0,1}(S, u^* TX)_{k-1,p} \oplus \Omega^1(T_1, u^* TL)_{k-1,p}
\oplus \Omega^1(T_2, u^* TL^2)_{k-1,p} \oplus
   \Map(S \cap T, u^* TX), \\ \quad ( \zeta, \xi_S, \xi_1,\xi_2)
   \mapsto \left( \begin{array}{l} D_u \xi_S - (1/2) J \d u \zeta, \nabla
     \xi_1 + D_\xi \grad(m(u)), \nabla \xi_2, \\
     \xi_S |_{S \cap T} - (D \phi \xi_1 |_{T_1 \cap S} \sqcup D \phi
     \xi_2 |_{T_2 \cap S}  ) \end{array}  \right). \end{multline}
Here the last factor $\Map(S \cap (T_1,T_2), u^* TX)$ enforces the
matching condition for the sections $\xi_S,\xi_1,\xi_2$ at the finite
set of nodes $S \cap (T_1 \cup T_2) \subset C$.  A holomorphic treed
disk $u: C \to X$ with stable domain $C$ is {\em regular} if the
linearized operator $\ti{D}_u$ is surjective, and {\em rigid} if it is
regular and $\ti{D}_u$ is an isomorphism, or more generally, if $C$ is
unstable, if $\ti{D}_u$ is surjective and the kernel of $\ti{D}_u$ is
generated by the infinitesimal automorphism $\on{aut}(C)$ of $C$.

A standard application of the implicit function theorem in Banach
spaces implies that the moduli space of rigid regular disks is
invariant under small perturbations of the boundary condition or
almost complex structure:

\begin{corollary} \label{implicit} Let $u_0: C \to X$ be a rigid
  regular holomorphic treed disk with boundary in a Lagrangian
  immersion $\phi_0: L \to X$ with respect to a symplectic form
  $\omega_0$.  Suppose that $\phi_t: L \to X$ is a family of
  Lagrangian immersions with respect to a family of symplectic forms
  $\omega_t$ for $t \in [-T,T]$, agreeing with the given immersion and
  form at $t = 0$.  Let $D \subset X$ a Donaldson hypersurface
  symplectic for all $\omega_t$ and $P_{\Gamma,t}$ a family of
  perturbation data for adapted holomorphic treed disks for
  combinatorial type $\Gamma_0$.  Then
\begin{enumerate} 
\item {\rm (Existence)} there exists an $\eps > 0$ such that there
  exists a family $u_t: C \to X$ for $|t | < \eps$ of holomorphic
  treed disks with boundary in the Lagrangian immersion
  $\phi_t : L \to X$;
\item {\rm (Uniqueness)} the solution $u_t$ is unique in the following
  sense: There exists a $\delta > 0$ such that $u_t$ is the unique
  such map up to isomorphism in an open neighborhood of $u_0$ in the
  Gromov topology.
\item \label{item:uniform} {\rm (Uniformity)} The sizes of $\delta $ and
  $\eps$ depend on uniform estimates for the distance from $\phi_0$ to
  $\phi_t$ and from $P_{\Gamma,0}$ to $P_{\Gamma,1}$ in $C^1$ norm and
  a lower bound for the right inverse of the linearized operator
  $\ti{D}_u$.
\end{enumerate} 
\end{corollary} 

The moduli space of holomorphic treed disks admits a natural version
of the Gromov topology which allows bubbling off spheres, disks, and
Morse trajectories.  Given a sequence
$u_\nu: C_\nu = S_\nu \cup T_\nu \to X, \partial u_\nu: \partial C_\nu
\to L$
of treed holomorphic disks with boundary in $\phi$ with bounded energy
Gromov compactness of pseudoholomorphic curves with Lagrangian
boundary conditions as in, for example, Frauenfelder-Zemisch
\cite{totreal} implies that there exist stable limits
$\lim u_{\nu} | S_\nu \to X$ on the surface parts, while limits on the
tree parts $\lim \partial u_\nu | \partial C_{\nu} \to L$ are a
consequence of convergence for gradient trajectories up to breaking.
Thus for any fixed energy bound $E$, the subset
\[\ol{\M}_d^{<E }(\phi) = \Set{ u \in \ol{\M}_d(\phi) \ | \ E(u) < E  
} \] 
satisfying the given energy bound is compact.  
 
The moduli space further decomposes according to the limits at
infinity and the expected dimension. For each end $e= 0,\ldots, d$ let
$\eps_e : [0,\infty) \to C$ denote a local coordinate on the $e$-th
end.  Define the sets of possible limits
\begin{eqnarray*} 
  \cI^{\on{c}}(\phi) &=& \crit(m) = \{ x  \in L \ | \ \d m (x) = 0 \} \\ 
  \cI^{\on{si}}(\phi) &=& \{ (x_-,x_+) \in L^2  \ | \ \phi(x_-) = \phi(x_+),
                     \ x_- \neq x_+ \} .\end{eqnarray*}
The set of self-intersections $\cI^{\on{si}}(\phi)$ has a natural 
involution 
\[\cI^{\on{si}}(\phi) \to \cI^{\on{si}}(\phi), \ x= (x_-,x_+) \mapsto \ol{x} =
(x_+,x_-) .\]
Given 
\[\ul{x} = (x_0,\ldots, x_d) \in \cI^{\on{c}}(\phi) \cup \cI^{\on{si}}(\phi)\]
denote by
\[\ol{\M}_d(\phi,\ul{x}) = \Set{  [u: C \to X] \in \ol{\M}_d(\phi,\ul{x})\ | \
 \lim_{s \to \infty}
u(\eps_e(s)) = x_e ,\  \forall e =0,\ldots, d }  \]
the locus with limits $\ul{x}$. For any integer $p$ denote by
\[\ol{\M}_d(\phi)_p = \{ [u : C \to X] \ | \ \Ind(\ti{D}_u) -
\dim(\aut(C))= p \} \]
the locus with expected dimension $p$, where $\ti{D}_u$ is the
operator of \eqref{linop}.  In the next section, we construct
perturbed moduli spaces using Cieliebak-Mohnke perturbations
\cite{cm:trans} and show that they are still compact.

\section{Transversality and compactness}

In this section we construct perturbations of the moduli spaces of
treed holomorphic disks with immersed Lagrangian boundary conditions
so that the perturbed moduli spaces are transversally cut out and
compact.  To achieve transversality, we combine the approach in
Akaho-Joyce \cite{akaho} with Biran-Cornea \cite{bc:ql}, Charest
\cite{charest:clust}, and Charest-Woodward \cite{cw:traj} which uses
Donaldson hypersurfaces to stabilize the domains of the holomorphic
treed disks; once the domains are stabilized, one may choose generic
domain-dependent almost complex structures and Morse functions to
achieve regularity, except for the following trivial case: To achieve
transversality for constant maps to self-intersections of the
Lagrangian immersion domain-dependent Hamiltonian perturbations are
required.

The existence of suitable Donaldson hypersurfaces uses auxiliary line
bundles whose curvature is the symplectic form, and so requires
rationality assumptions.  Say that $\phi$ is {\em weakly rational} if
the pairing of $[\omega] \in H^2(\phi)$ with any relative cycle is
rational.  That is, the class $[\omega]$ lies in the image of the
natural map from singular cohomology with rational coefficients:
\[[0,\omega] \in \on{Im}(H^2(\phi,\bQ) \to H^2(\phi,\R)).\]
In particular, $[\omega] \in H^2(X,\bQ)$ so that for some integer $k$,
$ k [\omega] = c_1(\ti{X})$ for some line bundle $\ti{X} \to X$.  By
Stokes' theorem the pairings of $[\omega]$ with disk classes
$u_* [S] \in H_2(\phi)$ are related to the holonomies of the
connection around the boundary by
\[ \exp( 2\pi i (u_* [S], k [\omega])) = \Hol( (\partial u)^* \ti{X}) \]
and so trivial.  The immersion $\phi$ is {\em strongly rational} if
there exists a line bundle $\ti{X} \to X$ with connection whose
curvature equals $k (2\pi/i) \omega$ for some integer $k > 0$ such that the
restriction of $\ti{X}$ to $\phi(L)$ is trivial as a line bundle with
connection.  That is, there exists an isomorphism of line bundles with
connection $\psi: \phi^* L \to L \times \C$ (where $L \times \C$ is
equipped with the trivial connection) such that
$\phi(x_1) = \phi(x_2)$ implies that $\psi_{x_1}$ agrees with
$\psi_{x_2}$ using the canonical identification
$L_{\phi(x_1)} = L_{\phi(x_2)}$.

Strong rationality implies the existence of Donaldson hypersurfaces in
the complement of the image of the Lagrangian immersion, similar to
the construction in Charest-Woodward \cite{cw:traj}.  Let
$\tilde{X} \to X$ be a line-bundle with connection $\alpha$ over $X$
whose curvature two-form $\curv(\alpha)$ satisfies
$\curv(\alpha)= (2\pi/i) \omega$; since our symplectic manifold
$(X,\omega)$ has rational symplectic class $[\omega]$ we may always
assume such $\ti{X}$ after taking a suitable integer multiple of the
symplectic form $\omega$.  Let $(\sigma_k)_{k \ge 0}$ be a sequence of
sections of $\tilde{X}^k \to X$.  The sequence $(\sigma_k)_{k \ge 0}$
is {\em asymptotically holomorphic} if there exists a constant $C$ and
integer $k_0$ such that for $k \ge k_0$,
\begin{equation} \label{asymhol}
 | \sigma_k | + | \nabla \sigma_k| + | \nabla^2 \sigma_k | \leq C, \quad |\olp \sigma_k| +
 | \nabla \olp \sigma_k| \leq C k^{-1/2} .\end{equation} 
The sequence $(\sigma_k)_{k \ge 0}$ is {\em uniformly transverse} to $0$ if
there exists a constant $\eta$ independent of $k$ such that for any
$x \in X$ with $|\sigma_k(x)| < \eta$, the derivative of $\sigma_k$ is
surjective and satisfies $| \nabla \sigma_k(x)| \ge \eta$.  In both
definitions the norms of the derivatives are evaluated using the
metric $g_k = k \omega( \cdot , J \cdot)$.  Donaldson's construction
\cite{don:symp} produces from an asymptotically holomorphic sequence
of sections a nearby sequence that is uniformly transverse, so that
the zero section $D = \sigma_k^{-1}(0)$ is a symplectic submanifold of
codimension two representing $k[\omega]$, called a {\em Donaldson
  hypersurface}.

\begin{lemma} \label{dexists} Suppose that $\phi: L \to X$ is a
  strongly rational Lagrangian immersion with only transverse
  self-intersections or non-degenerate tangencies.  There exists a
  Donaldson hypersurface $D \subset X$ with the property that
  $\phi(L)$ is exact in $X - D$.  That is, there exists a one-form
\[\alpha \in \Omega^1(X - D), \quad d \alpha = \omega\]
 and a function
\[\beta: L \to \R, \quad \d \beta = \phi^* \alpha\]
such that for any $x_1,x_2 \in L$, equality $\phi(x_1) = \phi(x_2)$ implies
$\beta(x_1) = \beta(x_2)$.  In particular, any disk with boundary in
$L$ with non-zero area intersects $D$. 
\end{lemma}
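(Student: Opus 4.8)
The plan is to realize $D$ as the zero locus of a section of a high tensor power of the line bundle supplied by strong rationality, arranged so that its restriction to $\phi(L)$ differs by less than $1/2$ from the flat unit section; the primitive $\alpha$ and the function $\beta$ are then read off from the associated trivialization of that power over $X-D$.

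First I would unpack strong rationality. Passing to a multiple of $\omega$ if necessary, let $\ti X\to X$ carry a connection $\nabla$ with $\curv(\nabla)=(2\pi/i)\omega$ together with an isomorphism of Hermitian line bundles-with-connection $\psi\colon \phi^*\ti X\to L\times\C$ (the target with its trivial connection) that is compatible with the immersion, i.e.\ $\phi(x_1)=\phi(x_2)$ forces $\psi_{x_1}=\psi_{x_2}$ as maps $\ti X_{\phi(x_1)}\to\C$. Let $s_L$ be the covariant-constant unit section of $\phi^*\ti X$ that $\psi$ sends to the constant section $1$. By compatibility $s_L$ descends to a section of $\ti X$ over the subset $\phi(L)\subset X$, and since $\nabla s_L=0$ all covariant derivatives and the $(0,1)$-component of $s_L^{\otimes k}$ vanish, so $s_L^{\otimes k}$ is automatically asymptotically holomorphic on $\phi(L)$ for the rescaled metrics $g_k=k\,\omega(\cdot,J\cdot)$ of \eqref{asymhol}.

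The technical core is a relative version of Donaldson's construction, in the spirit of Charest--Woodward \cite{cw:traj} (following \cite{don:symp}): for $k\gg 0$ I would produce an asymptotically holomorphic, uniformly transverse sequence of sections $\sigma_k$ of $\ti X^k$ with $\|\phi^*\sigma_k-s_L^{\otimes k}\|<1/2$ in the Hermitian norm on $\phi^*\ti X^k$. One first extends $s_L^{\otimes k}$ from $\phi(L)$ to an asymptotically holomorphic section over all of $X$ --- carried out sheet-by-sheet near the finitely many self-intersections and non-degenerate tangencies, where $\phi(L)$ is a union of finitely many smooth isotropic sheets and the local extensions agree on overlaps because $\psi$ is compatible with the immersion --- and then runs Donaldson's globalization-and-perturbation scheme to gain uniform transversality, keeping the perturbation supported away from $\phi(L)$, where $|\sigma_k|$ is already bounded below. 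The one point genuinely requiring checking is that the immersed (tangent or transverse) sheets do not spoil the asymptotically-holomorphic and transversality estimates; as those estimates are local, they do not. Taking $D:=\sigma_k^{-1}(0)$ for such a $k$, $D$ is a Donaldson hypersurface Poincar\'e dual to $k[\omega]$, and $D\cap\phi(L)=\emptyset$ because $|\sigma_k|\ge|s_L^{\otimes k}|-1/2=1/2$ on $\phi(L)$.

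It then remains to extract $\alpha$ and $\beta$. Over $X-D$ the section $\sigma_k$ is nowhere zero and trivializes $\ti X^k$; let $B_k:=\sigma_k^{-1}\nabla^{\otimes k}\sigma_k\in\Omega^1(X-D;\C)$ be the connection one-form in this trivialization, so $dB_k=\curv(\nabla^{\otimes k})=-2\pi i k\,\omega$ and hence $d(\Im B_k)=-2\pi k\,\omega$ on $X-D$. Put $\alpha:=-\tfrac1{2\pi k}\Im B_k$, the integral primitive of $\omega$ over $X-D$ in the sense of \cite{cw:traj}: a real one-form with $d\alpha=\omega$. Setting $g_k:=\psi^{\otimes k}(\phi^*\sigma_k)\colon L\to\C^\times$, which lies within $1/2$ of the constant $1$ and so has a single-valued argument $\arg g_k\colon L\to(-\tfrac\pi2,\tfrac\pi2)$, the trivialization $\psi^{\otimes k}$ carries $\phi^*\nabla^{\otimes k}$ to the trivial connection; hence $\phi^*B_k=g_k^{-1}\,dg_k=d\log g_k$ and therefore $\phi^*\alpha=-\tfrac1{2\pi k}\,d(\arg g_k)=d\beta$ with $\beta:=-\tfrac1{2\pi k}\arg g_k\colon L\to\R$. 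If $\phi(x_1)=\phi(x_2)=:p$ then $g_k(x_j)=\psi_{x_j}(\sigma_k(p))$, and compatibility of $\psi$ gives $\psi_{x_1}=\psi_{x_2}$, whence $g_k(x_1)=g_k(x_2)$ and $\beta(x_1)=\beta(x_2)$. Finally, if $u\colon(S,\partial S)\to(X,L)$ satisfies $\phi\circ\partial u=u|_{\partial S}$ and has image disjoint from $D$, Stokes' theorem gives $A(u)=\int_S u^*\omega=\int_S u^*d\alpha=\int_{\partial S}(\partial u)^*\phi^*\alpha=\int_{\partial S}(\partial u)^*d\beta=0$, so a disk with $A(u)\neq 0$ must meet $D$. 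The main obstacle is the relative Donaldson construction above: prescribing the restriction of $\sigma_k$ along the \emph{immersed} $\phi(L)$ while retaining uniform transversality on $X\setminus\phi(L)$.
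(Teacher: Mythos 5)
Your proposal takes essentially the same route as the paper: extend the flat trivialization of $\phi^*\ti X^k$ to an asymptotically holomorphic section concentrated on $\phi(L)$ (the paper cites Auroux--Gayet--Mohsen's Gaussian prescription; you cite the relative Donaldson construction of Charest--Woodward), run Donaldson's perturbation to obtain uniform transversality away from $\phi(L)$, take $D=\sigma_k^{-1}(0)$, and read off $\alpha,\beta$ from the resulting trivialization of $\ti X^k$ over $X-D$. One small imprecision: near a self-intersection the branch-wise extensions to a $2n$-dimensional neighborhood do \emph{not} literally ``agree on overlaps'' (only their restrictions to $\phi(L)$ do); the paper instead sums the two Gaussian contributions and observes the sum is bounded below in norm along $\phi(L)$, which is in fact exactly what your estimate $\|\phi^*\sigma_k - s_L^{\otimes k}\|<1/2$ encodes, so the argument goes through.
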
 

\begin{proof} The first step is to choose an asymptotically
  holomorphic sequence of sections concentrated on the image of the
  immersion.  By the Auroux-Gayet-Mohsen \cite{auroux:complement}
  prescription, there exists a Gaussian asymptotically holomorphic
  sequence of sections $\sigma_k: X \to \ti{X}^k$ concentrated on
  $\phi(L)$.  At least locally we may model $X$ in a neighborhood $U$
  of the image of $L$ as the cotangent bundle $T^\dual L$ with
  coordinates $q_1,\ldots,q_n,p_1,\ldots,p_n$.  Parallel transport
  along geodesics $\exp_x(sv), v \in T_x X$ normal to $L$ defines a
  local trivialization of $\ti{X}$ near the image of $\phi(L)$,
  depending on the choice of branch.  An asymptotically-holomorphic
  sequence of sections $\sigma_{k,\on{loc}} : U \to \ti{X} | U $ is
  then given by
  \[\sigma_{k,\on{loc}}(q_1,\ldots,q_n,p_1,\ldots,p_n) = \exp( - k
  (p_1^2 + \ldots + p_n^2) ) .\]
  The sequence $\sigma_{k,\on{loc}}$ can be extended to a globally
  defined sequence of sections $\sigma_k$ on $X$ by multiplying by a
  cutoff function $\varkappa: X \to \R$ supported in $U$ and equal to $1$
  in neighborhood of each branch of $\phi(L)$.  Near a transverse
  self-intersection or admissible self-tangent intersection
  $x \in L \times_\phi L - \Delta_L$, the section $\sigma_k$ is the
  sum of the contributions $\sigma_{k,1} + \sigma_{k,2}$ from the
  branches.  The sum $\sigma_k$ is non-vanishing on each branch of
  $\phi(L)$, since the maximum of the norm of $\sigma_{k,m}$ occurs on
  the branch $\phi(L)_m$. Since $\sigma_{k,m}, m \in \{ 1,2 \}$
  restricts to a section on $L_{m+1}$ (with $m$ mod $2$) of norm
  strictly less than one, the sum $(\sigma_m + \sigma_{m+1}) |L_m$ is
  homotopic to $\sigma_m | L_m$, and so the Maslov computation still
  implies that the intersection number with $\sigma^{-1}(0)$ is the
  symplectic area.

  In the second step one applies Donaldson's perturbation procedure
  \cite{don:symp} to obtain asymptotically holomorphic sections
  $\sigma_k' : X \to \ti{X}^k$ uniformly transverse to the zero
  section $X \subset \ti{X}^k$. This gives a symplectic hypersurface
  $D_k := (\sigma_k')^{-1}(0)$ in the complement of the Lagrangian.
  Because the section $\sigma_k'$ is homotopic to a covariant constant
  section of $\ti{X}^k$ on the Lagrangian $L$, on the complement of
  $D_k$ we have
\[ k \omega =  \d \alpha_k,    
\alpha_k | L = \d f_k \]
where $\alpha_k$ is the connection one-form corresponding to the
section $\sigma_k$.  Thus the Lagrangian $L$ is exact in the
complement $X - D_k$ of the Donaldson hypersurface $D_k \subset X$.
\end{proof} 

Given a weakly rational immersion one may choose a small perturbation of
the symplectic form and connection to obtain a strongly rational
immersion with respect to a slightly different symplectic form:

\begin{lemma} \label{weaktostrong} Let $\phi:L \to X$ be a weakly
  rational immersion with only isolated self-intersections
  $\phi(x_{k,+} ) = \phi(x_{k,-})$ for $k = 1,\ldots, m$.  There
  exists a collection of paths
\begin{equation} \label{coll} 
\{\gamma_{k,l,\pm,\pm}: [0,1] \to L\mid (k\pm,l\pm) \in
\Theta\} \end{equation}
  where $\gamma_{k,l,\pm,\pm}$ is a path from $x_{k,\pm}$ to
  $x_{l,\pm}$ in $L$ that are disjoint except for the endpoints and
  $\Theta \subset \cI^{\on{si}}(\phi)$ is an indexing set such that all
  distinct pairs of self-intersection points are connected by a
  concatenation of such paths.  For a generic perturbation of the
  connection one-form $\alpha_t'$ that is closed on $L$, the parallel
  transports along $\gamma_{k,l,\pm,\pm}$ are rational and so
  $\phi: L \to X$ is strongly rational with respect to the resulting
  symplectic form $\omega'$.
\end{lemma}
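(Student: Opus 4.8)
The plan is to reduce strong rationality to the triviality, after passing to a power, of a flat line bundle on the quotient of $L$ by its self-intersections, to produce the paths $\gamma_{k,l,\pm,\pm}$ by a general-position argument, and then to make the relevant holonomies into roots of unity by a small, explicit perturbation of the prequantum connection (the map $\phi$ itself staying fixed, so that the self-intersections are unchanged).

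First I would set up the bundle-theoretic picture. Since $[\omega]$ is rational, fix $k_0 \in \Z_{>0}$ with $k_0[\omega] \in H^2(X;\Z)$ and a Hermitian line bundle $\ti X \to X$ with connection of curvature $(2\pi/i)k_0\omega$; because $\phi^*\omega = 0$ the pullback $\phi^*\ti X \to L$ is flat. Let $\hat L$ denote $L$ with $x_{k,+}$ identified to $x_{k,-}$ for each $k$, so that $\phi$ factors through a bijection $\hat L \to \phi(L)$; the canonical identifications of fibers $(\ti X)_{\phi(x_{k,+})} = (\ti X)_{\phi(x_{k,-})}$ make $\phi^*\ti X$ descend to a flat bundle $\hat E$ on $\hat L$. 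If the holonomy character $\chi \colon \pi_1(\hat L) \to U(1)$ of $\hat E$ has finite order $N$, then $\hat E^{\otimes N}$ is a trivial flat bundle, and pulling back a flat trivialization yields a flat trivialization of $\phi^*(\ti X^{\otimes N})$ which is automatically compatible with the immersion; so it suffices to make $\chi$ finite-order. For a loop $c$ in a component of $L$ the class $[\phi\circ c]$ vanishes in $H_1(X)$ (here we use that $X$ is simply connected, as assumed throughout), hence $\phi\circ c$ bounds a disk $u$, and by Stokes' theorem $\chi(c) = \exp(\pm 2\pi i\,\langle k_0[\omega],[u]\rangle) = 1$ — this is where weak rationality enters. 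Thus $\chi$ is already trivial on the images of $\pi_1$ of the components of $L$, and only the finitely many ``extra'' generators of $\pi_1(\hat L)$, those running through the identifications, remain to be controlled.

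To describe these, fix a unit vector $e_k \in (\ti X)_{\phi(x_{k,+})}$ for each $k$. An extra generator is represented by a loop alternating paths in $L$ between branch points $x_{k,\pm}$ with canonical jumps at self-intersections, and its $\chi$-value is the telescoping product, with respect to the $e_k$, of the parallel transports $t(\gamma) \in U(1)$ of $\phi^*\ti X$ along the constituent paths. I would choose finitely many paths in $L$, each joining some $x_{k,\pm}$ to some $x_{l,\pm}$, pairwise disjoint away from shared endpoints, whose union is a forest, such that together with the identification jumps they connect all the branch points $x_{k,\pm}$ and generate $\pi_1(\hat L)$ modulo $\pi_1$ of the components of $L$; in particular every pair of self-intersection points is joined by a concatenation of these paths. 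When $\dim L \ge 2$ the disjointness is arranged by general position; when $\dim L = 1$, so $L$ is a union of circles, it is arranged directly, the finitely many path-endpoints being separated by a small perturbation. This collection is the $\{\gamma_{k,l,\pm,\pm} \mid (k\pm,l\pm) \in \Theta\}$ of the statement.

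Finally I would perturb the connection. Because the two Lagrangian branches at each self-intersection are transverse, $T_{x_{k,+}}L \oplus T_{x_{k,-}}L = T_{\phi(x_{k,+})}X$, so any $1$-form $\nu$ on $L$ equals $\phi^*\eta$ for a $1$-form $\eta$ defined near $\phi(L)$, which we extend to $X$ by a cutoff supported off $\phi(L)$. Perturbing the connection by $\eta$ changes the curvature to $(2\pi/i)k_0\omega'$ with $\omega'$ cohomologous to $\omega$ (so $k_0[\omega']$ is still integral) and $C^1$-close to $\omega$ (so still symplectic), and $\phi^*\omega' = 0$ — that is, $\phi$ stays Lagrangian — precisely when $\nu$ is closed. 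This replaces each $t(\gamma)$ by $t(\gamma)\exp(-\int_\gamma \nu)$. Since the chosen paths form a forest, for any prescription of the numbers $\int_\gamma \nu$ there is a small exact form $\nu = df$ on $L$ realizing it; choosing the prescription so that every $t(\gamma)$ becomes a root of unity makes the holonomy character of the perturbed flat bundle $\hat E'$ finite-order on the extra generators, while it stays trivial on $\pi_1$ of the components of $L$ by the computation above applied to $\omega'$. Hence $\hat E'$ has finite-order holonomy and $\phi$ is strongly rational with respect to $\omega'$ for the bundle $\ti X^{\otimes N}$, $N$ the order of this character. The hard part will be the combinatorial bookkeeping of the last two steps: arranging that the disjoint paths genuinely generate the extra part of $\pi_1(\hat L)$, so that rationality of the finitely many transports $t(\gamma)$ really forces the full holonomy character to be finite, while simultaneously keeping the perturbation one-form $\eta$ globalizable without destroying the Lagrangian condition or the symplectic and integrality properties of $\omega'$.
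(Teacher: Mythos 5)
Your argument is correct, and its engine is the same as the paper's: perturb the prequantum connection by a one-form whose pullback to $L$ is closed, so that $\phi^*\omega'$ stays zero and the holonomy around loops in $L$ is unchanged, while the transports along finitely many paths between the $x_{k,\pm}$ become freely tunable. Where you differ is in the packaging, and both changes are clarifying. Recasting strong rationality as finite order of the holonomy character of the flat bundle $\hat E$ on the quotient $\hat L$ makes the target of the perturbation explicit and cleanly separates the contribution of $\pi_1$ of the components of $L$ (controlled by weak rationality plus Stokes) from the ``extra'' generators coming from the identifications; the paper leaves this bookkeeping implicit. You also perturb globally by $\nu = df$ and let a spanning forest guarantee that the differences $\int_\gamma \nu = f(\gamma(1)) - f(\gamma(0))$ are freely prescribable, whereas the paper localizes to a bump form $f(\eps|p|)\,\d f(\delta|q|)$ supported near one branch of each double point, which makes it immediate that only that branch's transport is affected; your version instead needs (and correctly invokes) transversality of the branches to build an ambient $\eta$ with independent traces on the two sheets. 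Two small slips, neither fatal: the cutoff extending $\eta$ to $X$ should be identically $1$ on a neighborhood of $\phi(L)$ (not ``supported off $\phi(L)$''), and for a loop $c$ in $L$ the value $\chi(c)$ is a priori only a root of unity, not $1$, unless $k_0$ is chosen so that $k_0[\omega]$ pairs integrally with relative cycles in $H_2(\phi,\Z)$ and not merely with $H_2(X,\Z)$; since you only need finite order on $\pi_1(L)$, this does not affect the conclusion.
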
 

\begin{proof} 
  The strategy of proof is to introduce a variation of connection and
  symplectic form on each branch individually, which preserves the
  Lagrangian condition.  The proof is slightly different in the case
  of surface target.  If $\mathrm{dim}(X)>2$ then $\Theta$ can be
  chosen to connect all self-intersection points, and if
  $\mathrm{dim}(X)=2$ then $\Theta$ can be chosen to connect adjacent
  self-intersection points.  By assumption the self-intersections
  $\phi(x_{k,\pm})$ are isolated, so that at any self-intersection
  point $\phi(x_{k,\pm})$ there are two branches $L_0,L_1$ of
  $\phi(L)$ in any sufficiently small neighborhood of a
  self-intersection point $\phi(x_{k,\pm})$.

  To produce the variation in connection, let
  $f: \R_{\ge 0} \to [0,1]$ be a smooth bump function supported in
  $[0,2]$ and equal to $1$ on $[0,1]$.  Let $x \in L$ be a point on
  which $\phi$ is not injective, that is, $\phi(x) = \phi(y)$ for some
  $y \neq x \in L$.  Choose local Darboux coordinates
  $p_1,\ldots, p_n,q_1,\ldots, q_n$ on $X$ centered at $\phi(x)$ so
  that the given branch of $\phi(L)$ is equal near $x$ to the locus
  defined by $p_1 = \ldots = p_n = 0$.  Consider perturbations
  $\alpha'$ of the connection $\alpha$ by multiples of the one-forms
\[ \alpha_{\delta,\eps} = f(\eps |p|) \d f(\delta |q|) ;\]
the support is shown in Figure \ref{supp}.  In the Figure, the branch
$\phi(L)$ on which the connection is modified is the top branch.
\begin{figure} 
\centering
\begingroup%
  \makeatletter%
  \providecommand\color[2][]{%
    \errmessage{(Inkscape) Color is used for the text in Inkscape, but the package 'color.sty' is not loaded}%
    \renewcommand\color[2][]{}%
  }%
  \providecommand\transparent[1]{%
    \errmessage{(Inkscape) Transparency is used (non-zero) for the text in Inkscape, but the package 'transparent.sty' is not loaded}%
    \renewcommand\transparent[1]{}%
  }%
  \providecommand\rotatebox[2]{#2}%
  \ifx\svgwidth\undefined%
    \setlength{\unitlength}{140.6827431bp}%
    \ifx\svgscale\undefined%
      \relax%
    \else%
      \setlength{\unitlength}{\unitlength * \real{\svgscale}}%
    \fi%
  \else%
    \setlength{\unitlength}{\svgwidth}%
  \fi%
  \global\let\svgwidth\undefined%
  \global\let\svgscale\undefined%
  \makeatother%
  \begin{picture}(1,0.39595319)%
    \put(0,0){\includegraphics[width=\unitlength,page=1]{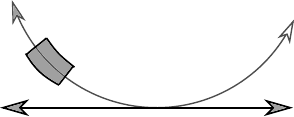}}%
    \put(0.21344341,0.24785136){\color[rgb]{0,0,0}\makebox(0,0)[lb]{\small \smash{$\supp(\alpha_{\delta,\eps})$}}}%
    \put(0,0){\includegraphics[width=\unitlength,page=2]{supp.pdf}}%
  \end{picture}%
\endgroup%
\caption{Support of the perturbation of the connection one-form} 
\label{supp} 
\end{figure} 
Since the non-injective points are assumed isolated, for sufficiently
small $\delta,\eps$ the support of $\alpha_{\delta,\eps}$ does not
intersect any other branch of $\phi(L)$.  The effect of adding
$\alpha_{\delta,\eps}$ to the connection one-form on $\ti{X}$ is to
modify the parallel transport in the fibers of $\ti{X}$ along any path
$\gamma$ to or from $x$ by a small amount $\delta h \in S^1$, without
changing the parallel transports to $x$ along other branches of
$\phi(L)$, at the cost of a small change $\delta \omega$ in the
symplectic form $\omega$ which keeps the immersion $\phi$ Lagrangian,
that is, $\phi^* (\omega + \delta \omega) =0 $.  Thus we ensure that
there is a path between each pair of self-intersection points which
has rational holonomy.  Since this perturbation preserves the holonomy
of each loop we still have that each $\gamma: S^1 \to L$ has rational
holonomy for $\phi^* \ti{X}$, all paths $\gamma_{k,l,\pm,\pm}$ from
\eqref{coll} between self-intersection points $x_k,x_l$ have the
property that the loops $\phi(\gamma_{k,\pm} \gamma_{l,\pm}^{-1})$
have rational holonomy.  After taking a suitable tensor power
$\ti{X}^{\otimes k}, k \gg 0$ of the bundle $\ti{X}$, all paths
between intersection points have the same parallel transport.  It
follows that the bundle $\ti{X}$ admits a trivialization over
$\phi(L)$.
\end{proof}

\begin{remark} Given Lemma \ref{weaktostrong}, we may assume that the
  Lagrangian immersions $\phi_s: L \to X$ for rational times $s$ are
  strongly rational.  Indeed, suppose that $L$ is exact in $X - D$
  with respect to some perturbed symplectic form $\omega'$.  The
  number of intersection points $u^{-1}(D)$ of $u$ with $D$ is $k$
  times the pairing of $u_*[C]$ with $[\omega']$; the number
  $\# u^{-1}(D)$ is necessarily positive for any almost complex
  structure $J \in \JJ_\tau(X,\omega) \cap J_\tau(X,\omega')$ taming
  both symplectic forms $\omega,\omega'$ which exists as long as
  $\omega'$ is sufficiently close to $\omega$.
\end{remark} 

Domain-dependent perturbations as in Cieliebak-Mohnke \cite{cm:trans}
are chosen using the Donaldson hypersurfaces above.  For sufficiently
large degree and a generic almost complex structure
$J_D \in \JJ_\tau(X)$ making $D$ an almost complex submanifold, the
Donaldson hypersurface $D$ contains no pseudoholomorphic spheres
$v: \P^1 \to D$ and any pseudoholomorphic sphere $v:\P^1 \to X$
intersects the hypersurface $D \subset X$ in at least three but
finitely many points $v^{-1}(D)$ \cite[Proposition 8.11]{cm:trans}.
The additional intersection points of a holomorphic treed disk
$u: C \to X$ with a Donaldson hypersurface $D$ may be used to
stabilize the domains $C$ of holomorphic disks.  Then domain-dependent
almost complex structures achieve transversality, as we now explain.
Call a holomorphic treed disk $u: C = S \cup T \to X$ {\em adapted} to
the Donaldson hypersurface $D \subset X$ if each bulk leaf $T_e$ (with
terminology from Section \ref{tbc}) lies in $u^{-1}(D)$ and each
connected component of $u^{-1}(D)$ contains an bulk leaf
$T_e, e \in \Edge_{\black,\rightarrow}(\Gamma)$; in the absence of
sphere bubbles $S_v, v \in \Ver_{\black}(\Gamma)$ mapping to the
hypersurface $D$ this means that the bulk leaves are the
intersection points with $D$, that is,
\[u^{-1}(D) = \bigcup_{ e \in \Edge_{\black,\rightarrow}(\Gamma) } T_e
.\]
Let 
\[\JJ_\tau(X) = \Set{ J : TX \to TX \ | \  \ J^2 = - \on{Id}, \quad \omega(
\cdot, J \cdot ) > 0 } \] 
denote the space of tame almost complex structures on $X$.  As in
Cieliebak-Mohnke \cite[Section 8]{cm:trans}, there exist an open
subset $\JJ_\tau(X,J_D,\Gamma)$ in the space of such domain-dependent
almost complex structures $J$ near $J_D$ with the property that each
$J$-holomorphic sphere
$u|S_v : S_v \to X, v \in \Ver_{\black}(\Gamma)$ intersects $D$ in
finitely many but at least three points.  For each combinatorial type
$\Gamma$, a {\em domain-dependent almost complex structure} is a map
\[ J_\Gamma: \ol{\S}_\Gamma \to \JJ_\tau(X,J_D,\Gamma) \]
(notation from \eqref{splitting}) agreeing with the given almost
complex structure $J_D$ on the hypersurface $D$ and in a neighborhood
of the nodes $q_k \in S$ and boundary $S$ for any fiber
$S \subset \ol{\S}_\Gamma$.  We also choose a domain-dependent
Hamiltonian perturbation: Let $\Vect_h(X) \subset \Vect(X)$ denote the
space of Hamiltonian vector fields and $\Vect_h(X,D)$ the space of
vector fields $v : X \to TX$ that vanish on an open neighborhood of
the Donaldson hypersurface $D \subset X$.  A {\em domain-dependent
  Hamiltonian perturbation} is a one-form on $\ol{\S}_\Gamma$ with
values in $\Vect_h(X,D)$
\[ H_\Gamma \in \Omega^1( \ol{\S}_\Gamma, \Vect_h(X,D)) \]
vanishing on an open neighborhood of the boundary of $\ol{\S}_\Gamma$.
A {\em domain-dependent Morse function} is a map
\[ m_\Gamma: \ol{\T}_\Gamma \times L \to \bR \]
agreeing with the given Morse function $m: L \to \R$ in a neighborhood
of the endpoints of each segment $T_i \subset T$ for any fiber
$T \subset \ol{\T}_\Gamma$.  A {\em perturbation datum} is a triple
$P_\Gamma = (J_\Gamma,H_\Gamma,m_\Gamma)$.  The space of perturbation
data of class $C^l$ is denoted $\PP^l_\Gamma = \{ P_\Gamma \}$.  The
combinatorial type of an adapted map is that of the map with the
additional data of a labelling $d(e), e \in \Edge(\Gamma)$ of any
interior node by intersection multiplicity $d(e)$ with the
hypersurface $D$; we let $d(e) =0$ if the map is constant with values
in the hypersurface $D$ near the node.  We denote the moduli space of
adapted treed holomorphic disks of type $\Gamma$ by
$\M_\Gamma(\phi,D)$.  Denote by $\ol{\M}(\phi,D)$ the union over
combinatorial types.  The requirement that the intersections with the
Donaldson hypersurface are the bulk leaves means that the energy
of a given combinatorial type of map to $X$ is determined by the
bulk leaves as well as the set of ghost bubbles. Thus in
particular, for any combinatorial type $\Gamma$ of holomorphic treed
disk there exist finitely many combinatorial types $\Gamma_{X}$ with
domain of type $\Gamma$ (allowing any combination of stable sphere
components to be ghost bubbles.)  Thus for each type $\Gamma$ of
domain the union
\[ \M_\Gamma(\phi,D) = \bigcup_{\Gamma \to \Gamma_{X}}
\M_{\Gamma_{X}}(\phi,D) \]
is a finite union of types $\Gamma_{X}$ of maps.

In order to obtain good compactness properties, we assume the
following coherence properties of the perturbations.  For each vertex
$v \in \on{Vert}(\Gamma)$, let $\Gamma(v)$ denote the subtree of
$\Gamma$ consisting of the vertex $v$ and all edges of $\Gamma$
meeting $v$.  There is natural inclusion
$\pi^* \cU_{\Gamma(v)} \to \cU_\Gamma$ and we assume that the
perturbations $\ul{P} = (P_\Gamma)$ satisfy the following axioms:
\begin{itemize}
\vskip .1in
\item{\rm (Locality)} The next axiom prevents ghost bubbles from
  forming when intersection points with divisors come together.  For
  each vertex $v \in \on{Vert}(\Gamma)$, let
\[ \Gamma(v) = \cup_{e \ni v} e \]  
denote the subtree of $\Gamma$ consisting of the vertex $v$ and all
edges $e$ of $\Gamma$ meeting $v$.  Let $\Gamma_{\white}$ denote the
subgraph of $\Gamma$ whose vertices are $v \in \Ver_{\white}(\Gamma)$.
Let
\[ \pi = \pi_1 \times \pi_2: \U_\Gamma \to \M_{\Gamma_\white} \times \U_{\Gamma(v)} \] 
be the product of the maps where $\pi_1$ is given by projection
followed by forgetful morphism and $\pi_2$ is the map $C \mapsto S_v$,
equipped with its special points. The perturbations $P_\Gamma$ are
{\em local} for $v$ if and only if $P_\Gamma$ restricts on $S_v$ to the pull-back
under $\pi$ of a family of perturbation data $P_{\Gamma,v}$ on
$\M_{\Gamma_\white} \times \U_{\Gamma(v)}$ to $\U_\Gamma$.  (The
dependence on $\M_{\Gamma_\white}$ is necessary to allow compatibility
with the (Cutting Edges) axiom; we thank Guangbo Xu for pointing this
out.)
 \vskip .1in
\item[] (Cutting-edges axiom) If $\Gamma$ is obtained from types
  $\Gamma_1,\Gamma_2$ by gluing along semi-infinite edges then
  $P_\Gamma$ is the product of the perturbations
  $P_{\Gamma_1},P_{\Gamma_2}$ under the isomorphism
  $\cU_\Gamma \cong \pi_1^* \cU_{\Gamma1} \cup \pi_2^*
  \cU_{\Gamma_2}$.
  \vskip .1in
\item[] (Collapsing-edges axiom) If $\Gamma'$ is obtained from
  $\Gamma$ by setting a length equal to zero or infinity or collapsing
  an edge then the restriction of $P_{\Gamma}$ to
  $\cU_{\Gamma} | \M_{\Gamma'} \cong \cU_{\Gamma'}$ is equal to
  $P_{\Gamma'}$.
\end{itemize} 
The origins of the axioms is rather different: the (Cutting-edges) and
(Collapsing-edges) axioms in particular imply that the moduli space
$\M_\Gamma(\phi,D)$ over the image of the inclusion
$\M_{\Gamma_1} \times \M_{\Gamma_2} \to \ol{\M}_\Gamma$ is a product
of moduli spaces over $\M_{\Gamma_1}(\phi,D)$ and
$\M_{\Gamma_2}(\phi,D)$; this implies that the terms in the \ainfty
axiom are associated to the boundary points on the moduli space of
holomorphic tree disks.  On the other hand, in principle one could
also have sphere bubbling: Cieliebak-Mohnke perturbations
$\ul{P} = (P_\Gamma)$ \cite{cm:trans} do not make all strata
$\ol{\M}_\Gamma(\phi,D)$ expected dimension in the case of ghost
bubbles.  Without the (Locality axiom), this fact could cause
additional terms in the boundary of the one-dimensional component
$\ol{\M}(\phi,D)_1$ of the moduli space of treed holomorphic maps
$\ol{\M}(\phi,D)$.  The (Locality axiom) implies that if at least one
ghost sphere bubble $u: S_v \to X, u_*[S_v] =0$ appears then
forgetting all but one marking $z_k \in S_v$ on each ghost bubble
$S_v \subset S, d(v) = 0$, one obtains a configuration of lower
expected dimension $u': S \to X,\ul{z}' \subset \ul{z}$ at least two
lower.  However, there is no recursive constraint on such
perturbations $P_{\Gamma,v}$.

Obtaining strict units requires the addition of {\em weightings} to
the combinatorial types as in Ganatra \cite{ganatra} and
Charest-Woodward \cite{flips}.  A weighting for a type $\Gamma$ is a
map $\varkappa$ from the space $\Edge_{\infty}(\Gamma)$ of semi-infinite
edges of $\Gamma$ to $[0,\infty]$.  The set of generators of the space
of Floer cochains $CF(\phi)$ is enlarged by adding two new elements
$x^{\whitet}, x^{\greyt}$ of degree $0$ resp. $-1$, with the
constraint that if the weight $\varkappa(e)$ is non-zero then the only
allowable labels of the edge $e$ are $x^{\whitet}$ (if the weight is
infinite) $x^{\greyt}$ (for any weighting).  \label{anyweighting} One then requires that the
perturbation system satisfies a forgetful axiom:
\begin{itemize}
\item[] {\rm (Forgetful axiom)} For any semi-infinite edge $e$ with
  infinite weighting $\varkappa(e) = \infty$, the perturbation datum
  $P_\Gamma$ is pulled back from the perturbation datum $P_{\Gamma'}$
  under the forgetful map $\cU_{\Gamma} \to \cU_{\Gamma'}$ obtained by
  forgetting that semi-infinite edge and stabilizing.
\end{itemize}
In particular, this axiom implies that the resulting moduli spaces
admit forgetful morphisms
$\M_{\Gamma}(\phi,D) \to \M_{\Gamma'}(\phi,D)$ whenever there is a
semi-infinite edge with infinite weighting.  See \cite{flips} for more
details on the allowable weightings.

In order to apply the Sard-Smale theorem we construct a suitable
Banach manifold of perturbation data for each type.  We assume
perturbation data $P_\Gamma$ matches that obtained by gluing
perturbation data $P_{\Gamma'}$ on strata $\U_{\Gamma'}$ contained in
$\ol{\U}_\Gamma$ on a fixed neighborhood of the boundary.  The space
$\PP_\Gamma^l$ of perturbation data of class $C^l$ taking values in
$\JJ_\Gamma(J_D)$ is a Banach manifold and the space $\PP_\Gamma$ of
perturbation data of class $C^\infty$ and fixed to be the given almost
complex structure $J_D$ and Morse function $m$ on the complement of a
compact subset is a Baire space, by the usual combination of $C^k$
norms into a non-linear metric:
\[ \dist( P_{\Gamma,1}, P_{\Gamma_2}) = 
\sum_{k \ge 0} 2^{-k} \min( 1, \Vert P_{\Gamma,1} - P_{\Gamma,2}
\Vert_{C^k} ) ;\]
see for example Royden \cite[Chapter 7.8]{royden}. 

Given a relative spin structure for the immersion, orientations on the
moduli spaces may be constructed following Fukaya-Oh-Ohta-Ono
\cite[Orientation chapter]{fooo}, \cite{orient}.   We may ignore the
constraints at the interior nodes
$z_1,\ldots, z_m \in \on{int}(S)$, since the tangent spaces to these
markings and the linearized constraints $\d u(z_i) \in T_{u(z_i)}D$
are even dimensional and oriented by the given complex structures.  At
any regular element $(u:C \to X) \in \M(\phi,D)$ the tangent space to
the moduli space of treed disks is the kernel of the linearized
operator
\[ T_u \M(\phi,D) \cong \ker(\ti{D}_u) .\]
The operator $\ti{D}_u$ is canonically homotopic via family of
operators $\ti{D}_u^t, t \in [0,1]$ to the operator
$0 \oplus D_u \oplus \dds$ on the direct sum in \eqref{linop}.  For
any vector spaces $V,W$ we have isomorphism
$\det(V \oplus W) \cong \det(V) \otimes \det(W)$.  The deformation
$\ti{D}_u^t, t \in [0,1]$ of operators induces a family of determinant
lines $\det(\ti{D}_u^t)$ over the interval $[0,1]$, necessarily
trivial, and so (by taking a connection on this family) an
identification of determinant lines
\begin{equation} \label{split1} \det(T_{u} \M(\phi,D)) \to \det(T_C
  \M_\Gamma ) \otimes \det(D_{{u}}) \end{equation}
well-defined up to isomorphism.  (Here $D_u$ denotes the linearized
operator subject to the constraints that require the attaching points
of edges mapping to critical points to map to the corresponding
unstable manifolds of the Morse function.)  In the case of nodes of
$S$ mapping to self-intersection points $x \in \cI^{\on{si}}(\phi)$ the
determinant line $\det(D_{u})$ is oriented by ``bubbling off
one-pointed disks'', see \cite[Theorem 44.1]{fooo} or \cite[Equation
(36)]{orient}.
\begin{figure}
\centering
\includegraphics[width=4in]{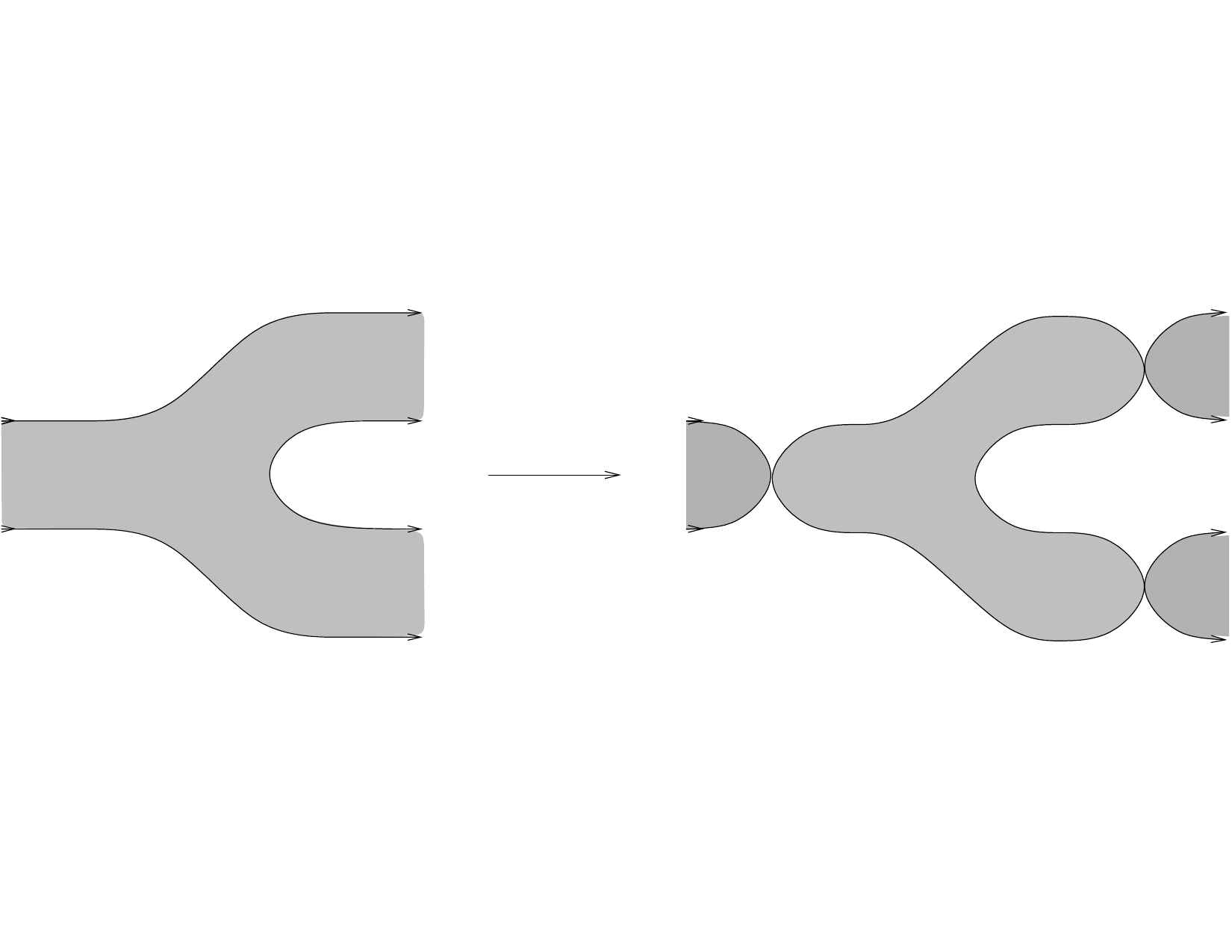}
\caption{Bubbling off the strip-like ends}
\label{closed}
\end{figure}
\noindent That is, for each self-intersection point
$(x_- \neq x_+) \in L^2, \phi(x_-) = \phi(x_+)$ choose a path of
Lagrangian subspaces
\begin{equation} \label{gammax}
\gamma_x:[0,1] \to \Lag(T_{\phi(x_-) = \phi(x_+)} X), 
\quad \gamma_x(0) = D_{x_-} \phi (T_{x_-} L )
\quad \gamma_x(1) = D_{x_+} \phi (T_{x_+} L).
\end{equation} 
Let $S$ be the unit disk with a single boundary marking
$1 \in \partial S$.  The path $\gamma_x$ defines a totally real
boundary condition on $S$ on the trivial bundle with fiber $T_x X$.
Let $\det(D_x)$ denote the determinant line for the Cauchy-Riemann
operator $D_x$ with boundary conditions $\gamma_x$ as in
\cite{orient}.  Denote by
\[ i(x) = \dim(\ker(D_x)) - \dim(\coker(D_x)) \in \bZ \]
the index of the operator $D_x$.  Let $\DD^+_{x_k,1} = \det(D_{x_k, 1}^+)$
and let $\DD^-_{x_k, 1}$ be the tensor product of the determinant line
$\det(D_{x_k}^-)$ for the once-marked disk with $\det(T_{x_k}L)$.
Because the once-marked disks with boundary conditions $\gamma_{x_k}$
and $\gamma_{\ol{x_k}}$ glue together to a trivial problem on the disk
with index $T_{x_k}L $, there is a canonical isomorphism
\begin{equation} \label{caniso} \DD^-_{x_k, 1} \otimes \DD^+_{x_k, 1} \to \R
  .\end{equation}
A choice of orientations $O_{x_k} \in \DD^\pm_{x_k, 1}$ for the
self-intersection points $x_k$ are {\em coherent} if the isomorphisms
\eqref{caniso} are orientation preserving with respect to the standard
orientation on $\R$.  Similarly for each critical point
$x \in \crit(m)$ let $W_x^\pm \subset L$ denote the stable and
unstable manifolds of $x$ under the flow of $-\grad(m) \in \Vect(L)$.
Choices of orientations $O_x$ on the determinant lines of the stable
resp.  unstable manifolds $\det(TW_{x}^\pm)$ are {\em coherent} if the
natural maps
\[\det(TW_x^-) \otimes \det(TW_x^-) \to \det(T_x L) \]
are orientation preserving.  Set
\[ \DD^+_{x_k,2} = \det(T_{x_k}W^+(x_k)) , \quad 
\DD^-_{x_k,2} = \det(T_{x_k}W^-(x_k))  \otimes \det( T_{x_k} L) \]
so that their tensor product is canonically trivial.  Given a relative
spin structure for $\phi:L \to X$ the orientation at $u$ is determined
by an isomorphism \label{familytransversality2}
 \begin{equation} \label{split2} \det(D_{{u}}) \cong \DD^+_{{x}_0,j_0}
   \otimes \DD^-_{x_1,j_1} \otimes \ldots \otimes \DD^-_{x_d,j_d},
 \end{equation}
 where each $j_k\in\{1,2\}$ depending on the type of end associated to
 $x_k$.  The isomorphism \eqref{split2} is determined by degenerating
 the surface with strip-like ends to a nodal surface. Thus each end
 $\eps_e, e \in \mE(S_v)$ of a component $S_v$ with a node $q_k$
 mapping to a self-intersection point is replaced by a disk
 $S_{v^\pm(k)}$ with one end attached to the rest of the surface by a
 node $q_k^\pm$.  After combining the orientations on the determinant
 lines on $S_{v^\pm(k)}$ with coherent orientations on the tangent
 spaces to the stable manifolds $W_{x_k}^\pm$ in the case of broken
 edges or semi-infinite edges $e \in \Edge(\Gamma), \ell(e) = \infty$,
 one obtains an orientation on the determinant line of the
 parameterized operator $\det(\ti{D}_u)$ and so orientations on the
 regularized moduli spaces $\M_\Gamma(\phi,D)$.  In particular the
 rigid component $\M(\phi,D)_\rho$ of the moduli space (that it, the
 component of expected dimension zero) inherits an orientation map
\[ \eps:  \M(\phi,D)_\rho \to \{ +1, -1 \} \]
comparing the constructed orientation to the canonical orientation of
a point.  

In general, Cieliebak-Mohnke perturbations \cite{cm:trans} are not
sufficient for achieving transversality if there are multiple markings
on ghost bubbles. Indeed, if there exists a sphere component
$S_v \subset S, v \in \Ver_{\black}(\Gamma)$ on which the map
$u |_{S_v}$ is constant and maps to the divisor so that
$u(S_v) \subset D$, the domain $S_v$ may meet any number of interior
leaves $T_e \subset T$.  Adding an bulk leaf $T_{e'}$ to the tree
meeting $S_v$ increases the dimension of a stratum
$\dim \M_\Gamma(\phi,D)$, but leaves the expected dimension
$\Ind(D_u), u \in \M_\Gamma(\phi,D)$ unchanged so $\M_\Gamma(\phi,D)$
is not of expected dimension for some types $\Gamma$.  A similar
phenomenon occurs in the immersed case for constant disks
$u|S_v: S_V \to X$ mapping to the self-intersection points $x,\ol{x}$
of the Lagrangian $\phi: L \to X$.  We call a holomorphic treed disk
$u: C \to X$ {\em uncrowded} if each such ghost component
$S_v \subset S$ meets at most one bulk leaf,
$T_e \cap S_v \neq \emptyset$. Cieliebak-Mohnke perturbations
$\ul{P} = (P_\Gamma)$ suffice to make strata for uncrowded types of
expected dimension at most one regular.  On the other hand, the
perturbations $P_\Gamma$ are chosen to satisfying the (Locality Axiom)
so that for any crowded component $\M_\Gamma(\phi)$ there exists a
non-empty uncrowded component $\M_{\Gamma'}(\phi)$ of the moduli space
obtained by forgetting all but one of the interior markings on such
ghost bubbles $S_v$.  Since combinatorial types $\Gamma'$ with sphere
bubbles are codimension two, such configurations $u: C \to X$ do not
appear in the components of the moduli space $\ol{\M}(\phi,D)$ of
expected dimension at most one.

The construction of regular perturbations now proceeds inductively by
combinatorial type.  More precisely, suppose that $d,e \ge 0$ are
integers.  Let $\Gamma$ be an uncrowded type of stable treed disk of
expected dimension at most one with $d$ incoming edges and $e$ edges
in total, and suppose that regular, stabilizing perturbation data
$P_{\Gamma'}$ have been chosen for all uncrowded boundary strata with
$d' \leq d$ incoming edges and $e' \ge e$ total edges, with
$(d',e') \neq (d,e)$. In particular, perturbations have been chosen
for any boundary stratum $ {\U}_{\Gamma'} \subset \ol{\U}_\Gamma$.  By
the gluing construction, we obtain regular perturbation data on a
neighborhood $V_\Gamma$ of the boundary of $\ol{\U}_\Gamma$.  Denote
by $\PP_\Gamma = \{ P_\Gamma \} $ the Baire space of perturbations
that agree with the given perturbations on $V_\Gamma$.

\begin{theorem} \label{comeager} Let $\Gamma$ be a combinatorial type
  of adapted holomorphic treed disk of expected dimension at most one.
  There exists a comeager subset $\PP^{\reg}_\Gamma$ of the space
  $\PP_\Gamma$ such that for $P_\Gamma \in \PP^{\reg}_\Gamma$,
\begin{enumerate} 
\item 
{\rm (Transversality)} every element of $\M_\Gamma(\phi,D)$ is
  regular;
\item \label{compactness} {\rm (Compactness)} the closure
  $\ol{\M}_\Gamma(\phi,D)$ is compact and contained in the adapted,
  uncrowded locus;
\item \label{bdes} {\rm (Boundary description)} The boundary of
  $\ol{\M}_\Gamma(\phi,D)$ is a union of components
  $\M_{\Gamma'}(\phi,D)$ where either $\Gamma'$ is a type with an edge
  of a length zero, a broken edge connecting two disk components, or a
  broken semi-infinite edge corresponding to bubbling off a Morse
  trajectory;
\item \label{tubular} {\rm (Tubular neighborhoods)} each uncrowded
  stratum $\M_\Gamma(\phi,D)$ of dimension zero has a tubular
  neighborhood of dimension one in any adjoining uncrowded strata of
  one higher dimension;
\item {\rm (Orientations)} the uncrowded strata $\M_\Gamma(\phi,D)$ of
  formal dimension one or two are equipped with orientations
  satisfying the standard gluing signs for inclusions of boundary
  components; in particular denote by $\eps(u) \in \{ \pm 1 \}$ the
  orientation sign associated to the zero-dimensional moduli spaces
  $\M(\phi,D)_\rho$.
\end{enumerate} 
\end{theorem}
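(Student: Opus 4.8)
The argument follows the standard Sard--Smale-plus-Gromov-compactness template for moduli of treed holomorphic disks, as in Cieliebak--Mohnke \cite{cm:trans} and Charest--Woodward \cite{cw:traj}, with the modifications for immersed boundary conditions from Akaho--Joyce \cite{akaho}; I sketch the five parts in turn.

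\emph{Transversality.} Work first with perturbation data of class $C^l$ and form the universal moduli space
\[ \M_\Gamma^{\univ}(\phi,D) = \{ (u,P_\Gamma) \mid P_\Gamma \in \PP_\Gamma^l, \ u \in \M_\Gamma(\phi,D;P_\Gamma) \} . \]
The plan is to show this is a Banach manifold by verifying that the linearization of $\ti{D}_u$ together with the variation in $P_\Gamma$ is surjective at every solution. The cokernel is killed component by component: on a non-constant disk or sphere component one uses that the component has an injective point lying away from the nodes, the hypersurface $D$, and the boundary of the disk (where $J_\Gamma$ is pinned down), so that variations of $J_\Gamma$ already surject; on the segments of $T$ one uses generic domain-dependent Morse data $m_\Gamma$; and on a constant surface component mapping to a self-intersection point $x \in \cI^{\on{si}}(\phi)$ --- the only genuinely new feature relative to the embedded case --- one uses the domain-dependent Hamiltonian perturbations $H_\Gamma$, which span the normal directions to the constant incidence condition. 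Here the uncrowdedness hypothesis and the (Locality axiom) enter: a ghost sphere or ghost disk meeting two or more interior leaves fails to be of expected dimension, but forgetting all but one interior marking on each such ghost component lands one in a genuinely uncrowded type of codimension at least two, which therefore does not meet the strata of expected dimension at most one. Applying Sard--Smale to the projection $\M_\Gamma^{\univ}(\phi,D) \to \PP_\Gamma^l$ gives a comeager set of regular $C^l$ perturbations, and the usual Taubes argument (intersecting over $l$) produces the comeager set $\PP_\Gamma^{\reg} \subset \PP_\Gamma$.

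\emph{Compactness and boundary description.} Adaptedness bounds the energy: each connected component of $u^{-1}(D)$ contains an interior leaf and the number of interior leaves is fixed by $\Gamma$, so $A(u)$ is bounded in terms of $\Gamma$ and $\ti X$; hence Gromov compactness for treed holomorphic disks with totally real boundary (Frauenfelder--Zemisch \cite{totreal} on the surface part, together with convergence up to breaking for gradient trajectories) applies and every sequence has a convergent subsequence. One then checks that the limit is again adapted and uncrowded and, by an index count, that the codimension-one degenerations are precisely: a finite edge length going to zero; a finite edge length going to infinity, so that a disk breaks off along a strip-breaking node; or a semi-infinite edge breaking off a Morse trajectory. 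A sphere bubble in $X$ meets $D$ in at least three points for the chosen $J_D$ and so carries at least three interior leaves, hence is codimension at least two; a sphere bubble inside $D$ is excluded since $D$ carries no $J_D$-holomorphic spheres in large degree; and a disk bubble of nonzero area carries an interior leaf (because $\phi(L)$ is exact in $X-D$ by Lemma \ref{dexists}), while a ghost bubble meets at most one interior leaf in the uncrowded locus. This establishes the Compactness and Boundary-description assertions, and since $\Gamma$ has expected dimension at most one no sphere-bubbled configuration occurs in $\ol{\M}_\Gamma(\phi,D)$.

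\emph{Tubular neighborhoods and orientations.} For the gluing statement I would invoke the standard construction: near a rigid element of a codimension-one stratum $\M_{\Gamma'}(\phi,D)$ there is a pregluing-plus-Newton-iteration gluing map from a half-open interval of gluing parameters onto a neighborhood in the adjacent one-dimensional stratum $\M_\Gamma(\phi,D)$; at a node mapping to a self-intersection point this is the strip-gluing argument, where the two branches are glued along strip-like ends using the Lagrangian path $\gamma_x$ of \eqref{gammax}. Orientations are built from the relative spin structure via the splittings \eqref{split1} and \eqref{split2} and the coherent orientations of the determinant lines $\DD^\pm_{x,j}$; their compatibility with the gluing maps --- the standard signs for inclusions of boundary faces --- follows from additivity of the index under gluing and the linear gluing formula for determinant lines, as in \cite{orient} and \cite[Orientation chapter]{fooo}. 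The main obstacle is the transversality analysis at the constant and ghost components: Cieliebak--Mohnke perturbations alone do not regularize all such strata, and it is the interplay of the uncrowdedness restriction with the Locality and Forgetful axioms, which forces the problematic crowded strata into codimension at least two, that makes the argument go through.
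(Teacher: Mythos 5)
Your proposal is correct and follows essentially the same template the paper sketches: universal moduli space plus Sard--Smale (with unique continuation from $J_\Gamma$-variations on non-constant components, and Hamiltonian perturbations at constant disks on self-intersection points) for transversality, energy bounds from adaptedness plus Gromov compactness with sphere bubbles excluded by intersection-count with $D$, gluing for the tubular-neighborhood and boundary statements, and relative-spin/determinant-line splittings for orientations. The only minor difference is that the paper explicitly separates the gluing analysis into three cases (unbranched disk nodes, branched strip nodes at self-intersections, and Morse-trajectory breakings), whereas you treat these collectively, but this does not affect the argument.
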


We will not give a complete proof, since the arguments are very
similar to those for the embedded case as in \cite{cm:trans},
\cite{flips}, but rather sketch the arguments.  The transversality for
generic perturbations is a consequence of the Sard-Smale theorem.
Given integers $k,p$ determining the Sobolev class as in the proof of
Proposition \ref{prop:cutout} and some local trivialization of the
universal curve $\U_\Gamma^i \to \M_\Gamma^i$ denote by
$\M_\Gamma^{\univ,i}(\phi,D)$ the  {\em universal moduli space} 
\begin{multline} \M_\Gamma^{\univ,i}(\phi,D) = \{ (C,u,\partial
  u,u_1,u_2, P_\Gamma) \\ \in \M_\Gamma^i \times \Map(S^\circ,X)_{k,p}
  \times \Map(\partial S^\circ,L)_{k,p} \times \Map(T_1,L)_{k,p} \\
  \times \Map(T_2,L \times_\phi L - \Delta_L)_{k,p} \times
  \PP_\Gamma^l(X,D) | (*) \} \end{multline}
where $l \gg k$, $(*)$ includes the conditions \eqref{conds} together
with the requirement that the interior edges $T_e$ map to the
Donaldson hypersurface $D$; in the case with tangencies with the
Donaldson hypersurface there are additional conditions at the nodes
$w_e$ requiring given order of vanishing at $D$ as explained in
\cite{cm:trans}, \cite{flips}.  On a point $p$ in any non-constant
component $S_v \subset S, \d u | S_v \neq 0$, orthogonality of any
element $\eta$ of the cokernel $\ker(D_u^*)$ to variations of the
almost complex structure $K \in T_{J_\Gamma} \JJ_\Gamma(J_D)$ implies
that the cokernel element $\eta$ vanishes in an open neighborhood of
$p$, and so everywhere on $S_v$ by unique continuation.  Constant
spheres $S_v, \d u | S_v = 0$ have surjective linearized operators
$D_{u | S_v}$ since the trivial Cauchy-Riemann operator on the sphere
is surjective.  Finally constant disks $u|S_v: S_V \to X$ mapping with
corners alternating $x,\ol{x}$ for some ordered self-intersection
point $x \in X$ have surjective universal linearized operator since
any cokernel element $\eta$ must be orthogonal to a variation of the
Hamiltonian perturbation on an open subset, and so vanish again by
unique continuation.  The Sard-Smale theorem implies that the set
$\PP_\Gamma^{l,\reg,i}$ of regular values of the projection onto
$\PP_\Gamma^{l,i}$ is comeager; we denote by
\[ \PP_\Gamma^{\reg} = \cap_i \PP_\Gamma^{\reg,i} \]
so that if $P_\Gamma \in \PP_\Gamma$ then every element of the moduli
space $\M_\Gamma(\phi,D)$ is regular.  A standard argument due to
Taubes implies that for a comeager subset of the space of {\em smooth}
perturbations the moduli space is transversally cut out.  In
particular, let $\Gamma$ be a type of expected dimension at most one
and $\Gamma'$ the type obtained by adding a tangency requirement at a
marking or node.  Then the moduli space $\M_{\Gamma'}(\phi,D)$ is
empty.  On the other hand, $\Gamma$ and $\Gamma'$ have the same
underlying tree so by taking $P_\Gamma \in \PP_{\Gamma'}$ we may
assume that every element of $\M_\Gamma(\phi,D)$ meets the Donaldson
hypersurface $D$ transversally.

We sketch the proof of the compactness statement \eqref{compactness}.
The (Locality axiom) ensures that the moduli spaces
$\M_\Gamma(\phi,D)$ corresponding to types $\Gamma$ with more than one
interior marking $z_i',z_k'$ on a ghost bubble
$S_v \subset S, u_*[S_v] = 0$ admit forgetful maps to moduli space
$\M_{\Gamma'}(\phi,D)$ with at most one ghost marking on each sphere
bubble.  Suppose the boundary of $\M_\Gamma(\phi,D)$ contains a
configuration in a stratum $\M_{\Gamma'}(\phi,D)$ containing a sphere
bubble.  If the sphere bubble is non-trivial, then the expected
dimension of $\M_{\Gamma'}(\phi,D)$ is at most $-1$, and so empty.
Thus all sphere bubbles occurring in the limiting configuration are
ghost bubbles.  On the other hand, any configuration of ghost bubbles
has at least two markings to be stable, and so the configuration
$\M_{\Gamma'}(\phi,D)$ has intersection multiplicity $d(z)$ at least
two with the Donaldson hypersurface at at least one point $z \in S$.
But then $\M_{\Gamma'}(\phi,D)$ is of negative expected dimension, and
so empty.  Thus the boundary of the locus of expected dimension at
most one is the union of strata $\M_{\Gamma'}(\phi,D)$ where $\Gamma$
is a stable type or a Morse trajectory has bubbled off.

To prove \eqref{tubular}, each stratum for combinatorial type
representing a breaking has a tubular neighborhood given by a gluing
construction.  In the case of immersed Floer theory with treed disks
there are now three kinds of gluing constructions necessary for the
construction of tubular neighborhoods.  A detailed exposition of the
necessary estimates may be found in Schwarz \cite{sch:morse} for the
Morse gluing, Abouzaid \cite{abouzaid:exotic} for the disk gluing
estimates; also see Schm\"aschke \cite{clean}.  First, if two disks
$S_{v_k}, v_k \in \Ver(\Gamma), k \in \{ 1, 2 \}$ are joined at a node
corresponding to an edge $e \in \Edge(\Gamma)$ of length $\ell(e) = 0$
in a treed holomorphic disk $u: C \to X$ at which the branch of $L$ is
unchanged then a gluing procedure for holomorphic disks produces a
one-dimensional family of treed holomorphic disks
$u_\varkappa: C_\varkappa \to X$ depending on a gluing parameter
$\varkappa \in [0, \eps)$ for some small $\eps > 0$.  The gluing $u_\varkappa$
converges to the given configuration $u$ as $\varkappa \to 0$, and $C_\varkappa$
is obtained from $C$ by replacing the adjacent disks $S_{v'}, S_{v''}$
with a single disk $S_v'$.  Second, if the branch of $L$ changes at
the node $q_k$ so that $q_k$ maps to a self-intersection point
$x_k \in \cI^{\on{si}}(\phi)$ then a gluing construction for holomorphic
strips with transversely intersecting Lagrangian boundary conditions
produces a family $u_\varkappa: C_\varkappa \to X$ where again $C_\varkappa$ is
obtained from $C$ by replacing adjacent disks $S_{v'}, S_{v''}$ with a
single disk $S_v'$. Finally, for edges $e$ with infinite length
$\ell(e) = \infty$ the gluing construction for Morse trajectories
produces a family $u_\varkappa: C_\varkappa \to X$ converging to infinity, where
$C_\varkappa$ is obtained from $C$ by replacing a broken segment
$T_e = T_{e,1} \cup T_{e,2}$ with an unbroken one $T_e'$.

The boundary description in Theorem \ref{comeager} \eqref{bdes}
follows from the description of the tubular neighborhoods
\eqref{tubular} implies a description for the topological boundary of
the union of one-dimensional strata.  Any top-dimensional stratum
$\ol{\M}_\Gamma(\phi)$ has boundary strata $\ol{\M}_{\Gamma'}(\phi)$
corresponding (potentially) to Morse trajectories of length zero or
broken Morse trajectories $u: T_e \to L$ of length $\ell(e)$ infinity.
The former strata $\M_{\Gamma'}(\phi,D)$ are called {\em fake}
boundary components since there are two ways of desingularizing a
configuration $u \in \M_{\Gamma'}(\phi,D)$: either by gluing the two
adjacent holomorphic disks $u_i:S_i \to X, u_k:S_k \to X$ or by
deforming a zero length Morse trajectory so that $u_i,u_k$ are
connected by a segment $T_e$ of length $\eps> 0 $ instead of a node.
On the other hand, the strata with a broken segment
$u : T_e \to L, \ell(e) = \infty$ are {\em true} boundary components
of the one-dimensional component of the moduli space
$\cup_{i=1}^k \M_{\Gamma_i}(\phi,D)_1$ since in this case there is a
single way of moving into the interior, by making the length $\ell(e)$
of that segment $T_e$ finite, see Figures \ref{true} and \ref{fig:fake}.
\begin{figure}
\centering
\includegraphics[width=4in]{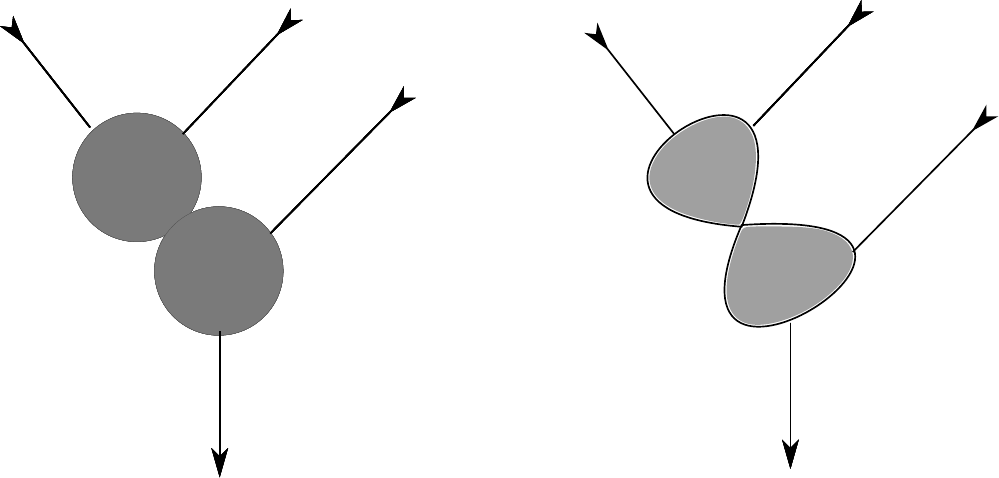}
\caption{Fake boundary components with an unbranched resp. branched node}
\label{fig:fake} 
\end{figure} 
\begin{figure}
\centering
\includegraphics[width=4in]{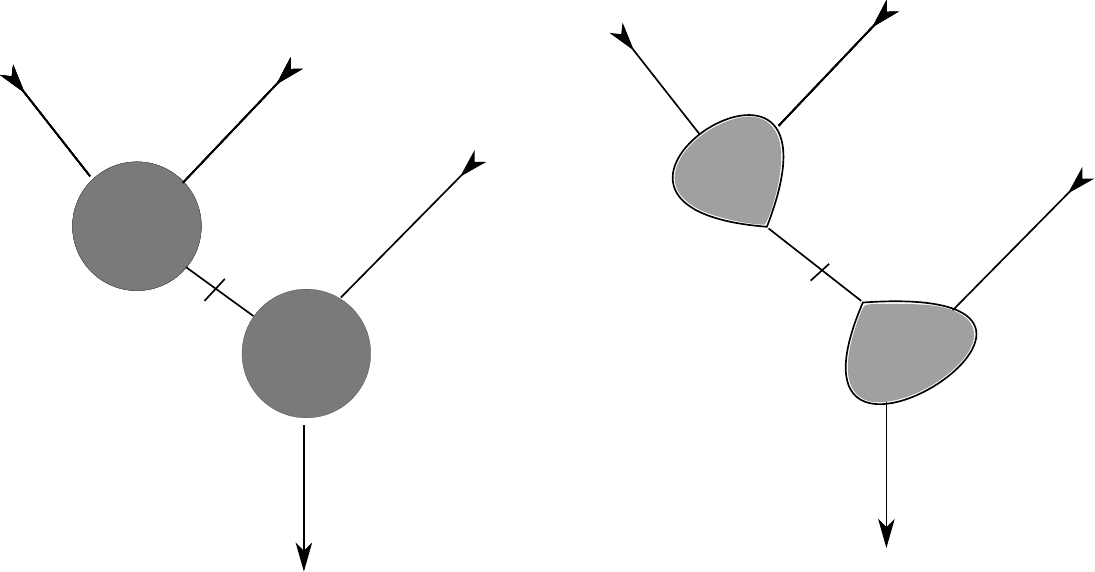}
\caption{True boundary components with an unbranched resp. branched segment}
\label{true} 
\end{figure} 
Because we chose the trajectories on the branched edges to be
constant, for the unperturbed almost complex structure $J_D$ and Morse
function $m$, once one has a fake boundary component $M_\Gamma(\phi)$
where a holomorphic disk breaks at a self-transverse intersection, it
also contains a true boundary component $\M_{\Gamma'}(\phi)$ with an
infinite edge mapping to the same self-transverse intersection, since
configurations with every possible edge length at the
self-intersection occur.  However, for the perturbed almost complex
structure and Morse function $J_\Gamma,m_\Gamma$ the perturbation
depends on the length $\ell(e)$ of this trajectory, and so the
existence of a non-empty stratum $\M_\Gamma(\phi)$ of a branched node
with zero length $\ell(e) = 0$ does not necessarily imply the
existence of a non-empty stratum $\M_{\Gamma'}(\phi)$ with infinite
length.

\begin{remark} {\rm (The exact case)}  
  An alternative construction of moduli spaces in the case of exact
  symplectic manifolds gives a definition of immersed Fukaya algebras
  over the integers.  Let $\Gamma$ be a stable combinatorial type of
  nodal marked disks $S$ with no sphere components, with {\em ordered}
  boundary markings $z_0,\ldots, z_d \in \partial S$ in
  counterclockwise order around the boundary and {\em unordered,
    possibly coinciding} interior markings $z_1',\ldots, z_c'$.  That
  is, two such disks $S_1,S_2$ are isomorphic if there exists a
  biholomorphism $S_1 \to S_2$ preserving the boundary marking
  $\ul{z}$ and preserving the interior markings $\ul{z}'$ up to
  reordering.  Let $\M^{\on{unord}}_\Gamma$ denote the moduli space of
  isomorphism classes of such disks, and $\ol{\M}^{\on{unord}}_{d,c}$ the union
  over combinatorial types $\Gamma$ with $d$ incoming leaves and $c$
  interior markings:
\[ \ol{\M}_{d,c}^{\on{unord}}
  = \bigcup_\Gamma \M^{\on{unord}}_\Gamma.\]
  Each moduli space $\M^{\on{unord}}_\Gamma$ is contractible, being
  identifiable with a subset of increasing elements
  $ 0 = z_1 < \ldots < z_{d-1} = 1$ of the boundary markings and a
  subset $z'_1,\ldots, z'_c \in \on{int}(\H)$ of the half-plane
  corresponding to the interior markings. The union
  $\ol{\M}^{\on{unord}}$ has a natural topology, similar to the moduli
  space of stable nodal disks $\ol{\M}_{d,c}$ with interior markings;
  in fact the latter admits a natural map to $\ol{\M}^{\on{unord}}_{d,c}$
  obtained by collapsing all sphere components.  Each moduli space
  $\M^{\on{unord}}_\Gamma$ is a smooth manifold equipped with a {\em
    universal disk bundle}
\[ \U^{\on{unord}}_\Gamma \to \M^{\on{unord}}_\Gamma .\]
Charts for $\M^{\on{unord}}_\Gamma$ near points $(C,\ul{z})$ where
some of the markings coincide are produced using the algebra
isomorphism $\C[z_1,\dots, z_n]^{\Sigma_n} \cong \C[p_1,\ldots,p_n]$
where $\Sigma_n$ denotes the symmetric group on $ \{ 1, \ldots, n \}$
and $p_1,\ldots, p_n$ are the elementary symmetric polynomials; taking
spectra produces local charts for $(\bC^n)/\bZ_n$. Since each
$\M^{\on{unord}}_\Gamma$ is contractible, the bundles $\U_\Gamma$ are
trivial.  Fix a trivialization $C \times \M^{\on{unord}}_\Gamma$.
(The existence of a global trivialization is not necessary; if the
bundle were only locally trivial we could perform the construction in
each local trivialization and take the intersection of the comeager
sets of perturbations.) Given a domain-dependent almost complex
structure and Morse function $P_\Gamma = (J_\Gamma,m_\Gamma)$, let
$J_\Gamma(C),m_\Gamma(C)$ denote the induced domain-dependent almost
complex structure and Morse function on $S,T \subset C$.  Denote the
moduli space of adapted treed disks by 
\begin{multline} \M^{\on{unord}}_\Gamma(\phi,D) = \left\{ 
\begin{array}{l} 
  C,\ul{z}, u_S: S \to X, \partial u_S: \partial S \to L, \\
  u_{T_1}: T_1 \to L , u_{T_2} :T_2 \to L \times_\phi L \
  \\  \olp_{J_\Gamma,H_\Gamma} u_S = 0, \left(\dds +
  \grad(m_\Gamma) \right) u_{T_2} = 0, \ u^{-1}(D) = \ul{z}
  \\  (\partial u_S)_{ \partial S \cap T_1} = \phi u_{T_1}
  |_{\partial S \cap T_1} , (\partial u_S)_{ \partial S \cap T_2} = \phi
  u_{T_1} |_{\partial S \cap T_2} \end{array} \right\}. \end{multline}
The same arguments that prove Theorem \ref{comeager} show that for a
comeager subset of perturbations $P_\Gamma$ the moduli space
$\M^{\on{unord}}_\Gamma(\phi,D)$ is cut out transversally; for
recursively defined perturbation data the locus of the moduli space
$\ol{\M}^{\on{unord},< E}(\phi,D)$ of energy at most $E$ is compact.
This ends the Remark.
\end{remark}

\section{Holomorphic disks with self-tangent boundary condition}
\label{stbc}

In this section we construct moduli spaces of holomorphic curves for
an immersed Lagrangian with a non-degenerate self-tangency.  For
simplicity we suppose that we are given a Lagrangian immersion
$\phi: L \to X$ such that self-intersection points
$L \times_\phi L - \Delta_L$ are transverse except for a single (up
to reordering) self-intersection
\[(x_0,x_1) \in L \times L, \quad \phi(x_0) = \phi(x_1) = x\] 
that has a non-degenerate self-tangency.  In particular the dimension
of the intersection of tangent spaces satisfies
\[
\dim ( (D_{x_0} \phi (T_{x_0} L)) \cap (D_{x_1} \phi (T_{x_1} L)) ) = 1
.\]
We assume that there exist local coordinates
$q_1,\ldots,q_n,p_1,\ldots,p_n$ on $X$ near the self-intersection
point $x$ with $x$ as the origin:
\[ q_1(x)  = \ldots  = q_n(x)  = p_1(x) = \ldots = p_n(x)  = 0 .\]
We also assume that the branches $L_1,L_2$ of $\phi(L)$ meeting $x$
are a plane and the product of a circular arc with a plane:
\begin{equation} \label{arcproduct}  L_1 = \{ p_1 = \ldots = p_n = 0 \}  \quad L_2 =  \{ p_1 = (1-
  q_1^2)^{1/2} + 1, q_2 = \ldots = q_n = 0 \} .\end{equation} 
Lemma \ref{admissible} shows that we may assume these conditions in
any generic Maslov flow.  We say that $x$ is a {\em standard
  self-tangency} if there exist such local coordinates and the almost
complex structure $J$ is chosen to be the standard complex structure
in these coordinates in a neighborhood of the self-tangency.

Once self-tangencies are allowed, one loses the exponential decay
property at the self-intersections.  Let $\Gamma$ be a combinatorial
type of treed disk with immersed Lagrangian boundary condition and let
$w \in C$ be a point at which a branched segment in $T_2$ meets the
surface part, so that $w \in T_2 \cap S$.  Let
\[ \eps_{w}: [0, \pm \infty) \times [0,1] \to S  \] 
be a holomorphic local coordinate on $S$ (with sign depending on
whether the end is incoming or outgoing) with limit
\[ \lim_{\tau \to \pm \infty} \eps_w(\tau,t) = w, \quad \forall t \in [0,1]
. \] 
Denote by $E_w \subset S$ the image of $[1,\infty) \times [0,1]$ under
$\eps_w$.  Denote by
\[E_w[\tau_0] := \eps_w( (\tau_0, \infty) \times [0,1]) \subset S \]
the part of the end with $\tau > \tau_0$.

\begin{lemma} \label{expdecay} {\rm (Exponential decay at
    self-intersections, similar to Proposition 4.6 in \cite{ees})} Let
  $u: C \to X$ be a holomorphic disk with boundary on a Lagrangian
  immersion $\phi: L \to X$ such that a point $z \in C$ maps to a
  self-intersection $x = \phi(x_0) = \phi(x_1), (x_0,x_1) \in L^2$.
  \begin{enumerate}
  \item \label{texpdecay} If the self intersection $x$ is transverse then there exists a
    local coordinate $\eps_w(\tau + it)$ near $z$ 
    so that if $\tau > \tau_0$ then (omitting $\eps_w$) for some
    $\theta > 0$, 
  \[ | u(\tau + it)| = O(e^{- \theta \tau}) ;\]
\item If the self-intersection $x$ is a standard self-tangency then
  either
\[ | u(\tau + it)| = O( e^{- \theta \tau}) \] 
as in the previous item or there exists a real number $c_0$ such that
if $\tau > \tau_0$ then up to involution
\begin{equation} \label{leftorright} u(\tau + it) = \left( \frac{2}{c_0 + \tau + it}, 0,\ldots, 0 \right) +
O(e^{-\theta \tau}) \end{equation} 
    where $O(e^{-\theta \tau})$ indicates exponential decay as in the
    first item.
  \end{enumerate}
\end{lemma}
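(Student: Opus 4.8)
The plan is to reduce, near the strip-like end attached to $z$, to a one-dimensional Riemann--Hilbert problem in the complex line $z_1$ in which the (near-)tangency sits, and then to run the standard asymptotic analysis for holomorphic strips; the one new ingredient is a conformal inversion that straightens the tangent circle. Write $w=\tau+it$ for the coordinate on the strip-like end $\eps_w$. Fix Darboux coordinates $(q_1,\dots,q_n,p_1,\dots,p_n)$ near $x$ together with a tame almost complex structure agreeing with the standard one to high order at $x$; then on $E_w[\tau_0]$ the map $u$ satisfies $\olp_{\mathrm{std}}u=O(|u|)\,|\partial u|$, an inhomogeneous Cauchy--Riemann equation whose inhomogeneity will be negligible for the asymptotics. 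Since $w$ maps to the self-intersection $x$, we have $u(\eps_w(\tau,t))\to 0$ as $\tau\to\infty$.

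The transverse case is classical. After the above normalization the two branches become linear Lagrangians $\R^n$ and $e^{i\Theta}\R^n$ with $\Theta=(\theta_1,\dots,\theta_n)$, $\theta_j\in(0,\pi)$, and $u$ is a finite-energy solution of the perturbed Cauchy--Riemann equation on $[0,\infty)\times[0,1]$ with these totally real boundary conditions converging to $0$. The asymptotic operator $-J_0\,d/dt$ on $[0,1]$ with boundary conditions $\R^n,\,e^{i\Theta}\R^n$ is invertible with spectral gap at $0$ at least $\theta$ as in \eqref{angles}; the usual exponential decay estimate for such boundary value problems (as in \cite{ms:jh}, \cite{ees}), combined with a bootstrap absorbing the inhomogeneity, gives $|u(\tau+it)|=O(e^{-\theta\tau})$.

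For the self-tangency case, the local model \eqref{arcproduct} is a product, $L_k=L_k'\times L_k''$, where $L_1''\pitchfork L_2''\subset\C^{n-1}$ are transverse linear Lagrangians and, inside the $z_1$-line, $L_1'=\R$ while $L_2'$ is the circle of radius one tangent to $\R$ at the origin. To leading order $u=(u_1,u'')$ decouples; by the transverse analysis $u''=O(e^{-\theta\tau})$, so it remains to study $u_1:[0,\infty)\times[0,1]\to\C$ with $u_1(\tau,0)\in\R$, $u_1(\tau,1)\in L_2'$, $u_1\to 0$. The key step is the Möbius inversion $v:=1/u_1$: the map $z\mapsto 1/z$ fixes $\R$ and carries $L_2'$ to a line parallel to $\R$, namely $\{\Im z=\mp\tfrac12\}$ (the sign depending on the side from which the circle is tangent). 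A preliminary estimate — a crude polynomial decay $|u_1|=O(|w|^{-1/2})$ from finite energy and monotonicity, parallel to \cite[Proposition 4.6]{ees} — shows that $u_1$ has no zeros on $E_w[\tau_0]$ for $\tau_0\gg 0$, and that $v$ is a finite-energy, asymptotically holomorphic map of the half-strip with boundary on the two parallel lines and $v\to\infty$. Subtracting the explicit solution $\mp w/2$, the remainder $g:=v\pm w/2$ is holomorphic on the half-strip and real on both boundary components; double Schwarz reflection across the two parallel lines makes $g$ invariant under $w\mapsto w+2i$, so $g$ descends, via $\zeta=e^{-\pi w}$, to a holomorphic function on the punctured disk $0<|\zeta|<1$. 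Since $v\to\infty$ while $u_1=1/v\to 0$ uniformly, $g$ cannot have an essential singularity at $\zeta=0$ by Picard, so $g=\sum_{k\ge -m}b_k\,e^{-k\pi w}$ for some $m\ge 0$.

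If $m\ge 1$ then $v\sim b_{-m}e^{m\pi w}$ and $u_1=1/v$ decays exponentially at rate $m\pi$, which is the first alternative. If $m=0$ then $g=b_0+O(e^{-\pi\tau})$, so $v=\mp w/2+b_0+O(e^{-\pi\tau})$ and hence
\[ u_1(\tau+it)=\frac{1}{\mp w/2+b_0+O(e^{-\pi\tau})}=\frac{\pm2}{w+c_0}+O(e^{-\theta\tau}),\qquad c_0:=\mp2b_0, \]
which, together with $u''=O(e^{-\theta\tau})$, is exactly \eqref{leftorright}. The main obstacle is making this inversion rigorous: one must know a priori that $u_1$ does not vanish near the puncture and that $v=1/u_1$ has the claimed boundary behavior and is holomorphic up to an integrable error despite the non-standard $J$. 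This is precisely the content of the parallel estimates in \cite[Proposition 4.6]{ees}: first establish the crude polynomial decay from finite energy, use it to rule out zeros and to bound the $\olp$-error for $v$, and only then run the reflection argument on the leading term.
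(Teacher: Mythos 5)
Your approach matches the paper's at the crucial step: both treatments reduce the self-tangency to the conformal inversion $v=1/u_1$, which carries the tangent pair (the real line and a circle tangent to it at the origin) to two parallel lines, and both settle the transverse case by citing Floer's exponential decay. The final extraction of the asymptotics differs only cosmetically: the paper differentiates, observing that $\Im v$ is constant on each boundary piece so $v'$ is real on all of $\partial E_w$, reflects, removes the singularity, and integrates to get $v=c_0 w + c_1 + O(e^{-\pi\tau})$; you instead subtract the explicit linear solution $\mp w/2$ and reflect $g = v \pm w/2$. These are interchangeable.

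There is, however, a genuine gap at "\,$g$ cannot have an essential singularity at $\zeta=0$ by Picard.\," Picard gives only that $g$ is dense near an essential singularity; choosing $\zeta_k\to 0$ with $g(\zeta_k)$ bounded yields $|v(w_k)| = |g(\zeta_k)\mp w_k/2|\sim \tau_k/2$, which is perfectly consistent with $u_1 = 1/v\to 0$, so no contradiction appears. What actually excludes the essential singularity --- and what the paper invokes --- is removal of singularities for the finite-energy map: regard $v$ as the post-composition of $u_1$ with the M\"obius biholomorphism $z\mapsto 1/z$ of $\P^1$, so that $v:E_w[\tau_0]\to\P^1$ is holomorphic with $E_{\P^1}(v)=E_{\P^1}(u_1)\le E(u_1)<\infty$ (the Fubini--Study metric is invariant under $z\mapsto 1/z$); after descending by $\zeta=e^{-\pi w}$ and reflecting, Gromov's removal of singularities extends $v$ (equivalently $v'$ or your $g$) meromorphically across $\zeta=0$, with a pole there because $u_1\to 0$. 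This $\P^1$-viewpoint also disposes of the preliminary step you flag about zeros of $u_1$: you do not need a polynomial decay estimate to rule them out, since $v$ is a well-defined $\P^1$-valued map regardless of zeros of $u_1$, and once $v$ extends meromorphically near $\zeta=0$ with a pole at $0$, its other poles (the zeros of $u_1$) are finite in number in a small punctured disk and can be excised by increasing $\tau_0$. With these two repairs your argument agrees with the paper's.
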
 

\begin{proof} The first item is Floer's exponential decay estimate
  \cite[Proof of Theorem 4]{floer:unreg}, see also \cite{floer}.  The
  second is also considered in Floer \cite[Lemma 4.2]{floer:unreg}, in
  the context of a study of the Fredholm index of the linearized
  operator; see also Ekholm-Etnyre-Sullivan \cite[Lemma 4.6]{ees}.
  The proof of the statement here is, as in Floer \cite{floer:unreg},
  simpler than the treatment in Ekholm-Etnyre-Sullivan \cite{ees},
  since we do not perturb the boundary condition $\phi(L)$ near the
  self-tangency $x$.  The self-tangent boundary conditions in $\C$
  given in the first coordinate in \eqref{arcproduct} may be
  transformed to affine linear boundary conditions via the conformal
  transformation $u \mapsto 1/u$.  
That is, the map $v(z) = 1/u_1(z)$
  is holomorphic and has boundary conditions given by
  $ \on{\Im}(v)(\tau) = 0, \on{\Im}(v) (\tau + i) = 1/2$.  Composing 
  with the holomorphic map $v \mapsto \exp( - 2 \pi v )$ we obtain a 
  finite energy holomorphic map with boundary conditions $\R$. 
 By 
  definition $v(z)$ has a limit as $\tau \to 0$ in the local 
  coordinate $z = \exp( \pi (\tau + i t))$,
 and must be given by a map 
  of the form 
  \[ \exp(- 2 \pi v(z)) = \pm \exp(2c_0) z + O(z^2) . \]
Assuming the positive sign we have 
\begin{eqnarray*}
 u_1(t,\tau) &=&  v(t,\tau)^{-1} \\
&=&  \pm ( \ln( \exp(    (c_0+t + i \tau)/2 + O(e^{-  \tau}))))^{-1} 
\\
u(t,\tau) &=& \left( \frac{  2}{c_0 + \tau + it}, 0,\ldots, 0 \right) +
O(e^{-\theta \tau})  
\end{eqnarray*} 
(where $\theta$ depends on the K\"ahler angles of the transverse
components) defines a holomorphic map from a neighborhood of $0$ in
the half plane $\on{Im}(z) \ge 0$ to $\C$ with boundary in $\R$ and
has the claimed expansion.
\end{proof}

\begin{proposition} If $\phi: L \to X$ has only non-degenerate
  tangencies then near any $u: C \to X$ of combinatorial type $\Gamma$
  the moduli space $\M_\Gamma(\phi)$ is cut out locally by a Fredholm
  map.
\end{proposition}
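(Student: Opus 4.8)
The plan is to follow the construction of Proposition \ref{cutout} almost verbatim: build a Banach manifold $\B_\Gamma$ of tuples $(C,u_S,\partial u_S,u_{T_1},u_{T_2})$, a Banach bundle $\cE_\Gamma \to \B_\Gamma$ whose fibre consists of $(0,1)$-forms on the surface part together with $1$-forms on the tree parts $T_1,T_2$, and a smooth section $\cF_\Gamma$ given by $(\olp u_S,\ \dds u_{T_1}+\grad(m(u_{T_1})),\ \dds u_{T_2})$; as in the self-transverse case the matching conditions $u(q_{j,-})=u(q_{j,+})$ at the finitely many nodes are imposed a posteriori, so that $\M_\Gamma(\phi)$ is a component of $\cF_\Gamma^{-1}(0)$. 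Away from any node of $C$ mapping to the self-tangency $x$, the definitions of $\B_\Gamma$, $\cE_\Gamma$, $\cF_\Gamma$ and the Fredholm property of the linearization $\ti D_u$ of \eqref{linop} are exactly as before, with exponential decay on the strip-like ends at transverse self-intersections --- hence invertibility of the asymptotic operators --- supplied by the first part of Lemma \ref{expdecay}.

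The only place that needs a new argument is the strip-like end $E_w$ at a node $w\in T_2\cap S$ mapping to the self-tangency, where exponential decay can fail and the asymptotic operator on the cross-section is degenerate. Here I would use the conformal change of variables from the proof of Lemma \ref{expdecay}: writing $u=(u_1,u')$ in the coordinates of \eqref{arcproduct}, the function $v=1/u_1$ is holomorphic on $E_w$, has affine totally real (in fact real) boundary conditions, and has finite energy, so by removal of singularities for finite-energy maps its derivative $v'$ extends holomorphically over the puncture with boundary values in $\R$, while the transverse component $u'$ already decays exponentially. Accordingly, near $w$ I would model the chart on $\B_\Gamma$ on $W^{k,p}$-sections of the pulled-back tangent bundle in the $v$-variable over the compactified half-disk --- equivalently, on a weighted Sobolev space in the $u_1$-variable that allows the $\pm 2/(c_0+\tau+it)$ leading term of \eqref{leftorright} --- together with the usual exponentially weighted space for the $u'$-components, and glue the remaining charts to this as in Proposition \ref{cutout}. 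In the $v$-coordinate the deformation problem near $w$ becomes a standard elliptic boundary value problem with non-degenerate totally real boundary conditions.

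The Fredholm property of $\ti D_u$ then follows from the usual patching: on the surface part the linearized Cauchy-Riemann operator differs by a compact operator from a model operator that is invertible on each end --- the exponentially weighted strip operators at transverse self-intersections, and the half-disk operator in the $v$-coordinate at the self-tangency node --- and is therefore Fredholm; the operators $\nabla+D\grad(m)$ on $T_1$ and $\nabla$ on $T_2$ are Fredholm as in the Morse case; and the matching terms $\xi_S|_{S\cap T}-(D\phi\,\xi_{T_1}\sqcup D\phi\,\xi_{T_2})$ at the finite node set $S\cap(T_1\cup T_2)$ contribute only a finite-rank correction. Hence $\ti D_u$ is a compact perturbation of a direct sum of Fredholm operators, so it is Fredholm, and $\cF_\Gamma$ gives the asserted local Fredholm description of $\M_\Gamma(\phi)$.

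I expect the main obstacle to be the functional-analytic bookkeeping at the self-tangency node: one must check that the chart $v=1/u_1$ is compatible with the charts used elsewhere on $\B_\Gamma$ --- in particular that it is compatible with the matching condition at $w$ and with the exponential weight imposed on $u'$ --- and that the model operator in the $v$-variable is indeed the Fredholm operator predicted by Lemma \ref{expdecay}. This is exactly the step where the hypotheses enter: that the tangency is non-degenerate and of the standard form \eqref{arcproduct}, and that, as emphasized in the proof of Lemma \ref{expdecay}, the boundary condition $\phi(L)$ is never perturbed near $x$, which is what keeps the transformed boundary condition affine-linear so that the reduction to a non-degenerate problem goes through.
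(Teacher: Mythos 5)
Your outline is correct, and it takes a genuinely different functional-analytic route from the paper's. The paper follows Floer's treatment in \cite{floer:unreg}: rather than transforming coordinates, it fixes for each $\ul{c}=(c_1,\ldots,c_m)\in\R^m$ a family of reference maps $u_{\on{ref}}^{\ul{c}}(z)=(-2(z+c_k)^{-1},0,\ldots,0)$ near the self-tangent nodes, works with the exponentially weighted space $\Omega^0(C,u^*TX)_{k,p,\lambda}$ (with small nonzero $\lambda$) for the remainder $\xi$ such that $u=\exp_{u^{\ul{c}}_{\on{ref}}}(\xi)$, and adjoins the parameter $\ul{c}$ to the Banach manifold so that the slowly decaying $1/z$ mode along the degenerate asymptotic operator is carried by $\ul{c}$ rather than by $\xi$. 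You instead pass to the conformal coordinate $v=1/u_1$ and use unweighted $W^{k,p}$ on the compactified half-disk; since $W^{k,p}$ on a compact domain contains constants, the variation of $c_0$ is absorbed automatically, and Fredholmness reduces to standard elliptic theory on a compact surface-with-boundary rather than a weighted-strip index computation. Both describe the same Banach manifold up to equivalence of norms and give a Fredholm operator of the same index; your setup is perhaps conceptually cleaner, while the paper's slots more directly into the chart and bundle trivialization machinery already in place from the earlier self-transverse case.

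One caution on phrasing. The $v$-transformation by itself does not render the cross-sectional asymptotic operator non-degenerate: in the strip picture the transformed boundary conditions for $v$ are still two parallel real-affine lines, and the operator on $[0,1]$ with boundary conditions $\R,\R$ still has $0$ in its spectrum (the constants). What removes the degeneracy in your approach is the conformal compactification itself: the puncture $w=0$ becomes an \emph{interior} point of the boundary arc of the half-disk, the two strip boundary components glue across it into a single smooth real boundary condition for $\delta v$, and the operator is then Fredholm simply because it is elliptic with a smooth totally real boundary condition on a compact domain. This absorption of the zero mode into the constants of $W^{k,p}(\text{half-disk})$ is exactly what replaces the paper's $\ul{c}$-parameter, and it is where the non-degeneracy hypothesis and the standard local model \eqref{arcproduct} actually enter; it would be worth spelling this out.
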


\begin{proof} The statement of the proposition is essentially that in
  Floer \cite[Theorem 4a]{floer:unreg}, who studies non-degenerate
  tangencies as a method for developing an index theorem for the
  transverse case, or Ekholm-Etnyre-Sullivan \cite[Corollary
  7.14]{ees}.  For simplicity assume a unique self-tangent point at
  $y \in X$. Let $C$ be a tree disk of type given by $\Gamma$ and let
  $q_1,\ldots, q_m \in C$ be the set of nodes corresponding to maps to
  the self-tangent point. We assume that the universal curve
  $\cU_\Gamma \to \cM_\Gamma$ is equipped with a smooth varying system
  of local coordinates $\eps_k: [0,1] \times \bR \to S$ near the nodes
  $q_k$ for any fiber $C \subset \cU_\Gamma$.  Given
  $\ul{c} = (c_1,\ldots, c_m) \in \bR^m$, fix a smooth {\em reference
    map} of the form
\begin{equation} \label{refmap} u_{\on{ref}}^{\ul{c}}: V \to X, \quad
  z \mapsto \left( -2 (z + c_k)^{-1}, 0,\ldots, 0
  \right) \end{equation}
near the node $q_k$. Choose a time-dependent exponential map on the
ends $\exp_{u,t}: T_x X \to X$ so that
\[\exp_{u,t=0}: T_{x} L_1 \to L_1, \quad \exp_{u,t=1}: T_{x} L_2(0) \to
L_2(0)\] 
in a neighborhood of the points $x_0,x_1 \in L$ mapping to $x \in X$.
For $p \ge 2, k \ge 1, kp > 2 $ and real number $\lambda$ let
$\Omega(C,u^*TX)_{k,p,\lambda}$ be the Sobolev space of $k,p$ maps
defined using the measure $e^{ \lambda \tau}$ on the strip-like end in
the coordinates $z = \tau + it$.  Denote by
$\Map^\Gamma_{k,p,\lambda}(C,X)$ be the space of maps $u$ of
combinatorial type $\Gamma$ of Sobolev class $W^{k,p}_{\on{loc}}$ of
the form $\exp_{u^{\ul{c}}_{\on{ref}}}(\xi)$ for
$\xi \in \Omega^0(C, u^* TX)_{k,p,\lambda}$ for some reference map
$u^{\ul{c}}_{\on{ref}}$. The class of maps that exponentially decay to
one of these reference maps is independent of choice of reference map
and local coordinates on the ends. Locally in a neighborhood of a map
$(u,\partial u): (C, \partial C) \to (X,L)$ the moduli space
$\M_\Gamma(\phi,D)$ is cut out by a Fredholm section of a Banach
vector bundle.  Given some local trivialization
$\U_\Gamma^i \to \M_\Gamma^i \times C$ of the universal treed disk
there is a bundle with base
\begin{multline} \B^i_\Gamma = \left\{ \begin{array}{l} (C,u, \partial
      u,u_{T_1},u_{T_2}) \in \left( \begin{array}{l} \M^i_\Gamma \times \Map_{k,p,\lambda}(S^\circ, X)
      \times \Map_{k,p,\lambda}(\partial S^\circ, L)\\  \times
                                       \Map_{k,p}(T_1,L) \times
                              \Map_{k,p}(T_2,L)  \end{array} \right) \\
                                       \phi \circ \partial u_S = u_S
                                       |_{\partial S}, \quad 
\partial u |_{ \partial S^\circ 
        \cap T_1} =  u_{T_1} |_{S \cap T_1} , \quad 
\partial u |_{ \partial S^\circ 
        \cap T_1} =  u_{T_1} |_{S \cap T_1} 
\end{array}
  \right\} \end{multline}
of maps of Sobolev class $k,p$ with Sobolev weight $\lambda$ together
with a treed disk of type $\Gamma$.  Local charts for $\B_\Gamma$ can
be constructed using geodesic exponentiation for some metric for which
$L$ is totally geodesic; such a metric exists as long as the
self-intersections of $L$ are self-transverse.  We suppose for
simplicity of notation that there is a single self-tangent node $q_k$
with $u(q_k) = x$.  Consider the map 
\begin{multline} \label{extra} T_c \bR \times \Omega^0( u^{c,*} TX,
  (\partial u^{c,*} TL))_{k,p,\lambda} \to \Map_{k,p,\lambda}(S,X) \\
  (\delta c,\xi) \mapsto \exp_{u^{c + \delta c}}(\Pi_{\delta
    c}\xi) \end{multline}
where 
\[\Pi_{\delta c}: 
 \Omega^0( u_{c,*} TX, 
 (\partial u^{c,*} TL))_{k,p,\lambda} \to
 \Omega^0( u_{c+\delta c,*} TX, 
 (\partial u^{c + \delta c,*} TL))_{k,p,\lambda} \]
 denotes parallel transport along the path of reference maps
 $u^{c + t \delta c}$.  The fiber of the bundle $\cE_\Gamma$ over some
 map $u$ is the vector space
\[ \cE_{\Gamma,u}^i :=  \Omega^{0,1}(S^\circ, u_S^* TX)_{k-1,p} \oplus 
\Omega^1(T_1, u_{T_1}^* TL)_{k-1,p} \oplus \Omega^1(T_2, u_{T_2}^*
TL^2)_{k-1,p} .\]
Local trivializations may be constructed using Hermitian parallel
transport using the metric used to construct the charts for the base.
The Fredholm map cutting out the moduli space locally is then
\begin{equation} \label{banachmap} \cF^i_\Gamma: \B^i_\Gamma \to
  \cE^i_\Gamma, (u_S,u_{T_1},u_{T_2}) \mapsto \left( \olp {u_S}, \dds u_{T_1} +
    \grad(m(u_{T_1})), \dds u_{T_2}  \right) ;\end{equation}
Here we use the assumption that the complex structure
$J_\Gamma: \cU_\Gamma \to J_\tau(X)$ is the standard one $J_0(z) = iz$
in a neighborhood of the self-tangency $x_0$ in the given local
coordinates.  This assumption implies that the one-form $\olp u_S$ has
the required exponential decay independent of the choice of reference
map.  The choice of Sobolev weight $\lambda \neq 0$ smaller than any
of the angles in \eqref{angles} guarantees that the linearized
operator $\ti{D}_u$ is Fredholm.  As before, after imposing a finite
energy condition $E(u) , E$ any holomorphic treed disk $u: C \to X$
has a well-defined limit $\lim_{s \to \infty} u( \eps_k(s,t))$ along
any strip-like end $\eps_k$ converging to a node $q_k$ mapping to a
self-intersection point $x_0 \in \phi( L \times_\phi L - \Delta_L)$.
The required moduli space $\M_\Gamma(\phi)$ is then the subset of the
zero set $\cF^{i,-1}_\Gamma(0)$ of the resulting Fredholm map
satisfying the matching condition at the nodes (quotiented by the
group of automorphisms of $C$, if the domain $C$ is unstable.)
\end{proof}

The moduli spaces with self-tangent boundary can be regularized as
before using stabilizing divisors.  Using Lemma \ref{dexists} choose a
Donaldson hypersurface $D \subset X$ so that $\phi(L) \subset X$ is
exact in $X - D$.  Requiring the bulk leaves $T_e$ to map to $D$,
and any component of $u^{-1}(D)$ containing an bulk leaf, we may
assume that the domains have stable surface part $S \subset T$ and use
domain-dependent almost complex structures and Morse functions.  As in
Theorem \ref{comeager}, for a comeager subset $\PP^\reg_{\Gamma,i} $
of domain-dependent perturbations the moduli space of treed
holomorphic disks $u: C \to X$ is transversally cut out for fibers
$C \subset \cU^i$ in a local trivialization
$\cU^i_\Gamma \to \cM^i_\Gamma$ of the universal curve; taking the
finite intersection $\PP_{\Gamma}^\reg = \cap_i \PP^{\reg}_{i,\Gamma}$
of these comeager sets produces the desired comeager subset of regular
perturbations.

There is also a moduli space denoted $\M_\Gamma^e(\phi)$ with
``exponential decay'' at the self-tangency, which will be empty for
regular perturbations. The moduli space $\M_\Gamma^e(\phi)$ is
constructed in the same way, but using a constant reference function
$u^{\on{ref}}(z) = \phi({y})$ near the self-tangency nodes $q_k$
instead of the reference function in \eqref{refmap}. For generic
choices of perturbation data $P_\Gamma$, the exponential decay moduli
spaces $\M_\Gamma^e(\phi)$ are, by construction, of lower dimension
than the moduli spaces $\M_\Gamma(\phi)$ that allow the reference
function \eqref{refmap}.  Thus,

\begin{proposition} {\rm (Similar to \cite[Remark 9.20]{ees})} Let
  $\Gamma^e$ denote a combinatorial type of treed adapted holomorphic
  disk with exponential decay at a tangency.  Let $\Gamma$ be the
  corresponding combinatorial type of treed holomorphic disk without
  the self-tangency requirement.  If $\M_\Gamma(\phi,D)$ has expected
  dimension zero, then for regular perturbation data the moduli space
  $\M^e_{\Gamma^e}(\phi,D)$ is empty.
\end{proposition}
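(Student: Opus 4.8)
The plan is to combine the transversality already available for these moduli spaces with a dimension count. First I would observe that $\M^e_{\Gamma^e}(\phi,D)$ sits in exactly the Fredholm framework of the preceding Proposition: the only modification is that near each self-tangency node the reference map is the constant $\phi(y)$ rather than the family $-2(z+c_k)^{-1}$ of \eqref{refmap}, and this does not disturb the Sard--Smale argument used to prove Theorem \ref{comeager} --- the linearized operator on the non-constant surface components and on the tree parts is unchanged, and variations of the perturbation datum $P_\Gamma$ still span the cokernel by unique continuation. So for $P_\Gamma \in \PP^{\reg}_\Gamma$ the space $\M^e_{\Gamma^e}(\phi,D)$ is a smooth manifold whose dimension equals the index $\Ind(\ti{D}_u) - \dim \aut(C)$ of its linearized operator.

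Next I would compare that index with $\vdim \M_\Gamma(\phi,D)$. The types $\Gamma$ and $\Gamma^e$ have the same underlying treed disk and the same homology and edge labels; the difference is purely the local model at the nodes mapping to the self-tangent point $x$. Passing from the parabolic reference $-2(z+c_k)^{-1}$, which carries a shift parameter $c_k \in \R$ appearing as the factor $T_c\R$ in \eqref{extra}, to the constant reference removes exactly that parameter for each such node --- equivalently it tightens the admissible asymptotic Sobolev weight past the relevant eigenvalue of the asymptotic operator, so that $\delta c$ no longer contributes to the kernel. This is the index computation of Floer \cite[Theorem 4a]{floer:unreg} (see also \cite[Section 7]{ees}): each node mapping to the self-tangent point lowers the Fredholm index by (at least) one. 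Hence $\vdim \M^e_{\Gamma^e}(\phi,D) \le \vdim \M_\Gamma(\phi,D) - m$ where $m \ge 1$ is the number of such nodes, and by hypothesis the right-hand side is $\le -1$.

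Finally I would conclude: a transversally cut out manifold of negative expected dimension is empty, so $\M^e_{\Gamma^e}(\phi,D) = \emptyset$ for $P_\Gamma \in \PP^{\reg}_\Gamma$. The one genuinely non-formal step --- the main obstacle --- is the strict index drop of the middle paragraph; everything else is bookkeeping. I expect it to be handled exactly as in \cite{floer:unreg} and \cite{ees}: identify the asymptotic operator at a standard self-tangency, compute its spectrum, and check that the eigenspace responsible for the parabolic mode $\sim 1/(z+c_0)$ is one-dimensional, lying above the Sobolev weight used for \eqref{refmap} but below it once the weight is tightened to force exponential decay, so that it is eliminated from the kernel when one imposes the exponential-decay condition.
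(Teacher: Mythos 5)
Your proposal is correct and takes essentially the same route as the paper: the paper also argues that $\M^e_{\Gamma^e}(\phi,D)$ is, by construction (namely dropping the shift parameter $c_k \in T_c\R$ of \eqref{extra} by passing to the constant reference map), of strictly lower expected dimension than $\M_\Gamma(\phi,D)$, and then invokes regularity to conclude emptiness. Your middle paragraph supplies the explicit index bookkeeping that the paper compresses into the phrase ``by construction, of lower dimension,'' but the argument is the same.
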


Given a relative spin structure for the self-tangent immersion,
orientations for the moduli spaces $\ol{\M}(\phi,D)$ may be
constructed as in Fukaya-Oh-Ohta-Ono \cite{fooo}, see also
Wehrheim-Woodward \cite{orient}, by bubbling off boundary value
problems on disks with given paths $\gamma_x: [0,1] \to \Lag(T_xX)$.
For $x = {y} \in X$ the self-tangent intersection point of
$\phi: L \to X$, the natural gluing of the boundary value problems
associated to the once-punctured disk with marking ${y}$ and boundary
condition $\gamma_y$ and a marking ${\ol{v}}$ with boundary condition
$\gamma_{\ol{v}}$ produces a boundary value problem on the disk with
index one (since the Sobolev spaces are defined using a small {\em
  negative} Sobolev weight and so (after stabilization) kernel
isomorphic to $T_{{y}} L \oplus \R$.  For the self-tangent
self-intersection, we require as part of the coherence axiom for
orientations that
$o_{{y}} \in \det(D_{{y}}), o_{{\ol{v}}} \in \det(D_{{\ol{v}}})$
should be defined so that the natural gluing map
\begin{equation} 
\label{tglue}
\DD_{{y},2}^+ \otimes \DD_{{y},2}^- \to  \R\end{equation} 
produces the standard orientation on the determinant of the
one-dimensional index of the boundary condition where
$\DD_{{y},2}^+ = \det(D_{{y},2})$ and
$\DD_{{y},2}^- = \det(D_{{y},2}^-) \otimes \det(T_{{y}}L)$.  The
determinant line in \eqref{tglue} is an {\em odd} determinant line in
the language of Deligne-Freed \cite{signs}, so that permuting it with
other determinant lines produces signs that will contribute to the
gluing sign for deformation of the Lagrangian boundary condition.

Compactness results follow from the discussion in
Etnyre-Ekholm-Sullivan \cite{ees}.  In particular \cite[Theorem
11.2]{ees} construct compactified {\em parametrized} moduli spaces of
holomorphic curves as the Lagrangian boundary condition develops a
self-tangency in an isotopy $\phi_t, t \in [-\eps,\eps]$.  Energy
quantization holds uniformly in $t$ in the following sense:

\begin{lemma} \label{hbar} There $\hbar > 0$ such that any non-constant holomorphic
  polygon $u: S \to X$ with boundary in $\phi_t$ {\em except} for
  those contained in a small neighborhood of the self-tangency, so
  that any such holomorphic polygon not contained in a small
  neighborhood $U$ of $y$ has energy at least $E(u) > \hbar$.  
\end{lemma}

\begin{proof} For holomorphic spheres or disks, the standard argument
  using the mean value inequality applies, see \cite[Proposition
  4.1.4]{ms:jh}.  On the other hand, since the distance between the
  branches of the Lagrangian is bounded from below away from the
  self-intersections, there exists a constant $c > 0$ so that holomorphic
  polygons with a corner not contained in a neighborhood of the
  self-tangency contain a sub-domain of the form
  $u: [0,\infty] \times [0,1] \to X$ with the property that
  $d(u(T,0), u(T,\infty)) > c$.  By the mean value inequality again,
  this is impossible if the energy is sufficiently small, since the
  derivative $\partial_t u ( T, t)$ is bounded by a constant times the
  energy.
\end{proof} 

The lack of energy quantization for holomorphic strips in a
neighborhood of the self-tangency invalidates the argument in Gromov
compactness \cite[Proposition 4.7.1]{ms:jh} used to prevent energy
loss, see especially \label{espec} Step 2 in the proof in \cite{ms:jh}
which uses energy quantization. However, in the neighbourhood $U$ the
branches $L_0,L_1$ of $\phi(L)$ are exact:
$$      \omega_U = \d \alpha, \quad 
\alpha |_{L_0} = \d \zeta_0, \quad \alpha_{L_1} = \d \zeta_1 .$$
Energy loss can be ruled out using Stokes' theorem: For a sequence of
holomorphic polygons $u_\nu: S_\nu \to X$ Gromov converging to
$u: S \to X$ in the complement of the self-tangency $y$, let
$ V_\nu = u_\nu^{-1}(U)$ and $(\partial V_\nu)_1$ resp.
$(\partial V_\nu)_2 \cong \sqcup_{i=1}^{e_\nu} [0,1]$ the intersection
of $\partial V_\nu$ with $\partial S$ resp. the closure of complement
of $\partial S$ in $\partial V_\nu$.  Write the intersection
$ (\partial V_\nu)_1 \cap (\partial V_\nu)_2$ as a sequence
$z_{0,1}, z_{1,1}, \ldots, z_{0,e}, z_{1,e}$ corresponding to the
endpoints of the intervals in $\sqcup_{i=1}^{e_\nu} [0,1]$.  Since
$u_\nu$ converges to $u$, the number of ends $e_\nu$ converges to some
$e$ for $\nu$ sufficiently large.  The energy
$ E(u_\nu | u_\nu^{-1}(U))$ is determined by
$$\int_{u_\nu^{-1}(U)} u^* \omega = \int_{(\partial V_\nu)_1} u^*
\alpha + \sum_{i=1}^{e_\nu} \zeta_1( z_{1,i}) - \zeta_0( z_{0,i}) $$
which limits to $ \int_{u^{-1}(U)} u^* \omega$.  It follows that there
is no energy loss at the self-tangency $y$ either.

Once one has excluded holomorphic maps with exponential decay to a
self-tangency point, the moduli space of maps with a node mapping to a
self-tangency splits as a disjoint union depending on whether the map
approaches the tangency from the $x_1$ negative or positive, according
to the sign in \eqref{leftorright}.  We introduce two new symbols
$v_\pm$ and let
\[ \cI^{\on{si}}(\phi)  =  ( L \times_{\phi} L - \Delta_L)  - \{y \} \cup
\{ v_+ , v_- \} .\]
Then for $\ul{x} \subset \cI^{\on{c}}(\phi)\cup \cI^{\on{si}}(\phi)$ we denote by
$\M(\phi,D,\ul{x})$ the subset of the moduli space
$\M_\Gamma(\phi,D,\ul{x})$ with the given limits $\ul{x}$, which
distinguishes between holomorphic maps approaching from $x_1$ negative
or positive in the local model.

We define a moduli space of holomorphic treed disks by imposing the
following condition at nodes: If $e \in \Edge(\Gamma)$ is an edge
corresponding to a segment $T_e$ of $T$ that maps to the self-tangency
in $\phi_0$, then the limits of the disks on either end of $T_e$ are
opposite, that is, if $z_{\pm}$ are the endpoints of the segment
$ \lim_{z \mapsto z_+} u(z) = v_+$ (that is, $u(z)$ approaches $y$
from the left in the local model) then
$ \lim_{z \mapsto z_-} u(z) = v_-$ and similarly if
$ \lim_{z \mapsto z_+} u(z) = v_-$ (that is, $u(z)$ approaches $y$
from the left in the local model) then
$ \lim_{z \mapsto z_-} u(z) = v_+$.  Thus, the holomorphic treed disk
``passes through'' the self-tangency.  We also allow the tangency
points $v_\pm$ as input or output.  In the case that $\phi_t: L \to X$
is a Maslov flow with a self-tangency at $t = 0$ generating two new
self-intersection points, the labels specifying the limits at infinity
are
\begin{equation} \label{new} \cI^{\on{si}}(\phi_0) = \cI^{\on{si}}(\phi_{-\eps}) \cup \{ v_-,v_+\}, \quad \cI^{\on{si}}(\phi_\eps) = \cI^{\on{si}}(\phi_{-\eps}) \cup \{ \zz, \bzz, \bz,
  \mz \} .\end{equation}
In other words, each direction of the self-tangency evolves into two
ordered self-tangencies.

\section{Morse model for immersed Floer cohomology}
\label{mmodel}

In this section we apply the transversality results of the previous
section to construct immersed Floer theory in the Morse model.  The
generators of the immersed Floer space are critical points of a Morse
function on the Lagrangian together with ordered self-intersection
points of the immersion: Let 
\[ m: L \to \bR \]  
be a Morse function. Define
\begin{eqnarray} 
  \cI^{\on{c}}(\phi) &=& \crit(m) = \{ x  \in L \ | \ \d m (x) = 0 \} \\ 
  \cI^{\on{si}}(\phi) &=& \{ (x_1,x_2) \in L^2  \ | \ \phi(x_1) = \phi(x_2),
                     \ x_1 \neq x_2 \} \\
 \label{selfint} \cI(\phi) &=& \cI^{\on{c}}(\phi) \cup \cI^{\on{si}}(\phi) \cup \{ x^{\whitet},
                x^{\greyt} \} .\end{eqnarray}
The set of self-intersections $\cI^{\on{si}}(\phi)$ has a natural  
involution obtained by reversing the ordered pair:
\[\cI^{\on{si}}(\phi) \to \cI^{\on{si}}(\phi), \ x= (x_1,x_2) \mapsto \ol{x} =
(x_2,x_1) .\]
Thus $\cI(\phi)$ consists of the critical points of the Morse function
together with two copies of each self-intersection point, plus two
extra generators $x^{\whitet}, x^{\greyt} \in \cI(\phi)$ used to
construct strict units for the Fukaya algebra.  We assume for
simplicity $L$ is connected and $m: L \to \R$ has a single maximum
$x_+ \in \cI^{\on{c}}(\phi)$.  The
degrees are determined by
\[ \deg(x_+) = \deg(x^{\whitet}) = 0, \quad \deg(x^{\greyt}) = -1 
.\]

\begin{example} \label{ex1} We consider an example of an immersion of
  the circle in the two-sphere with a pair of self-intersection
  points.  Let $X \cong S^2 \cong \bR^2 \cup \{ \infty \}$.  Consider
  the immersion of a circle $L \cong S^1$ as in Figure
  \ref{generators}.
\begin{figure} 
\begin{center}
\begingroup%
  \makeatletter%
  \providecommand\color[2][]{%
    \errmessage{(Inkscape) Color is used for the text in Inkscape, but the package 'color.sty' is not loaded}%
    \renewcommand\color[2][]{}%
  }%
  \providecommand\transparent[1]{%
    \errmessage{(Inkscape) Transparency is used (non-zero) for the text in Inkscape, but the package 'transparent.sty' is not loaded}%
    \renewcommand\transparent[1]{}%
  }%
  \providecommand\rotatebox[2]{#2}%
  \ifx\svgwidth\undefined%
    \setlength{\unitlength}{74.03606701bp}%
    \ifx\svgscale\undefined%
      \relax%
    \else%
      \setlength{\unitlength}{\unitlength * \real{\svgscale}}%
    \fi%
  \else%
    \setlength{\unitlength}{\svgwidth}%
  \fi%
  \global\let\svgwidth\undefined%
  \global\let\sv
  \makeatother%
  \begin{picture}(1,3.22487084)%
    \put(0,0){\includegraphics[width=\unitlength,page=1]{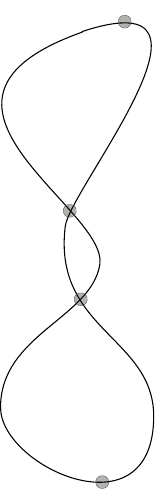}}%
    \put(2.06938101,2.55758581){\color[rgb]{0,0,0}\makebox(0,0)[lb]{\small \smash{}}}%
    \put(0.43701218,1.69888224){\color[rgb]{0,0,0}\makebox(0,0)[lb]{\small \smash{$\bzz$}}}%
    \put(0.74037698,3.15861453){\color[rgb]{0,0,0}\makebox(0,0)[lb]{\small \smash{$x_+$}}}%
    \put(0.75586417,-0.00000002){\color[rgb]{0,0,0}\makebox(0,0)[lb]{\small \smash{$x_-$}}}%
    \put(0.37449693,2.000991){\color[rgb]{0,0,0}\makebox(0,0)[lb]{\small \smash{$\zz$}}}%
    \put(0.46606091,1.40667391){\color[rgb]{0,0,0}\makebox(0,0)[lb]{\small \smash{$\bz$}}}%
    \put(0.46885573,1.11180859){\color[rgb]{0,0,0}\makebox(0,0)[lb]{\small \smash{$\mz$}}}%
    \put(0,0){\includegraphics[width=\unitlength,page=2]{imex.pdf}}%
  \end{picture}%
\endgroup%

  \end{center}
\caption{Generators for immersed Floer cohomology}
\label{generators}
\end{figure} 
Let $m:L \to \R$ be a Morse function with two critical points given by
the maximum $x_+$ and minimum $x_-$; the highest self-intersection
point contributes two elements $\zz, \bzz$ to $\cI(\phi)$ where the
ordering of the points in the self-intersection $L \times_\phi L$ is
given by the direction of the arrow in the Figure.  The second
self-intersection point contributes elements $\bz,\mz$
similarly. Thus
\begin{equation} \label{bzs}
\cI(\phi) = \cI^{\on{si}}(\phi) \cup \cI^{\on{c}}(\phi) \cup \{ x^{\whitet},
x^{\greyt} \} = \{ x_+ ,x_-, \zz,\bzz,\bz,\mz , x^{\whitet},
x^{\greyt} \} .\end{equation}
This ends the example. \end{example}

In order to obtain graded Floer cohomology groups a grading on the set
of generators is defined as follows.  Let $N$ be an even integer and
$\Lag^N(X) \to \Lag(X)$ an $N$-fold Maslov cover of the bundle of
Lagrangian subspaces as in Seidel \cite{se:gr}; we always assume that
the induced $2$-fold cover $\Lag^2(X) \to \Lag(X)$ is the bundle of
oriented Lagrangian subspaces.  A {\em grading} of $\phi:L \to X$ is a
lift $\phi^N$ to $\Lag^N(X)$ of the natural map
$L \to \Lag(X), x \mapsto \on{Im}(D_x\phi)$.  Given such a grading,
there is a natural $\bZ_N$-valued map
\[ \cI(\phi) \to \bZ_N, \quad x \mapsto |x| \]
obtained by assigning to any critical point the index mod $N$ and to
any self-intersection point $(x_-,x_+) \in L$ the Maslov index of the
image of any path from $\phi^N(x_-)$ to $\phi^N(x_+)$ in $\Lag_x(X)$.
Denote by $\cI^k(\phi)$ the subset of $x \in \cI(\phi)$ with
$|x| = k$.

The moduli space of holomorphic disks is non-compact, and to remedy
this the structure maps of the Fukaya algebra are defined over Novikov
rings in a formal variable.  Let
\[\Lambda = \Set{ \sum_{i=1}^\infty c_i q^{d_i} \ | \ \ c_i \in \C, d_i
  \in \R, \lim_{i \to \infty} d_i =  \infty }\]
denote the {\em universal Novikov field}.  The valuation by powers of
$q$
\[ \val_q: \Lambda  - \{ 0 \} \to \R, \quad 
\sum_{i=1}^\infty c_i q^{d_i} \mapsto \min_{c_i \neq 0 }(d_i) \]
is well-defined and the {\em Novikov ring}
\begin{equation} \label{novikov} \Lambda_{\ge 0} = \{ f \in \Lambda \
  | \ \val_q(f) \ge 0 \}, \quad \text{resp.} \quad \Lambda_{> 0} = \{
  f \in \Lambda \ | \ \val_q(f) > 0 \},
\end{equation} 
is the subspace with non-negative resp. positive $q$-valuation.  Let
$\Lambda^\times \subset \Lambda$ be the subset with zero
$q$-valuation.

The Floer cochain space is the free module over generators given by
Morse critical points, self-intersection points, and the two
additional generators from \eqref{selfint} necessary to achieve strict
units.  Let
\[CF(\phi) = \bigoplus_{ x \in \cI(\phi) } \Lambda x .\]
The space of Floer cochains is naturally $\bZ_N$-graded by 
\[ CF(\phi) = \bigoplus_{k \in \bZ_N} CF^k(\phi), \quad CF^k(\phi) = \bigoplus_{x \in \cI^k(\phi)} \Lambda x.\]
Put
\[ 1_\phi := x^{\whitet} \in CF(\phi) .\]
The $q$-valuation on $\Lambda$ extends naturally to $CF(\phi)$:
\[ \val_q:CF(\phi) - \{ 0 \} \to \R, \quad \sum c(x) x \mapsto
\min(\val_q(c(x)) .\]
We suppose that $L$ is equipped with a local system
$y: \pi_1(L) \to \Lambda^\times$, and denote for any holomorphic treed
disk $u: C \to X$ the holonomy of the local system around the boundary
of the disks components in $C$ by $y(u) \in \Lambda^\times$.  For
regular perturbations define {\em higher composition maps}
\[ \mu_d: CF(\phi)^{\otimes d} \to CF(\phi)[2-d] \]
on generators by 
\[ 
\mu_d(x_1,\ldots,x_d) = \sum_{x_0,u \in\ol{\M}_\Gamma(\phi,D,\ul{x})_\rho}
(-1)^{\heartsuit} (\sigma(u)!)^{-1} y(u) q^{E(u)} \eps(u) x_0 \]
where $\sigma(u)$ is the number of bulk leaves,
$\eps(u) \in \{ \pm 1 \}$ was defined in Theorem \ref{comeager}, and
\begin{equation} \label{heartsuit}  \heartsuit = {\sum_{i=1}^d i|x_i|} .\end{equation}

\begin{theorem} For any regular, coherent perturbation system
  $\ul{P} = (P_\Gamma)$ the maps $(\mu^d)_{d \ge 0}$ satisfy the
  axioms of a (possibly curved) \ainfty algebra $CF(\phi)$ with strict
  unit $1_\phi = x^{\whitet} \in CF(\phi)$.
\end{theorem}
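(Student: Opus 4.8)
The plan is to obtain the curved \ainfty relations from the structure of the one-dimensional components of the moduli spaces $\ol{\M}(\phi,D)$ supplied by Theorem \ref{comeager}, and then to deduce the strict unit axioms from the (Forgetful axiom) on the perturbation data, following the treed-disk formalism for the embedded case in Charest--Woodward \cite{flips} (in turn based on Fukaya--Oh--Ohta--Ono \cite{fooo}). Transversality, compactness, the boundary description, and the orientations of the relevant moduli spaces are precisely the content of Theorem \ref{comeager}, so the argument is bookkeeping of signs and a dimension count for the unit.

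For the \ainfty relations, fix incoming limits $\ul{x} = (x_1,\dots,x_d) \in \cI(\phi)^d$ and an outgoing limit $x_0 \in \cI(\phi)$, and consider the union $\ol{\M}(\phi,D,\ul{x})_1$ of the one-dimensional strata with these limits. By Theorem \ref{comeager} this is a compact, oriented one-manifold whose boundary, by item \eqref{bdes}, is a union of strata $\M_{\Gamma'}(\phi,D,\ul{x})$ of three kinds: a finite edge of length zero, a broken finite edge joining two disk components, or a broken semi-infinite edge. Using the tubular neighborhood description, item \eqref{tubular}, a zero-length stratum at an \emph{unbranched} node has two desingularizations — gluing the two adjacent disks, or lengthening the Morse segment — and a zero-length stratum at a \emph{branched} node likewise admits two desingularizations, by strip gluing at the transverse self-intersection or by lengthening the constant branched segment; hence each such stratum is an interior point of the one-manifold obtained by adjoining both desingularizations and contributes nothing to the signed boundary count. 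The remaining (broken-edge) strata, by the (Cutting-edges) and (Collapsing-edges) axioms, are products $\M_{\Gamma_1}(\phi,D) \times \M_{\Gamma_2}(\phi,D)$ of zero-dimensional moduli spaces, and are in bijection with the terms
\[ \mu_{d-n+1}\bigl( x_1,\dots,x_{m-1},\ \mu_n(x_m,\dots,x_{m+n-1}),\ x_{m+n},\dots,x_d \bigr), \qquad 0 \le n \le d, \]
the cases $n = 0$ (an internal insertion of $\mu_0$) and $n = d$ (a broken root edge, giving $\mu_1 \circ \mu_d$) included. Since the signed count of the boundary of a compact oriented one-manifold vanishes, these terms sum to zero; comparing, via the orientation conventions of \cite[Orientation chapter]{fooo} and \cite{orient} and the coherent choices $O_x \in \DD^\pm_{x,j}$, the induced boundary orientation with the product orientation produces exactly the Koszul signs appearing in the curved \ainfty relations, consistent with the sign conventions used to define $\mu_d$ (cf.\ \eqref{heartsuit}).

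For the strict unit, recall that $1_\phi = x^{\whitet}$ carries an infinite weighting $\rho(e) = \infty$ on its semi-infinite edge, so by the (Forgetful axiom) the perturbation datum $P_\Gamma$ for any type with an input labeled $x^{\whitet}$ is pulled back under the forgetful map $\cU_\Gamma \to \cU_{\Gamma'}$ deleting that leaf and stabilizing. For $d \ge 3$ and for $d = 1$ this forgetful map strictly decreases the expected dimension, so the relevant zero-dimensional moduli spaces are empty and $\mu_d(\dots,1_\phi,\dots) = 0$. For $d = 2$ the forgotten configuration is unstable, and the explicit description of the once-marked treed disk — as in the homotopy units construction of \cite[(3.3.5.2)]{fooo} and \cite[Section 2.2]{flips} — shows that the only contributions come from constant Morse trajectories, yielding $\mu_2(1_\phi,x) = x$ and $\mu_2(x,1_\phi) = (-1)^{|x|}x$; the auxiliary generator $x^{\greyt}$ of degree $-1$ supplies the chain homotopy upgrading homotopy-unitality to a strict unit.

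The main obstacle is the sign verification in the \ainfty step: one must check that the orientation on each $\M_\Gamma(\phi,D)$ assembled from the relative spin structure and the coherent orientations $O_x$ restricts, on the broken-edge boundary strata, to the product orientation up to exactly the Koszul sign dictated by the \ainfty relations, and in particular that the \emph{odd} determinant lines attached to self-intersection points (the ``bubbling off a one-pointed disk'' normalization, \cite[Theorem 44.1]{fooo}, \cite{orient}) contribute the correct additional signs from permuting them past the other determinant factors. The cancellation of the fake (zero-length) boundary strata is routine given the three gluing constructions recalled in the proof of Theorem \ref{comeager}, though one does use the observation there that the length-dependence of the domain-dependent perturbation does not obstruct it, since both desingularizations of such a stratum exist unconditionally by the gluing theorems.
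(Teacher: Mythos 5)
Your proof takes essentially the same route as the paper's (very terse) argument after the theorem statement: derive the curved $A_\infty$ relations from the boundary description of the one-dimensional moduli spaces in Theorem \ref{comeager}, using the tubular-neighborhood/gluing results to cancel the fake (zero-length) strata and the (Cutting-edges)/(Collapsing-edges) axioms to express the true (broken-edge) strata as products of rigid moduli, deferring the sign bookkeeping to \cite{flips} and \cite{orient}; and obtain strict unitality from the (Forgetful axiom) as in \cite{flips}. You fill in the details — in particular the branched-node desingularization, the odd-determinant-line signs at self-intersections, and the dimension count for the unit axiom — that the paper handles by citation to \cite{flips} with the remark that the sign computation is unchanged in the immersed case.
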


In particular, $\mu_d$ satisfy the {\em \ainfty-associativity
  equations}
\begin{multline} \label{ainftyassoc} 
0 = \sum_{\substack{d_1,d_2 \ge 0 \\ d_1+d_2 \leq d}} (-1)^{ d_1 + \sum_{i=1}^{d_1} |x_i|}
\mu^{d-d_2+1}(x_1,\ldots,x_{d_1}, \\ \mu^m(x_{d_1+1},\ldots,x_{d_1+d_2}),
x_{d_1+d_2+1},\ldots,x_d) 
\end{multline}
for any $ x_1,\ldots, x_d \in \cI(\phi)$.  Up to sign the relation
\eqref{ainftyassoc} follows from the description of the boundary in
Theorem \ref{comeager} of the one-dimensional components, while the
sign computation in \cite{flips} is independent of whether the
Lagrangian is immersed or embedded.  The element
\[ \mu_0(1) \in CF(\phi) \]
is the {\em curvature} of the Fukaya algebra and has positive
$q$-valuation $\val_q(\mu_0(1))$.  The Fukaya algebra $CF(\phi)$ is
{\em flat} if $\mu_0(1)$ vanishes and {\em projectively flat} if
$\mu_0(1)$ is a multiple of the identity $1_\phi$.  Consider the
sub-space of $CF(\phi)$ consisting of elements with positive
$q$-valuation with notation from \eqref{novikov}:
\[CF(\phi)_+ = \bigoplus_{x \in \cI(\phi)} \Lambda_{>0} x .\]
Define the {\em Maurer-Cartan map}
\[ \mu: CF(\phi)_+ \to CF(\phi), \quad 
b \mapsto \mu_0(1) + \mu_1(b) + \mu_2(b,b) + \ldots .\]
Let $MC(\phi)$ denote the space of {\em weak solutions to the
  Maurer-Cartan space}  
\[ MC(\phi) = \{ b \in CF^{\on{odd}}(\phi) \ | \ \mu(b) = W(b) 1_\phi,
\quad W(b) \in \Lambda \} .\]
The value $W(b)$ of $\mu(b)$ for $b \in MC(\phi)$ defines the {\em
  disk potential}
\[ W: MC(\phi) \to \Lambda .\]
For $b \in CF(\phi)_+$ define  
\[  \mu^b_d(a_1,\ldots,a_d) = \sum_{i_1,\ldots,i_{d+1}} \mu_{d + i_1 +
  \ldots + i_{d+1}}(\underbrace{b,\ldots, b}_{i_1}, a_1,
\underbrace{b,\ldots, b}_{i_2}, a_2,b, \ldots, b, a_d,
\underbrace{b,\ldots, b}_{i_{d+1}}) \]
For $b \in MC(\phi)$, the maps $\mu^b_d, d \ge 1$ form a flat \ainfty
algebra.

\begin{remark} {\rm (Homotopy invariance)} The homotopy type of the
  the immersed Fukaya algebra $CF(\phi)$ is independent of the choices
  of $J,m,D$ and $\ul{P} = (P_\Gamma)$ up to \ainfty homotopy
  equivalences, by the immersed version of \cite[Corollary
  3.10]{flips} whose details we leave to the reader.  The homotopy
  equivalences between \ainfty algebras for perturbations
  $\ul{P}, \ul{P}'$ induce maps between the Maurer-Cartan moduli
  spaces $MC(\phi,\ul{P}) \to MC(\phi,\ul{P}')$ so that Floer
  cohomology (as a collection of vector spaces $HF(\phi,b)$ over the
  Maurer-Cartan space $b \in MC(\phi)$) is an invariant of the
  Lagrangian immersion $\phi$.

  The homotopy type of the immersed Fukaya algebra is also invariant
  under Hamiltonian diffeomorphism, but not exact isotopy.  Given a
  Hamiltonian diffeomorphism $\psi: X \to X$ \label{anyd} any Donaldson hypersurface
  $D \subset X$ in the complement of $\psi(\phi(L))$ pulls back to a
  Donaldson hypersurface $\psi^{-1}(D) \subset X$ in the complement of
  $\phi(L)$.  The domain-dependent almost complex structure $J_\Gamma$
  and Morse functions $m_\Gamma$ also pull back giving a bijection
  between treed holomorphic disks in the definition of the structure
  maps, so that $CF(\phi) \simeq CF(\psi \circ \phi)$.  The immersed
  Fukaya algebra $CF(\phi)$ is {\em not} invariant under arbitrary
  exact isotopy $\phi_t: L \to X$ (even if the number of intersection
  points is preserved) since the areas of the holomorphic disks
  $u: C \to X$ with boundary in $\phi_t$ may change, destroying a
  solution to the Maurer-Cartan equation.  See Example
  \ref{weinsteinex}.  This ends the Remark.
\end{remark}

\begin{example} \label{remind} We remind the reader of a basic
  computation of Floer cohomology in the Morse model for circles
  embedded in the two-sphere, which is a special case of the results
  of, for example, \cite{fooo:toric1}: Let the symplectic manifold $X$
  be the unit two-sphere $S^2$ in $\bR^3$ with its standard symplectic
  form with volume $4 \pi$.  The only embedded Lagrangian with
  non-trivial Floer cohomology is the equator, up to Hamiltonian
  isotopy.  Indeed, consider an embedding $\phi:L = S^1 \to X = S^2$
  such that $\phi(L)$ separates $X$ into regions of areas
  $A_-,A_+$. To compute the embedded Floer theory let $m: S^1 \to \R$
  denote a Morse function on $L$ with a pair of critical points $x_+$
  resp. $x_-$ the maximum resp. minimum of $m$.  We have
  \[ \mu_0(1) = q^{A_+} x_+ + q^{A_-} x_+, \quad \mu_1(x_-) = q^{A_+}
  x_+ - q^{A_-} x_+ .\]
  If the geometric unit was the same as the strict unit, $b = 0$ would
  give a weak Maurer-Cartan solution and the Floer cohomology of
  $\phi_0$ would be non-trivial if and only if $A_+ = A_-$. \label{wouldbe}

  We explain why the distinction between strict and geometric units is
  immaterial in this case. The Floer theory has a weakly bounding
  cochain
\[ b = (q^{A_+} + q^{A_-}) x^{\greyt} .\]
For reasons of dimension $\mu_n(b,\ldots, b)$ vanishes for
$b > 1$ while 
\[ \mu_1(x^{\greyt}) = x^{\whitet} - x_+ .\]
The graphs with a single edge and no vertex and weighting zero or
infinity give the two terms on the right, while there are no terms
with pseudoholomorphic disks since these are negative index. Hence
\[ \mu_0^b(1) = \mu_0(1) + \mu_1(b) = (q^{A_+} + q^{A_-}) x_+
+ (q^{A_+} + q^{A_-}) ( x^{\whitet} - x_+) = (q^{A_+} +
q^{A_-} ) x^{\whitet} .\]
With this choice of $b$ the Floer cohomology is again non-trivial iff
$A_+ = A_-$.
\end{example}

\begin{example} \label{teardrops} We consider an immersion of the
  circle in the two-sphere with a pair of self-intersection points.
  Let $X$ be again the symplectic two-sphere $S^2$ and $L$ the circle
  $S^1$ with immersion $\phi: L \to X$ as shown in Figure
  \ref{generators}. For the example shown in \ref{ex1}, recall that
  $\zz,\bzz$ and $\bz,\mz$ are the generators of the Floer cochains
  created by the new self-intersection points while $x_+ ,x_-$ are the
  maximum, resp. minimum of the Morse function, and
  $x^{\whitet}, x^{\greyt}$ are the additional generators added to
  obtain strict units.  The self-intersection points divide $\phi(L)$
  into regions of areas $A_1,A_2,A_3$ while the exterior has area
  $A_0$.  The curvature of the Fukaya algebra is, for some choices of
  trivializations of the determinant lines,
\[ \mu_0(1) = q^{A_3} \zz + q^{A_1} \mz + q^{A_0 + 2A_2}x_+ \] 
from contributions of the regions shown in Figure \ref{regions}, with
the final term the contribution from the ``outside'' of the circle.
\begin{figure} 
\begin{center}
\begingroup%
  \makeatletter%
  \providecommand\color[2][]{%
    \errmessage{(Inkscape) Color is used for the text in Inkscape, but the package 'color.sty' is not loaded}%
    \renewcommand\color[2][]{}%
  }%
  \providecommand\transparent[1]{%
    \errmessage{(Inkscape) Transparency is used (non-zero) for the text in Inkscape, but the package 'transparent.sty' is not loaded}%
    \renewcommand\transparent[1]{}%
  }%
  \providecommand\rotatebox[2]{#2}%
  \ifx\svgwidth\undefined%
    \setlength{\unitlength}{64.31480103bp}%
    \ifx\svgscale\undefined%
      \relax%
    \else%
      \setlength{\unitlength}{\unitlength * \real{\svgscale}}%
    \fi%
  \else%
    \setlength{\unitlength}{\svgwidth}%
  \fi%
  \global\let\svgwidth\undefined%
  \global\let\svgscale\undefined%
  \makeatother%
  \begin{picture}(1,3.24090954)%
    \put(0,0){\includegraphics[width=\unitlength,page=1]{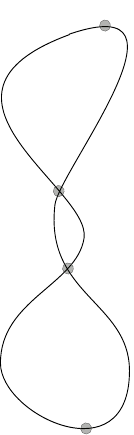}}%
      \put(0.68588225,3.16002938){\color[rgb]{0,0,0}\makebox(0,0)[lb]{\small \smash{$x_+$}}}%
    \put(0.27051352,1.97142873){\color[rgb]{0,0,0}\makebox(0,0)[lb]{\small \smash{$\zz$}}}%
    \put(0.38309095,1.08264629){\color[rgb]{0,0,0}\makebox(0,0)[lb]{\small \smash{$\mz$}}}%
    \put(0.53397912,0.10380593){\color[rgb]{0,0,0}\makebox(0,0)[lb]{\small \smash{$x_-$}}}%
    \put(0.05544301,1.53534477){\color[rgb]{0,0,0}\makebox(0,0)[lb]{\small \smash{}}}%
    \put(0.02558908,1.48843159){\color[rgb]{0,0,0}\makebox(0,0)[lb]{\small \smash{$A_0$}}}%
    \put(0.37104156,0.63119722){\color[rgb]{0,0,0}\makebox(0,0)[lb]{\small \smash{$A_1$}}}%
    \put(0.43501431,1.5012261){\color[rgb]{0,0,0}\makebox(0,0)[lb]{\small \smash{$A_2$}}}%
    \put(0.29853919,2.56317277){\color[rgb]{0,0,0}\makebox(0,0)[lb]{\small \smash{$A_3$}}}%
    \put(0,0){\includegraphics[width=\unitlength,page=2]{imex2.pdf}}%
  \end{picture}%
\endgroup%
 \end{center}
\caption{Teardrops obstructing Floer cohomology} 
\label{regions}
\end{figure} 
Thus $ b = 0$ is {\em not} a solution to the weak Maurer-Cartan
equation.

We look for a weakly bounding cochain that ``kills the teardrops''.
Working with the trivial local system $y \in \RR(L)$ the first
composition map has vanishing terms
\[ \mu_1(\mz) = 0, \quad \mu_1(\zz) = 0, \quad 
\mu_1(x_+) = 0 \]
while 
\[ 
\mu_1(\bzz) = - q^{A_2} \mz + q^{A_3} x_+, \quad \mu_1(\bz) = -  q^{A_2}
\zz, 
\quad \mu_1(x_-) = q^{A_1} \mz + q^{A_0 + 2A_2} x_+ ;\]
the last term arises from the holomorphic bigon
$u: \bR \times [0,1] \to X$ including the exterior that overlaps with
itself in the region with area $A_2$.  The negative signs depend on
the choice of relative spin structures; here we take the determinant
lines to be oriented by the tangent direction; then the relative spin
structure on completion of the middle region $A_2$ is non-trivial and
contributes a sign in the above formulas involving $A_2$.  There are
no other regions contributing to the \ainfty structure maps involving
$\bzz,\bz$.  If $\min(A_1,A_3) > A_2$ then in particular the element
\[ b_0 =  q^{A_1 - A_2} \bzz  + q^{A_3 - A_2} \bz\]
has positive $q$-valuation \label{posq} and solves the Maurer-Cartan-like equation
$$ \mu_0^{b_0}(1) \in \on{span} (x_+) .$$
Indeed, 
\begin{eqnarray*} 
  \mu_0^{b_0}(1) &=& \mu_0(1) + \mu_1(b_0) + \mu_2(b_0,b_0) + \ldots  \\ 
                 &=& q^{A_0 + 2A_2}x_+ + q^{A_1} \zz + q^{A_3} \mz + q^{A_1 -A_2} \mu_1(\bz) 
                     + q^{A_3 - A_2} \mu_1(\bzz)  \\
                 &=& q^{A_0 + 2A_2}x_+ +
                     q^{A_1 +
                     A_3 - A_2} x_+ .\end{eqnarray*} 

                 Ignoring the difference between the geometric and
                 strict units the element above gives a weak
                 Maurer-Cartan solution. To obtain an honest
                 Maurer-Cartan solution as in the previous example we
                 shift so that the zero-th composition map is a
                 multiple of the strict unit:
                 \[ b = b_0 + w x^{\greyt}, \quad w: = (q^{A_0 + 2A_2}
                 + q^{A_1 + A_3 - A_2} ) .\]
We claim that 
\[ \mu_0^b(1) = w x^{\whitet} .\]
In case all the incoming leaves are labelled by elements
$x \in \cI(\phi)$ not equal to the minimum $x_-$, then any
configuration $u: C \to X$ involving these elements with at least one
vertex survives perturbation of the Morse function, almost complex
structure, and metric.  Indeed, the holomorphic disks $u: S \to X$
with boundary in $\phi(L)$ are invariant under perturbation of the
almost complex structure, since the complex structure on the disk is
unique up to diffeomorphism, and the condition on semi-infinite
gradient trajectories on the leaves is that they flow up to the
maximum.  It follows that the only contributions to
$\mu_1^{b_0}(w x^{\greyt})$ are those with at most one vertex in the
corresponding graph, since otherwise the weighting parameter $\varkappa$ is
free to vary and the configuration cannot be rigid.  So as in the
previous example,
\begin{eqnarray*} \mu_0^b(1) &=& \mu_0^{b_0}(1) + \mu_1^{b_0}(w
                                 x^{\greyt}) \\
                             &=&
                                 w x_+ + \mu_1^0(w x^{\greyt}) \\
                             &=&
                                 w x_+ + w (x^{\whitet} - x_+) \\
                             &=& w x^{\whitet} .\end{eqnarray*}
Hence
\[ b \in MC(\phi) .\]

We compute the Floer cohomology for this bounding cochain.  For the
same reason as in previous paragraph, we have
$ \mu_1^{b_0} = \mu_1^b $ \label{mub} and so we may ignore the difference between
geometric and strict unit.  The first composition map (that is, the
differential) \label{differential} satisfies
\[ \mu_1^b(x_+) = 0, \quad 
\mu_1^b(x_- + q^{A_3 - A_2} \bz) =  (q^{A_0 + 2A_2} - q^{A_3 - A_2 +
  A_1}) x_+ .\]
Since $\mu_1^b$ squares to zero, we must have 
\[ 
\mu_1^b(q^{A_2} \mz + q^{A_1} x_+) = q^{A_2} \mu_1^b(\mz) = 0, \quad
\mu_1^b(q^{A_2} \zz) = q^{A_2} \mu_1^b(\zz) = 0 .\]
So $\mu_1^b(\mz) = \mu_1^b(\zz) = 0$.  Computing the cohomology of
$\mu_1^b$ we have 
\begin{equation} \label{weighted} 
 HF(\phi,b) = \begin{cases}  \Lambda^{\oplus 2}  & A_0  + 3A_2 = A_3 +
   A_1\\
\{0 \} & \text{otherwise} .\end{cases} \end{equation} 

We check that the computation is compatible with Hamiltonian
displaceability.  Note that the condition \eqref{weighted} implies
\begin{eqnarray*}   A_0  &=&  A_1 + A_2 + A_3 - 4 A_2 \\ 
 A_1 &=& A_0 + A_2 + A_3 + (2A_2 - 2 A_3) \\
 A_2 &=&  A_0 + A_1 + A_3  - 2 A_0 - 2A_2  \\
 A_3 &=& A_0 + A_1 + A_2  + (2A_2 - 2 A_1) 
.\end{eqnarray*}
These equalities preclude any equality
\begin{equation} \label{displ}  A_i > A_j + A_k + A_l , \quad i,j,k,l
  \ \text{distinct} .\end{equation} 
Equation \eqref{displ} is equivalent to displaceability by Moser's
theorem \cite{moser} since if one area $A_i$ is larger than the sum of
the other three $A_j + A_k + A_l$ then there exists a Hamiltonian
diffeomorphism $\psi: X \to X$ that moves the exterior of the largest
region $X - A_i$ into its interior $ \on{int}(A_i)$.  Thus the
non-triviality of the Floer cohomology $HF(\phi,b)$ is consistent
with non-displaceability.
 \end{example}

\begin{example} \label{figureeight} Consider an embedded circle with a single intersection
  point, dividing the sphere into regions $R_0$ (``outside''), $R_1$
  and $R_2$ as in Figure \ref{single}, with areas $A_0,A_1,A_2$.
\begin{figure} 
  \begin{center} 
\begingroup%
  \makeatletter%
  \providecommand\color[2][]{%
    \errmessage{(Inkscape) Color is used for the text in Inkscape, but the package 'color.sty' is not loaded}%
    \renewcommand\color[2][]{}%
  }%
  \providecommand\transparent[1]{%
    \errmessage{(Inkscape) Transparency is used (non-zero) for the text in Inkscape, but the package 'transparent.sty' is not loaded}%
    \renewcommand\transparent[1]{}%
  }%
  \providecommand\rotatebox[2]{#2}%
  \ifx\svgwidth\undefined%
    \setlength{\unitlength}{100.16567993bp}%
    \ifx\svgscale\undefined%
      \relax%
    \else%
      \setlength{\unitlength}{\unitlength * \real{\svgscale}}%
    \fi%
  \else%
    \setlength{\unitlength}{\svgwidth}%
  \fi%
  \global\let\svgwidth\undefined%
  \global\let\svgscale\undefined%
  \makeatother%
  \begin{picture}(1,1.15957357)%
    \put(0,0){\includegraphics[width=\unitlength,page=1]{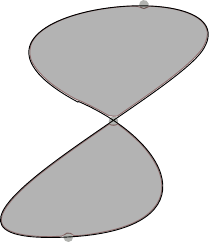}}%
    \put(0.80909476,0.62709015){\color[rgb]{0,0,0}\makebox(0,0)[lb]{\small \smash{}}}%
    \put(0.49714944,0.9064007){\color[rgb]{0,0,0}\makebox(0,0)[lb]{\small \smash{$A_1$}}}%
    \put(0.37173924,0.27747676){\color[rgb]{0,0,0}\makebox(0,0)[lb]{\small \smash{$A_2$}}}%
    \put(0.72165326,0.53556023){\color[rgb]{0,0,0}\makebox(0,0)[lb]{\small \smash{$A_0$}}}%
  \end{picture}%
\endgroup%
\end{center}
\caption{An example with vanishing Floer cohomology} 
\label{single}
\end{figure} 
Let $x_+$ be the maximum, $v,\ol{v}$ the generators associated to the
self-intersection points and $x_-$ the minimum.  The two lobes of the
figure eight contribute to $\mu_0(1) = (q^{A_1} + q^{ A_2}) v$ with
the same sign by invariance of the picture (up to deformation) under
rotation by $\pi$.  The first \ainfty map for $b = 0$ is given by
\[\mu_1(x_+) = 0, \quad \mu_1(v) = 0, \quad \mu_1(\ol{v}) =
q^{A_1}x_+, \quad \mu_1(x_-) = q^{A_2} v .\]
Indeed a configuration with an input $x_+$ can never be rigid; there
are two possible strips with input $v$ but these can be seen to cancel
by the \ainfty relations, or an explicit sign computation; the lobe
with area $A_2$ contributes a holomorphic strip causing
$\mu_1(x_-) = q^{A_2} v$.  The higher composition maps involving $x_-$
depend on a choice of perturbations.  There is a choice for which one
has a version of the divisor equation
\[ \mu_d(x_-,\ldots, x_-) = (d!)^{-1} \mu_1(x_-) = 
  (d!)^{-1} q^{ A_2} v \] 
  explained in \cite[Section 4.4]{pw:surger}.  The higher composition
  maps involving $v$ as input are trivial.  A weakly bounding cochain
  (albeit) with zero $q$-valuation must have for some constant $c$
  \[ b = \ln( - q^{A_1 - A_2}) x_- + c v = - \pi i x_- + c v \]
  and therefore exists only if $A_1 = A_2$ in which case
\[ \mu_0^b (1) = cx_+ .\]  
The reader who is uncomfortable with the assertion on the divisor
equation may take a local system whose holonomy around the lower lobe
is $-1$ and take $b = 0$.  The Floer operator has
\[ \mu_1^b(x_+) = 0, \quad \mu_1^b(v) = 0, \quad \mu_1^b(\ol{v}) =
q^{A_1}x_+, \quad \mu_1^b(x_-) = - q^{A_2} v .\]
Thus the Floer cohomology vanishes.  On the other hand, $\phi(L)$ is
non-displaceable if $A_i > A_j + A_k$ for some distinct $i,j,k$.  We
remark that Grayson \cite{grayson:eight} has conjectured that the mean
curvature flow of $\phi$ converges to a point if $A_1 = A_2$.
\end{example}

\begin{example} \label{weinsteinex} The following is an example of
  Weinstein \cite{weinstein} of an exact isotopy of maps from the
  circle $S^1$ to the cylinder $T^* S^1$.  The family includes
  immersions with both obstructed and unobstructed Floer cohomology.
  It also shows that non-triviality of immersed Floer cohomology is
  not an obstruction to displaceability by exact deformation. Let
  $\phi_t: S^1 \to T^* S^1$ be an immersion of the form shown in
  Figure \ref{disjoint}.  If the areas satisfy the equality
  $A_1 + A_3 = A_2$, then $\phi_1(L)$ is an exact isotopy of
  $\phi_0(L)$ by standard Moser arguments.
\begin{figure} 
\begin{center}
\begingroup%
  \makeatletter%
  \providecommand\color[2][]{%
    \errmessage{(Inkscape) Color is used for the text in Inkscape, but the package 'color.sty' is not loaded}%
    \renewcommand\color[2][]{}%
  }%
  \providecommand\transparent[1]{%
    \errmessage{(Inkscape) Transparency is used (non-zero) for the text in Inkscape, but the package 'transparent.sty' is not loaded}%
    \renewcommand\transparent[1]{}%
  }%
  \providecommand\rotatebox[2]{#2}%
  \ifx\svgwidth\undefined%
    \setlength{\unitlength}{188.55726828bp}%
    \ifx\svgscale\undefined%
      \relax%
    \else%
      \setlength{\unitlength}{\unitlength * \real{\svgscale}}%
    \fi%
  \else%
    \setlength{\unitlength}{\svgwidth}%
  \fi%
  \global\let\svgwidth\undefined%
  \global\let\svgscale\undefined%
  \makeatother%
  \begin{picture}(1,0.99300233)%
    \put(0,0){\includegraphics[width=\unitlength,page=1]{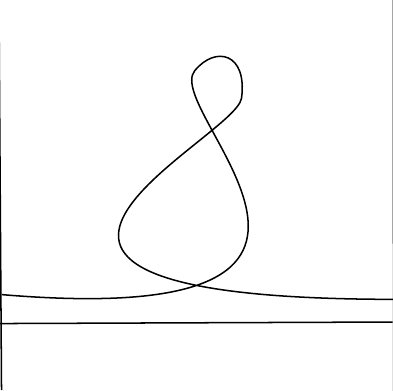}}%
    \put(0.38961377,0.18807638){\color[rgb]{0,0,0}\makebox(0,0)[lb]{\small \smash{$A_1$}}}%
    \put(0.37417594,0.43474253){\color[rgb]{0,0,0}\makebox(0,0)[lb]{\small \smash{$A_2$}}}%
    \put(0.50498664,0.76180027){\color[rgb]{0,0,0}\makebox(0,0)[lb]{\small \smash{$A_3$}}}%
    \put(0,0){\includegraphics[width=\unitlength,page=2]{weinstein.pdf}}%
    \put(0.61715766,0.6643017){\color[rgb]{0,0,0}\makebox(0,0)[lb]{\small \smash{$v_+,\ol{v}_+$}}}%
    \put(0.61715766,0.27000651){\color[rgb]{0,0,0}\makebox(0,0)[lb]{\small \smash{$v_-,\ol{v}_-$}}}%
  \end{picture}%
\endgroup%
\end{center}
\caption{An exact isotopy of the zero section with 
obstructed Floer theory}
\label{disjoint}
\end{figure} 
Let $\bz,\mz$ resp. $\zz,\bzz$ be the generators of the Floer complex
corresponding to the lower resp. higher self-intersection point (with
bar if the ordered intersection point is odd)
\[\mu_0(1) = q^{A_3} \zz, \quad \mu_1(\bz) = q^{A_2} \zz .\]
For unobstructedness the weakly bounding cochain is forced to equal
\[b = q^{A_3 - A_2} \bz \in MC(\phi) \] 
as long as $A_3 > A_2$ since the Maurer-Cartan space requires positive
$q$-valuation $\val_q(b) > 0$.  This is impossible if
$A_2 = A_1 + A_3$. \label{impossible}

On the other hand, consider an exact isotopy of $\phi_1(L)$ with the
property that $A_1$ is negative, so that $A_3 > A_2$.  In this case,
$b$ is well-defined and the Floer cohomology is non-trivial.  This
ends the example.
\end{example}

\section{Invariance for transverse self-intersection}

In this section we prove that Maslov flows not changing the number of
self-intersection points leave the Floer cohomology invariant.  This
generalizes an argument in Alston-Bau \cite{alston} which assumed a
monotonicity condition on the Lagrangian.  Since the structure maps in
our setting are weighted by areas, we study how the areas of the disks
contributing to the Fukaya algebra change as we deform the immersion
under a Maslov flow, similar to the study in Angenent
\cite{angenent}.  We show that immersed Lagrangian Floer theory
possesses a canonical $\R$-grading, similar to the $\bQ$-grading of the
twisted sectors on orbifold quantum cohomology as in Chen-Ruan
\cite{cr:orb}, (which is preserved only to leading order by the
differential) which determines the change in the areas of the
holomorphic polygons under Maslov flow.

First, we explain that for small times there exists a bijection
between pseudoholomorphic disks with the given boundary condition.

\begin{lemma} \label{bijlema} Let $\phi_t: L \to X, t \in [0,T]$ be an
  isotopy of Lagrangians with only self-transverse intersections and
  no triple intersections.  For $t_2$ sufficiently close to $t_1$,
  there exists a diffeomorphism $\psi_{t_1}^{t_2}$ of $X$ mapping
  $\phi_{t_2}(L)$ to $\phi_{t_1}(L)$ and for which for any
  combinatorial type $\Gamma$ of treed disk the almost complex
  structure $\psi_{t_1}^{t_2} J_{\Gamma,t_1}$ is admissible.  This
  diffeomorphism induces a bijection between moduli spaces
  $\M_\Gamma(\phi_{t_1})$ and $\M_\Gamma(\phi_{t_2})$, defined using
  $J_{\Gamma,t_1}$ resp.  $\psi_{t_1}^{t_2,*} J_{\Gamma,t_2}$.
\end{lemma}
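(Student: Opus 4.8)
The plan is to build the diffeomorphism $\psi_{t_1}^{t_2}$ by an ambient isotopy extension, transport all the auxiliary data (symplectic form, Donaldson hypersurface, perturbation datum) along it, and then observe that the identification of moduli spaces is tautological, admissibility following from openness because $\psi_{t_1}^{t_2}$ is $C^\infty$-close to the identity.

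First I would construct the diffeomorphism. Since for every $t$ the immersion $\phi_t$ has only transverse self-intersections and no triple points, near each of its points $\phi_t(L)$ is a union of at most two transverse embedded sheets and the self-intersection set $\cI^{\on{si}}(\phi_t)$ varies smoothly in $t$. The usual isotopy extension argument then applies: writing $v_t$ for the normal velocity field of $\phi_t$, one extends it, sheet by sheet, using a partition of unity subordinate to a finite cover by Darboux charts in which $\phi_t(L)$ is a union of coordinate planes, to a time-dependent vector field $\ti v_t$ on $X$; its flow $\Psi_t$ satisfies $\Psi_{t_1} = \on{id}$ and $\Psi_t \circ \phi_{t_1} = \phi_t \circ \chi_t$ for a smooth family of diffeomorphisms $\chi_t \colon L \to L$ with $\chi_{t_1} = \on{id}$, the reparametrization $\chi_t$ being forced because a self-intersection pair of $\phi_{t_1}$ need not remain one for $\phi_t$. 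Replacing the family $\phi_t$ by $\phi_t \circ \chi_t$ — which does not change the moduli spaces up to the tautological bijection given by $\chi_t$ on the boundary lifts and tree maps and by transport of the Morse data — we may assume $\Psi_t \circ \phi_{t_1} = \phi_t$, and we set $\psi_{t_1}^{t_2} := \Psi_{t_2}^{-1}$, so that $\psi_{t_1}^{t_2}\circ\phi_{t_2} = \phi_{t_1}$ and $\psi_{t_1}^{t_2} \to \on{id}$ in $C^\infty$ as $t_2 \to t_1$. When the symplectic forms $\omega_t$ vary one extends instead the isotopy of $\phi_t(L) \cup D_t$ for a smoothly varying choice of Donaldson hypersurfaces $D_t$ (Lemma \ref{dexists}), so that $\psi_{t_1}^{t_2}$ also carries $D_{t_2}$ to $D_{t_1}$.

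Next I would transport the perturbation datum and deduce the bijection. The datum $P_{\Gamma,t_1} = (J_{\Gamma,t_1},H_{\Gamma,t_1},m_{\Gamma,t_1})$ is admissible as in the transversality constructions above — $J_{\Gamma,t_1}$ tames $\omega_{t_1}$, equals the fixed $J_D$ near $D_{t_1}$, lies in the Cieliebak--Mohnke class $\J_\tau(X,J_D,\Gamma)$, and the whole family is coherent in $\Gamma$ — and all of these are open conditions preserved by pull-back under a diffeomorphism $C^\infty$-close to the identity that respects the hypersurface. Hence for $t_2$ close to $t_1$ the datum $(\psi_{t_1}^{t_2})^*P_{\Gamma,t_1}$ is an admissible coherent perturbation datum for $\phi_{t_2}$ with respect to $(\psi_{t_1}^{t_2})^*\omega_{t_1}$ — a form for which $\phi_{t_2}$ is Lagrangian, since $\phi_{t_2}^*(\psi_{t_1}^{t_2})^*\omega_{t_1} = \phi_{t_1}^*\omega_{t_1} = 0$, and which is $C^\infty$-close to $\omega_{t_2}$ — and $D_{t_2}$. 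A diffeomorphism $\psi$ of $X$ intertwines the perturbed Cauchy--Riemann operator for $(J,H)$ with that for $(\psi_*J,\psi_*H)$, so for a treed disk $u\colon C \to X$ with boundary lift $\partial u\colon \partial C \to L$ the map $u$ is a holomorphic treed disk with boundary in $\phi_{t_2}$ for $(\psi_{t_1}^{t_2})^*P_{\Gamma,t_1}$ if and only if $\psi_{t_1}^{t_2}\circ u$, with the same boundary lift and the same tree maps into $L$ (legitimate since $\psi_{t_1}^{t_2}\circ\phi_{t_2} = \phi_{t_1}$ and $L$ with its Morse data is untouched), is a holomorphic treed disk with boundary in $\phi_{t_1}$ for $P_{\Gamma,t_1}$: the gradient-trajectory equations on the tree parts are identical, $\psi_{t_1}^{t_2}$ carries $u^{-1}(D_{t_2})$ to $(\psi_{t_1}^{t_2}\circ u)^{-1}(D_{t_1})$ with equal multiplicities, and it respects the sheet structure, identifying $\cI^{\on{si}}(\phi_{t_2})$ with $\cI^{\on{si}}(\phi_{t_1})$ and so matching the limits at the semi-infinite ends and the matching conditions at the nodes. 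Thus $u \mapsto \psi_{t_1}^{t_2}\circ u$ is a bijection $\M_\Gamma(\phi_{t_2}) \to \M_\Gamma(\phi_{t_1})$ compatible with combinatorial type; and since the transported datum $(\psi_{t_1}^{t_2})^*P_{\Gamma,t_1}$ is $C^\infty$-close to any a priori generic choice of datum at time $t_2$, Corollary \ref{implicit} identifies the corresponding moduli spaces, so the conclusion holds for the original choices too. (The change in symplectic area of the disks only rescales the $q$-weights of the structure maps, not the underlying moduli spaces, and is analyzed separately.)

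I expect the delicate point to be the first step: making the isotopy-extension argument genuinely work for an isotopy of immersions with transverse self-intersections, and keeping track of the auxiliary reparametrization $\chi_t$ of $L$ and, in the non-constant-form case, dragging along the Donaldson hypersurface. Everything afterward is a routine consequence of the naturality of $\olp$ under diffeomorphisms and the openness of the admissibility conditions.
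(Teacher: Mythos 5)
Your proof is correct and follows essentially the same route as the paper's: both construct $\psi_{t_1}^{t_2}$ by extending the normal velocity of the isotopy to an ambient vector field that preserves the self-intersection locus (the paper does this on $X\times[0,T]$ with a vector field projecting to $\partial/\partial t$, you do the equivalent thing with a time-dependent field on $X$), then transport perturbation data by pull-back and invoke openness of tameness and of the stabilizing/admissibility conditions together with naturality of $\olp$ to get the tautological bijection $u\mapsto\psi_{t_1}^{t_2}\circ u$. The one point you make explicit that the paper elides is the reparametrization $\chi_t$ of $L$ needed so that $\Psi_t\circ\phi_{t_1}=\phi_t\circ\chi_t$ rather than $\Psi_t\circ\phi_{t_1}=\phi_t$; that is a worthwhile clarification but not a different argument.
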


\begin{proof} As in Alston-Bao \cite{alston}, the fact that there are
  only self-transverse intersections means that there exists a family
  of diffeomorphisms $\psi_{t_1}^{t_2}$ mapping $\phi_{t_1}(L)$ to
  $\phi_{t_2}(L)$.  The assumption of transverse self-intersections
  and no triple self-intersections implies that
  $\cup_{t \in [0,T]} (\phi_t( L \times_{\phi_t} L - \Delta_L)) \times
  \{ t \} $
  is an embedded submanifold of the product $X \times [0,T]$.
  Standard arguments imply that there exists a tubular neighborhood
  $B_\eps(0, \bR^{2n}) \times [0,T] \to X \times [0,T]$ of
  $\{ x _t , t \in [0,T] \}$ so that the branches of $\phi_t(L)$ near
  any self-intersection are the images of
  $B_\eps(0, \bR^n \times \{ 0 \})$ resp
  $B_\eps(0, \{ 0 \} \times \bR^n)$.  The family $\psi_{t_1}^{t_2}$ is
  defined as the restriction of the flow of a vector field $v_t$ on
  $X \times [0,T]$ to $X \times \{ t_1 \}$, with $v_t$ defined so that
  (a) $D \pi v_t = \ddt$, where $\pi: X \times [0,T] \to [0,T]$ is the
  projection (b) $v_t$ is tangent to the submanifold of
  self-intersections
  $\cup_{t \in [0,T} (\phi_t( L \times_{\phi_t} L - \Delta_L)) \times
  \{ t \} $
  (c) $v_t$ is tangent to $\phi_t(L)$ away from the self-intersections
  $\phi_t(L \times_{\phi_t}L - \Delta_L)$.  Such a vector field may be
  defined in stages, first in the tubular neighborhood of the
  self-intersections and then extended to $\phi_t(L)$ and $X$.  Since
  tameness is an open condition, $\psi_{t_1}^{t_2,*} J_{\Gamma,t_2}$
  is also tamed for $t_1$ sufficiently close to $t_2$, and similar for
  the stabilizing condition.  The map $\psi_{t_1}^{t_2}$ induces the
  claimed bijection of moduli spaces of pseudoholomorphic treed disks.
\end{proof}

We give two definitions of the $\R$-grading. The first definition of
the grading is an action-index difference. 

\begin{definition} Let
\[(x_-,x_+) \in L^2, \quad u(x_-) = u(x_+) = x \in X\]
be a self-intersection point.  Choose a path
\[ \gamma_x : [-1,1] \to \Lag(T_x X), \quad \gamma(\pm 1) =
D\phi_{x_{\pm}} (T_{x_\pm} L) \]
as in \eqref{gammax}.  Consider the determinant map 
\[ \on{det}: \Lag(T_x X) \to \Lag(K_x^{-1}) \cong S^1 \]
given by the top exterior product.  Let 
\[\d \theta \in \Omega^1(S^1), \quad \int_{S^1} \d \theta = 2 \pi \]
denote the standard angular form that is circle-invariant.  Define the 
{\em action} of the path
\begin{equation} \label{pathaction} a(x) = \int_{-1}^1 (\on{det}
  \gamma_x)^* \d \theta /\pi \end{equation}
obtained by integrating the pull-back of the
angular form.  If $\ol{x}= (x_+,x_-)$ is the reversed
self-intersection point denote by $\gamma_{\ol{x}}$ the opposite of
the path $\gamma_x$ so that
$$ a(x) + a(\ol{x}) = 0, \quad i(x) + i(\ol{x}) = n .$$
Define 
\begin{equation} \label{theta} \theta(x) = i(x) - a(x) \end{equation} 
independent of the choice of path $\gamma_x$.
\end{definition} 

Alternatively, the $\R$-grading may be given in terms of the sum of
the K\"ahler angles of intersection of the Lagrangians.  

\begin{definition} Define an {\em angle}
\[ \theta_1 = \min \{ \theta \in (0,1) \ | \ e^{\pi i \theta}
D\phi(T_{x_-}L) \cap D\phi(T_{x_+}L) \neq \{ 0 \} \} .\]
Replacing $D\phi( T_{x_\pm} L)$ with their orthogonal complements in
$T_x X$ and applying the construction iteratively to the complements,
one obtains a basis 
\[\{ y_1,\ldots,y_n \}  \subset D\phi(T_{x_+} L)\] 
and angles
\[\theta_1 < \ldots < \theta_n \in (0, 1 )\] 
so that bases for the Lagrangian tangent spaces are given by 
\begin{equation} \label{angles} D\phi (T_{x_+}L)  = \on{span} \{ y_1,\ldots, y_n \}, \quad 
D\phi(T_{x_-}L) = \on{span} \{ e^{i \theta_1} y_1, \ldots, e^{i \theta_n }
y_n \} .\end{equation} 
Then as in \cite[Equation (3)]{alston}
\[ \theta(x) = n - \sum_{i=1}^n \theta_i \in (0,n) .\]
\end{definition} 

We these definitions in place we prove a variational formula for the
areas under Maslov flow.  As in \eqref{maslovflow} suppose that the
anticanonical bundle $K^{-1}$ is equipped with a connection $\alpha_t$
so that the infinitesimal variation $\cdot{\phi_t}$ is given by the
corresponding one-form $\phi_t^* \alpha_t$ representing the connection
on $\phi_t^* K^{-1}$.  The following is a generalization of a formula
of Angenent \cite[bottom of page 1214]{angenent} for holomorphic
polygons with a single corner in the dimension two case, to arbitrary
dimensions and polygons.

\begin{lemma} \label{change} The rate of change in the area of any family of
  rigid treed disks $u_t: C \to X$ with boundary in $\phi_t(L)$ and
  with vertices at infinity $z_0,\ldots, z_d$ (with only $z_0$
  outgoing) mapping to intersection points
  $x_0,\ldots, x_d \in \cI(\phi_t)$ is
  \[ \ddt \int_S u_t^* \omega_t =
 \sum_{k > 0 } \theta(x_k) - d + 2 -
  \theta(x_0) \]
with $\theta(x_k) := i(x_k) $ if $x_k \in \cI^{c}(\phi_t)$. 
\end{lemma}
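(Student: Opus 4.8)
The plan is to differentiate the area pairing
$A(u_t) = \int_S u_t^*\omega_t$ by splitting the derivative into a
contribution from the moving symplectic form $\dot\omega_t$ and a
contribution from the moving map $\dot u_t$, and then to recognize each
piece as a relative-cohomology pairing. Since $u_t: C \to X$ is a
holomorphic treed disk with boundary in $\phi_t(L)$, the relative cycle
$(u_t|_S, \partial u_t)$ pairs with the relative class
$\Def(\phi_t) = [\dot\phi_t, -\dot\omega_t] \in H^2(\phi_t)$ from
\eqref{cartaneq}. The key identity is
\[
\ddt \int_S u_t^*\omega_t = \int_S u_t^*\dot\omega_t + \int_{\partial S}
(\partial u_t)^* \dot\phi_t,
\]
which follows by the same Cartan-homotopy computation as \eqref{cartan}
together with the fact that $\phi_t^*\omega_t = 0$ so that the boundary
term can be written via $\dot\phi_t = \phi_t^*\omega_t(v_t,\cdot)$; up to
sign this is exactly the relative pairing $\langle \Def(\phi_t), [u_t]
\rangle$ of \eqref{relpair}. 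For a Maslov flow, $\Def(\phi_t)$ is (minus)
the Maslov class $\mu(\phi_t) \in H^2(\phi_t)$ by \eqref{maslovflow} and
the lemma identifying the pair \eqref{thepair} with the Maslov class; so
$\ddt A(u_t) = -I([u_t])$, the Maslov index of the disk, up to the
normalization constant and sign conventions fixed earlier.

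**Reducing to the index formula.** The second step is to compute
$I([u_t])$ for a treed disk with corners at self-intersection points
$x_0,\ldots,x_d$ in terms of the $\R$-graded quantities
$\theta(x_k)$. Here I would invoke the standard "closing up the corners"
description of the Maslov index for immersed boundary conditions: a disk
with $d+1$ corners, where at each corner $x_k$ the boundary condition
jumps along the path $\gamma_{x_k}$ of \eqref{gammax}, has Maslov index
equal to the winding number of $\det(\partial u)^*TL$ computed along the
boundary, with each corner contributing the winding of
$\det\gamma_{x_k}$. By the definition of the action $a(x) = \int_{-1}^1
(\det\gamma_x)^*\d\theta/\pi$ and of $\theta(x) = i(x) - a(x)$ in
\eqref{theta}, the corner contributions assemble into $\sum_{k} a(x_k)$
(with appropriate signs for incoming versus outgoing), and the remaining
topological contribution of the closed-up disk together with the index
bookkeeping at the tree vertices and the rigidity condition
$\Ind(\ti D_{u_t}) = 0$ gives the Euler-characteristic-type term $-d+2$.
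Concretely: rigidity forces the expected dimension to be zero, i.e. (up
to the usual shift) $I([u_t]) + (\text{corner terms}) = (\text{number of
incoming corners}) - 2$; rearranging and using $a(x_k) = i(x_k) -
\theta(x_k)$ for self-intersections, and $\theta(x_k) = i(x_k)$ by fiat
for Morse critical points, yields
\[
\ddt \int_S u_t^*\omega_t = \sum_{k>0}\theta(x_k) - d + 2 - \theta(x_0),
\]
which is the claim.

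**The main obstacle.** I expect the genuinely delicate point to be the
careful bookkeeping of signs, orientations, and the factor of $\pi$ in
translating between (i) the analytic index of the linearized operator
$\ti D_{u_t}$ for the treed disk, (ii) the topological Maslov index
$I([u_t])$ of the relative class, and (iii) the corner-angle data
$\theta(x_k)$, together with checking that the tree part $T \subset C$
contributes nothing to the area derivative (the gradient trajectories are
stationary for the area functional, and the branched segments map to
points) and nothing unexpected to the index count beyond the
$\dim(\aut(C))$ term already present in the rigidity condition
$\ol{\M}_\Gamma(\phi,D)_0$. The homotopy of $\ti D_{u_t}$ to $0 \oplus
D_{u_t} \oplus \dds$ described after \eqref{linop}, and the
index-additivity of the Maslov index under sewing (Robbin's
characterization cited from \cite[Appendix C]{ms:jh}), are the tools that
make this rigorous; I would organize the computation so that each
self-intersection corner is treated by the once-punctured-disk model of
\eqref{gammax}–\eqref{caniso}, reducing the global statement to the
local contribution $a(x_k)$ at each corner plus a single global constant
that one pins down by testing on the constant disk or a model teardrop.
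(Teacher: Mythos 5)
Your proposal tracks the paper's proof nearly line by line: differentiate the area via the relative pairing with $\Def(\phi_t)=\pm\mu(\phi_t)$ (equivalently, integrate the Cieliebak--Goldstein cocycle $\phi_t^*\alpha_t-\alpha_L$ around $\partial S$); close each corner with the once-punctured model disk of \eqref{gammax}, so the boundary integral splits into the (integer) Maslov index of the glued disk $\ol{S}$ plus the corner windings $\pm a_t(x_k)$; and use rigidity to identify the glued contribution and then substitute $\theta = i - a$. The one step you state imprecisely is the rigidity count: you write $I([u_t]) + (\text{corner terms}) = d - 2$, but the expected-dimension-zero condition actually gives $I([u_t]) + (\text{corner terms}) = \mathrm{Maslov}(\ol{S}) = \sum_{k>0} i(x_k) - i(x_0) - d + 2$, with the $i(x_k)$ living on the glued-disk side rather than already folded into the corner terms; these are precisely the integers that pair with the $a(x_k)$ to form $\theta(x_k)$ in the final rearrangement, which is exactly what the paper's displayed chain of equalities records. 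With that correction you have the paper's argument.
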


\begin{remark} \label{noself} If there are no self-intersections at
  the corners $z_k$ and no incoming markings $d = 0$ the right-hand
  side is simply $2 - i(x_0)$.  For example, in the case of a single
  outgoing marking with output $x_0$ so that $i(x_0) = 0$ each disk in
  the equation is Maslov index $I(u_t) = 2$, while if only Maslov
  index two disks appear then the Fukaya algebra is projectively flat,
  that is, $\mu_0(1) \in \on{span}(1_{\phi_t})$.
\end{remark}

\begin{proof}[Proof of Lemma] We wish to put ourselves in the
  situation where there are no branch changes in the boundary
  condition, by adding additional disks at each of the branch changes.
  For simplicity we consider a configuration $C = S \cup T $ with a
  single disk $S$ with $d+1$ leaves $T = T_0 \cup \ldots T_d$ with
  points at infinity $z_0,\ldots, z_d \in C$ mapping to
  self-intersection points $x_0,\ldots, x_d \in \cI^{\on{si}}(\phi)$.
  Let $\ti{S}$ be the disk equipped with complex bundle $E$ with
  totally real Lagrangian boundary condition $F$ obtained by gluing
  together $S$ equipped with the bundle pair
  $(u^* TX, (\partial u)^* TL)$ with the disks $S_0,\ldots,
  S_d$
  \label{capping} equipped with bundles $T_{\phi(x_i)} X$ and boundary
  conditions corresponding to the paths
  \[\gamma_{x_k}: (0,1) \to \Lag( T_{\phi(x_k)} X), \quad j =
  0,\ldots, d .\]
  See Figure \ref{corners}; note that the capping disks
  $S_0,\ldots, S_d$ are not mapped to $X$ but rather are only equipped
  with bundle pairs.
\begin{figure}
\centering
\begingroup%
  \makeatletter%
  \providecommand\color[2][]{%
    \errmessage{(Inkscape) Color is used for the text in Inkscape, but the package 'color.sty' is not loaded}%
    \renewcommand\color[2][]{}%
  }%
  \providecommand\transparent[1]{%
    \errmessage{(Inkscape) Transparency is used (non-zero) for the text in Inkscape, but the package 'transparent.sty' is not loaded}%
    \renewcommand\transparent[1]{}%
  }%
  \providecommand\rotatebox[2]{#2}%
  \ifx\svgwidth\undefined%
    \setlength{\unitlength}{101.12587396bp}%
    \ifx\svgscale\undefined%
      \relax%
    \else%
      \setlength{\unitlength}{\unitlength * \real{\svgscale}}%
    \fi%
  \else%
    \setlength{\unitlength}{\svgwidth}%
  \fi%
  \global\let\svgwidth\undefined%
  \global\let\svgscale\undefined%
  \makeatother%
  \begin{picture}(1,1.20720742)%
    \put(0,0){\includegraphics[width=\unitlength,page=1]{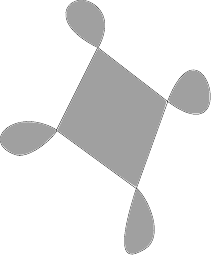}}%
    \put(1.12610199,-0.32407637){\color[rgb]{0,0,0}\makebox(0,0)[lb]{\small \smash{}}}%
    \put(0.86176921,0.75908287){\color[rgb]{0,0,0}\makebox(0,0)[lb]{\small \smash{$S_0$}}}%
    \put(0.38530993,1.06894806){\color[rgb]{0,0,0}\makebox(0,0)[lb]{\small \smash{$S_1$}}}%
    \put(0.07211299,0.52918297){\color[rgb]{0,0,0}\makebox(0,0)[lb]{\small \smash{$S_2$}}}%
    \put(0.61520971,0.12602524){\color[rgb]{0,0,0}\makebox(0,0)[lb]{\small \smash{$S_3$}}}%
    \put(0.44195188,0.61581213){\color[rgb]{0,0,0}\makebox(0,0)[lb]{\small \smash{$S$}}}%
  \end{picture}%
\endgroup%
\caption{Gluing a polygon with once-punctured disks} 
\label{corners}
\end{figure} 
First assume that all vertices of $S$ map to self-intersections of
$\phi$.  The bundle $u_t^* K^{-1}$ extends to a bundle on $\ol{S}$
obtained by taking top exterior powers.  The one-form $u_t^* \alpha$
extends to a one-form $\ol{\alpha}_t$ on the extension; via the given
trivialization of the anticanonical bundle $\phi_t^* K^{-1}$ over the
Lagrangian these connection one-forms become ordinary one-forms still
denoted $\ol{\alpha}_t \in \Omega^1(\ol{S})$ on the base.  Since the
Maslov index of the glued problem $(E,F)$ is the winding number of the
boundary condition, 
\[  \int_{\partial \ol{D}} \ol{\alpha_t} 
  =  I( E,F). \]
  Since $\mu_d$ is a map of degree $2 - d$, the Maslov index of any
  rigid disk contributing to $\mu_d$ must be the shifted difference in
  degrees
\[ I(E,F) = - i(x_0) + \sum_{i=1}^d i(x_i) - d + 2 .\] 
On the other hand, the integral of
$\ol{\alpha_t}$ around the boundary of any of the disks
$S_i$ is by \eqref{pathaction} equal \label{onthebase} to $\pm
a_t(x_i)$, with a minus sign for the self-intersection point that is
outgoing.  Putting everything together the integral of the connection
one-form $\alpha$ around the boundary $\partial S$ of the disk is
\begin{eqnarray*} \sum_{i=0}^d \int_{\partial S} u_t^* \alpha
  &=& \int_{\partial \ol{D}} \ol{\alpha_t} + a_t(x_0) - \sum_{i=1}^d a_t(x_i) \\
  &=& -  i(x_0) +  \sum_{i=1}^d i(x_i) - d + 2 + a_t(x_0) - \sum_{i=1}^d a_t(x_i)\\
  &=& \sum_{i=1}^d \theta_t(x_i) - \theta_t(x_0) -d + 2
      .\end{eqnarray*}
  The case that some of the points at infinity map to critical points
  $\cI^{\on{c}}(\phi)$ is similar, using the fact that each element
  $x_k \in \cI(\phi)$ of index $i(x_k)$ that is not a
  self-intersection point cuts down the dimension of the moduli space
  by dimension $i(x_k)$.  For the case that the configuration $C$ has
  several disk components, the computation follows by considering each
  disk separately and taking the sum.
\end{proof}

The following Euler flow on Floer cochains cancels out the change in
the areas of the disks under mean curvature flow of the immersion.
Combine the $\R$-gradings on self-intersection points and critical
points by
\[  \cI(\phi) \to \R, \quad x \mapsto |x| := \begin{cases} \theta(x) &  x \in \cI^{\on{si}}(\phi)\\ 
\dim(W_x^-) & x \in \cI^{\on{c}}(\phi) 
 \end{cases}  \]
 where $W_x^-$ is the unstable manifold of $\grad(m)$ at $x$.  

\begin{definition} 
  The {\em Euler vector field}
 \[ e_t \in \Vect(CF(\phi_t)), \quad e_t = \sum_{x \in \cI(\phi)} (1 -
 |x|) \partial_x .\]
 Identify $CF(\phi_t) \cong CF(\phi_{t_1})$ for all $t \in [t_1,t_2]$
 and let
\[ E_{t_1}^{t_2} : CF(\phi_{t_1}) \to CF(\phi_{t_2}) \]
denote the flow of the Euler vector field $e_t$ from time $t_1$ to
time $t_2$.  On generators we have
\begin{equation} \label{ongen}
 E_{t_1}^{t_2}: x \mapsto \begin{cases} q^{(t_2 - t_1)(1 - |x|)} x & x
  \in
  \cI^{\on{c}}(\phi)\cup \{ x^{\whitet}, x^{\greyt} \}  \\
  q^{ \int_{t_1}^{t_2} (1 - \theta(x))} x
 & x \in
  \cI^{\on{si}}(\phi) \end{cases} .\end{equation} 
\end{definition} 

\begin{theorem} \label{related} Let $\phi_t:L \to X$ be a Maslov flow
  of Lagrangian immersions with only transverse self-intersections and
  no triple intersections.  Suppose that $t_2$ is sufficiently close
  to $t_1$ so that the bijection between the moduli spaces
  $\M(\phi_{t_1})$ and $\M(\phi_{t_2})$ is induced by a family of
  diffeomorphisms $\psi_{t_1}^{t_2} : X \to X$ mapping $\phi_{t_1}(L)$
  to $\phi_{t_2}(L)$.  Then the A-infinity structure maps
  $\mu^{t_1}_d$ for $\phi_{t_1}$ and $\mu^{t_2}_d$ for $\phi_{t_2}$
  are related by
 \begin{multline}
 q^{(t_2 - t_1)} E_{t_1}^{t_2}( \mu^{t_1}_d(x_1,\ldots, x_d) ) =
  \mu^{t_2}_d( E_{t_1}^{t_2} x_1, \ldots ,E_{t_1}^{t_2} x_d ), \\
  \forall d \ge 0, \ x_1,\ldots, x_d \in \cI(\phi_{t_1}) \cong 
  \cI(\phi_{t_2}). \end{multline}
\end{theorem}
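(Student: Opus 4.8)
The plan is to deduce the theorem by combining the bijection of moduli spaces in Lemma~\ref{bijlema} with the area-variation formula of Lemma~\ref{change}, so that the statement becomes an identity between exponents of $q$. First I would fix $t_2$ close enough to $t_1$ that Lemma~\ref{bijlema} applies. By Corollary~\ref{implicit}, every rigid treed disk $u_{t_1}:C\to X$ with boundary in $\phi_{t_1}(L)$ contributing to $\mu^{t_1}_d$ extends to a smooth family $u_t:C\to X$, $t\in[t_1,t_2]$, of rigid treed disks with boundary in $\phi_t(L)$, and $u_{t_1}\mapsto u_{t_2}$ induces the bijection $\ol{\M}_\Gamma(\phi_{t_1},D,\ul{x})_0\to\ol{\M}_\Gamma(\phi_{t_2},D,\ul{x})_0$ of Lemma~\ref{bijlema}. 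Along such a family the combinatorial type $\Gamma$, the tuple of limits $(x_0,\dots,x_d)$, the number $\sigma(u)$ of interior leaves, the local-system holonomy $y(u)\in\Lambda^\times$, the sign $(-1)^{\heartsuit}$ of \eqref{heartsuit}, and the orientation sign $\eps(u)\in\{\pm1\}$ are all constant: the last because the coherent orientations are built from a continuously varying choice of paths $\gamma_x$ and from the bundle data $(u_t^*TX,(\partial u_t)^*TL)$, so the determinant lines and chosen orientations in \eqref{caniso}, \eqref{split1}, \eqref{split2} deform continuously and no sign is created. Hence the contribution of $u_{t_1}$ to $\mu^{t_1}_d(x_1,\dots,x_d)$ and that of $u_{t_2}$ to $\mu^{t_2}_d(x_1,\dots,x_d)$ differ only in the area weight $q^{E(u_t)}$.

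Next I would record the two inputs coming from the $\R$-grading. Applying Lemma~\ref{change} to the family $u_t$ gives
\[
E(u_{t_2})-E(u_{t_1})=\int_{t_1}^{t_2}\Big(\sum_{k=1}^{d}\theta(x_k)-d+2-\theta(x_0)\Big)\,dt,
\]
with $\theta(x_k)=\dim W^-_{x_k}$ when $x_k\in\cI^{\on{c}}(\phi_t)$, and with the unit generators $x^{\whitet},x^{\greyt}$ assigned the values $0,-1$ of their degrees, for which the surface part of the relevant $u_t$ is empty so that both sides vanish. On the other hand, comparing the grading $|x|$ with \eqref{ongen}, and using that $\dim W^-_x$ is independent of $t$, one obtains the uniform formula $E_{t_1}^{t_2}x=q^{\int_{t_1}^{t_2}(1-\theta(x))\,dt}\,x$ for every generator $x\in\cI(\phi_{t_1})\cong\cI(\phi_{t_2})$.

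Finally I would compare the two sides of the asserted identity term by term over the indexing pairs $(x_0,u)$. On the left, $q^{(t_2-t_1)}E_{t_1}^{t_2}$ multiplies the coefficient of the $x_0$-term by $q^{(t_2-t_1)}\,q^{\int_{t_1}^{t_2}(1-\theta(x_0))\,dt}$, whose exponent equals $2(t_2-t_1)-\int_{t_1}^{t_2}\theta(x_0)\,dt$. On the right, replacing each input $x_k$ by $E_{t_1}^{t_2}x_k$ and using multilinearity of $\mu^{t_2}_d$ factors out $\prod_{k=1}^{d}q^{\int_{t_1}^{t_2}(1-\theta(x_k))\,dt}$, while the area weight becomes $q^{E(u_{t_2})}=q^{E(u_{t_1})}\,q^{\int_{t_1}^{t_2}(\sum_{k}\theta(x_k)-d+2-\theta(x_0))\,dt}$ by Lemma~\ref{change}. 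Adding the two $q$-exponents on the right, the terms $\sum_k\int_{t_1}^{t_2}\theta(x_k)$ cancel, the terms $\pm d(t_2-t_1)$ cancel, and one is left with $2(t_2-t_1)-\int_{t_1}^{t_2}\theta(x_0)\,dt$, exactly the exponent produced on the left. Since the remaining data $(\sigma(u)!)^{-1}$, $y(u)$, $\eps(u)$, $(-1)^{\heartsuit}$ all agree under the bijection, the $(x_0,u)$-terms agree; summing over $(x_0,u)$ gives the identity for each $d\ge0$, the case $d=0$ being the assertion $q^{(t_2-t_1)}E_{t_1}^{t_2}\mu^{t_1}_0(1)=\mu^{t_2}_0(1)$ about the curvature.

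I expect the main obstacle to be not the exponent bookkeeping above but the verification that the bijection of Lemma~\ref{bijlema} genuinely preserves every piece of auxiliary data entering the structure maps --- in particular that the coherent orientations of Theorem~\ref{comeager}, defined by bubbling off once-punctured disks with boundary paths $\gamma_x$ and depending on the chosen relative spin structure, transport without a sign change along the family $u_t$. This reduces to checking that the paths $\gamma_x$, the relative spin data, and the domain-dependent perturbation data can all be chosen continuously in $t$, and that the gluing identifications \eqref{caniso}, \eqref{split1}, \eqref{split2} are compatible with these continuous deformations; once this is in place, the remainder is the elementary computation indicated above.
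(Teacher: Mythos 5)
Your proposal is correct and takes essentially the same approach as the paper: both rest on the bijection of moduli spaces (Lemma~\ref{bijlema}/Corollary~\ref{implicit}), the area-variation formula of Lemma~\ref{change}, and the explicit formula \eqref{ongen} for $E_{t_1}^{t_2}$ on generators, and then reduce the statement to an exponent bookkeeping. The only cosmetic difference is that the paper phrases the computation infinitesimally --- showing that $\ddt\bigl(q^{t-t_1}(E_{t_1}^t)^{-1}\mu^t_d(E_{t_1}^t x_1,\ldots,E_{t_1}^t x_d)\bigr)=0$ and invoking the initial condition at $t=t_1$ --- while you integrate Lemma~\ref{change} along the family and compare exponents directly at the two endpoints; your remark about orientations, which the paper leaves implicit in the hypothesis that the bijection is induced by an ambient isotopy, is a reasonable point to flag but is not a genuine gap in either account.
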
 

\begin{proof} Since $\mu_d$ is defined by counting holomorphic disks,
  we have for all $x_k$'s self-intersection points
\begin{multline}  \label{deriv}
\ddt   q^{(t - t_1)} (E_{t_1}^t)^{-1}  
\mu^{t}_d( E_{t_1}^{t} x_1, \ldots ,E_{t_1}^{t} x_d )
\\ =  \ddt 
 q^{t - t_1}  q^{ \int_{t_1}^{t} (1 - \theta(x_0))} 
\mu^{t}_d \left(  q^{ \int_{t_1}^{t} (1 - \theta(x_1))} x_1,\ldots, 
  q^{ \int_{t_1}^{t} (1 - \theta(x_d))} x_d \right)
\\ = \left(  \sum_{k > 0 } \theta(x_k) - d + 2 -
  \theta(x_0)  + (\theta(x_0) - 1) - \sum_{k > 0 } (\theta(x_k) - 1)
\right)  \\
\ln(q) (E_{t_1}^t)^{-1} \mu^t_d(E_{t_1}^t x_1,\ldots, E_{t_1}^t x_d) 
=  0 
\end{multline}
by Lemma \ref{change}.  The claim in the Lemma holds when \label{when}
$t = t_1$ since $E_{t_1}^{t_1}$ is the identity.  By \eqref{deriv}
\label{theclaim} the claim holds for all $t$.  The general case (when
some $x_k \in \cI^{\on{c}}(\phi_t))$ is similar.
\end{proof}

Because of the signs in the Euler flow above, the flow may not
preserve the space of Maurer-Cartan solutions which are required to
have positive $q$-valuation.  Let
$$MC^{> E}(\phi_t)= \Set{ b \in MC(\phi_t) \ | \ \val_q(b) > E } $$
  be the subset of solutions to the weak Maurer-Cartan equation having 
  $q$-valuation at least $E$.  

  \begin{corollary} Let $\phi_t:L \to X$ be a Maslov flow of
    Lagrangian immersions with transverse self-intersections for
    $t \in [t_1,t_2]$.  The Euler flow
    $E_{t_1}^{t_2}: CF(\phi_{t_1}) \to CF(\phi_{t_2})$ (resp. its
    inverse) maps 
\[ E_{t_1}^{t_2}:MC^{> E} (\phi_{t_1}) \to MC^{ >E - (\dim(L)-1)(t_2 -
  t_1)}(\phi_{t_2}) \] 
resp.   
\[ MC^{> E} (\phi_{t_2}) \to MC^{ >E - 2(t_2 - t_1)}(\phi_{t_2}) .\]  
The potentials are preserved \label{preserved} up to an overall power
of $q$, that is,
\[ (E_{t_1}^{t_2})^* W_{t_2} = q^{(t_2 - t_1)} W_{t_1} \]
and the Euler flow \label{eulerflow} lifts to an isomorphism
\[ HF(\phi_{t_1},b_{t_1}) \to HF(\phi_{t_2}, b_{t_2}) . \]
\end{corollary}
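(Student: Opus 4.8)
\emph{Proof proposal.} The plan is to deduce the corollary formally from Theorem~\ref{related}, viewing $E:=E_{t_1}^{t_2}$ as a rescaled isomorphism of $A_\infty$ algebras. First I would reduce to the case in which $t_2$ is close enough to $t_1$ that Lemma~\ref{bijlema} and Theorem~\ref{related} apply on $[t_1,t_2]$: the general case follows by subdividing $[t_1,t_2]$ into finitely many such subintervals, using the cocycle identity $E_{t_1}^{t_3}=E_{t_2}^{t_3}\circ E_{t_1}^{t_2}$ together with additivity of the exponents in \eqref{ongen}, and absorbing any Hamiltonian jumps of the Maslov flow via the Hamiltonian invariance of $CF(\phi)$ up to $A_\infty$ homotopy equivalence noted in the homotopy invariance remark above. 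So from now on assume $t_2$ is close to $t_1$.

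Next I would record the valuation behaviour. By \eqref{ongen}, $E$ acts on each generator $x$ by multiplication by $q^{c(x)}$, where $c(x)=(t_2-t_1)(1-|x|)$ for $x\in\cI^{\on{c}}(\phi)\cup\{x^{\whitet},x^{\greyt}\}$ and $c(x)=\int_{t_1}^{t_2}(1-\theta_t(x))\,dt$ for $x\in\cI^{\on{si}}(\phi)$. Since $|x|=\dim(W_x^-)\in\{0,\dots,\dim L\}$ for critical points, $|x^{\whitet}|=0$, $|x^{\greyt}|=-1$, and $\theta_t(x)\in(0,\dim L)$ for self-intersection points, we get $c(x)\ge-(\dim L-1)(t_2-t_1)$ and $-c(x)\ge-2(t_2-t_1)$ for every $x$, the first extremal at the maximum of $m$ and the second at $x^{\greyt}$. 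As $E$ is multiplication by an invertible element of $\Lambda$ in each coordinate, $\val_q(Eb)\ge\val_q(b)-(\dim L-1)(t_2-t_1)$ and $\val_q(E^{-1}b)\ge\val_q(b)-2(t_2-t_1)$, which yields the claimed maps between the $MC^{>E}$ spaces once we know $E$ and $E^{-1}$ preserve solvability of the weak Maurer--Cartan equation. For that, and for the potential identity, I would apply Theorem~\ref{related} with all inputs equal to $b:=b_{t_1}$: multilinearity of $\mu_d$ and linearity of $E$ give $\mu^{t_2}_d((Eb)^{\otimes d})=q^{(t_2-t_1)}E(\mu^{t_1}_d(b^{\otimes d}))$ for all $d\ge0$, and summing over $d$ (the series converge since $\val_q(b)>0$) gives $\mu^{t_2}(Eb)=q^{(t_2-t_1)}E(\mu^{t_1}(b))$. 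If $b\in MC(\phi_{t_1})$ then $\mu^{t_1}(b)=W_{t_1}(b)\,1_{\phi_{t_1}}$, and since $E$ sends the strict unit $1_{\phi_{t_1}}=x^{\whitet}$ to a power of $q$ times $1_{\phi_{t_2}}$, the right-hand side is a multiple of $1_{\phi_{t_2}}$; hence $Eb\in MC(\phi_{t_2})$ and reading off the coefficient gives $(E_{t_1}^{t_2})^*W_{t_2}=q^{(t_2-t_1)}W_{t_1}$. Running Theorem~\ref{related} in reverse handles the ``resp.'' clause for $E^{-1}$.

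Finally, for the Floer cohomology statement I would apply Theorem~\ref{related} to strings $b,\dots,b,a,b,\dots,b$ with one non-$b$ input, obtaining $\mu^{t_2}_{k_-+k_++1}((Eb)^{\otimes k_-},Ea,(Eb)^{\otimes k_+})=q^{(t_2-t_1)}E(\mu^{t_1}_{k_-+k_++1}(b^{\otimes k_-},a,b^{\otimes k_+}))$; summing over $k_\pm$ gives $\mu_1^{b_{t_2},t_2}\circ E=q^{(t_2-t_1)}\,E\circ\mu_1^{b_{t_1},t_1}$, so $E$ is an isomorphism of $\Lambda$-modules carrying the differential $q^{(t_2-t_1)}\mu_1^{b_{t_1},t_1}$ to $\mu_1^{b_{t_2},t_2}$. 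Because $q^{(t_2-t_1)}$ is a unit in the Novikov field $\Lambda$, rescaling a differential by it leaves kernel and image unchanged, so $E$ descends to an isomorphism $HF(\phi_{t_1},b_{t_1})\to HF(\phi_{t_2},b_{t_2})$. I expect the only real friction to be bookkeeping: controlling convergence of the Maurer--Cartan and Floer sums after the valuation shift induced by $E$ (which is exactly why the hypotheses $\val_q(b)>(\dim L-1)(t_2-t_1)$, resp.\ $>2(t_2-t_1)$, are imposed so that $Eb$, resp.\ $E^{-1}b$, still lies in $CF(\,\cdot\,)_+$) and threading the Hamiltonian jumps of a Maslov flow through the composition without disturbing the exponents; the genuinely analytic content---Lemma~\ref{bijlema}, Lemma~\ref{change}, and Theorem~\ref{related}---is already in hand.
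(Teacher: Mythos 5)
Your proof is correct and follows essentially the same route as the paper's: it applies Theorem~\ref{related} termwise to the Maurer--Cartan sum (and to sums with a single non-$b$ input) to obtain the identities $\mu^{t_2}(E_{t_1}^{t_2}b)=q^{t_2-t_1}\,E_{t_1}^{t_2}\mu^{t_1}(b)$ and $\mu_1^{E_{t_1}^{t_2}b}\circ E_{t_1}^{t_2}=q^{t_2-t_1}\,E_{t_1}^{t_2}\circ\mu_1^{b}$, and it combines these with the bound $|x|-1\in[-2,\dim(L)-1]$ coming from \eqref{ongen} to track how $E_{t_1}^{t_2}$ and its inverse shift $q$-valuations. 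Your extra care in reducing to short subintervals where Lemma~\ref{bijlema} applies and in threading the Hamiltonian jumps of the Maslov flow is a useful clarification left implicit in the paper, but it does not change the underlying argument.
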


\begin{proof} We have
\begin{eqnarray*} 
  \mu_0^{t_2}(1) &+& \mu_1^{t_2}(E_{t_1}^{t_2}b) +
  \mu_2^{t_2}(E_{t_1}^{t_2}b,E_{t_1}^{t_2} b) + \ldots  \\
  &=& q^{(t_2 - t_1)} E_{t_1}^{t_2} \mu_0^{t_2}(1) + q^{(t_2 - t_1)}
  E_{t_1}^{t_2}\mu_1^{t_2}(b) + q^{(t_2 - t_1)} E_{t_1}^{t_2}
  \mu_2^{t_2}(b, b) + \ldots \\ &=& q^{(t_2 - t_1)} E_{t_1}^{t_2} \left(
    \mu_0^{t_2}(1) + \mu_1^{t_2}(b) + \mu_2^{t_2}(b, b) + \ldots 
  \right) . 
\end{eqnarray*}
Since the real-grading on $CF(\phi_t)$ takes values between $0$ and
$n = \dim(L)$, the Euler flow
$E_{t_1}^{t_2}: CF(\phi_{t_1}) \to CF(\phi_{t_2})$ satisfies
\[ \ddt \val_q( (E_{t_1}^{t})^{-1}(x)) = |x| - 1 \in [ -2 ,\dim(L) -
1] \]
for self-intersection points $x \in \cI(\phi_t)$ by \eqref{ongen}.
Hence the forward flow maps $MC^{> E}(\phi_{t_1})$ to
$MC^{> E - (t_2 - t_1)(1 - \dim(L))} (\phi_{t_2})$, and preserves the
potentials up to an overall power of $q$.  Similarly, the reverse
Euler flow maps $MC^{> E}(\phi_{t_2})$ to
$MC^{> E - 2(t_2 - t_1)} (\phi_{t_2})$, \label{gt} and preserves the
potentials up to an overall power of $q$.  Taking the directional
derivative of $\mu_0^b(1)$ in the direction of a class $c$ implies
that $HF(\phi_{t_1},b_{t_1})$ is mapped to $HF(\phi_{t_2}, b_{t_2})$
isomorphically.
\end{proof}

In order to complete the proof of Theorem \ref{transmain}, it remains
to deal with triple intersections, that is, triples
$x_1,x_2,x_3 \in L$ with $\phi(x_1) = \phi(x_2) = \phi(x_3) $.

\begin{figure} 
\begin{center}
\begingroup%
  \makeatletter%
  \providecommand\color[2][]{%
    \errmessage{(Inkscape) Color is used for the text in Inkscape, but the package 'color.sty' is not loaded}%
    \renewcommand\color[2][]{}%
  }%
  \providecommand\transparent[1]{%
    \errmessage{(Inkscape) Transparency is used (non-zero) for the text in Inkscape, but the package 'transparent.sty' is not loaded}%
    \renewcommand\transparent[1]{}%
  }%
  \providecommand\rotatebox[2]{#2}%
  \ifx\svgwidth\undefined%
    \setlength{\unitlength}{273.18852222bp}%
    \ifx\svgscale\undefined%
      \relax%
    \else%
      \setlength{\unitlength}{\unitlength * \real{\svgscale}}%
    \fi%
  \else%
    \setlength{\unitlength}{\svgwidth}%
  \fi%
  \global\let\svgwidth\undefined%
  \global\let\svgscale\undefined%
  \makeatother%
  \begin{picture}(1,0.4446314)%
    \put(0,0){\includegraphics[width=\unitlength,page=1]{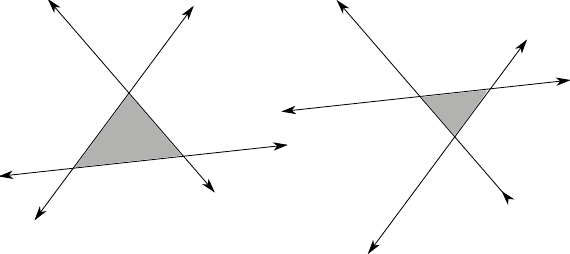}}%
    \put(0.20418892,0.32730756){\color[rgb]{0,0,0}\makebox(0,0)[lb]{\small \smash{$x_1$}}}%
    \put(0.34469919,0.1557371){\color[rgb]{0,0,0}\makebox(0,0)[lb]{\small \smash{$x_2$}}}%
    \put(0.06072052,0.12467694){\color[rgb]{0,0,0}\makebox(0,0)[lb]{\small \smash{$x_3$}}}%
    \put(0.79433208,0.14834179){\color[rgb]{0,0,0}\makebox(0,0)[lb]{\small \smash{$x_1$}}}%
    \put(0.66121712,0.29181024){\color[rgb]{0,0,0}\makebox(0,0)[lb]{\small \smash{$x_2$}}}%
    \put(0.91709368,0.312517){\color[rgb]{0,0,0}\makebox(0,0)[lb]{\small \smash{$x_3$}}}%
  \end{picture}%
\endgroup%
\end{center}
\caption{Behavior under a triple intersection} 
\label{triangle}
\end{figure} 

\begin{lemma} {\rm (Triple intersection lemma)}  
  \begin{enumerate} 
  \item \label{four} {\rm (Dimension greater than two)} Suppose that
    $\dim(X) \ge 4$.  Let $\phi_0: L \to X$ be a Lagrangian immersion
    without triple intersections and with only transverse
    self-intersections $L \times_\phi L - \Delta_L$.  Then for any $l \ge 2$, there exists an open
    $C^l$-dense set of Maslov flows $\phi_t: L \to X, t \in [0,T]$
    between $\phi_0$ and $\phi_T$ for which there are no triple
    self-intersection points;
  \item \label{twoinv}  {\rm (Dimension two)}  In dimension $\dim(X) = 2$, immersed Lagrangian Floer theory
    $HF(\phi_t,b_t)$ is invariant under a Maslov flow
    $\phi_t: L \to X, t \in [0,2]$ that develops a triple intersection
    (but no quadruple intersections).  \end{enumerate}
\end{lemma}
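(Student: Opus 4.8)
For the case $\dim(X)\ge 4$ (part \ref{four}), the plan is to run a Sard--Smale dimension count parallel to the proof of Theorem \ref{sardsmale}. One introduces the universal space of triple self-intersections
\[ \M^{\univ,tsi} := \left\{ \begin{array}{l} (x_1,x_2,x_3,t_0,H,\phi) \in L^3 \times [0,T] \times C^l([0,T]\times X) \times \Lag(L,X)_l \\ \dot{\phi_t} = \alpha_t + \d \phi_t^* H_t, \ \ \phi_{t_0}(x_1) = \phi_{t_0}(x_2) = \phi_{t_0}(x_3), \ \ x_1,x_2,x_3 \ \text{distinct} \end{array} \right\}. \]
As in Theorem \ref{sardsmale}, a Hamiltonian variation supported near $t_0$ and near $x_2$ (resp.\ $x_3$) moves $\phi_{t_0}(x_2)$ (resp.\ $\phi_{t_0}(x_3)$) through an arbitrary normal direction while fixing $\phi_{t_0}(x_1)$, so the two vector equations cut out $\M^{\univ,tsi}$ transversally and it is a smooth Banach manifold. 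The Sard--Smale theorem then produces a comeager, hence $C^l$-dense, set of perturbations $H$ for which the fibre over $[0,T]$ is a manifold of dimension $3\dim(L)+1-2\dim(X)=1-\tfrac12\dim(X)\le -1$, hence empty: the corresponding Maslov flow has no triple intersections. Since $L$ is compact and $\phi_0,\phi_T$ have none, these perturbations can be taken rel endpoints, and absence of triple intersections is a $C^l$-open condition.

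For $\dim(X)=2$ (part \ref{twoinv}) the plan follows the self-tangency argument of Theorem \ref{tanmain}. First, an admissibility reduction parallel to Lemma \ref{admissible} lets me assume that near the triple-intersection instant, normalized to $t=1$, the flow agrees inside a Darboux neighborhood $U$ of the triple point with a fixed local model --- two line segments and one circular arc through the origin, arranged so that the isotopy is a Maslov flow as in Example \ref{elem} --- bounding a small triangle of area $A(t)$ with $A(1)=0$, $A'(1)\ne 0$, and no quadruple point. The essential difference from the self-tangency case is that no self-intersection point is created or destroyed: $\cI^{\on{si}}(\phi_t)$, hence $\cI(\phi_t)$ and $CF(\phi_t)$, is canonically identified for all $t$ near $1$. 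Outside $U$ the flow is a transverse isotopy, governed by Theorem \ref{related} and the Euler flow; inside $U$ the three branches of $\phi(L)$ are exact, so exactly as in the energy-loss analysis for self-tangencies there is $\hbar>0$ such that every holomorphic treed disk with boundary in $\phi_t$ not contained in $U$ has energy $\ge \hbar$, and Stokes' theorem rules out energy loss at the triple point.

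With this localization in place, I would construct, following the parametrized compactness of Ekholm--Etnyre--Sullivan \cite[Theorem 11.2]{ees}, compactified parametrized moduli spaces of adapted treed disks over $t\in[1-\eps,1+\eps]$; the ends of their one-dimensional parts are the fibres at $t=1\pm\eps$, the usual $A_\infty$ degenerations (disk bubbling at transverse self-intersections, Morse breaking), and configurations in which the collapsing holomorphic triangle in $U$ splits off. Counting rigid elements of these spaces defines an $A_\infty$ morphism $F=(F_d)_{d\ge 0}: CF(\phi_{1-\eps})\to CF(\phi_{1+\eps})$ whose linear term agrees, up to higher-area corrections, with the Euler flow $E_{1-\eps}^{1+\eps}$ and is therefore invertible, so $F$ is an $A_\infty$ homotopy equivalence. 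As such, $F$ induces a bijection $\ol{MC}(\phi_{1-\eps})\to \ol{MC}(\phi_{1+\eps})$ intertwining the potentials up to a power of $q$ and isomorphisms of Floer cohomology, by the same reasoning as in the homotopy-invariance remark. Composing with the transverse-isotopy invariance over $[0,1-\eps]$ and $[1+\eps,2]$ gives, for any $[b_0]\in\ol{MC}(\phi_0)$, a class $[b_2]\in\ol{MC}(\phi_2)$ with $W(\phi_2,b_2)=q^{4}W(\phi_0,b_0)$ and $HF(\phi_2,b_2)\cong HF(\phi_0,b_0)$; the reverse direction holds under a $q$-valuation hypothesis $\val_q(b_2)>4$, as with Theorem \ref{transmain}, the normalization $[0,2]$ placing the triple point at $t=1$.

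The main obstacle will be the construction and gluing analysis of the parametrized moduli spaces at $t=1$: one must prove the triple-point analogue of Lemma \ref{expdecay} and of \cite[Lemma 4.6]{ees}, namely that a holomorphic polygon whose corners straddle the collapsing triangle glues from a polygon with the missing corner together with the small triangle, with the expected transversality of the parametrization. Once that is available the $q$-valuation bookkeeping is comparatively mild here: no new generators appear, $F_1$ differs from the tautological identification $CF(\phi_{1-\eps})\cong CF(\phi_{1+\eps})$ only by the area-bounded Euler flow, and the collapsing triangle enters $F_d$ weighted by $q^{A(t)}$ with $A(t)\ge 0$, so the corrected bounding cochain stays in $CF(\phi_{1+\eps})_+$ without the delicate valuation constraints of Theorem \ref{tanmain}.
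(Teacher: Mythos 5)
Your treatment of part (a) is essentially the paper's own: you set up the universal space of triple self-intersection points, argue transversality by Hamiltonian variations as in Theorem \ref{sardsmale}, and conclude by Sard--Smale that the fibre over a regular $H$ has expected dimension $3\dim(L)+1-2\dim(X)=1-\tfrac12\dim(X)\le-1$, hence is empty. That matches the paper's proof.

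For part (b) you take a genuinely different, and much more laborious, route than the paper does. You propose a full parametrized gluing/degeneration analysis at the triple point, modeled on the self-tangency machinery of Theorem \ref{tanmain} and \cite{ees}: local normal form, parametrized compactness, a new triple-point analogue of the exponential-decay estimate and of the gluing lemma, and an $A_\infty$ morphism counting rigid parametrized disks. You yourself flag ``the construction and gluing analysis of the parametrized moduli spaces at $t=1$'' as the main obstacle, and indeed this would require proving new hard-analysis results not in the paper. The paper avoids all of this with a short ``soft'' observation: holomorphic polygons having at most one corner among the triple generators $x_1,x_2,x_3$ extend continuously past the triple-intersection time, so the only problematic contributions come from polygons touching two or more of them; but if two or more of the $x_i$ appear with nonzero coefficient in a Maurer--Cartan element $b_t$, then the small triangle (Figure \ref{triangle}) forces $\mu_0^{b_t}(1)$ out of the span of the unit, i.e.\ the Floer cohomology is already obstructed before $t=0$. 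Hence at most one $x_i$ has nonzero coefficient in any $b_t\in MC(\phi_t)$, and the disks meeting two or more of the $x_1,x_2,x_3$ simply do not enter $\mu_d^{b_t}$ at all --- invariance follows without any new gluing. Your approach, if its analytic obstacle could be cleared, would prove a stronger statement (an $A_\infty$ homotopy equivalence without any Maurer--Cartan positivity hypothesis), but as written it leaves the hardest step unresolved and is far from the most economical proof. The lesson: before launching a parametrized gluing construction, check whether the Maurer--Cartan constraint already kills the contributions from the dangerous configurations.
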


\begin{proof} First we give the proof of \eqref{four}, which uses a
  Sard-Smale argument and a dimension count.  Suppose that
  $\dim(X) \ge 4$.  Consider the universal space of triple points
  \begin{multline} \label{triple} \M^{\univ,\on{trip}} := \Set{  (t_0, \phi , H, x_1,x_2,x_3) |
\begin{array}{l} x_1,x_2,x_3 \ \text{distinct} \\  
   \phi_{t_0}(x_1) = \phi_{t_0}(x_2) = \phi_{t_0}(x_3) \\ \dot{\phi_t} =
    \phi_t^*(\d H_t + 
    \alpha_t  - \alpha_L)  \}  \end{array} }  \\ \subset [0,1] \times
    \Lag(L,X)_l \times C^l([0,1] \times X)  \times L^3 .\end{multline}
  By assumption the self-intersections of $\phi_0$ are transverse.  As
  in Theorem \ref{sardsmale}, Hamiltonian variations generated by
  $H_t \in C^l(X)$ together with $(D_x \phi)( T_x L)$ span the tangent
  space $T_{\phi(x)} L $ at any point $x \in L$.  It follows that the
  locus \eqref{triple} is cut out transversally.  By the Sard-Smale
  theorem,  regular values of the projection
\[ \pi: \M^{\univ,\on{trip}} \to C^l([0,1] \times X)  \] 
are comeager.  For regular values $H$, the submanifold of triple
intersections $\pi^{-1}(H)$ is of expected dimension $-1$, hence
empty.

The argument for \eqref{twoinv} is rather different, since triple
intersections cannot be avoided in families; instead we must examine
the holomorphic triangles more carefully.  Suppose that $\dim(X) = 2$.
For a triple intersection point of $\phi_0$, denote the intersection
points $x_1,x_2,x_3 \in L $ with
$\phi_0(x_1) = \phi_0(x_2) = \phi_0(x_3)$.  If a holomorphic polygon
$u: S \to X$ with boundary in $\phi$ has only one of these points
$x_k, k \in \{ 1, 2, 3 \}$ as a vertex, then that $u$ extends in a
continuous family $u_t$ with boundary in $\phi_t$ for all $t$ past the
triple intersection time $0$.  On the other hand,
$ b_t \in MC(\phi_t)$ for $t < 0$ implies that if the coefficient of
two or more $\ol{x}_i,\ol{x}_j, i,j \in \{ 1,2,3 \}$ of the odd
generators $\ol{x}_1,\ol{x}_2,\ol{x}_3$ in the element
$b_t \in CF(\phi_t)$ are non-zero, then the triangle in Figure
\ref{triangle} implies that the Floer cohomology is obstructed.
Indeed, if say the coefficients $b(\ol{x}_1),b(\ol{x}_2)$ are non-zero
and the triangle has area $A$, smaller than the area of any other
holomorphic polygon contributing to $\mu_0^b(1)$ then we would have
\begin{equation} \label{smalltri} \mu_0^b(1) = \mu_2(b,b) =b(\ol{x}_1)
  b(\ol{x}_2)  q^A x_3 + \text{higher\ order\ in } q.  \end{equation}
So at most one element of $x_1,x_2,x_3$ has non-zero coefficient in
$b_t$.  It follows that the disks $u_t: C \to X$ that meet two or more
of the generators $x_1,x_2,x_3$ do not contribute to the maps
$\mu_d^{b_t}(1)$.
\end{proof}

\section{Curve shrinking and gluing at a tangency} 
\label{curveshrinking} 

In this section we modify estimates of Ekholm-Etnyre-Sullivan
\cite{ees} to show invariance under the birth or death of a pair of
self-intersection points. The Floer differential contains a term of
small $q$-valuation that connects the two new generators.  This makes
the Floer cohomology \label{tobe}  invariant under the birth or death, while the
modification of the curvature of the Fukaya algebra can be cancelled
by a suitable weakly bounding cochain. 

The main result \label{rresult} of this section is a description of how the
holomorphic disks change as the immersion passes through a
self-tangency, and is a modification of the results of \cite[Chapter
10]{ees} to the setting of treed disks.  Let
$(\phi_t:L \to X)_{t \in [-\eps,\eps]}$ be a family of Lagrangian
immersions with a single value $t=0$ for which there is a birth of a
pair of self-intersection points at ${v} \in \phi_0(L)$.  Let
\[\zz,\bzz, \bz,\mz \in \cI^{\on{si}}(\phi_\eps) \] 
denote the additional generators in the Floer cochains for $\phi_\eps$
in relation to $\phi_{-\eps}$ depicted in the local model in Figure
\ref{generators}, where $\bzz,\bz$ are distinguished as the generators
that are the incoming corners of the small strip in Figure
\ref{small}; that is, with the curved branch $L_2$ in
\eqref{arcproduct} ordered before the flat branch $L_1$ for the
intersection with negative $q_1$ coordinate, and the reverse for the
intersection with positive $q_1$ coordinate.  We say that
$\ul{x}_t \subset \cI^{\on{si}}(\phi_t)$ is an {\em admissible} family
of generators if and only if \label{admfam}
\begin{center}
  $\phi(x_{i,t}) \to {v} , x_{i,t} \in \ul{x}_t, i > 0 $ resp. $i = 0$
  implies $x_{i,t} \notin \{ \zz, \mz \} $ resp.
  $x_{i,t} \in \{\zz,\mz \}$; \end{center}
that is, the generators $\bzz, \bz$ are excluded as outputs and
$\zz,\mz$ are excluded as inputs.  In the following the reference to
the Donaldson hypersurface $D$ may be dropped to simplify notation so
that the moduli spaces of holomorphic treed disks will be denoted
$\ol{\M}(\phi_t)$.

\begin{theorem} \label{smallstrip} Let $\phi_t: L \to X, t \in [-T,T]$
  be an admissible family of Lagrangian immersions with a
  self-tangency at $t = 0$ at $y \in X$.  Suppose that admissible
  perturbations $\ul{P} = (P_\Gamma)$ have been chosen for $\phi_0$
  using some Donaldson hypersurface $D$.  Then the same hypersurface
  $D$ and collection $\ul{P} = (P_\Gamma)$ of perturbations are
  regular for adapted holomorphic disks with boundary in $\phi_t, t
  \in [-\eps,\eps]$ for $\eps$ sufficiently small and 
  \begin{enumerate}
  \item \label{sstrip} {\rm (Small strips, similar to Lemma 2.14 in
      \cite{ees})} There exists $\eps > 0$ such that for any
    $t \in (0,\eps)$ there exist holomorphic strips of index one
\[ [ u_+(t): C \to X]  \in \M(\phi_t,\bzz,\mz), 
\quad [ u_-(t): C \to X] \in \M(\phi_t,\ol{v}_-,\zz),
 \]
 connecting $\bzz$ to $\mz$ resp. $\bz$ to $\zz$ with area
 $A(u_\pm(t)) \to 0$ as $t \to 0$, as in Figure \ref{small}, and
 any strip $u: C \to X$ with boundary in $\phi_t$ between $\bzz$ and
 $\mz$ of sufficiently small energy $E(u)$ is equal to $u_+(t)$ or
 $u_-(t)$.

\item \label{cshort} {\rm (Curve shrinking, similar to Proposition
  2.16 in \cite{ees})} Let $\ul{x}_s \in \cI(\phi_s)$ for
  $s \in [0,\delta]$ be a admissible family of self-intersection
  points/critical points.  There exist $\varkappa > 0$ such that for any
  $s \in (-1/\varkappa,0)$ there exists an orientation-preserving bijection
  \[ G_s: \M_{\Gamma_0}(\phi_0, \ul{x}_0)_\rho \to U_s \subset
  \M_{\Gamma_s}(\phi_s, \ul{x}_s )_\rho - \{ u_\pm \} \]
  where $u_\pm$ are the strips from the previous item, $U_s$ is a
  $C^0$ neighborhood of $\M_{\Gamma_0}(\phi_0,\ul{x}_0)_\rho$ in
  $ \M_{\Gamma_s}(\phi_s, \ul{x}_s )_\rho$, with the property that
  $\lim_{s \to 0} G_s(u) = u$.  See Figure \ref{split}.

\begin{figure}
\begin{center}
\includegraphics[width=4.5in]{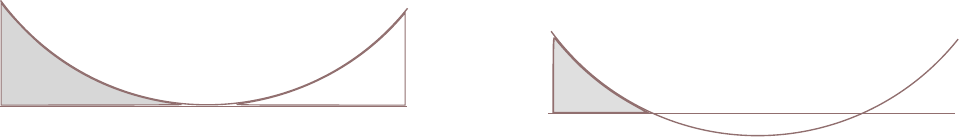} 
\end{center}
\caption{Shrinking a holomorphic curve at a tangency} 
\label{split} 
\end{figure} 

\item \label{gtang} {\rm (Gluing at a tangency, similar to Proposition
    2.17 in \cite{ees})} Let $\Gamma_0$ be a combinatorial type of
  tree disk $E_0 \subset \Edge(\Gamma)$ a subset of the finite edges
  corresponding to nodes $q_e \subset C$ mapping to the self-tangency
  point $x$, and no semi-infinite edges $T_e \subset T$ map to the
  self-tangency $x$.  Let $\Gamma_t$ denote the combinatorial type
  obtained by collapsing the edges $E_0$.  Suppose that the almost
  complex structure $J$ is in standard form near the self-tangency.
  Then there exists $\varkappa > 0$ such that for $s \in (0,1/\varkappa)$ there
  exists a bijection
\[ G_s: 
\M_{\Gamma_0}(\phi_0)_\rho \to U_s \subset \M_{\Gamma_s}(\phi_s)_\rho \]
where $U_s$ is a Gromov neighborhood of
$\M_{\Gamma_0}(\phi_0,\ul{x}_0)$ in
$ \M_{\Gamma_s}(\phi_s, \ul{x}_s )_\rho$ , with the property that
$\lim_{s \to 0} G_s(u) = u$.  See Figure \ref{glue}.  After
multiplying the given orientations by the sign \eqref{heartsuit} in
the definition of the structure maps the map $G_s$ is orientation
preserving.  Furthermore, given any sequence
$u_\nu \in \M_{\Gamma_s}(\phi_s)_\rho$ there exists a subsequence
converging to some $u \in \M_{\Gamma_0}(\phi_0)_\rho$.
\end{enumerate} 
\begin{figure}
\begin{center}
\includegraphics[width=3in]{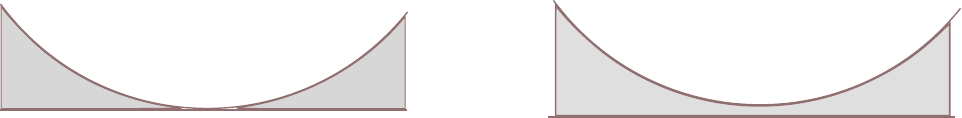} 
\end{center}
\caption{Gluing a holomorphic curve at a tangency} 
\label{glue} 
\end{figure} 
\end{theorem}

\begin{proof}[Proof of Theorem \ref{smallstrip} \eqref{sstrip}]
  The proof is essentially that of Lemma 2.14 in \cite{ees}, but we
  reproduce a proof for completeness.  
See also \cite{yjl} for similar gluing estimates. 

Note first that the Lagrangians
  in the local model \eqref{localmodel} are products.  Hence any
  holomorphic curve of $u: C \to X$ may be written in a neighborhood
  of $y$ as a product of maps $u_1: C \to X$ and $u_2 : C \to X$ with
  boundary in $L_1',L_1''$ and $L_2(s)', L_2(s)''$.  Let
  $u_1: \bR \times [0,1] \to \C$ denote the holomorphic strip with
  boundary on $L_1',L_2(s)''$ whose image is the region
\[x_1^2 + (y_1 - 1 \pm t) \in \left[0,\sqrt{ 1 - x_1^2} \right]  .\] 
This is the shaded region in Figure~\ref{small}, which exists by the Riemann
Mapping Theorem, see \cite{warsch}.  Let $u_2,\ldots, u_n$ be the
constant maps in the other components. Then $u = (u_1,\ldots, u_n)$ is
the desired map.

We claim that there are no other maps of equally small energy.  By
energy quantization Lemma \ref{hbar} there exists $\hbar$ so that any
non-trivial holomorphic polygon mapping a corner to an element
$x \in \cI(\phi_t)$ not equal to $y$ has energy at least $\hbar$.  Fix
$\eps$ sufficiently small so that $A(u(\eps)) < \hbar/2$.  Any other
map $u':C' \to X$ of area at most, say, $2A(u(\eps))$ must be such
that each component of the surface part $u'_{S'}$ meets the given
neighborhood of $y$.  The surface part $u'_{S'}$ is contained in a
small neighborhood of the intersection point, by a diameter estimate
involving the mean value inequality explained in Sikorav
\cite[4.4.1]{sikorav} (for the closed case; the case with Lagrangian
boundary is the same). Then the components $u'_2,\ldots, u'_n$ must be
constant by the maximum principle, while the first component is, up to
translation, the map $u$ given above up to an automorphism of
$\bR \times [0,1]$.
\end{proof}

The proofs of the shrinking and gluing parts Theorem \ref{smallstrip}
are generalizations of results of Ekholm-Etnyre-Sullivan \cite{ees}
that rely on a local model for the tangency. 
We prove parts~\eqref{cshort} and~\eqref{gtang} of Theorem~\ref{smallstrip}
in the remainder of this section.
As for similar gluing
results for pseudoholomorphic curves, the results depend on a
quantitative version of the implicit function theorem used by Floer
to prove gluing of trajectories:

\begin{lemma} \label{picard} {\rm (Floer's Picard Lemma,
    \cite[Proposition 24]{floer:monopoles})} Let $f : V_1 \to V_2$ be
  a smooth map between Banach spaces that admits a Taylor expansion
  $f(v) = f(0) + df(0)v + N(v)$ where $df(0): V_1 \to V_2 $ is
  Fredholm and has a right inverse $G:V_2 \to V_1$ satisfying the
  uniform bound
\[ \Vert GN(u) - GN(v) \Vert \leq C( \Vert u\Vert  + \Vert v \Vert)\Vert
u - v \Vert \]
for some constant $C$. Let $B(0, \eps)$ denote the $\eps$-ball
centered at $0 \in V_1$ and assume that
\[\Vert Gf(0) \Vert \leq \frac{1}{8C} .\]
Then for $\eps< \frac{1}{4C}$, the zero-set of $f^{-1}(0) \cap B(0,\eps)$ is a
smooth submanifold of dimension $\dim(Ker(df(0)))$ diffeomorphic to
the $\eps$-ball in $\Ker(df(0))$. \end{lemma}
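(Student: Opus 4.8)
The statement is a parametrised version of Floer's Picard (Newton) iteration, and the plan is to realise $f^{-1}(0)$ near the origin as the graph over $\Ker(df(0))$ of a map obtained from a uniform contraction. First I would split the domain using the right inverse: since $df(0)G=\Id_{V_2}$, the operator $df(0)$ is surjective, $G$ is injective, and $P:=G\circ df(0)$ is a bounded projection with $\im(P)=\im(G)$ and $\ker(P)=\Ker(df(0))$, so $V_1=\Ker(df(0))\oplus\im(G)$. Injectivity of $G$ shows that, near $0$, the equation $f(v)=0$ is equivalent to $Gf(v)=0$. Writing $v=k+w$ with $k\in\Ker(df(0))$ and $w\in\im(G)$, and using $df(0)k=0$ together with $G\,df(0)w=w$, the equation $Gf(k+w)=0$ becomes the fixed-point equation
\[ w=\Phi_k(w):=-Gf(0)-GN(k+w). \]
Since $N(0)=0$, the hypothesis on $N$ gives $\|GN(u)\|\le C\|u\|^2$ and $\|GN(u)-GN(u')\|\le C(\|u\|+\|u'\|)\|u-u'\|$.

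Next I would run the contraction estimate uniformly in the parameter $k$. Restricting $k$ to a small ball and $w$ to a closed ball in $\im(G)$ of radius of order $1/C$, chosen so that $\|k+w\|+\|k+w'\|\le 1/(2C)$ on the relevant balls, one obtains $\|\Phi_k(w)-\Phi_k(w')\|\le\tfrac12\|w-w'\|$, while the bound $\|\Phi_k(w)\|\le\|Gf(0)\|+C\|k+w\|^2$ together with the hypothesis $\|Gf(0)\|\le 1/(8C)$ shows that $\Phi_k$ maps this ball to itself. By the contraction mapping principle $\Phi_k$ has a unique fixed point $w=w(k)$. Since $\Phi_k(w)$ depends smoothly on $(k,w)$ and $\Id-D_w\Phi_k$ is invertible (the differential of the nonlinear term has operator norm $\le\tfrac12$), the implicit function theorem shows $k\mapsto w(k)$ is smooth, and $\Psi\colon k\mapsto k+w(k)$ is a smooth embedding of a ball in $\Ker(df(0))$ into $V_1$ whose image lies in $f^{-1}(0)$. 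Conversely, any zero $v$ with $\|v\|$ sufficiently small decomposes as $v=k+w$ with $k=(\Id-P)v$, $w=Pv$, and the same manipulation gives $w=\Phi_k(w)$, so uniqueness forces $w=w(k)$ and $v=\Psi(k)$; hence the image of $\Psi$ is exactly $f^{-1}(0)\cap B(0,\eps)$, a smooth submanifold of dimension $\dim\Ker(df(0))$, and since any finite-dimensional ball is diffeomorphic to the $\eps$-ball it is diffeomorphic to the $\eps$-ball in $\Ker(df(0))$.

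The only technical content is the bookkeeping of the radii: one must choose the parameter ball for $k$ and the solution ball for $w$ so that the contraction constant is $\le\tfrac12$ and the ball is preserved uniformly in $k$, and (if necessary) shrink $\eps$ to absorb the operator norm of the projection $P$ when passing from $\|v\|<\eps$ to size bounds on $k$ and $w$. The smoothness of $w(\cdot)$ and the identification of the zero locus with the image of $\Psi$ are then formal consequences. This is exactly the scheme of Floer's Picard lemma \cite{floer:monopoles}, which is where I would point for the remaining elementary estimates.
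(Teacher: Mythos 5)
The paper does not prove this lemma: it is quoted verbatim (with citation) from Floer's paper on monopoles, so there is no in-paper argument to compare against. Your proof is the standard Newton--Picard contraction argument and it is essentially the one Floer gives.

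A few small points worth noting. Your reduction from $f(v)=0$ to $Gf(v)=0$ is correct \emph{globally}, not just near $0$: since $df(0)G=\Id$, $G$ is injective outright, so the two equations have the same solution set; the ``near $0$'' caveat is unnecessary. Your identification $V_1 = \Ker(df(0)) \oplus \im(G)$ via the bounded idempotent $P = G\,df(0)$ is correct and is exactly how Floer's version is organized. The one place where the statement is a bit slippery and where you are right to flag the bookkeeping is the claim that the result holds for \emph{every} $\eps < 1/(4C)$ with the zero set diffeomorphic to the $\eps$-ball in $\Ker(df(0))$. As written, $C$ only controls $GN$; to pass from $\|v\|<\eps$ to bounds on the components $k = (\Id-P)v$ and $w = Pv$ one needs the operator norm of $P$ (equivalently of $G$ and of $df(0)$) to enter, unless one adopts the convention (as Floer effectively does) that the constant $C$ already absorbs these norms. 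This is a cosmetic mismatch in how the constants are packaged, not a gap in the argument, and you correctly identify it as the only technical content. With that caveat, the contraction estimate, the uniform-in-$k$ fixed point, the smoothness of $k\mapsto w(k)$ via the implicit function theorem, and the converse direction showing the graph is all of $f^{-1}(0)$ near $0$ are all in order.
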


We review the Sobolev spaces and estimates from Etnyre-Ekholm-Sullivan
\cite{ees} necessary to prove shrinking.  Given a curve $u: C \to X$
with boundary in $\partial u: \partial C \to L$.  In the local model
of a self-tangency move \eqref{localmodel} $u$ is a product of components $u_1,\ldots,u_n$ to
$\C$ with the given Lagrangian boundary conditions.  By the local
model, we may assume that $L$ is locally the union
$\phi(L) \cap U \cong L_1 \cup L_2$ of components $L_1 , L_2$ with
$L_1 \subset \bC^n$ linear and $L_2 \subset \bC^n$ the product of a part
of a circle of radius one with a linear space, whose center will move
down as one performs the isotopy.  Up to rescaling this is a mean
curvature flow.

\begin{definition} {\rm (Pre-shrinking)}
  We first define a simplified family of immersions developing a
  self-tangency.  Let $U$ be a ball of radius $(R + \hh R^a)^{-1}$
  around the self-tangency point ${v}$, the origin in local
  coordinates, where $a \in (0,1)$ is close to $1$.  Let
  $b_R \in \bC^\infty(\R)$ be a non-decreasing function with support in $[0,R^{-1})$
  and
\[ b_R(r)=(R+R^a)^{-2}, \quad \forall r \in \left[ 0,(R+\hh R^a)^{-1} \right) \]
and
\[ \sup |D^k b_R| \leq O(R^{2k -
    (2k+1)a}), k \leq 3 .\] 
Let
\[h_R: \bC^n \to \R, \quad z \mapsto \on{Re}(z_1)  b_R(|z|) .\]
For $s > 0$ let 
\[ \Psi_R^s:X \to X \]  
denote the time $s$ Hamiltonian flow of $h_R$.  Thus $\Psi_R^s$ is a
translation by $(R+R^a)^{-2}s$ \label{prefactor} in a small ball of
radius $(R + \hh R^a)^{-1}$ around the origin, and the identity
outside a ball of radius $R^{-1}$. Let
\[ L_2^R(s) = \Psi_R^s(L_2), \] 
and let
\[ \phi_s^{\loc}: L \to
X \]  
denote the immersion obtained by patching together $\phi$ outside of
the ball around $x$ with the immersions with images $L_1, L_2^R(s)$.
The family $\phi_s^{\loc}$ is a Maslov flow up to rescaling locally,
by Example \ref{elem}, since the change in radius in the circle is
equivalent up to dilation centered at a point on the branch $L_1$ to a
translation.  \label{elemref}

We now define an approximately holomorphic disk ending at one of the
self-intersection points near the tangency.  Let
$\beta:[0,1] \to [0,1]$ be a smooth surjective map which is constant
in a small neighborhood of $\{ 0, 1 \}$.  Let $u: C \to X$ be a treed
holomorphic disk and $u_R$ the deformation of $u$ in a neighborhood of
the self-tangency given by
\[u_{R}: \bR \times [0,1] \to X, \quad u_{R}(\tau + it) =
\Psi_R^{\beta(t)} (u( \tau + it)) \]
so that $u_{R}$ has the deformed Lagrangian boundary conditions. 

The pre-shrunk map is defined by cutting off the variation of the
local model of the previous paragraph near the intersection point. Let
$\varkappa = \varkappa(R)$ be such that the intersection points of $L_1$ and
$L^{R(\varkappa)}_2(s)$ 
are
\[ L_1 \cap L^{R(\varkappa)}_2(s) = \{ \pm ( (\varkappa
+ \varkappa^a)^{-1},0,\ldots, 0 ) \}  .\]
Choose a family of metrics $g(R,s,t)$ such that the Lagrangian $L_1$
is totally geodesic for $t =0$ and $L^{R(\varkappa)}_2$ is totally geodesic for
$t = 1$. Locally near each node $q_k$ mapping to the self-tangency the
map $u_{R}$ may be written as the geodesic exponential of a section of
the tangent bundle
\[ \xi_{R,k}:  E_{p_k}(-\varkappa) \to  \bC^n , \quad 
\exp^{R,t}( \xi_{R,k}(\tau + it)) = u_R(\tau + it), \quad \forall j
.\]
Define a treed disk $w_\varkappa: C \to X$ by patching as follows:
\[ w_\varkappa(\zeta) = \begin{cases} 
u(\zeta)                               & \zeta \in C \backslash 
\cup_k E_k(\varkappa)  \\
\exp^{R,t}(\xi_{R,k}(\zeta))  & \zeta \in E_k(\varkappa) \\
v (\zeta)                                           & \zeta \in \cup_k 
E_k(-\varkappa - \varkappa^a)
\end{cases}. 
\]
This ends the definition. 
\end{definition} 

We claim that there is a unique holomorphic treed disk near the
approximately-holomorphic disk defined in the previous paragraph.
Since the almost complex structure $J_\Gamma$ in a neighborhood $U$ of
the self-tangency ${v}$ is assumed to be standard
$J_\Gamma |_U = J_{\bC^n}$, and so invariant under rescaling, it will
suffice \label{suffice} to prove Theorem \ref{smallstrip}
\eqref{cshort} for the family $\phi_s^{\loc}$.  The Sobolev spaces
used for shrinking are defined using a weight function constant on the
ends.

\begin{definition} {\rm (Weighted Sobolev spaces)} Let $\lambda > 0 $
  be a small weight and $e_\varkappa: C \to \R$ a function with
\begin{equation} \label{weightfunction} e_\varkappa(\tau + it) 
  = \begin{cases} e^{- \lambda \tau} & \tau \in (0,
    \varkappa) \\
    e^{- \lambda \varkappa} & \tau > \varkappa \end{cases} .\end{equation}
Let $ \B_{2,-\lambda,\varkappa}$ denote the Banach manifold of treed disks
$u: C \to X, \partial u: \partial C \to L$ bounding $\phi$ such that
on each component, $u$ and $\partial u$ are of finite $e_\varkappa$-weighted
$W^{2,2}$ norm as in the proof of Proposition \ref{prop:cutout}.  Let $\cE_{1,-\lambda,\varkappa}$ be
the corresponding bundle of one-forms as in \eqref{bundlefiber}.
\end{definition} 

The following are the necessary ingredients in the Picard lemma:

\begin{proposition} \label{shprop} {\rm (Zeroth, first, and second
    order estimates for shrinking)}
\begin{enumerate} 
\item \label{zerothsh} There exists a constant $C_0 > 0$ such that for all
  $\lambda > 0$ sufficiently small and $\varkappa > 0$ sufficiently large,
  the pre-glued section $w_\varkappa$ defined above has norm
\[ \Vert \cF_{w_\varkappa}(0) \Vert_{1,-\lambda, \varkappa} 
\leq C_0 e^{-\lambda \varkappa} \varkappa^{ -1 - a/2} .\]
\item  \label{shc1} There exists a constant $C_1 > 0$ such that the linearized operator at
  the pre-glued solution satisfies the uniform estimate
  \begin{equation} \label{uniform} \Vert \xi \Vert_{2,-\lambda,\varkappa}
    \leq C_1 \varkappa^{1 + \delta} \Vert \ti{D}_{w_\varkappa} \xi
    \Vert_{1,-\lambda,\varkappa} .\end{equation}
\item There exists a constant $C_2 > 0$ such that the non-linear term
  in the map $\cF_{w_\varkappa}$ satisfies
\[   \Vert N(\xi_1) - N(\xi_2,) 
\Vert_{1,-\lambda,\varkappa} 
\leq C_2 e^{\lambda \varkappa} ( \Vert \xi_1 \Vert_{2,-\lambda,\varkappa} + \Vert \xi_2 
\Vert_{2, - \lambda, \varkappa} )  \Vert \xi_1 - \xi_2 \Vert_{2 , -\lambda ,
  \varkappa} .\]
\end{enumerate} 
\end{proposition}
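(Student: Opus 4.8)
The three estimates of Proposition \ref{shprop} are exactly the input needed to run Floer's Picard Lemma (Lemma \ref{picard}) for the pre-shrunk map $w_\rho$, and the plan is to prove them by adapting the half-strip analysis of Ekholm-Etnyre-Sullivan \cite[Chapter 10]{ees} to treed disks; throughout, $\delta>0$ will be an arbitrarily small exponent and $\lambda>0$ a small weight, both fixed at the end once $\rho$ is large, and the weighted norms are those of $\B_{2,-\lambda,\rho}$, $\cE_{1,-\lambda,\rho}$ built from the weight function \eqref{weightfunction}. For the zeroth order bound \eqref{zerothsh}, first I would observe that $w_\rho$ is honestly $J$-holomorphic on all of $C$ except on the thin annular band where the two cut-offs defining it are non-constant: off $\cup_k E_k(\rho)$ it equals $u$, on $\cup_k E_k(-\rho-\rho^a)$ it is the constant $v$, and on the overlap it is $\exp^{R,t}(\xi_{R,k})$ built from the holomorphic reference map \eqref{refmap}. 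Hence $\cF_{w_\rho}(0)=\olp w_\rho$ is supported in a window of width $O(\rho^a)$ about $\tau=\rho$ in the strip coordinate $z=\tau+it$. There the reference map $z\mapsto(-2(z+c_k)^{-1},0,\dots,0)$ has size $O(\rho^{-1})$ and first derivative $O(\rho^{-2})$ while the cut-off derivative is $O(\rho^{-a})$, giving a pointwise bound $|\olp w_\rho|=O(\rho^{-1-a})$ on the band; integrating over a region of measure $O(\rho^a)$ against the weight, which equals $e^{-\lambda\rho}$ there, yields $\Vert\cF_{w_\rho}(0)\Vert_{1,-\lambda,\rho}\le C_0e^{-\lambda\rho}\rho^{-1-a/2}$. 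The tree-part components vanish since the gradient trajectories are undeformed in the pre-shrinking.

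\textbf{First order bound \eqref{shc1}.} This is the crux. I would build a right inverse $G$ of $\ti D_{w_\rho}$ by gluing together right inverses of the pieces. Away from the self-tangency $w_\rho$ coincides with the rigid regular treed disk $u$, whose linearized operator $\ti D_u$ is onto with a right inverse bounded independently of $\rho$. Near each node $q_k$ the linearization is, to leading order, the Cauchy-Riemann operator on a half-infinite strip carrying the boundary condition of the local model \eqref{arcproduct} (a line and an arc of a circle in the first $\C$-factor, transverse linear Lagrangians in $\C^{n-1}$) with Sobolev weight $-\lambda$; as in Floer \cite[Theorem 4a]{floer:unreg} and \cite[Chapter 10]{ees}, this operator is surjective but its pointwise kernel contains a mode decaying only like $\tau^{-1}$, namely the infinitesimal deformation of the parameter $c$ in the reference map $-2(z+c)^{-1}$. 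Truncating the model to length $\rho$, this slowly decaying near-kernel forces the inverse of the truncated operator to have norm $O(\rho^{1+\delta})$, the $\rho^\delta$ absorbing logarithmic factors and the weight. Patching the two inverses with cut-off functions produces an approximate right inverse $Q$ with $\Vert\ti D_{w_\rho}Q-\Id\Vert\le\tfrac12$ for $\rho$ large, so $G=Q(\ti D_{w_\rho}Q)^{-1}$ is a genuine right inverse with $\Vert G\Vert\le 2\Vert Q\Vert=O(\rho^{1+\delta})$, which is \eqref{uniform}.

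\textbf{Second order bound.} Writing $N(\xi)=\cF_{w_\rho}(\xi)-\cF_{w_\rho}(0)-\ti D_{w_\rho}\xi$, the difference $N(\xi_1)-N(\xi_2)$ is a path integral of the Hessian of $\cF_{w_\rho}$, i.e. of terms in $\nabla J$, in the curvature of the connection used for the exponential charts \eqref{bundlebase}--\eqref{bundlefiber}, and in the second derivative of the exponential map, evaluated along a segment from $\xi_2$ to $\xi_1$. In real dimension two on the surface part the Sobolev multiplication $W^{2,2}\cdot W^{1,2}\hookrightarrow W^{1,2}$ together with $W^{2,2}\hookrightarrow C^0$ supplies the bilinear bound, and elementary one-dimensional estimates handle the tree parts; the one extra ingredient is weight bookkeeping, since a product of two $e_\rho$-weighted sections carries weight $e_\rho^2$, so to land back in the $e_\rho$-weighted target one divides by $e_\rho$ at a cost of $\Vert e_\rho^{-1}\Vert_\infty=e^{\lambda\rho}$, which is the source of that factor in the stated estimate.

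\textbf{Assembly and main obstacle.} The three bounds are exactly the hypotheses of Lemma \ref{picard} for $\cF_{w_\rho}$ with right inverse $G$: the quadratic modulus of $GN$ is $C\lesssim\rho^{1+\delta}e^{\lambda\rho}$ by the first and second bounds, while $\Vert G\cF_{w_\rho}(0)\Vert$ is governed by the zeroth and first, and a suitable choice of $\lambda$ and of the exponents $a,\delta$ — with $a$ close to $1$ so that $\rho^{-1-a/2}$ beats $\rho^{1+\delta}$ and the exponential factors cancel — makes the smallness hypothesis $\Vert G\cF_{w_\rho}(0)\Vert\le 1/(8C)$ hold for $\rho$ large; this produces the unique holomorphic treed disk near $w_\rho$ used in Theorem \ref{smallstrip}. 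I expect the main obstacle to be the first order bound: pinning down the precise polynomial rate $\rho^{1+\delta}$ at which the right inverse degenerates, which rests on the Fredholm and exponential-decay analysis of the linearized operator at the non-degenerate self-tangency local model, and on verifying that the patched inverse retains that rate uniformly in $\rho$.
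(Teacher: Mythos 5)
Your treatment of the zeroth and second order estimates runs parallel to the paper's brief sketch: for (a) you carry out the dimensional analysis (pointwise $O(\rho^{-1-a})$ on a band of width $\rho^a$ where the weight equals $e^{-\lambda\rho}$), and for (c) you invoke Sobolev multiplication and track weights, which is essentially the paper's one-line reduction to $\sup_C|\xi|\le e^{\lambda\rho}\Vert\xi\Vert_{2,-\lambda,\rho}$.

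For the first order estimate (b) you take a genuinely different route. You propose to \emph{construct} a right inverse $G$ by patching local right inverses — one from the regular operator $\ti{D}_u$ away from the tangency, one from the truncated local-model operator near each node — and to bound $\Vert G\Vert=O(\rho^{1+\delta})$ directly, attributing the degeneration to a slowly decaying near-kernel mode, namely the $c$-parameter deformation of the reference map $-2(z+c)^{-1}$. The paper instead argues by contradiction (following Ekholm--Etnyre--Sullivan \cite[Lemma 8.11]{ees}): it supposes a norm-one sequence $\xi_\nu$ violates \eqref{uniform}, splits it with two cutoffs $\alpha_1,\alpha_2$, and on the $\alpha_2$-piece multiplies by the diagonal matrix $\mu=\diag(\varphi,1,\ldots,1)$ built from the angle function $\hat\varphi(\tau)=\theta(\rho)-\theta(\tau)$; the factor $\rho$ then appears because the smallest K\"ahler angle $\theta(\rho_\nu)\approx 1/\rho_\nu$ degenerates. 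These are compatible descriptions of the same degeneracy (the vanishing angle is precisely what makes the $c$-deformation an approximate kernel element), but the mechanics differ: the constructive patching approach gives a right inverse uniformly in $\rho$ that feeds cleanly into Floer's Picard lemma and tends to be more quantitative, whereas the contradiction/compactness argument is shorter and avoids the bookkeeping of the gluing step, at the cost of giving no explicit inverse. You correctly flag that the hard point in your version is proving the truncated local-model operator has inverse of norm $O(\rho^{1+\delta})$; note that in practice this sub-step is often itself established by exactly the kind of contradiction argument the paper uses, so the two routes may converge at a deeper level. Either way, the claims and the final assembly against Lemma \ref{picard} are correct.
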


\begin{proof}[Sketch of proof] We sketch the arguments, without going
  into full detail as in Ekholm-Etnyre-Sullivan \cite[Chapter
  8]{ees}. The exponential decay factor in the zero-th order estimate
  \eqref{zerothsh} arises from the choice of Sobolev weighting
  function and the second factor arises from the cutoff used to define
  the map $w_\varkappa$, see \cite[Lemma 8.10]{ees}.  The first order
  estimate is similar to \cite[Lemma 8.11]{ees}.  Suppose otherwise so
  that there exists a sequence $\xi_\nu, \nu \in \bZ_{> 0}$ with norm
  one such that the right-hand-side of \eqref{uniform} approaches zero
  for some sequence of gluing parameters $\varkappa_\nu$. Let
  $\alpha_1: C \to \R$ be a cutoff function equal to $0$ in a
  neighborhood of the markings mapping to the self-intersection and
  with first and second derivatives of order $\varkappa^{-a}$; then
\[ \Vert \alpha_1 \xi_\nu \Vert_{2,-\lambda,\varkappa} \leq C \Vert (\olp \alpha_1) 
\xi_\nu \Vert_{1,-\lambda,\varkappa} + \Vert \alpha \ti{D}_{w_\varkappa} \xi_\nu
\Vert_{1,-\lambda,\varkappa} .\]
The first term is order $\varkappa^{-a}$ because of the estimates on the
cutoff function while the second is order $\varkappa^{-1 - \delta} $ by
assumption on $\xi_\nu$. On the other hand, let $\alpha_2 : C \to \R$
be a cutoff function equal to $1$ on the strip-like ends, with first
and second derivatives of order $\varkappa^{-a}$. Let
$\hat{\varphi}: (-\infty, -\varkappa + \varkappa^a) \to \R$ be the function that
equals the difference of angles $\theta(\varkappa) - \theta(\tau)$ between
the tangent line $L_2^\varkappa(1)$ and the real line. The function
$\hat{\varphi}$ extends to a cutoff function
\[\varphi: (-\infty ,-\varkappa + \varkappa^a) \times [0,1] \to \C\] 
with 
\begin{eqnarray*}
  0 &=& \varphi(\tau + it), \quad (\tau,t) \in (-\infty, -\varkappa + \varkappa^a) \times \{ 0\}\\ 
  \hat{\varphi}(\tau) &=& \varphi(\tau + it) , \quad (\tau,t) \in   (-\infty, -\varkappa + \varkappa^a) \times \{ 1 \} \end{eqnarray*}
with the same derivative estimates. Let 
\[\mu = \diag(\varphi,1,\ldots, 1) .\]
Then  we have 
\begin{eqnarray*} \Vert \alpha_2 \mu \xi_\nu
  \Vert_{2,-\lambda,\varkappa_\nu} &\leq& C \Vert e^{-\lambda
    \varkappa_\nu} \alpha_2 \mu \xi_\nu \Vert_{2,\varkappa_\nu} \\ &\leq& C \varkappa ( \Vert
  e^{-\lambda \varkappa_\nu } (\olp \alpha_2 \mu) \xi_\nu \Vert_{1,\varkappa_\nu} + \Vert
  e^{-\lambda \varkappa_\nu} \alpha_{2,\varkappa_\nu} \mu \ti{D}_u \xi_\nu
  \Vert_{1,\varkappa_\nu} \end{eqnarray*}
The first coefficient of $\xi_\nu$ arises from the fixed angle
$\theta(\varkappa_\nu) $ which is approximately $1/\varkappa_\nu$ for
$\varkappa_\nu$ large.  Thus we obtain that
$\Vert \xi_\nu \Vert_{2, -\lambda, \varkappa_\nu}$ converges to zero
as $\nu \to \infty$ which is a contradiction.  Finally the estimate
for the non-linear term is similar to \cite[Lemma 8.17]{ees} and
follows from
\[ \sup_{z \in C} |\xi (z) |  \leq e^{\lambda \varkappa} \Vert \xi \Vert_{2,
  -\lambda, \varkappa} .\qedhere \]
\end{proof} 

Floer's version of the Picard Lemma \ref{picard} and Proposition
\ref{shprop} produce the desired family of holomorphic treed disks
with Lagrangian boundary condition in $\phi_s^{\loc}$.  The map
$\phi_s^{\loc}$ is related to $\phi_s$ by an isotopy of
self-transverse Lagrangians of $X$, and so $\phi_s^{\loc}$ and
$\phi_s$ are related by a family of diffeomorphisms of $X$ which
approaches the identity as $\varkappa \to \infty$.  As in Corollary
\ref{implicit}, in such a setting the spaces of holomorphic curves are
in bijection.  This proves the gluing part in \eqref{cshort} of
Theorem \ref{smallstrip}, that is, the existence of the map.  The
Gromov convergence 
\[ \lim_{s \to 0} G_s(u) = u \]  
follows since $E(G_s(u)) \to E(u)$ as $s \to 0$ and each component of
the limit is obtained by a suitable rescaling sequence, by
construction.

The existence of an inverse (that is, surjectivity of the above
construction onto the space of nearby solutions) follows from the
exponential decay estimates in Lemma \ref{expdecay} \eqref{texpdecay}.
We break the region bounded by $\on{Re}(u_{s,1}) \in \{ 1, 2/R \}$ the
branches $L_1,L_2(s)$ into regions
$\Omega_s, \Omega^+_s, \Omega^{++}_s$.  On the region $\Omega^{++}_s$
on which $\on{Re}(u_{s,1}) \ge 1$ we have uniform convergence in all
derivatives.  So 
\[ u_s = \exp_{w_s} (\xi(s)), \text{ for some} \ \xi(s), \quad \Vert
\xi(s) \Vert < C \exp(- \lambda \varkappa) .\]   
On the region $\Omega^+_s \cong [-T(s),T(s)] \times [0,1]$ we write
$u_{s,1}$ in terms of its Fourier coefficients
\begin{multline} \Theta_s \circ u_{s,1}(\tau + i t) = (\tau -
  \tau_0(s)) + i t + \sum_{n > 0} c_n(s) \exp( 2 \pi n (\tau - T(s))
  +i t) \\ + \sum_{n < 0} c_n(s) \exp( 2 \pi n (\tau + T(s)) +i t)
  .\end{multline}
Uniform convergence at $\on{Re}(u_{s,1}) = 1$ implies the Fourier
constants $c_n(s), n > 0 $ converge to the Fourier constants $c_n(0)$
of $u_{0,1}$ as $s \to 0$, while the Fourier coefficients
$c_n(s), n < 0$ converge to zero by exponential decay of $u_{s,1}$.
It follows that
\[ \sup | (u_{s,1} - w_{s,1})(\tau + it)  |^p  + | (\d u_{s,1} - \d
w_{s,1})(\tau + it) |^p \leq  C \exp(-p |\tau - T(s)|) \quad \text{on} \ \Omega^+_s .\]
Thus for any $\eps$, there exists $s_0 > 0$ such that for $s < s_0$,
\[ \Vert ( u_s - w_s ) |_{\Omega+_s} \Vert_{1,p,-\lambda} \leq \eps 
\exp( - \varkappa \lambda) .\]
Finally the restriction of $u_{s,1} - w_{s,1} | \Omega_s$ to
$\Omega_s \cong [-T(s), T(s)] \times [0,1]$ converges to zero
uniformly, as well as its derivative.  Thus for any $\eps > 0$, there
exists $s_0 > 0 $ such that if $s < s_0$ then
\[ \sup | (u_{s,1} - w_{s,1})(\tau + it) |^p + | (\d u_{s,1} - \d 
w_{s,1})(\tau + it) |^p \leq \eps \exp( - \varkappa \lambda) .\]
It follows that
\[ \Vert u_s - w_s |_{\Omega_s} \Vert_{1,p,-\lambda} \leq \eps \exp( -
\varkappa \lambda) .\]
Putting everything together we find that for any $\eps > 0$, there
exists $s_0 > 0 $ such that if $s < s_0$ then 
\[ u_s = \exp_{w_s} (\xi(s)), \text{ for some} \ \xi(s) \ \text{with}
\ 
\ \Vert \xi(s) \Vert_{1,p,\lambda} \leq \eps \exp( - \varkappa \lambda)
. \] 
Since this quantity is less than the quantity $1/4C$ in Floer's Picard
Lemma \ref{picard}, for $\eps$ sufficiently small, the map $u_s$ is
the solution given by the Picard iteration.

The gluing at a tangency part \eqref{gtang} of Theorem
\ref{smallstrip} follows from a similar application of Floer's Picard
Lemma \ref{picard}. Similar to the set-up for shrinking, for $R \gg 0$
let $a_R: [ 0,\infty] \to \R$ be a smooth non-increasing function with
support in $[0, R^{-1}/2)$ with the properties
\[ a_R(r) = 1/R, \quad r \in [0,1/R^2]; \quad 
\sup_r | D_r a_R | = O(1); \quad \sup_r | D^2_r a_R | = O(R) .\]
Let 
\[h_R:\bC^n \to \R, \quad h_R(z) = \on{Re}(z_1) a_R(|z_1|) .\] 
For $s> 0$ let $\Phi_R^s$ denote the time $s$ Hamiltonian flow of
$h_r$. Let $L_2^R(s)$ denote the Lagrangian submanifold obtained by
applying $\Phi_R^s $ to $L_2$, and let $\gamma^1(s), \gamma^2_R(s)$ be
paths tracing out $L_1$ and $L_2^R(s)$. Let $\phi_s^{\loc}: L \to X$
be the family of immersions obtained by gluing in the local family to
the immersion $\phi: L \to X$.  Let $u: C \to X$ be a rigid
holomorphic tree disk with boundary on $\phi$ and
$I \subset \{ 0,\ldots, n \}$ such that the points $z_i, i \in I$ on
the boundaries of the disk map to the self-intersection points. Let
$s = (K \varkappa)^{-1}$ as below or in \cite[Lemma 10.18]{ees} and
let $\Xi_\varkappa$ denote the neck region bounded by the curves
$\gamma^2_R(s), \gamma^1(s)$ and $u^{i_\pm(k)}(\varkappa + it)$ for
$t \in [0,1]$.  By the Riemann mapping theorem for a unique
$A(\varkappa)$ there exists a biholomorphic map from the interior of
rectangle
\begin{equation} \label{phirho} \varphi_\varkappa: [-A(\varkappa), A(\varkappa)] \times
  [0,1] =: \Omega_\varkappa \to \C
\end{equation} 
with a continuous extension with the properties 
\begin{eqnarray*} 
 \varphi_\varkappa( [-A(\varkappa),A(\varkappa)] \times \{ 0 \}) &\subset&
\on{Im}(\gamma_1 ) \\
 \varphi_\varkappa( [-A(\varkappa),A(\varkappa)] \times \{ 1 \}) &\subset&
\on{Im}(\gamma_2(s) ) \\
\varphi_\varkappa(0 + i[0,1]) &\subset& \{ \on{Re}(z_1) = 0 \} .\end{eqnarray*}

The pre-glued map is defined as follows. Let $u_{i_{\pm}(k)}^1$ denote
the first components of the maps $u_{i_\pm(k)}$ meeting the tangency
at nodes $q_k$, contained in components $C_{i_\pm(k)}$. By gluing
$C_{i_\pm(k)} - E_{q_k}[ -((1 + a)\varkappa ] $ with $\Omega_\varkappa$ one obtains a treed holomorphic disk on which the maps $u_{k,\pm}^1$
glue together in a neighborhood of the node $q_k$. Let $u_{k,\pm}'$
denote the remaining components of $u$ in the local model. Using the
standard metric on $\bC^{n-1}$ define maps $u'_{k,\pm}$ to $\bC^{n-1}$
by
\[ u'_{k,\pm}(\tau + it) = \exp ( \xi_{k,\pm}'(\tau + it) ) \]
where in this setting the geodesic exponentiation $\exp$ is the
identity map since the metric is constant. Let $E_{q_k,\pm}$ denote
the image of the local coordinate near $q_k$ on the component
$C_{i_\pm(k)}$ adjacent to the node $q_k$. Pick a cutoff function
$\alpha$ equal to $1$ on $C - \cup_{k,\pm} E_{q_k,\pm} [-\varkappa + 5]$
with support in the complement of
$\cup_{k,\pm} E_{q_k, \pm} [-\varkappa + 3]$, with derivatives uniformly
bounded for $\varkappa \gg 0$.  Define $w'_\varkappa: C_\varkappa \to \bC^{n-1}$ by
\[ w'_\varkappa(\zeta) = \begin{cases} 
  u'_{k,\pm}(\zeta)  &  \zeta \in C_{i_\pm(k)} - E_{q_k,\pm}[-\varkappa + 5] \\
  \exp_0^t(\alpha(\zeta) \xi_{\pm,j}(\zeta)) & E_{q_k,\pm}[-\varkappa + 5] - E_{q_k,\pm}[-\varkappa] \\
  0 & \zeta \in \Omega_\varkappa .
\end{cases} 
\]
Combining the components we define $w_\varkappa = (w^1_\varkappa, w_\varkappa')$.

A treed holomorphic disk nearby the preglued solution above is found
using the Picard lemma.  For a function $f: C_\varkappa \to \R$ and for
each component $C_{i_\pm(k)}$ adjacent to a node $q_k$ mapping to the
self-tangency we denote by $f_{i_\pm(k)}$ the restriction of $f$ to
$C_{i_\pm(k)} - E_k[-(1 + a)\varkappa]$.  Let $f_k$ denote the restriction
to the region $\Omega_k(\varkappa)$ near the node $q_k$.  For $\lambda > 0$
let $e_\lambda^{i_\pm(k)}$ denote the weight function on
$C_{i_\pm(k)}$ which equals $1$ on $C_{i\pm(k)} - \cup E_{q_k}$ and
equals $e^{\lambda |\tau|}$ in $E_{q_k}$.  Let
$\Vert \cdot \Vert_{k,\lambda,i}$ denote the Sobolev norm on
distributions with $k$ square-integrable derivatives obtained from the
weight function $e_\lambda^{i_\pm(k)}$.  Let $e_\lambda^j$ denote the
weight function which equals $e^{\lambda | \tau| } $ on $\Omega_k$ and
$\Vert \cdot \Vert_{k,\lambda,j}$ the resulting Sobolev norm.  Define
\begin{equation} \label{secondweightfunction} \Vert f
  \Vert_{k,\lambda,\varkappa} = \sum_{i \neq i_\pm(k)} \Vert f_i
  \Vert_{k,2} + \sum_j \Vert f_i \Vert_{k,\lambda,i_\pm(k)} + \sum_j
  \Vert f_j \Vert_{k,\lambda,j} .\end{equation}
Let 
\[ \cF_u: T_u \B_\Gamma \to
\cE_{\Gamma,u} \]  
denote the map from \eqref{banachmap} obtained from the given local
trivializations of $\B_\Gamma$ and $\cE_{\Gamma}$ near $u$. Let
$\theta \gg \lambda $ be the smallest non-zero K\"ahler angle of the
intersection where the angles are described in \eqref{angles}; see
also \cite[Lemma 8.13]{ees}.

%

  \begin{proposition} \label{gprop} {\rm (Zeroth, first, and second
      order estimates for gluing)}
\begin{enumerate} 
\item There exists $C_0 > 0$ such that for all $\varkappa > 0$
  sufficiently large the pre-glued solution $w_\varkappa$ satisfies
\[ \Vert \cF_{w_\varkappa}(0) \Vert_{1,\lambda,\varkappa} 
\leq C_0 e ^{(- \theta + 2 K \lambda) \varkappa} \]
\item \label{gc1} There exists $C_1$ such that for all $\varkappa$
  sufficiently large the linearization $\ti{D}_{w_\varkappa}$ of
  $\cF_{w_\varkappa}$ satisfies \cite[Equation (8.74), (8.90)]{ees}
\[   \Vert \xi \Vert_{2,\lambda,\varkappa} \leq C_1 \Vert \ti{D}_{w_\varkappa} \xi
\Vert_{1,\lambda,\varkappa} .\]
\item There exists $C_2 > 0 $ such that for all $\varkappa$
  sufficiently large the non-linear part of the map $\cF_{w_\varkappa}$
  satisfies a uniform estimate \cite[Lemma 8.18]{ees}
\[   \Vert N(\xi_1) - N(\xi_2) 
\Vert_{1,\lambda,\varkappa} 
\leq C_2 ( \Vert \xi_1 \Vert_{2,\lambda,\varkappa} + \Vert \xi_2 
\Vert_{2, \lambda, \varkappa} )  \Vert \xi_1 - \xi_2 \Vert_{2 , \lambda ,
  \varkappa} .\]
\end{enumerate}
\end{proposition}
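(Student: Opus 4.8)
The plan is to establish the three estimates as the inputs required by Floer's Picard Lemma \ref{picard}, applied to the map $\cF_{w_\rho}\colon T_{w_\rho}\B_\Gamma \to \cE_{\Gamma,w_\rho}$ cutting out the moduli space near the pre-glued configuration $w_\rho$; the arguments are the treed-disk counterparts of Ekholm-Etnyre-Sullivan \cite[Chapter 8]{ees}. For the zeroth-order estimate, I would note that $w_\rho$ is genuinely $J_\Gamma$-holomorphic on the neck $\Omega_\rho$ (where it is the Riemann map $\varphi_\rho$ of \eqref{phirho}, and $J_\Gamma$ is standard) and coincides with the rigid disk $u$ outside the annular cutoff regions $E_{q_k,\pm}[-\rho+5]\setminus E_{q_k,\pm}[-\rho]$, so that $\olp w_\rho$ is supported on those regions and there equals $(\olp\alpha)\,\xi_{k,\pm}$ up to terms quadratic in $\xi_{k,\pm}$. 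By the exponential decay estimate Lemma \ref{expdecay} \eqref{texpdecay}, the relevant components of $\xi_{k,\pm}$ (those in the $\C^{n-1}$-directions, and the part of the first coordinate past which $\varphi_\rho$ has taken over) decay like $O(e^{-\theta|\tau|})$ with $\theta$ the smallest K\"ahler angle of \eqref{angles}; on the cutoff region $|\tau|\approx\rho$, while the weight $e_\lambda$ is at most $e^{2K\lambda\rho}$ there by bookkeeping of the cutoff construction and the relation $s=(K\rho)^{-1}$ between the gluing parameter and the neck half-length $A(\rho)=O(K\rho)$. This yields $\Vert\cF_{w_\rho}(0)\Vert_{1,\lambda,\rho}\le C_0 e^{(-\theta+2K\lambda)\rho}$, and I will take $\lambda$ small enough that $\theta-2K\lambda>0$.

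The first-order estimate \eqref{gc1} is the crux. I would argue by contradiction: if it failed there would exist $\rho_\nu\to\infty$ and sections $\xi_\nu$ with $\Vert\xi_\nu\Vert_{2,\lambda,\rho_\nu}=1$ and $\Vert\ti{D}_{w_{\rho_\nu}}\xi_\nu\Vert_{1,\lambda,\rho_\nu}\to 0$. Using cutoff functions subordinate to the covering of $C_{\rho_\nu}$ by the truncated disk components $C_{i_\pm(k)}-E_{q_k,\pm}[-\rho_\nu]$ and the neck $\Omega_{\rho_\nu}$, with first and second derivatives of order $\rho_\nu^{-1}$, one decomposes $\xi_\nu$ into pieces, each of which converges (after the conformal rescaling identifying its support with a fixed model configuration) to an element of the kernel of a limiting operator. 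On the disk components that limit is $\ti{D}_u$, which is surjective with kernel spanned by $\aut(C)$ by regularity (Theorem \ref{comeager}); on the neck it is the Cauchy-Riemann operator on the strip with the two linear boundary conditions meeting at the self-tangency, whose kernel in the $\lambda$-weighted norm is trivial since $\lambda<\theta$ separates the $e^{\pm\theta|\tau|}$ modes, as in \cite[Equations (8.74), (8.90)]{ees}. Hence every limiting piece vanishes, contradicting $\Vert\xi_\nu\Vert=1$ once one checks that the cutoff overlap errors, of size $O(\rho_\nu^{-1})$, do not obstruct the matching of limits. The main obstacle is keeping $C_1$ independent of $\rho$ across the neck, where the two branches are only $O(\rho^{-1})$-transverse: unlike the shrinking estimate Proposition \ref{shprop} \eqref{shc1}, no factor $\rho^{1+\delta}$ is permitted here, so I must use the explicit form \eqref{refmap} of the reference map to see that the spectral gap of the neck operator (in the weighted norms, relative to the ends mapping to the new self-intersection points) does not degenerate as $\rho\to\infty$.

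For the second-order estimate, I would Taylor-expand the nonlinear part of $\cF_{w_\rho}$ pointwise — the terms coming from the curvature of $X$ and from the difference between almost-complex parallel transport and geodesic exponentiation — and use the Sobolev embedding $W^{2,2}\hookrightarrow C^0$ on the real two-dimensional domains. Since the weight $e_\lambda\ge 1$, this gives $\sup_{C_\rho}|\xi|\le C\Vert\xi\Vert_{2,\lambda,\rho}$ with no $\rho$-dependence (in contrast to the shrinking case, where $e_\lambda\le 1$ forces an $e^{\lambda\rho}$ factor), and the quadratic bound
\[ \Vert N(\xi_1)-N(\xi_2)\Vert_{1,\lambda,\rho} \le C_2\bigl(\Vert\xi_1\Vert_{2,\lambda,\rho}+\Vert\xi_2\Vert_{2,\lambda,\rho}\bigr)\,\Vert\xi_1-\xi_2\Vert_{2,\lambda,\rho} \]
then follows exactly as in \cite[Lemma 8.18]{ees}. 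With the three estimates in hand, $\Vert\cF_{w_\rho}(0)\Vert$ eventually falls below the threshold $1/8C$ of Lemma \ref{picard} (with $C$ built from $C_1$ and $C_2$), which supplies the unique nearby holomorphic treed disk underlying the bijection $G_s$ of Theorem \ref{smallstrip} \eqref{gtang} together with the convergence $\lim_{s\to 0}G_s(u)=u$.
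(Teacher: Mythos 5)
Your proposal follows the same route as the paper, which (like you) reduces each estimate to its counterpart in Ekholm--Etnyre--Sullivan (Lemmas 8.13, 8.15, 8.16 of \cite{ees}) and relies on the estimates being local and the almost complex structure being standard near the tangency; the paper's own proof is a one-paragraph citation, and your write-up is a correctly fleshed-out version of that sketch. Your observation that the positive weight ($e_\lambda\ge 1$) is what removes both the $e^{\lambda\rho}$ factor in the quadratic estimate and the $\rho^{1+\delta}$ loss in the linear estimate relative to Proposition \ref{shprop} is accurate and matches the distinction between the two Sobolev setups \eqref{weightfunction} and \eqref{secondweightfunction}.
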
 

\begin{proof} The first is similar to \cite[Lemma 8.13]{ees}, the
  second similar to \cite[Lemma 8.15]{ees}, and the third to
  \cite[Lemma 8.16]{ees}.  All estimates are local and so apply to
  holomorphic treed disks with almost complex structure standard in
  a neighborhood of the tangency.  
\end{proof} 

Floer's version of the Picard Lemma \ref{picard} and Proposition
\ref{gprop} produce the claimed family of holomorphic treed disks
$u_s: C_s \to X$ with boundary in $\phi_{s}^{\loc}$ converging to $u$
as $\varkappa \to 0$.  As before, $\phi_{s}^{\loc}$ and $\phi_{s}$ are
related by a family of diffeomorphisms of $X$ which approaches the
identity in all derivatives as $s \to 0$.  The existence of an inverse
to $G_s$ (that is, ``surjectivity of gluing'') follows from
exponential decay for pseudoholomorphic curves of small energy.
Suppose that $u_s$ Gromov converges to $u$; we must show that for $s$
sufficiently small the solution $u_s$ is the one produced by the
Picard iteration, up to equivalence.  Because the almost complex
structure is constant and split near the self-tangency, it suffices to
consider each component $u_{s,i}, i = 1,\ldots, n$ of $u_s$
separately.  For the components $u_{s,i}$ with $i \neq 1$, the
Lagrangian boundary conditions are transverse and the Lagrangian
version of the annulus lemma (see for example, Frauenfelder-Zemisch 
\cite[Lemma 3.1]{totreal}) implies that for $s$ sufficiently large,
$u_{s,i}$ is arbitrarily close to the approximate solution $w_{s,i}$
in the Sobolev norm defined by \eqref{secondweightfunction}.  For the
first coordinate, we identify the region $\Omega'_s$ bounded by
$\on{Re}(z) \in \{ -1,1 \}$ and $\phi_s(L)$ with a rectangle
$[-T(s),T(s)] \times [0,1]$ of length $2T(s)$ by a biholomorphism
$\Theta_s$.  Write $\Theta_s \circ u_{s,1}$ in Fourier coefficients on
the neck region $[-T(s), T(s)] \times [0,1]$
\begin{multline} \Theta_s \circ u_{s,1}(\tau + i t) = (\tau -
  \tau_0(s)) + i t + \sum_{n > 0} c_n(s) \exp( 2 \pi n (\tau - T(s))
  +i t) \\ + \sum_{n < 0} c_n(s) \exp( 2 \pi n (\tau + T(s)) +i t)
  .\end{multline}
Let $c_n(0)$ be the similar Fourier coefficients of the derivative of
$\Theta_s \circ w_{s,1}$, where $w_{s,1}$ is the approximate solution,
and $\tau_0(0)$ the translation factor of the leading order term of
$\Theta_s \circ w_{s,1}$.  Convergence of $u_{s,1}$ to $u_{0,1}$ on
compact subsets and agreement of $\Theta_s \circ u_{0,1}$ with
$\Theta_s \circ w_{0,1}$ on a neighborhood of $\tau = \pm T(s)$ (where
both are defined) implies that $c_n(s) \to c_n(0)$ for all $n$.  After
a conformal variation that eliminates the difference
$\tau_0(s) - \tau_0(0)$ (that is, a small variation in length of the
neck) the difference satisfies
\[  | \Theta_s \circ u_{s,1}(\tau + it) - \Theta_s \circ w_{0,1}
(\tau+it) | < C ( \exp( - 2\pi (T(s) - \tau)) + \exp( - 2\pi (\tau - T(s))))
.\]
and similarly for the first derivatives.  We break $\Omega'_s$ into
three regions separated by the curves $\on{Re}(u_{0,1}) = s$: The
region $\Omega_s$ with $s = 1/\varkappa$ and the regions $\Omega_s^{\pm}$
between $\on{Re}(z)(u_{0,1}(s)) \in \{ -s, s\}$ and
$\on{Re}(z)(u_{0,1}(s)) \in \{ -1,1 \}$.  Since $u_{0,1}(z) \sim 1/z$,
the conformal modulus of $\Theta_s$ is order $1/z$ on $\Omega^\pm_s$
while $\Theta_s$ has conformal modulus bounded by $s$ on $\Omega_s$.
It follows that for any $\eps > 0$, there exists $s_0 > 0$ such that
for $s < s_0$
\begin{multline} \int_{ [-T(s),T(s)] \times [0,1]} ( | u_{s,1}(\tau +
  i t) - w_{s,1}( \tau + it) |^p \\ + | \d u_{s,1}(\tau + i t)
  - \d w_{s,1}( \tau + it) |^p  ) e_\varkappa(\tau + it) \d t \d \tau \\
  < C_1 \eps + C_2 \eps + C_3 \eps / (a - \lambda)
 \end{multline} 
 where the first term $C_1 \eps$ arises from the contribution of the
 complement of $\Omega'_s$ where $u_s$ converges uniformly in all
 derivatives, the second $C_2 \eps$ from the integrals over
 $\Omega^\pm_s$, which have exponentially decaying integrand, and the
 third $C_3 \eps / ( a - \lambda)$ from the integral over
 $\Omega'_\varkappa$, on which the integrand is also exponentially decaying
 at a rate depending on the minimum non-zero angle $a$ between the
 branches of the Lagrangian.  Combining the results for the various
 components implies that for $t$ sufficiently large, $u_t$ is
 arbitrarily close to the approximate solution $w_t$ in the Sobolev
 norm defined by \eqref{secondweightfunction}.  This proves part
 \eqref{gtang} of Theorem \ref{smallstrip} up to sign.

 The statement on regularity in Theorem \ref{smallstrip} follows from
 Lemmas \ref{shprop} \eqref{shc1} and \ref{gprop} \eqref{gc1}.
 Indeed, for $s$ sufficiently small, the linearized operators are
 surjectivity so all rigid holomorphic disks with boundary on $\phi_s$
 are regular.

 Finally we show that the gluing map in Theorem \ref{smallstrip}
 \eqref{gtang} is orientation preserving.  The sign computation for
 the gluing map is similar to that for the \ainfty associativity
 relation \eqref{ainftyassoc}, but in this case the determinant lines
 associated to the nodes $q_k \in S$ mapping to the self-tangency
 ${v} \in X$ have index shifted by one, which creates additional
 signs.  For the purpose of computing orientations it suffices to
 consider the case that there is a single node $q_k$ mapping to a
 tangency.  Recall that for any self-intersection point
 $x_j \in \cI^{\on{si}}(\phi_t)$ the notation $\DD^{\pm}_{x_j,2}$
 denotes the determinant line associated to $x_j$ in the Fredholm
 operator on the once-punctured disk associated to a choice of path
 from $\gamma_x$.  By assumption in \eqref{caniso}, the orientations
 on $\DD^\pm_{x_j,2}$ are defined so that the tensor products
 $\DD^-_{x_j,2} \otimes \DD^+_{x_j,2} \cong \R$ with the standard
 orientation on the trivial vector space, in the case of a
 self-transverse boundary condition, or a one-dimensional vector
 space, in the case of self-tangent boundary condition $x_j = {v}$; to
 simplify notation we assume that each vertex maps to a
 self-intersection point; otherwise the following discussion holds by
 the same argument replacing $\DD_{x_j,2}$ with $\DD_{x_j,1}$ in the
 notation for each vertex mapping to a Morse unstable manifold. By
 deforming the parametrized linear operator $\ti{D}_u$ of
 \eqref{linop} to the linearized operator $D_u$ plus a trivial
 operator, one obtains an isomorphism of determinant lines
\begin{equation} \label{convent} \det( T {\M}_d (x_0,\ldots,x_d)) \to  
  \det(T\M_d) \DD^+_{x_0,2} \DD^-_{x_1,2} \ldots \DD^-_{x_d,2}
  .\end{equation}
The gluing map for a single node
takes the form (omitting tensor products from the notation to save
space)
\begin{multline} 
  \det(T\M_m) \DD^+_{{v},2} \DD^-_{x_{n+1},2} \ldots \DD^-_{x_{n+m},2}
  \det(T\M_{d-m+1,2}) \DD^+_{x_0,2} \DD^-_{x_1,2} \ldots \DD^-_{{v},2}
  \ldots \DD^-_{x_d,2} .\end{multline}
To determine the sign of this map, first note that the gluing map
\[ (0,\eps) \times \M_m \times \M_{d-m+1} \to \M_d \]
on the associahedra $\M_k$ is given in coordinates (using the
automorphisms to fix the location of the first and second point in
$\M_m$ to equal $0$ resp. $1$ and $\M_{d- m + 1}$) by
\begin{multline} \label{signs} (\delta,(z_3,\ldots,z_m)
  ,(w_3,\ldots,w_{d-m+1}) ) \\ \to (w_3,\ldots, w_{n+1}, w_{n+1} +
  \delta, w_{n+1} + \delta z_3, \ldots, w_{n+1} + \delta z_m,
  w_{n+2},\ldots, w_{d-m}) .\end{multline}
The map \eqref{signs} acts on orientations by a sign of $-1$ to the power
\begin{equation} \label{signa} (m-1)(n-1). \end{equation}
These signs combine with the contributions
\begin{equation} \label{signb} 
\sum_{k=1}^n k |x_k| +
(n+1)(|{v}| + 1) + 
 \sum_{k=n+m+1}^d 
  (k-m+1) |x_k| 
+ 
\sum_{k=n+1}^m (k-n) |x_k|
 \end{equation} 
in the definition of the structure maps, and a contribution
\begin{equation} \label{signc}
(d-m+1)m + m \left(  |{v}| + \sum_{i \ge n } |x_i| \right) 
 \end{equation} 
 from permuting the determinant lines
 $\DD_{x_j,2}^-, j =n+1,\dots,n+m, \DD_{{v},2}^+$ with
 $\det(T \M_{d-m+1})$ and permuting these determinant lines with the
 $\DD_{x_i,2}^-, i \leq n, \DD_{{v},2}^-$.  The extra factor of $\R$
 created by the gluing at the self-tangency point can be moved to the
 first position at the cost of creating signs in the number of
\begin{equation} \label{signd} \sum_{k=0}^n |x_k| + d   - 1  .\end{equation}
Combining the signs \eqref{signa}, \eqref{signb}, \eqref{signc},
\eqref{signd} one obtains mod $2$
\begin{multline} 
\left(   mn + n + m \right) +
\left( \sum_{k=1}^n k |x_k| + (n+1) (|{v}| + 1) + \sum_{k=n+m+1}^d (k-m+1)
  |x_k| +  \sum_{k=n+1}^{n+m} 
(k-n) |x_k|  \right) \\ 
 + 
(d-m+1)m + m \left( |{v}| +  \sum_{i \leq n} |x_i| \right)
+ \sum_{k=1}^n |x_k| + d + 1  \\
\equiv 
( mn + m + n) + 
\sum_{k=1}^d k |x_k| + (n+1) (|{v}| + 1) + 
 \sum_{k=n+m+2}^d (m-1) |x_k| + \sum_{k=n+1}^{n+m}  n |x_k| 
\\   + 
(d-m+1)m + m \left( d + \sum_{i \ge n + m + 1} |x_i| \right)
+ \sum_{k=0}^n |x_k| + d + 1 \end{multline} 
\begin{multline}
  \equiv mn + m + \sum_{k=1}^d k |x_k| + |{v}| + \sum_{k=n+m+2}^d |x_k|
  + nm +
  (d-m+1)m + md + \sum_{k=0}^n |x_k|   \\
  \equiv m + \sum_{k=1}^d k |x_k| + \sum_{k=n+1}^{n+m} |x_k| + m +
  \sum_{k=n+m+2}^d |x_k| + \sum_{k=0}^n |x_k| .
\end{multline}
After incorporating the sign \eqref{heartsuit} the number of signs is
congruent mod $2$ to
\begin{equation} \label{assocsign} 
\sum_{k=0}^d  |x_k| + d   \equiv 0 .
 \end{equation}
This completes the proof of Theorem \ref{smallstrip}.

\begin{corollary} \label{gcor} Let $\phi_s: L \to X$ be an admissible
  Maslov flow with a self-tangency at $s = 0$ as above.  For any
  $E > 0$ and $\ul{x} = (x_0,\ldots, x_d)$ there exists an $\eps_0$ so
  that for $\eps < \eps_0$ there is an orientation-preserving
  bijection
\begin{multline} \label{bij}
  \Set{ u \in \M_d(\phi_{-\eps},\ul{x})_\rho \ | \ E(u) < E } \\ \to
  \Set{ \ul{u}
  \in \M(\phi_{\eps}, \ul{x}_0)_\rho \times_{\cI(\phi_\eps)^r}
  \prod_{i=1}^r \M(\phi_\eps, v_{\eps_i}, \ul{x}_i)_\rho \ | \ E(\ul{u}) <
  E } \end{multline} 
where $\ul{x}_0 = (x_0,v_{-\kappa_i})_{i=1}^r$ with
$\kappa_i \in \{ \pm 1 \}$ and
$\ul{x} = \ul{x}_1 \cup \ldots \cup \ul{x}_r$, where $r$ runs over
non-negative integers; see Figure \ref{lots}.
\end{corollary}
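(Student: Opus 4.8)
The strategy is to combine the three parts of Theorem~\ref{smallstrip} into a single bijection by induction on the number of edges of $\Gamma$ mapping into a neighborhood of the self-tangency, exactly as in Corollary~10.10 of \cite{ees}. The starting point is the following observation: for a holomorphic treed disk $u: C \to X$ with boundary in $\phi_{-\eps}$ and energy less than $E$, Gromov compactness together with the energy quantization statement proved above (Step~2 in the proof of \cite[Proposition 4.7.1]{ms:jh} fails only near the tangency, and there Stokes' theorem rules out energy loss) implies that as $\eps \to 0$ the disk $u$ converges to a stable treed disk $u_0$ with boundary in $\phi_0$. The limit $u_0$ has a collection of nodes $q_e$, $e \in E_0$, mapping to the self-tangency point $y \in X$, together possibly with some ``small strips'' of vanishing area that have shrunk to points at $y$; on the $\phi_0$ side the small strips are replaced by nodes mapping to $y$, and after passing to $\phi_{+\eps}$ each such node reopens into a small strip connecting $\zz$ to $\mz$ or $\bzz$ to $\bz$ as in Theorem~\ref{smallstrip}~\eqref{sstrip}.

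First I would set up the two sides of \eqref{bij} carefully: the left side is the count of rigid treed disks with boundary in $\phi_{-\eps}$ and limits $\ul{x}$, while the right side parametrizes a ``broken'' configuration consisting of one disk $\ul{u}_0 \in \M(\phi_\eps,\ul{x}_0)_0$ with extra outgoing-type inputs $v_{-\kappa_i}$ at the $r$ newly created self-intersections, glued along the fiber product over $\cI(\phi_\eps)^r$ to $r$ small strips $\M(\phi_\eps, v_{\eps_i}, \ul{x}_i)_0$ supplied by part~\eqref{sstrip}. The key is that the fiber-product matching condition at each $v_{\pm}$ is forced: a rigid configuration with a node mapping to the self-tangency must, on either side, have the two ends of the corresponding tree segment labelled by opposite signs $v_+, v_-$ (this is the ``passes through the self-tangency'' condition imposed in Section~4, equations around \eqref{new}). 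Then I would apply Theorem~\ref{smallstrip}: part~\eqref{gtang} (gluing at a tangency) produces, for $s = +\eps$ small, a bijection between the rigid disks of type $\Gamma_0$ with boundary in $\phi_0$ and the rigid disks of the collapsed type $\Gamma_s$ with boundary in $\phi_\eps$, and part~\eqref{cshort} (curve shrinking) does the same on the $\phi_{-\eps}$ side; composing the inverse of the $-\eps$ shrinking map with the $+\eps$ gluing map, and attaching the small strips from part~\eqref{sstrip} at the newly created generators, yields \eqref{bij}. Orientation-compatibility follows from the orientation statements in parts~\eqref{cshort} and~\eqref{gtang} of Theorem~\ref{smallstrip}, where the extra sign \eqref{heartsuit} was already incorporated into the gluing map; see the sign computation culminating in \eqref{assocsign}.

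The inductive structure handles the general case: an arbitrary rigid treed disk with boundary in $\phi_{-\eps}$ and energy $<E$ is, for $\eps$ small, one of finitely many configurations (finitely many, by compactness of $\ol{\M}_d^{<E}(\phi)$), and one applies the one-node gluing and shrinking statements successively at each node mapping to $y$; the coherence axioms on the perturbation data (Locality, Cutting-edges, Collapsing-edges) guarantee that the glued perturbation data agree so the composition of gluing maps is well-defined. Surjectivity of the combined map---i.e.\ that every rigid configuration with boundary in $\phi_\eps$ of energy $<E$ arises this way---is exactly the ``surjectivity of gluing'' part of Theorem~\ref{smallstrip}, which was established via the Fourier-coefficient/exponential-decay argument and Floer's Picard Lemma~\ref{picard}.

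\textbf{Main obstacle.} The hard part will be the bookkeeping of the fiber product and the signs: one must verify that the matching of the tangency labels $v_{\pm}$ at the $r$ new nodes is precisely the fiber-product condition over $\cI(\phi_\eps)^r$ appearing in \eqref{bij}, and that the orientation identifications from the $-\eps$ shrinking, the $+\eps$ gluing, and the small-strip attachments compose to an orientation-preserving map with no residual sign. This is where the odd determinant line at the self-tangency (the index shift by one noted in Section~4 around \eqref{tglue}) enters, and it is the reason the sign~\eqref{signd} appears in the computation above; tracking it through the induction is the one genuinely delicate point, but it reduces to the single-node computation already carried out.
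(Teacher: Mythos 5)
Your overall strategy matches the paper's own short proof: take the Gromov limit of a rigid disk as $\eps\to 0$ to land in $\ol{\M}(\phi_0)$, cut the limiting domain along the preimages of the self-tangency, identify a main component $u_0$ (containing the root $e_0$) and secondary components $u_1,\ldots,u_r$, and then apply parts~\eqref{cshort} and~\eqref{gtang} of Theorem~\ref{smallstrip} on the $\phi_{-\eps}$ and $\phi_{\eps}$ sides respectively, with part~\eqref{sstrip} providing the small strips that reconnect the pieces. The orientation bookkeeping via \eqref{assocsign} and the appeal to the Picard lemma for surjectivity of gluing are also what the paper does.

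However, there is one genuine misreading in your setup of the right-hand side of \eqref{bij}: you identify the moduli spaces $\M(\phi_\eps, v_{\eps_i}, \ul{x}_i)_0$ with the ``$r$ small strips supplied by part~\eqref{sstrip}.'' That cannot be correct. The small strips from part~\eqref{sstrip} are index-one bigons with exactly two corners (one of $\bzz\to\mz$ or $\bz\to\zz$) and shrinking energy $A(\eps)\to 0$. The factors $\M(\phi_\eps, v_{\eps_i}, \ul{x}_i)_0$ are the moduli spaces of the secondary disks $u_1,\ldots,u_r$ in the paper's decomposition of the limit: each has one output at a newly born self-intersection $v_{\eps_i}$ but an arbitrary string of inputs $\ul{x}_i$, and $\ul{x}_1\cup\cdots\cup\ul{x}_r$ partitions the original inputs $\ul{x}$. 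So in general $\ul{x}_i$ has more than one element (or none), and $\M(\phi_\eps, v_{\eps_i}, \ul{x}_i)_0$ is not a strip at all, let alone a small one. The small strips from part~\eqref{sstrip} are the canonical connectors between the output $v_{\eps_i}$ of each secondary disk and the input $v_{-\kappa_i}$ of the main disk, and it is precisely this connector that the fiber product over $\cI(\phi_\eps)^r$ is accounting for; it is not one of the factors in the product. Once you correct this identification your argument coincides with the paper's proof and the rest of the bookkeeping goes through as you describe.
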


\begin{figure} 
\begin{center}
\includegraphics[width=5in]{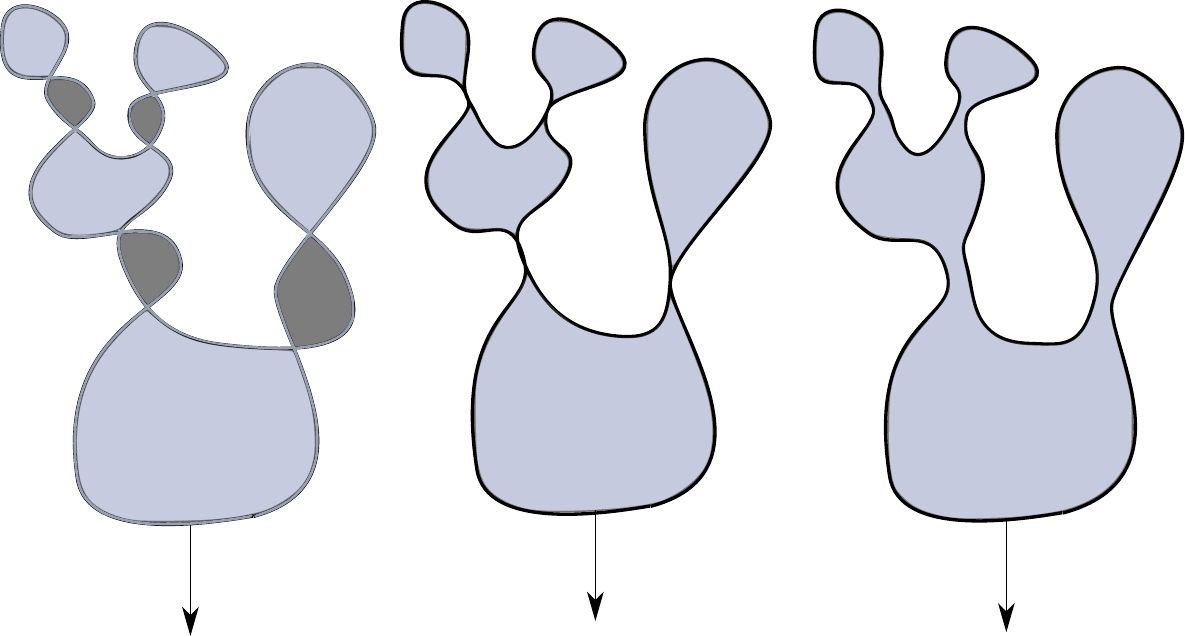}
\end{center}
\caption{Disks on an immersion developing self-tangent boundary}
\label{lots}
\end{figure} 

\begin{proof} We combine \label{combinate} the different parts of Theorem
  \ref{smallstrip} : Given a family $u_\eps \in \M^{< E}(\phi_\eps)$,
  we obtain in the limit $\eps \to 0$ an element
  $u \in \M^{< E}(\phi_0)$. Removing the preimages $u^{-1}(x_j)$ of
  the tangency points in $\phi_0(L)$ we obtain a disconnected domain
  $C - \{ u^{-1}(x_j) \}$.  Define $u_0$ to be the closure of the
  component of $C - \{ u^{-1}(x_j) \}$ containing the outgoing edge
  $e_0$, and let $u_1,\ldots, u_r$ be the components of $u$ attached
  to the component $u_0$ at the self-tangency points.  By Theorem
  \ref{smallstrip} again, each of these shrinks to a component mapping
  to $v_{\pm \kappa_i}$ for $\eps < 0$, and these are connected by
  small strips connecting $v_{\pm \kappa_i}$ with $v_{\mp
    \kappa_i}$. The bijection \eqref{bij} follows.
\end{proof} 

\section{Invariance for birth-death singularities}
\label{bdeath}

In this section we use the correspondence between holomorphic curves
with Lagrangian boundary condition evolving under Maslov flow in
Theorem \ref{smallstrip} to construct the claimed map in Theorem
\ref{tanmain} between Maurer-Cartan moduli spaces as the immersion
develops (or loses) a pair of self-intersection points. 

We first introduce some notation for counting curves in the immersion
with self-tangency.  Let $\ol{\M}(\phi_0)$ denote the moduli space of
treed holomorphic disks with boundary condition on $\phi_0$ and let
$\ol{\M}(\phi_0)_{0} $ denote the part of the moduli space of expected
dimension zero with the property that any combinatorially-finite edge
$T_e \subset T, e \in \Edge_{< \infty}(\Gamma)$ mapping to the
self-tangency $v$ has length zero: \label{mapping}
\[   (u(T_e) = v)   \implies ( \ell(e)  = 0 ) .\]
(Without this condition, treed disks splitting at the tangency cannot
be rigid, since one could add an edge of arbitrary length.)  Theorem
\ref{smallstrip} implies that for any energy bound $E$, the subset of
rigid holomorphic treed disks $\ol{\M}^E(\phi_0)_\rho$ of energy at most
$E$ is compact, since the correspondence in Theorem \ref{smallstrip}
is energy preserving up to a small constant determined by the change
in action at the self-intersection points.  Each element $u: C \to X$
of $\ol{\M}(\phi_0)_\rho$ has underlying combinatorial type a treed nodal
disk $C$ where the nodes of $C$ map to the self-tangency point
${v} \in X$.  For any such map let $E(u)$ denote the energy,
$\sigma(u)$ the number of interior markings, and $y(u)$ the monodromy
of the local system.

\begin{definition} {\rm (Composition maps for self-tangent
    immersions)} Define maps similar to the \ainfty structure
maps
\begin{multline} \label{fake}
  \mu_d(\phi_0): CF(\phi_0)^d  \to  CF(\phi_0), \\
  \mu_d(\phi_0;x_1,\ldots,x_d) := \sum_{u \in
    \ol{\M}(\phi_0;x_0,\ldots,x_d)_\rho } (-1)^{\heartsuit}
  (\sigma(u)!)^{-1} y(u) q^{E(u)} \eps(u) x_0 \end{multline}
\end{definition} 
where $\heartsuit$ is as in~\eqref{heartsuit}.

\begin{remark} The maps \eqref{fake} fail to satisfy the \ainfty
  axiom.  Suppose that the orientations on the determinant lines at
  the self-tangency points $\det(D_{x_\varkappa}^\pm)$ are chosen so that
  the natural maps $\det(D^\pm_{x_0}) \to \det(D_{x_\varkappa}^\pm)$
  induced by the deformation of boundary condition is orientation
  preserving.  If we preclude the tangencies as inputs or outputs then
  the maps $\mu_d(\phi_0)$ capture the limit of the Fukaya composition
  maps for $\phi_t$ as $t \to 0^-$: Let
  \begin{equation} \label{piv} \Pi: CF(\phi_0) \to CF(\phi_0), \quad x
    \mapsto \begin{cases} x & x
      \in \{ v, \ol{v} \} \\
    0 & \text{otherwise} \end{cases} \end{equation}
denote projection onto $\on{span} \{ v, \ol{v} \}$ so that $(1 - \Pi)$
is the projection of $CF(\phi_0)$ onto the image of $CF(\phi_t)$.
Under the natural inclusion  $CF(\phi_t) \to CF(\phi_0)$ we have 
\[ \lim_{t \to 0^-}  \mu_d(\phi_t)  = (1- \Pi) \mu_d(\phi_0)
(1- \Pi)^d .\]
In particular, \label{inpart} the maps $(1 - \Pi) \mu_d(\phi_0) (1-\Pi)^d$ satisfy the
\ainfty associativity axiom, by Theorem \ref{smallstrip}
\eqref{cshort}.

However, because of the failure of gluing at a self-tangency without
moving the Lagrangian, one cannot expect the maps $\mu_d(\phi_0)$ to
satisfy the \ainfty axiom if some of the semi-infinite edges map to \label{simplify}
the self-tangent point ${v}$.  For $b \in CF(\phi_0)$ define
$b$-deformed maps
\begin{multline}  \mu^b_d(\phi_0;a_1,\ldots,a_d) \\
= \sum_{i_1,\ldots,i_{d+1}} \mu_{d + i_1 +
  \ldots + i_{d+1}}(\phi_0,\underbrace{b,\ldots, b}_{i_1}, a_1,
\underbrace{b,\ldots, b}_{i_2}, a_2,b, \ldots, b, a_d,
\underbrace{b,\ldots, b}_{i_{d+1}}) .\end{multline}
\end{remark} 

\begin{definition} \label{tancount} {\rm (Curve counts at the tangencies)}  
  Define two new maps composing resp. precomposing with projection
  onto the span of the self-tangency points: Define
\[ \nu_{d,\pm}^b : CF(\phi_0)^{\otimes d} \to CF(\phi_0), 
\quad \nu_{d,+}^b = \Pi \circ \mu_d^b, \quad \nu_{d,-}^b = \mu_d^b
\circ \Pi^d.
 \]
 The maps $\nu_{d,\pm}^b$ count holomorphic tree disks with boundary
 insertion such that the only allowed outputs resp. inputs are the
 self-tangency points.  Define
\[ \iota:  CF(\phi_0) \to CF(\phi_t), \quad v_{0,\pm} \mapsto
\eps_\pm \ol{v}_{t,\mp} \]
where $\eps_\pm \in \{ \pm 1 \}$ are the signs of the small strips in
Theorem \ref{smallstrip}.  Let $A_t$ denote the area of the small
strip in Theorem \ref{smallstrip}.  Since
\[ \lim_{t \to 0} A_t = 0, \quad 
\lim_{t \to 0} \val_q(\nu_{0,\pm}^b(1)) > 0\]
we have   positive $q$-valuation 
\[\val_q( q^{-A_t} \iota(\nu_{0,+}^{b}(1)) ) > 0 .\] 
\end{definition} 

We assume that the orientations for the determinant lines at the
self-intersection points are chosen so that the contribution of the
small strip connecting the self-intersection points is negative.

\begin{definition} \label{corrmap} The {\em correction map between Floer cochain
    spaces}  is the map
\begin{equation} \label{psi}  \psi:  CF(\phi_{-\eps}) \to CF(\phi_\eps), \quad b_{-\eps} \mapsto b_\eps:= b_{-\eps} +
q^{-A(\eps)} \iota(\nu_{0,+}^{b_{-\eps}}(1)) .\end{equation} 
\end{definition} 

The informal version of this definition was described in the
introduction in \eqref{informal}.  The derivative of the map
\eqref{psi} is the identity plus corrections from holomorphic tree
disks with output on the tangency and inputs the Maurer-Cartan
element, except for a single input with the given tangent vector
$x \in \cI(\phi_{-\eps})$:
\begin{equation} \label{derivpsi} 
D_{b_{-\eps}} \psi (x) = x + \iota \circ
\nu^{b_{-\eps}}_{0,+}(x) .\end{equation} 
We show below that \eqref{derivpsi} gives an isomorphism of Floer
cohomologies.  

\begin{example}  
  Two weakly bounding cochains $b_{-\epsilon}$ (left) and $b_\epsilon$
  (right) related by $\psi$ are shown in Figure \ref{psifig}, with
  areas shown in the limit $\eps \to 0$ (that is, without any Maslov
  flow).  In this case the area of the small strip $A(\eps)$ between
  the new self-intersection points is equal to the area labelled
  $A_5$.  The original weakly bounding cochain $b_{-\eps}$ is
  supported at the two self-intersection points $\ol{v}_{\eps,\pm}$ in
  the boundary of the region with area $A_1$, and so the correction in
  this case is
  \[ \iota(\nu_{0,+}^{b_{-\eps}}(1)) =  q^{A_6 + ( A_ 3 - A_1 ) +
    (A_2 - A_1) } \ol{v}_{\eps,+} + q^{A_4} \ol{v}_{\eps,-} . \]
This ends the example.  
\end{example}

\begin{figure}
\begin{center}
\includegraphics[height=3in]{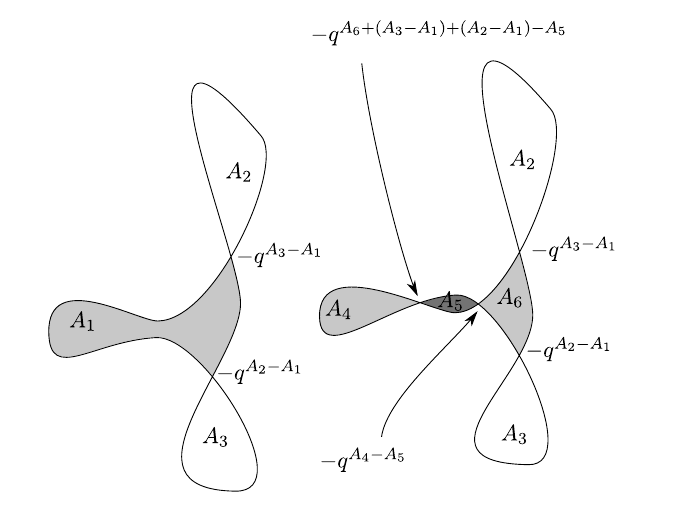}
\end{center}
\caption{Two weakly bounding cochains related by $\psi$}
\label{psifig}
\end{figure}

In preparation for the proof of Theorem \ref{tanmain}, we suppose
without loss of generality that $\phi_t$ undergoes a birth of two new
self-intersection points at $t =0$; the case of a death is similar.

\begin{lemma} \label{claimlem}  The equality holds
\begin{equation} \label{claim} 
\lim_{\eps \to 0} \mu_0^{b_{-\eps}}(\phi_{-\eps};1) =
\lim_{\eps \to 0} \mu_0^{b_{\eps}}(\phi_\eps;1) \end{equation}
using the inclusion $CF(\phi_{-\eps} ) \to CF(\phi_\eps)$. 
\end{lemma} 

\begin{proof} \label{moredetail} By definition
\begin{eqnarray*} 
 \mu_0^{b_\eps}(\phi_\eps;1) &=& \sum_{d \ge 0} \mu_d(\phi_\eps;
                                 b_\eps,\ldots, b_\eps) \\
 &=& \sum_{d \ge 0} \mu_d(\phi_\eps;
                                 (b_{\eps} - b_{-\eps}) + b_{-\eps}   ,\ldots,   (b_{\eps} - b_{-\eps}) + b_{-\eps}) \\
&=& \sum_{r \ge 0}
\mu_r^{b_{-\eps}}(\phi_\eps; b_\eps - b_{-\eps}, \ldots, b_\eps - b_{-\eps})
                        .\end{eqnarray*} 
                      By Definition \ref{corrmap}, this is equal to
\[ 
\mu_0^{b_\eps}(\phi_\eps;1)= \sum_{r \ge 0}
\mu_r^{b_{-\eps}}(\phi_\eps;\iota
\nu_{0,+}^{b_{-\eps}}(1),\ldots,\iota \nu_{0,+}^{b_{-\eps}}(1)) . \]

\vskip .1in \noindent {\em Step 1: The coefficient of any generators
  not equal to $v_{\pm \eps}, \ol{v}_{\pm, \eps}$ in \eqref{claim} are
  equal. }  Corollary \ref{gcor} implies that, accounting for both
curves that degenerate to curves passing through the self-tangency as
well as the change in bounding cochain, the coefficient of any
$x \in \cI(\phi_{\eps}) - \{ v_{\pm,\eps}, \ol{v}_{\pm,\eps} \}$ are
equal.  Indeed, any configuration contributing to
$\mu_0^{b_{-\eps}}(1)$ transforms into a collection of holomorphic
treed disks with output at $v_{\pm,\eps}$, a collection of small
strips connecting to $\ol{v}_{\pm,\eps}$, and holomorphic treed disks
with the original output.  The difference in areas and holonomies are
accounted for by the definition of \eqref{psi}.  An
example \label{anexample} of a configuration counted for $\eps > 0 $
and the matching configuration for $\eps < 0$ is given in Figure
\ref{lots} where the contribution for $\eps > 0$ is of the form
$\mu_2(\nu_{0,+},\nu_{0,+})$; disks with boundary in the given
immersion are lightly shaded while the contributions from $\nu_{0,+}$
are darkly shaded.  The signs agree by Theorem \ref{smallstrip}.

\label{theexample}

  \vskip .1in \noindent {\em Step 2: The coefficient of $v_{\pm,\eps}$ in \eqref{claim}
    vanishes on both sides. }  Recall that the generators
  $v_{\pm,\eps}$ are the outputs of the small strip in Theorem
  \ref{smallstrip}.  If there are configurations contributing to
  $\mu_0^{b_\eps}(\phi_\eps;1)$ with output $v_{\pm,\eps}$ with
  positive coefficient, then the definition of $\psi$ in \eqref{psi}
  implies that these configurations are cancelled by
  $\mu_1(q^{-A(\eps)} \iota(\nu_{0,+}^{b_{-\eps}}(1)) $.  Furthermore,
  there are no other composition maps $\mu_r, r > 1$ containing
  $ \iota(\nu_{0,+}^{b_{-\eps}}(1)) $ as an input, since any polygon
  projects to a polygon in the first coordinate and so the only
  polygon with input $\ol{v}_{\pm,\eps}$ is the small strip.

  \vskip .1in \noindent {\em Step 3: The coefficient of
    $\ol{v}_{\pm,\eps}$ in \eqref{claim} vanishes on both sides. }
  Suppose otherwise, so that without loss of generality the
  coefficient of $\ol{v}_{+,\eps}$ is equal to $c \in \Lambda$.  The
  Bianchi \ainfty relation for $\phi_\eps$ reads
  \[ \mu_1^{b_\eps} \mu_0^{b_\eps}(1) = 0 . \]
Contributions to this identity include once-broken treed disks with no
inputs and output at $v_{-,\eps}$, as in the larger shaded region in
Figure \ref{bianchi} only a part of whose boundary is shown.

\begin{figure}
\begin{center}
\includegraphics[height=2in]{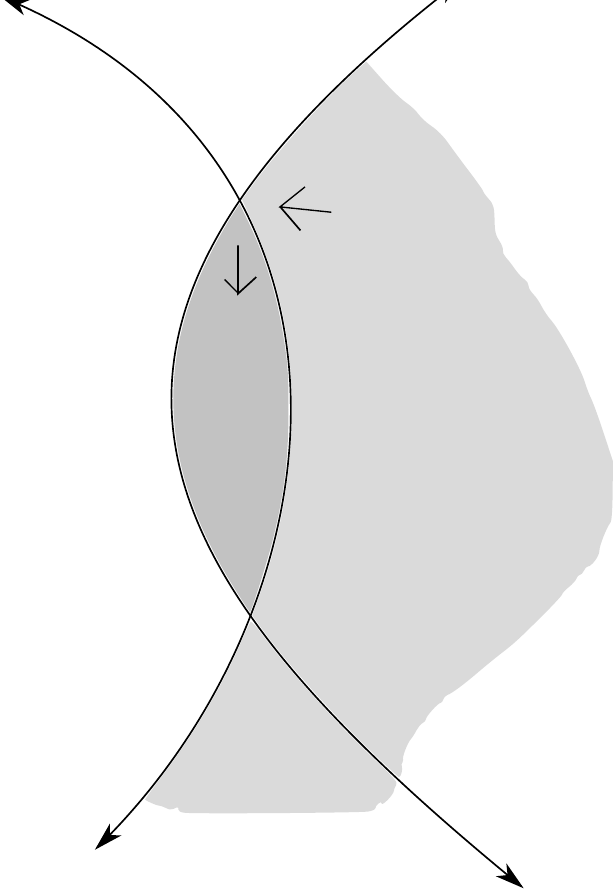}
\caption{Part of a disk with output at $\ol{v}_{+,\eps}$}
\label{bianchi}
\end{center}
\end{figure} 

The contribution from configurations with breaking at
$\ol{v}_{+,\eps}$ is equal to $c q^{A(\eps)}$, since the only
holomorphic treed disk from $\ol{v}_{+,\eps}$ to $v_{-,\eps}$ is the
small strip discussed above, shown darkly shaded in Figure
\ref{bianchi}.  To see this, note that the projection on the first
coordinate is holomorphic and the only holomorphic polygon in the
two-dimensional picture is the bigon in Theorem \ref{smallstrip},
shown in Figure \ref{bianchi}.  Contributions also include
configurations with breaking at some other generator
$x \in \cI(\phi_\eps)$, not equal to $v_{\pm,\eps}, \ol{v}_{\pm,\eps}$
and so corresponding to a generator of $\cI(\phi_{-\eps})$.  However,
by assumption, $b_{-\eps}$ is a solution the projective Maurer-Cartan
equation, and so the coefficient of $x$ in
$\mu_0^{b_{-\eps}}(\phi_{-\eps};1)$ vanishes.  But then by Step 1, the
coefficient of $x$ in vanishes as well.  Hence the coefficient of
$\ol{v}_{+,\eps}$ vanishes.
\end{proof} 

\begin{proof}[Proof of Theorem \ref{tanmain}] 
  By Lemma \ref{claimlem}
\begin{eqnarray*}  b_{-\eps} \in MC(\phi_{-\eps})   
&\implies &  \mu_0^{b_{-\eps}}(1) \in \on{span}(1_{\phi_{-\eps}}) \\ 
&\implies & \mu_0^{b_\eps}(1) \in \on{span}(1_{\phi_\eps}) \\ 
&\implies & b_\eps \in MC(\phi_\eps)  \end{eqnarray*} 
with the same value of the potential. 

To prove the claim on Floer cohomology first note that the Floer
differential $\mu_1^{b_{\pm \eps}}$ is the derivative of
$\mu_0^{b_{\pm \eps}}$, which implies by Lemma \ref{claimlem} that
$D \psi$ is a chain map, where $\psi$ is as in Definition~\ref{corrmap}.  Indeed we have
\begin{eqnarray*} 
\lim_{\eps \to 0}  D \psi (\mu_1^{b_{-\eps}}) (x) &=& \lim_{\eps \to 0} \ddt \psi( \mu_0^{b_{-\eps} + tx} (1))
  \\
                                   &=& \lim_{\eps \to 0}  \ddt \mu_0^{\psi(b_{-\eps}) + t D \psi_{b_{-\eps}}(x)}(1) \\
                                   &=& \lim_{\eps \to 0}  \mu_1^{\psi(b_{-\eps})}( D
                                       \psi_{b_{-\eps}}(x)) .\end{eqnarray*}
To show that $D\psi$ induces an isomorphism, we apply the spectral sequence for the cohomology induced by
the filtration by powers of $q^{\delta}$, for $\delta$ larger than
$\eps$ but smaller than the area of any other non-constant holomorphic treed disk.  
The differential for the first page of the spectral sequence is
the Morse differential, together with the map $\ol{v}_{\eps,\pm} \to
q^{A(\eps)} v_{\eps,\mp}$ induced by the small strip, and any 
cohomology class for this differential has a representative supported in the image of $CF(\phi_{-\eps})$ in
$CF(\phi_\eps)$.    In particular, $D\psi$ is an isomorphism on the
first page, and it
follows
that $D\psi$ is an isomorphism $HF(\phi_{-\eps}) \to HF(\phi_{\eps})$
on the limits of the spectral sequence (see \cite[5.2.12]{weibel}.)

Conversely we claim that the limit of any family of Maurer-Cartan
solutions is obtained by the correction formula in \eqref{psi}.  Given
$b_{\eps} \in MC(\phi_{\eps})$ write
\[ b_{\eps} = b_{-\eps} + \delta b, \quad \delta b = \sum_{x \in \{ \zz,
  \bzz, \bz,\mz \}} \lan \delta b, x\ran x .\]
Consider the equations
\[ \lim_{\eps \to 0} \mu_0^{b_\eps}(1) = \lim_{\eps \to 0} 
\sum \mu_d^{b_{-\eps}}(\delta b,\ldots, \delta b) \in \Lambda
1_{\phi_{1}} .\]
Re-write this equation as
\begin{multline}  
- \mu_1( \lan \delta b , \bz \ran \bz 
+ \lan \delta b , \bzz \ran \bzz \ran ) \\  
= \sum_{d \ge 0} \mu_d^{b_{-\eps}}(\delta b,\ldots, \delta b)  - 
\mu_1(
\lan \delta b ,\bz \ran \bz  + 
\lan \delta b , \bzz \ran \bzz \ran )  \ \text{mod} \
\on{span}(1_\phi) \end{multline} 
The terms in $\delta b$ mod $q^{n\eps}$ determine the terms in
$\delta b$ mod $q^{n\eps + \hbar}$ for some energy quantization
constant $\hbar$ and any integer $n$.  Therefore this equation has a
unique solution $\delta b$, necessarily equal to the one given above
in \eqref{psi}.


%
 \end{proof}

\begin{remark} In Etnyre-Ekholm-Sullivan \cite{ek:or} the authors give
  a second proof of invariance of Legendrian contact homology under a
  self-tangency move by using a ``bifurcation analysis''.  It would be
  interesting to know whether a stabilization or cobordism argument
  would remove the need for explicit gluing results used above.
\end{remark}

\begin{proof}[Proof of Theorem \ref{main}]
  Theorem \ref{main} follows from Theorems \ref{transmain} and
  \ref{tanmain} by breaking up the domain of the isotopy $[0,T]$ into
  sub-intervals $[0,t_1], [t_1,t_2], \ldots, [t_{k-1}, T]$.  By
  induction, we may assume that the theorem holds for any Maslov flow
  $\phi_t$ on the interval $t \in [t_1,T]$.  Suppose that
  $[b_0] \in \ol{MC}(\phi_0)$ with
  \[ \min_{i=0}^k A_i - (\dim(L) - 1)( T - t_i) > 0 .\]
  Since $A_0 = \val_q(b_0)$, Theorem \ref{transmain} implies that
  there exists a family $b_t = E_0^t(b_0), t \in [0,t_1]$ and an
  identification $HF(\phi_0,b_0) \cong HF(\phi_t,b_t)$ for all $t$.
  Theorem \ref{tanmain} implies that the family $b_t$ continues as a
  family $b_{t,+} \in MC(\phi_t)$ for $t \in [t_1,t_1+\eps]$ for
  $\eps$ small with a $q$-valuation
  \begin{eqnarray*} \val_q(b_{t_1,+}) &=& \val_q(\psi(b_{t_1,-})) \\
&=&   \min(\val_q(b_{t_1,-}), A_1) \ge \min(\val_q(b_0) - t_1 (\dim(L) -
  1), A_1) .\end{eqnarray*} 
Now $b_{t_1,+}$ satisfies the assumption of Theorem \ref{main} for the
interval $[t_1,T]$ since, letting $A_1' = \val_q(b_{t_1,+})$, we have
\begin{eqnarray*} \min  && \left(A_1' - (\dim(L) - 1) (T - t_1) , \min_{i=2}^{k}
  A_i -  (\dim(L) - 1)( T - t_i) \right)\\  &\ge& \min \left( \min(\val_q(b_0) - T
  (\dim(L) - 1), A_1 - (\dim(L) - 1) (T - t_1)),  \right. \\ & \ &
                                                                   \left. \min_{i=2}^{k}
                                                                   A_i   -  (\dim(L) - 1)( T - t_i)
  \right) 
  \\ &=& \min_{i=0}^k A_i - (\dim(L) - 1)( T - t_i) > 0
  .\end{eqnarray*}
By the inductive hypothesis, there exists $b_T \in MC(\phi_T)$ with
\[ W(\phi_T,b_T) = q^{2(T - t_1)}W(\phi_{t_1},b_{t_1}) \cong q^{2T}
W(\phi_0, b_0)  \] 
\[ HF(\phi_T,b_T) \cong HF(\phi_{t_1}, b_{t_1}) \cong
HF(\phi_0,b_0).\]
The statement of the Theorem for the forward direction follows.  The
result for the reverse Maslov flow (mean curvature flow) is similar,
using inverse Euler flow.
 \end{proof}

 We end the paper with further open questions:

 \begin{remark} \label{comms}
\begin{enumerate} 
\item The first question concerns existence of non-trivial objects in
  the Fukaya category.  We conjecture that any rational compact
  K\"ahler manifold $(X,\omega)$ contains a Floer non-trivial
  Lagrangian $\phi: L \to X, \ HF(\phi) \neq \{ 0 \}$, that is, a
  non-trivial object $[\phi]$ in the immersed Fukaya category $\F(X)$,
  and the definition of Lagrangian is taken to be sufficiently
  general.  This paper shows that this property is preserved under (at
  least small) K\"ahler-Ricci flows $(X,\omega_t)$, while the paper
  \cite{flips} shows that sufficiently nice surgeries in the flow
  generate non-trivial objects.
\item The next question concerns the existence of mean curvature
  flows.  Joyce's conjecture \cite{joyce:conjectures} predicts that
  after allowing certain singular Lagrangians and surgeries, mean
  curvature flow $\phi_t: L_t \to X, t \in [0,\infty)$ exists and
  preserves Floer cohomology $HF(\phi_t,b_t)$ if $b_0 \in MC(\phi_0)$.
  Note that this approach requires surgery at times $t_i \in (0,T)$
  that are {\em not} singular times of the mean curvature flow.  This
  conjecture is open even for immersed curves $\phi: L \to X$ on
  surfaces $X, \dim(X) = 2$.  Some analytic results relevant to the
  surgeries necessary when the $q$-valuation of the weakly bounding
  cochain become zero are contained in the manuscript \cite[Chapter
  10]{fooo}.
\item To what extent is non-displaceability equivalent to Floer
  non-triviality?  In the case of immersions in the two-sphere
  $\phi: L \cong S^1 \to X \cong S^2$, \label{X2}non-vanishing of Floer
  cohomology $HF(\phi,b)$ seems to be stronger than
  non-displaceability
  $\exists \psi: X \to X, \psi(\phi(L)) \cap \phi(L)$ empty.  Is there
  a version of Floer theory (such as bulk deformations) that detects
  non-displaceability?
\item The structure of the space of Maurer-Cartan solutions $MC(\phi)$
  seems poorly understood, even in the case of immersions
  $\phi: L \cong S^1 \to X \cong S^2$ in two-sphere.  For example,
  what is the dimension of $MC(\phi)$?
\item Immersed Lagrangian Floer theory is somewhat limited in
  applicability by the requirement that the transverse
  self-intersection condition: $HF(\phi,b)$ is defined only for
  immersions $\phi: L \to X$ with self-transverse, or at least clean, \label{cleanint}
  self-intersection $L \times_\phi L - \Delta_L$.  Of course, one can
  perturb $\phi$ to achieve a self-transverse immersion
  $\tilde{\phi}$. How does the Maurer-Cartan moduli space
  $MC(\tilde{\phi})$ depend on the choice of perturbation
  $\tilde{\phi}$?  In particular, is there a ``good locus''
  $MC(\tilde{\phi})$ of weakly bounding cochains $\tilde{b}$ so that
  the Floer cohomology
  $HF(\tilde{\phi}, \tilde{b}), \tilde{b} \in MC(\tilde{\phi})$ is
  independent of the choice of perturbation?
\item As a special case of the previous question, does there exist a
  good Floer theory for Lagrangian immersions that are finite covers
  of their image?  Let $\phi: L \to X$ be a Lagrangian embedding and
  let $\ti{\phi}: \ti{L} \to X$ be the immersion obtained by composing
  $\phi$ with a finite cover $\pi: \ti{L} \to L$.  Is the immersed
  Lagrangian Floer theory of a self-transverse perturbation of the
  composition $\psi \circ \ti{\phi}: \ti{L} \to X$ independent of the
  choice of perturbation $\psi: X \to X$?
\item What is the sub-category of immersed Fukaya category $\F(X)$ \label{removeto}
  generated by embedded Lagrangians $\phi: L \to X$?  The mapping cone
  construction in Fukaya-Oh-Ohta-Ono \cite[Chapter 10]{fooo} implies
  that, at least in some cases, self-intersection points
  $x \in L \times_\phi L $ may be replaced with mapping cones
  $\Cone(x), x \in \Hom(\phi,\phi)$; see also Mak-Wu \cite{mak:dehn}
  and Fang \cite{fang}.
\end{enumerate}
\end{remark} 

\appendix

\section{Erratum on triple intersections}

This erratum is join with Hadi Azizi, who kindly pointed out that the
proof of Lemma 7.9 (b) omitted a discussion of the case that only one
of the interior angles is even.  In this case, the map of
Maurer-Cartan spaces is non-trivial.  The claim in question is that
immersed Floer cohomology is invariant under Maslov flow when the
Maslov flow passes through a triple intersection; i.e.~a time when
three branches of the Lagrangian intersect.  Generically, this happens
only at a finite number of times for two-dimensional symplectic
manifolds, and not at all in higher dimensions.  To introduce
notation, let $X$ be a compact, oriented surface with symplectic form
$\omega \in \Omega^2(X)$, and let $L_t \subset X$ be a Maslov flow of
immersed Lagrangians (e.g.~a curve shortening flow) as above.  We
assume that at time $t =0 $ the immersion $L_t$ has a triple
intersection, as shown in Figure \ref{fig:triple}.  Choose small times
$t_- < 0 < t_+$ and define
\[ L_- = L_{t_-}, \quad L_+ = L_{t_+} \]
as shown.    Let $U$ be a neighborhood of the triple intersection so that there are three local branches $L_{\pm,1}, L_{\pm,2}, L_{\pm,3} \subset L_\pm$ of the Lagrangians as shown 
\[  U  \cap L_\pm = 
L_{\pm,1} \cup L_{\pm,2} \cup L_{\pm,3} .\]
Assume that $L_-$ is equipped with a weakly bounding cochain $b_-$. 

\begin{theorem} \label{thm:inv} (Lemma 7.9 (b) above)
Under the above assumptions, there exists a weakly bounding cochain $b_+$ for $L_+$
so that
\begin{enumerate} 
\item \label{it:sameW} $W(b_+) = q^{2(t_+ - t_-)} W(b_-)$ and 
\item \label{it:sameHF} $HF(L_-,b_-) \cong HF(L_+, b_+).$
\end{enumerate}
\end{theorem}

We introduce the following notation.   Let $R_\pm$ denote the small triangle at the center, that is, the bounded region in the complement of $L_{\pm,1} \cup L_{\pm,2} \cup L_{\pm,3}$.   We assume that the areas of the small triangles $R_-,R_+$ are  equal and given by some small real number $\eps$, as in Figure \ref{fig:triple0}.

\begin{figure}[ht]
    \centering
   \scalebox{.9}{ 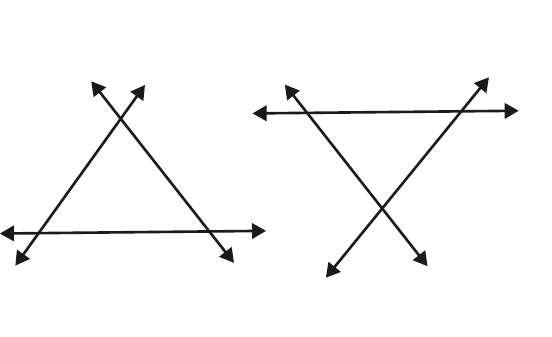}
    \caption{Moving past a triple intersection}
    \label{fig:triple0}
\end{figure}

By assumption, the self-intersection points of the Lagrangians are assumed to have $\Z_{2N}$ gradings for some $N$, and the $\Z_2$ grading by reducing the $\Z_{2n}$ grading is determined by the orientations as follows as in Seidel \cite{se:gr}:  Suppose the corresponding corner $x \in L_{\pm,i} \cap L_{\pm,j}$ has orientations 
\[ o_{\pm,i} \in
\pi_0(T_x L_{\pm,i} - \{ 0 \} ), \quad o_{\pm,j} \in
\pi_0(T_x L_{\pm,j} - \{ 0 \} ) \] 
on both Lagrangians. The following is immediate from the definitions:

\begin{lemma} 
\begin{enumerate} 
\item If both $o_{\pm,i}, o_{\pm,j}$ point outwards from $x$ or both point inwards then the $\Z_2$ grading $|x| \in \Z_2$ is an odd as an ouput corner and even as an input corner, while 
\item if one orientation points outwards and one points inwards then the associated grading $|x| \in \Z_2$ is even as an output corner and odd as an input corner. 
\end{enumerate}
\end{lemma} 

 By an interior angle, we mean an ordered self-intersection point arising as in Figure \ref{fig:triple}
or Figure \ref{fig:triple2}, where the ordering is induced from the counter-clockwise ordering around the boundary of the triangle $R_-$.  

\begin{lemma}  
There are either three even interior angles, or one even interior angle. \end{lemma}   

\begin{proof}  For the symmetric case that the orientations of the three branches 
$L_{-,1}, L_{-,2}, L_{-,3}$ around the inner triangle  $R_-$ are all clockwise, all interior angles correspond to ordered self-intersection points that are even.     Each change in orientation
of one of the branches $L_{-,k}$ switches two of the self-intersection points, say $L_{-,k} \times_X L_{-,i}$ and $ L_{-,k} \times_X L_{i,j}$, corresponding to interior angles from even to odd degree or vice-versa.  So any sequence of switches of orientations of the branches of $L_-$ produces a system of gradings where either one or three of the interior angles is even.  
\end{proof}

For completeness we recall our notation for weakly bounding cochains. Let  $L \to X$ 
be a compact, oriented, immersed Lagrangian equipped with an orientation, spin structure, and grading with transverse self-intersection points.   The generators of $CF(L)$ are given by ordered self-intersection points, together with critical points of 
a Morse function, and two extra generators from the homotopy units
construction, which may be ignored.  
Denote by 
\[ m_d: CF(L)^{\otimes d} \to CF(L)  \] 
 the \ainfty operations  for $L$ obtained by counting holomorphic polygons.   More precisely, if %
 \[ x_1,\ldots, x_d \in L \times_X L  \] 
 are  incoming ordered self-intersection points then 
 \[  m_d(x_d,\ldots, x_1) =  \sum_{u  \text{ has  output  } x_0}
q^{A(u)} \sigma(u) \] 
%
where $\sigma(u) \in \{ \pm 1 \}$ is the orientation sign determined by the orientations on the Lagrangians, and the sum is over rigid holomorphic treed disks with incoming edges with limits 
$x_1,\ldots, x_d$ and outgoing edge with limit $x_0$.   In the case of Morse generators one considers treed holomorphic polygons.   The condition for $b \in CF(L)$
to be a weakly bounding cochain is that $b_\pm$ solve the projective Maurer-Cartan equation 
\[ m_0^{b}(1) := m_0(1) + m_1(b) + m_2(b \otimes b) + \ldots  = W(b) 1_L  \] 
for some element $W(b)$ in the Novikov ring called the {\em disk potential} of $b$; here 
 $1_{L} \in CF(L)$ is the strict unit.   In this case the deformed differential  defined by
\[ m_1^{b}(c) := \sum_{d_0,d_1} m_{1 + d_0 + d_1}( b^{\otimes d_1} \otimes c \otimes  b^{\otimes d_0} ) \]
 squares to zero.  Denote by $HF(L,b)$ its cohomology.   For a pair of Lagrangians $L_0,L_1$ 
 intersecting cleanly and equipped with weakly bounding cochains $b_0,b_1$ there is a similar version $CF(L_0,L_1)$ with differential denoted $m_1^{b_0,b_1}$.   Its cohomology is denoted
 $HF(L_0,L_1)$.  In the special case  $L = L_0 = L_1$ and $b_0 = b_1$ we have $HF(L) = 
 HF(L_0,L_1)$.  Furthermore, the Floer cohomology of a pair is invariant under Hamiltonian 
 diffeomorphisms $\phi$ in the sense that $HF(L_0,L_1) \cong HF(L_0,\phi(L_1))$, assuming the intersections are clean. 

 The main result Theorem \ref{main} states that the Floer cohomology
 is invariant under Maslov flow.  Let
 \[ L_t, t \in [t_-,t_+] \] 
 be such a flow.  Using notation above for $t \in [t_-,t_+]$ let
\[ E_{t_-}^t :  CF(L_{t_-}) \to CF(L_{t}) \]
denote the Euler flow from time $t_-$ to time $t$ from Definition 7.6 and 
\[ b_{-,t}  = E_{t_-}^{t} b_- \in MC(L_{t}) \] 
the flowed weakly bounding cochain, up to the time of the triple
intersection.  The Euler flow is defined so that, in particular, the
coefficients of the curvature change by an overall power of $q$, so
that, in the absence of a singularity or triple crossing, the element
$b_{-,t}$ is a weakly bounding cochain for the flow $L_t$ of $L_-$. We
suppose for the rest of the note that $L_+$ is obtained by flowing
$L_-$ through a triple crossing, as shown in Figure~\ref{fig:triple2}.

\subsection{One even interior angle} 

We first consider the case that the two lower interior angles in the small triangle $R_-$ in the center are degree $0$ ordered self-intersection points, and the top intersection point is degree $1$.  We label the interior angles of even degree  in the Novikov ring as in Figure \ref{fig:triple}, 
giving coefficients of weakly bounding cochain 
\[ b_- = e\ol{x}_- + f \ol{y}_- + g \ol{z}_- + \ldots  \in CF(L_-) .\]

\begin{notation} As in Figure \ref{fig:triple} denote by
 \begin{itemize} 
 \item $u_1$ a general holomorphic disk with output corner labelled $f$,  with arbitrary corners labelled $b_-$;
 \item $u_2^-$ a general holomorphic disk with output corner with coefficient $f$ and first input corner at $e$;  and remaining corners labelled $b_-$; $u_2^+$ a general holomorphic disk with output corner at $e$ and last input corner at $e$;  Thus $u_2^-, u_2^+$ are geometrically the same disk up to 
 permutation of inputs and outputs;
 \item $u_3$ with output corner $e$, and remaining corners labelled $b_-$;  
\item $u_2'$ a general holomorphic disk with output corner $g$, not containing the interior triangle $R_-$, and remaining corners labelled $b_-$; and
 \item $u_2''$ a general holomorphic disk with output corner $g$, containing the interior triangle $R_-$ with area $\eps$, and remaining corners labelled $b_-$;
\end{itemize} 
We use the notation $\# u $ to indicate the weighted count of 
disks with some specified type, contributing to the \ainfty structure
maps.   Thus by $\# u_1$ we mean the weighted count of disks $u_1$ with arbitrary numbers of corners labelled with the weakly bounding cochain, and output $f$:
\[ \# u_1 =  \sum_{u_1 \ \text{output} f}
(-1)^{\heartsuit} q^{A(u_1)} \sigma(u_1) \]
where $\sigma(u) \in \{ \pm 1 \}$ is the orientation sign determined
by the orientations on the Lagrangians, and $\heartsuit$ is the sign
from \eqref{heartsuit}.
\end{notation}

\begin{figure}[ht]
    \centering
   \scalebox{.6}{ 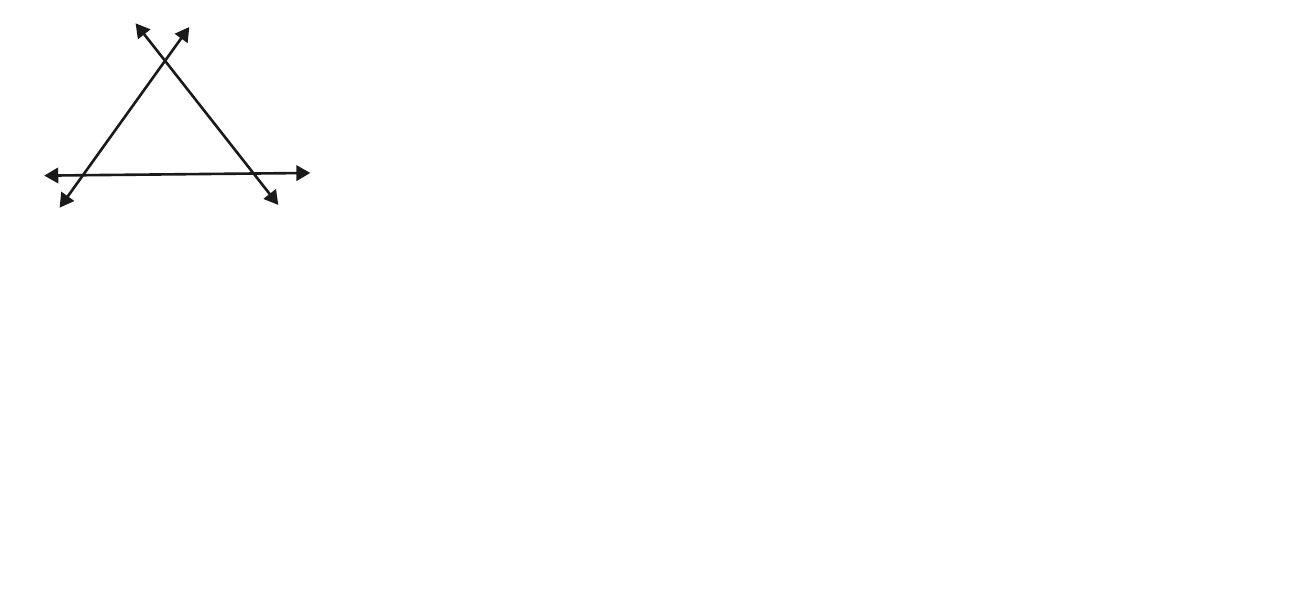}
    \caption{Disks bounding a triple intersection}
    \label{fig:triple}
\end{figure}

\begin{lemma}  With notation as above, the numbers of 
disks containing the bottom region in Figure \ref{fig:triple} are related by 
\begin{enumerate} 
\item $\# u_2^- = - \# u_2^+$ ; 
\item $\# u_2^+ =  \pm \# u_2'' = - \# u_2^-$, with sign depending on the choice of orientations.
\end{enumerate}
\end{lemma}

\begin{proof}  
For the first item, the disks $u_2^-, u_2^+$ are geometrically the same, but are related by naming the output of $u_2^-$ the first input of $u_2^+$.   In the construction of orientations in \cite[(40)]{Wehrheim_Woodward_orientation}, this corresponds to the transposition of determinant lines (one of the outputs with one of the inputs) in the expression 
\[ \det(D_{\gamma_d}^-) \det(TL) \ldots \det(D_{\gamma_1}^-) \det(TL) \det(D^c) \det (TL) \det(D_{\gamma_0})   \] 
where $D^c$ is a Cauchy-Riemann operator for a problem on a surface with boundary but without strip-like ends, from which the orientation on the moduli space of holomorphic disks is induced.   Since these determinant lines are determinants of Freholm operators of odd index 
and the intervening expression 
\[  \det(TL) \ldots \det(D_{\gamma_1}^-) \det(TL) \det(D^c) \det (TL)  \] 
is even, this transposition produces a sign change as in \cite[Remark 2.1.2 (4)]{orient}. 

For the second item, for any such disk $u_2$, one may take the union with the interior triangle $u_2''$.  Vice-versa, given $u_2''$
removing the interior triangle gives a disk $u_2$ with corners $f,e$.   The corresponding compactified boundary value problems are canonically isotopic and, as a result, the determinant lines are canonically isomorphic, and the isomorphism is orientation is orientation preserving for suitable choices of orientations at $e,f,g$.
\end{proof}

Unobstructedness of $L_-$ implies the following equalities:  

\begin{lemma}  Suppose $L_-$ is weakly unobstructed.  Then
\begin{eqnarray*} 
\# u_1 + \# u_2^+ e &=&  0 \\ 
\# u_3 + \# u_2^-   f &=&  0  \\ 
\# u_2 q^{\eps} + \# u_2' &=& 0 .\end{eqnarray*}
\end{lemma}

\begin{proof}    Because holomorphic disks can come into an ordered
self-intersection point we have, for example, 
\[ \# u_1 + \# u_2 e  = \on{coeff}(z_-, m_0^{b_-}(1))  \] 
where $\on{coeff}(y_-, m_0^{b_-}(1))$
means the coefficient of $y_-$ in the curvature $m_0^{b_-}(1)$, which must vanish since $b_-$ is a weakly bounding cochain.    The other equalities  are equivalent to the vanishing of the curvature $\mu_0^{b_-}$ at the other degree zero intersection points in Figure \ref{fig:triple}.
\end{proof}

\begin{definition}  For $t > 0$, let $b_{+,t} \in CF(L_t)$ be the cochain equal to $b_{-,t}$
on the complement of the three self-intersection points shown in Figure \ref{fig:triple},
an given by the values  $e_t,f_t, e_t f_t + q^\eps g_t$ at the degree one self-intersection points near the triple intersection shown in Figure \ref{fig:triple},
where $e_t,f_t,g_t$ are the coefficients of $b_{-,t}$.
\end{definition}

\begin{proof}[Proof of Theorem \ref{thm:inv} \eqref{it:sameW}, case of one even interior angle]  We introduce the following notation.  Denote by $v_1$ a general holomorphic disk bounding $L_+$ with a single corner at $a$, $v_2^{',+}$ a general holomorphic disk with output with coefficient $f$ an last input at $e$, $v_2^{',-}$ a general holomorphic disk with output at $e$ and first input with corner with coefficient $f$,
$v_3$ a general holomorphic disk with $f$ (so $v_1,v_3$ overlap at $v_2'$) and $v_2$ a general holomorphic disk with corner 
labelled $fe + gq^{\eps}$. 

The following relationship holds between the signed counts
of disks before and after the triple intersection:  As in Lemma 7.4, for the family of bounding cochains $b_t$ whose values are given there at intersection points outside of the diagram shown,
\[ \# v_1 = \# u_1 q^{2t}, \quad \# v_2 = \# u_2 q^{\eps} q^{-2t}, \quad \# v_3 = \# u_3 q^{2t} .\]

To show unobstructedness we need to show the three equalities 
\begin{eqnarray*}
 \# v_2' q^{\eps} + \# v_2 &=& 0 \\
 \# v_1 + \# v_2^{',+} f_t &=& 0 \\
 \# v_3 + \# v_2^{',-} e_t &=& 0 \end{eqnarray*}
The signs in the second and third equations are in the opposite order 
of those in $u$ because of the change in order of $f_t,e_t$ in the ordering of punctures around the boundary of the disks.
We have 
\[ \# v_2' q^{\eps} = \# u_2' q^{\eps+2t} = 
-\# u_2 q^{-\eps+2t} q^{\eps} = - \# v_2 \]
as required.  Similarly
\[ \# v_1 = \# u_1 q^{2t} = - \# u_2^+ f_t q^{2t} = - \# u_2' f_t q^{-\eps+2t}  = 
- \# v_2' q^{\eps } f_t q^{-\eps}  =  - \# v_2^{',+} f_t.  \] 
The last inequality is similar.   This shows that
$L_+$ is unobstructed.  

It remains to show that the potentials agree up to a power of $q$. Let $u_-$ be a disk with $k_-$ corners at $g_t$ from inside the triangle, and $k_+$ corners at $g$ from outside the triangle, and no corners at $e_t,f_t$.  From $u_-$ we obtain a disk  $u_+$ with $k_-$ corners 
at the corner marked $f_t e_t  + q^\eps g_t$ from outside the triangle and $k_+$ corners at $f_t e_t + q^\eps g_t$  from inside the triangle. 
There are also $2^{k_-}$  other disks $u_-'$ with corners at either $e_t,f_t$ or at $g_t$, and $2^{k_+}$ other disks $u_+'$ with corners at $e_t$ and $f_t$, or $g_t$.   The curves $u_-,u_+$ are in bijection, but their weighted count differs by a factor 
of $q^{2t}(f_t e_t + q^\eps g_t)^{k_+ - k_-}$.  Hence
\[ \# u_-' = q^{2t}(f_t e_t + q^\eps g_t)^{k_-} \# u_-  =  
 \# u_+ q^{2t}(f_t e_t + q^\eps g_t )^{k_+}  = q^{2t}\# u_+'  \] 
as desired. 
 \end{proof}

\subsection{Three even interior angles}

We next consider the case of three even interior angles,
shown in Figure \ref{fig:triple2}.    Denote by $u_1$ resp.~$u_1'$ a general holomorphic disk with corner at $e$ containing the inner triangle, resp.~not containing the inner triangle;
$u_2$ resp.~$u_2'$ a general holomorphic disk 
with corner at $g$ containing the inner triangle, resp.~not containing the inner triangle; and $u_3$ resp.~$u_3'$ a general holomorphic disk with corner at $f$ containing the inner triangle, resp.~not containing the inner triangle.  Weak unobstructedness of $L_-$ implies:

\begin{lemma}  The equalities 
\[ \# u_2 + q^\eps e_t f_t = -\# u_2', 
\ \# u_1 + q^\eps f_t g_t  = - \# u_1', \ 
\# u_3 +q^\eps g_t e_t = - \# u_3' .\]
hold. \end{lemma}

\begin{proof} The equalities 
are the vanishing of the curvature at the intersection points with 
coefficients $g_t$, $f_t$, and $e_t$ respectively. 
\end{proof}

\begin{proof}[Proof of Theorem \ref{thm:inv} \eqref{it:sameW}, case of three even interior angles] 
 From the previous Lemma we obtain 
\[ q^{2t} ( \# v_2 + e_t f_t)  = - q^{2t} v_2' ,  \quad  q^{2t} ( \# v_1 + f_t g_t ) =  - q^{2t} \# v_1', \quad  \#  q^{2t}( v_3 + g_t e_t)  =  - q^{2t} \# v_3'  \]
so
\[   \# v_2  = - \#v_2'   - e_t f_t , \quad 
\# v_1 =  - \# v_1' - f_t g_t,  \quad \# v_3 = - \# v_3' - g_t e_t .\]
These are the equations for the weak unobstructedness of $L_+$; the change in sign come from the fact that the small triangle $R_-$ has induced orientation that reverses (with respect to the orientation induced from $L_\pm$)
under the flow over the triple intersection.   The computation of the potential is similar to the case of one even interior angle, treated above. 
\end{proof}

\begin{figure}[ht]
    \centering
   \scalebox{.9}{ 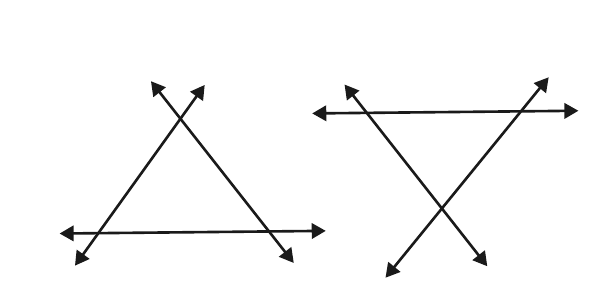}
    \caption{Moving past a triple intersection}
    \label{fig:triple2}
\end{figure}

\subsection{Isomorphism of Floer cohomology}

We finally prove the isomorphism of Floer cohomology.  Following the philosophy suggested by the Yoneda lemma, we do this by comparing the Floer cohomology with a test Lagrangian, which may be taken to be a Hamiltonian isotopy of the original Lagrangian, whose Maslov flow has possibly a triple intersection at a different point in the symplectic manifold.   Let $T_- \to X$ be an immersed Lagrangian with weakly bounding cochain $c_- \in CF(T_-)$ with the same potential $w_-$ as $L_-$.  Let $T_t$ be a Maslov flow of $T_-$ from $t_-$ to $t_+$,
with weakly bounding cochain $c_{t}$.  As above, picking any Maslov
flow $T_t$ requires a choice of connection $\alpha_X$ on the
anticanonical divisor $K_X$ and a flat connection $\alpha_L$ on its
restriction to the Lagrangian $T$. In order to define $HF(L_t,T_t)$,
we need the Maslov flow $T_t$ to be \textit{compatible} with $L_t$,
i.e., the chosen connections on $K_X$ agree.  $T_t$ has the same
potential as $L_t$, and so the Floer differential for the pair
$(L_t,T_t)$ squares to zero and the cohomology $HF(L_t,T_t)$ is
well-defined.  Let $(T_+,c_+)$ be the specialization at small,
positive $t_+.$ Let $p \in X$ be the point of triple intersection of
$L$.
 
\begin{proposition} \label{prop:test} Suppose there exist a Darboux neighborhood $U$ of $p \in X$  so that 
\begin{enumerate}
\item 
$U$ is disjoint from $T_t$ for all $t \in [t_-,t_+]$, and the intersection of $L_t$
with $U$ is as depicted in Figure \ref{fig:triple} or \ref{fig:triple2};
%
%
\item $T_t$ does not go through a triple intersection for $t \in [t_-,t_+]$.
\item the intersection points $T_t \cap L_t$ are transverse for all $t$.  
\end{enumerate}
Then  $HF((L_-,b_-),(T_-,c_-)) \cong HF((L_+,b_+),(T_+,c_+))$. Note that all three conditions are satisfied for a generic choice of compatible Maslov flow $T_t$ and small time interval $[t_-,t_+]$.
\end{proposition}

\begin{proof}   This is proved in the same way as the equality of potentials:  
Suppose $x, y \in CF((L_-,b_-),(T_-,c_-))$ represent intersection points necessarily in the complement of the neighborhood $U$ of the triple intersection.  The count of contributions to the coefficient of $y$ in $m_1^{b_-}(x)$ is equal, up to a factor of $q$, to the coefficient of 
$y$, by the same argument as in Theorem \ref{thm:inv} \eqref{it:sameW}.
\end{proof}

\begin{proof}[Proof of Theorem \ref{thm:inv} \eqref{it:sameHF}] 
Let $\phi: X \to X $ be a Hamiltonian isotopy so that $\phi(L_{t}), t \in [t_-,t_+]$ is disjoint from a neighborhood of the triple intersection.  Let $T_{t} := \phi(L_{t})$.  Then $T_{t}$ is a Maslov flow and has a singularity at $t = 0$, at a point disjoint from the triple intersection of
that of $L_{t}$.  By choosing the neighborhood $U$ sufficiently small, we may assume that $U$
is disjoint from $T_{t}$, and $\phi(U)$ is disjoint from $L_{t}$.  By Proposition \ref{prop:test}
and invariance of Floer cohomology under Hamiltonian perturbation
\begin{multline} HF(L_-,b_-) \cong HF((L_-,b_-),(L_-,b_-)) \cong HF((L_-,b_-),(T_-,b_-)) \\ \cong HF((L_+,b_+),(T_+,b_+)) \cong HF((L_+,b_+),(L_+,b_+)) \cong HF(L_+,b_+) .\end{multline}
\end{proof}

\def\cprime{$'$} \def\cprime{$'$} \def\cprime{$'$} \def\cprime{$'$}
\def\cprime{$'$} \def\cprime{$'$}
\def\polhk#1{\setbox0=\hbox{#1}{\ooalign{\hidewidtht 
      \lower1.5ex\hbox{`}\hidewidth\crcr\unhbox0}}} \def\cprime{$'$}
\def\cprime{$'$}

\end{document}